\documentclass[11pt]{article}
\usepackage{graphicx} 
\usepackage{bbm}
\usepackage{bm}
\usepackage[margin=1in]{geometry}
\usepackage{color}
\usepackage{amsfonts}
\usepackage{algorithm}
\usepackage{algorithmic}
\usepackage{latexsym, amssymb, amsmath, amscd, amsthm, amsxtra}
\usepackage{mathtools}
\usepackage{enumerate}
\usepackage[all]{xy}
\usepackage{mathrsfs}
\usepackage{fancyhdr}
\usepackage{listings}
\usepackage{hyperref}
\usepackage{enumitem}
\usepackage{multirow}
\usepackage{tikz}

\pagestyle{plain}

\pagestyle{plain}

\def\11{\mathbbm{1}}

\def\ER{Erd\H{o}s-R\'enyi\ }

\newcommand{\Pb}{\mathbb P}
\newcommand{\Qb}{\mathbb Q}

\newtheorem{thm}{Theorem}[section]

\newtheorem{proposition}[thm]{Proposition}

\newtheorem{lemma}[thm]{Lemma}
\newtheorem{cor}[thm]{Corollary}

\newtheorem{remark}[thm]{Remark}
\newtheorem{DEF}[thm]{Definition}

\numberwithin{equation}{section}

\makeatletter
\newenvironment{breakablealgorithm}
{
		\begin{center}
			\refstepcounter{algorithm}
			\hrule height.8pt depth0pt \kern2pt
			\renewcommand{\caption}[2][\relax]{
				{\raggedright\textbf{\ALG@name~\thealgorithm} ##2\par}%
				\ifx\relax##1\relax 
				\addcontentsline{loa}{algorithm}{\protect\numberline{\thealgorithm}##2}%
				\else 
				\addcontentsline{loa}{algorithm}{\protect\numberline{\thealgorithm}##1}%
				\fi
				\kern2pt\hrule\kern2pt
			}
		}{
		\kern2pt\hrule\relax
	\end{center}
}
\makeatother

\hypersetup{colorlinks=true,linkcolor=blue}

\title{A Smooth Computational Transition in Tensor PCA}

\usepackage{authblk}
\author[1]{Zhangsong Li\thanks{Email: \textit{ramblerlzs@pku.edu.cn}. Partially supported by the National Key R$\&$D program of China (Project No. 2023YFA1010103) and the NSFC Key Program (No. 12231002).}}

\affil[1]{School of Mathematical Sciences, Peking University}

\date{\today}
\begin{document}
\maketitle

\begin{abstract}
    We propose an efficient algorithm for tensor PCA based on counting a specific family of weighted hypergraphs. For the order-$p$ tensor PCA problem where $p \geq 3$ is a fixed integer, we show that when the signal-to-noise ratio is $\lambda n^{-\frac{p}{4}}$ where $\lambda=\Omega(1)$, our algorithm succeeds and runs in time $n^{C+o(1)}$ where $C=C(\lambda)$ is a constant depending on $\lambda$. This algorithm improves a poly-logarithmic factor compared to previous algorithms based on the Sum-of-Squares hierarchy \cite{HSS15} or based on the Kikuchi hierarchy in statistical physics \cite{WAM19}. Furthermore, our result shows a smooth tradeoff between the signal-to-noise ratio and the computational cost in this problem, thereby confirming a conjecture posed in \cite{KWB22}. 
\end{abstract}

\tableofcontents

\section{Introduction}{\label{sec:intro}}

Principal component analysis (PCA), which identifies a direction of maximum variance from a matrix of pairwise correlations, is among the most basic tools for data analysis in a wide range of disciplines. In recent years, variants of PCA have been proposed including restricting directions to the nonnegative orthant (nonnegative matrix factorization) or to directions that are sparse linear combinations of a fixed basis (sparse PCA). A further extension arises when data provides access to higher-order correlations beyond pairwise information. In this case, an analog of PCA is to find a direction with largest possible third moment or other higher-order moment (higher-order PCA or tensor PCA).

In this work, we consider the order-$p$ tensor PCA (also known as the spiked tensor) problem. This model was introduced by \cite{RM14} and has received significant attention recently.\footnote{The special case $p=2$ of the spiked tensor model is the so-called spiked Wigner matrix model which has been widely studied in random matrix theory, statistics, information theory, and statistical physics; see \cite{Mio18} for a survey.} In this model, we observe a $p$-fold $n \times n \times \ldots \times n$ tensor
\begin{equation*}
    \bm Y = \kappa x_*^{\otimes p} + \bm G \,,
\end{equation*}
where the parameter $\kappa \geq 0$ is the signal-to-noise ratio (SNR), $x_*$ is a planted signal drawn from the uniform distribution over the hypercube $\{ -1,+1 \}^n$, and $\bm G$ is a symmetric noise tensor with standard normal entries (see \eqref{eq-def-symmetric-tensor} for the precise definition of $\bm G$). 

Two basic problems regarding this model are as follows: (1) the detection problem, i.e., deciding whether $\bm Y$ is sampled from the law of a spiked tensor or is sampled from the law of a pure symmetric noise tensor; (2) the recovery problem, i.e., recovering the planted signal $x_*$ from the observation $\bm Y$. Information-theoretically, it was shown in \cite{RM14, LML+17} that for any fixed integer $p\geq 2$, detection and recovery is possible if $\kappa \gg n^{ \frac{1-p}{2} }$. On the computational side, the Sum-of-Squares hierarchy provides a polynomial-time detection and recovery algorithm when $\lambda \gg n^{-\frac{p}{4}} \cdot \operatorname{polylog}(n)$ \cite{HSS15}. In contrast, the approximate message passing (AMP) algorithm and other ``local searching'' algorithm only succeeds when $\lambda \gg n^{-\frac{1}{2}}$ \cite{RM14, BGJ20}. As a ``redemption'' of the statistical physics approach, it was shown the Kikuchi spectral algorithm based on higher-order Kikuchi free energies also succeeds when $\lambda \gg n^{-\frac{p}{4}} \cdot \operatorname{polylog}(n)$ \cite{WAM19}. Recently, using deep results in random matrix theory, \cite{BCSvH24+} provides a sharper analysis of the Kikuchi algorithm for even $p$ and shows that it succeeds when $\lambda>c_* n^{-\frac{p}{4}}$ where $c_*$ is an explicit constant. In addition, although it is challenging to prove hardness for a typical instance of the problem even under the assumption of P$\neq$NP, evidences from the Sum-of-Squares hierarchy \cite{HSS15, HKP+17}, the low-degree polynomial framework \cite{KWB22, KMW24} and reduction from a variant of the planted clique conjecture \cite{BB20} suggest that detection and recovery is impossible by efficient algorithms when $\kappa \ll n^{-\frac{p}{4}}$. Thus, when $p \geq 3$ this model exhibits a large {\em statistical-computational gap}, and the (conjectured) computational threshold is around $\kappa \approx n^{-\frac{p}{4}}$. In this work, we will focus on the critical regime $\kappa = \Theta(n^{-\frac{p}{4}})$. Our result can be summarized as follows:
\begin{thm}[Informal]{\label{MAIN-THM}}
    Suppose $p \geq 3$ is a fixed integer and suppose $\kappa = \lambda \cdot n^{-\frac{p}{4}}$ where $\lambda=\Omega(1)$ is a non-vanishing constant. Then there exists two constants $C=C(\lambda)$ and $C'=C'(\lambda)$ and two algorithms $\mathcal A,\mathcal A'$ with running time $n^{C+o(1)}$ and $n^{C'+o(1)}$ respectively, such that $\mathcal A$ (respectively, $\mathcal A'$) takes $\bm Y$ as input and achieves strong detection (respectively, weak recovery).
\end{thm}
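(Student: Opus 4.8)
I would reduce both strong detection and weak recovery to the analysis of one family of polynomial statistics in $\bm Y$, each a weighted count of a prescribed collection of $p$-uniform hypergraphs, and then carry out (i) first- and second-moment estimates showing that a suitable member of the family separates the two laws, and (ii) an algorithmic step showing that this member can be evaluated in time $n^{C(\lambda)+o(1)}$. View $\bm Y$ as a weighting of the complete $p$-uniform hypergraph on $[n]$, the hyperedge $e=\{i_1,\dots,i_p\}$ carrying the real number $\bm Y_e=\bm Y_{i_1\cdots i_p}$, and for a finite family $\mathcal H$ of isomorphism types of connected $p$-uniform hypergraphs with weights $c_H$ set
\begin{equation*}
  f_{\mathcal H}(\bm Y)\;=\;\sum_{H\in\mathcal H}c_H\sum_{\phi}\prod_{e\in E(H)}\bm Y_{\phi(e)},
\end{equation*}
the inner sum over injective embeddings $\phi$ of $V(H)$ into $[n]$. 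The choice of $\mathcal H$ is dictated by the low-degree heuristic: expanding the likelihood ratio in Hermite polynomials, the monomials $\prod_e\bm Y_e$ whose planted expectation is a nonzero constant (rather than a sign depending on the unknown $x_*$) are exactly those whose underlying multi-hypergraph has all vertex degrees even, and among simple such hypergraphs with $m$ edges the contribution to $\E_{\mathrm{planted}}$ is dominated by those on $pm/2$ vertices, i.e.\ the $2$-regular ones, since at the critical scale $\kappa=\lambda n^{-p/4}$ the quantity $\kappa^{2m}n^{v}$ is maximised at $v=pm/2$. I would therefore take $\mathcal H$ to consist of the connected, simple, $2$-regular $p$-uniform hypergraphs with at most $M=M(\lambda,n)$ edges and \emph{pathwidth at most $\ell=\ell(\lambda)$}; the pathwidth cap is what makes the statistic efficiently computable and is the combinatorial incarnation of working at ``level $\ell$'' of a Kikuchi-type hierarchy. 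The weights are $c_H\propto\kappa^{|E(H)|}$, which is optimal by Cauchy--Schwarz for the ratio below.

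\textbf{Moment analysis and detection.} Because every $H\in\mathcal H$ is simple, under the null law each $\prod_{e}\bm Y_{\phi(e)}$ is a product of independent centred Gaussians, so $\E_{\mathrm{null}}[f_{\mathcal H}]=0$, and since two distinct simple hypergraphs cannot pair up edge for edge, $\operatorname{Var}_{\mathrm{null}}[f_{\mathcal H}]$ collapses to a diagonal sum of the squared weights. The even-degree condition gives $\E_{\mathrm{planted}}[\prod_e\bm Y_{\phi(e)}]=\kappa^{|E(H)|}$, and a $2$-regular $H$ with $m$ edges has $(1+o(1))\,n^{pm/2}/|\mathrm{Aut}(H)|$ embeddings, so with $c_H\propto\kappa^{|E(H)|}$ the ratio $\E_{\mathrm{planted}}[f_{\mathcal H}]/\operatorname{Var}_{\mathrm{null}}[f_{\mathcal H}]^{1/2}$ equals, up to $1+o(1)$, $\big(\sum_{m\le M}\lambda^{2m}\mathcal N_{p,\ell}(m)\big)^{1/2}$, where $\mathcal N_{p,\ell}(m)$ is the (dimension-free) number of $2$-regular connected pathwidth-$\le\ell$ shapes with $m$ edges. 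For $p\ge 3$ this count grows, once $\ell\ge\ell(\lambda)$, fast enough in $m$ (roughly $\ell^{\Theta(m)}$, so the base exceeds $\lambda^{-2}$) that the series diverges for the given constant $\lambda$ — in contrast with $p=2$, where $2$-regular connected graphs are single cycles, the count is bounded, and one recovers the sharp threshold $\lambda>1$. Fixing $\ell=\ell(\lambda)=\Theta(\lambda^{-4/p})$ and then letting $M\to\infty$ slowly drives the ratio to infinity. Finally I would control $\operatorname{Var}_{\mathrm{planted}}[f_{\mathcal H}]$ by expanding $\bm Y_{\phi(e)}=\kappa\prod_{i\in e}x_*(i)+\bm G_{\phi(e)}$ and bounding, by an enumeration of how two structures from $\mathcal H$ can overlap and which of their shared edges play a ``signal'' versus a ``noise'' role, showing $\operatorname{Var}_{\mathrm{planted}}[f_{\mathcal H}]=(1+o(1))\operatorname{Var}_{\mathrm{null}}[f_{\mathcal H}]$. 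Two applications of Chebyshev then show that thresholding $f_{\mathcal H}$ at $\tfrac12\E_{\mathrm{planted}}[f_{\mathcal H}]$ achieves strong detection.

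\textbf{Running time, and recovery.} Evaluating $f_{\mathcal H}$ amounts to, for each of the $\mathcal N_{p,\ell}(\le M)$ shapes, computing a weighted count of embeddings of a fixed pathwidth-$\le\ell$ hypergraph into the weighted host $\bm Y$. This I would do by colour-coding in the style of Alon--Yuster--Zwick: colour $[n]$ with $O(pM)$ colours, count colourful embeddings by a dynamic program along a width-$\ell$ path decomposition of the shape in time $n^{O(\ell)}2^{O(M)}$, and average over $e^{O(M)}\log n$ colourings (or derandomise) to recover the true count. Taking $M=M(\lambda)\cdot\omega_n(1)$ growing slowly enough that $2^{O(M)}$ and the number $\mathcal N_{p,\ell}(\le M)=2^{O(M\log M)}$ of shapes are $n^{o(1)}$, the total time is $n^{O(\ell)+o(1)}=n^{C(\lambda)+o(1)}$ with $C(\lambda)=O(\ell(\lambda))$; this is $\mathcal A$, and the exhibited dependence of $\ell(\lambda)$, hence $C(\lambda)$, on $\lambda$ is precisely the smooth tradeoff predicted in \cite{KWB22}. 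For $\mathcal A'$ I would pass to the matrix form at level $\ell$: the ``self-avoiding walk'' matrix $W$ on $\binom{[n]}{\ell}$ whose $(S,T)$ entry is the weighted count of fixed-length self-avoiding paths in the level-$\ell$ Kikuchi graph of $\bm Y$. The detection estimate, read off the quadratic form of $W$ along the lift $v_*$ with $(v_*)_S=\prod_{i\in S}x_*(i)$, shows that under the planted law the top eigenvalue of $W$ beats its null operator norm by a constant factor once $\ell$ is large, so by Davis--Kahan its leading eigenvector $\hat v$ satisfies $\langle\hat v,v_*\rangle^2=\Omega(\|\hat v\|^2\|v_*\|^2)$; from such a $\hat v$ one recovers $x_*$ by forming $B_{ij}=\sum_{S:\,i,j\notin S}\hat v_{S\cup\{i\}}\hat v_{S\cup\{j\}}$, which concentrates near a positive multiple of $x_*x_*^{\top}$ since $(v_*)_{S\cup\{i\}}(v_*)_{S\cup\{j\}}=x_*(i)x_*(j)$, and outputting $\hat x=\operatorname{sign}$ of its top eigenvector; bounding the residual in $B$ by the same moment estimates gives $\tfrac1n\langle\hat x,x_*\rangle=\Omega(1)$ with high probability, in time $n^{O(\ell)+o(1)}=n^{C'(\lambda)+o(1)}$.

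\textbf{Main obstacle.} The heart of the argument is the combinatorial estimate $\operatorname{Var}_{\mathrm{planted}}[f_{\mathcal H}]=(1+o(1))\operatorname{Var}_{\mathrm{null}}[f_{\mathcal H}]$ — equivalently, the bound on the null operator norm of the level-$\ell$ walk/Kikuchi matrix with the \emph{correct constant}, so that a non-vanishing constant $\lambda$ (and not merely $\lambda=\omega(\operatorname{polylog}(n)^{-1})$) already suffices, and this uniformly in $p\ge 3$ including odd $p$, where the clean random-matrix theory available for even $p$ does not apply. Making this work is exactly what forces the restriction to self-avoiding, $2$-regular, bounded-pathwidth structures, in order to kill the ``collision'' terms that would otherwise dominate the second moment; executing that enumeration while simultaneously keeping the pathwidth — hence the exponent $C(\lambda)$ — as small as the low-degree prediction allows is the technically demanding step. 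By comparison the colour-coding and derandomisation that yield the $n^{o(1)}$ overhead are routine.
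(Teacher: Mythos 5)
Your detection strategy is essentially the paper's: build a polynomial statistic by counting connected $2$-regular $p$-uniform hypergraphs of bounded width, exploit the super-exponential growth of such shapes (for $p\ge 3$) to beat a constant $\lambda$, verify it by first/second moments plus Chebyshev, and evaluate it by color coding. However, two of the steps you defer are not merely "technically demanding" but are where the actual content lies, and one of them you leave in a form that would not go through. First, the lower bound $\mathcal N_{p,\ell}(m)\gtrsim \ell^{\Theta(m)}$ on the number of bounded-pathwidth $2$-regular shapes (weighted by $1/|\mathsf{Aut}(H)|$) is asserted, not obtained; the paper gets it by an explicit construction — cyclic "necklaces" $U_1\cup\cdots\cup U_\ell$ of two-leaf components from $\mathcal U(m,p)$ (Definitions~\ref{def-component-hypergraph} and \ref{def-mathcal-H}) — combined with the asymptotic enumeration of hypergraphs with given degree sequence (Lemma~\ref{lem-labeled-hypergraph-given-degree-sequence}) and a separate argument discarding disconnected or non-$2$-vertex-connected components. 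This structured sub-family is not an optional convenience: the unique necklace decomposition (Lemma~\ref{lem-prelim-mathcal-H}) is what makes $\sum_H 1/|\mathsf{Aut}(H)|$ computable, what organizes the planted-variance overlap enumeration (Lemmas~\ref{lem-very-technical-sec-3} and \ref{lem-very-technical-sec-3-transfer}), and what supplies the path decomposition for the dynamic program. Working with the unrestricted bounded-pathwidth family, you would have no comparable handle on any of these, so in practice you would be forced back to a construction of this type.

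The recovery half is a genuinely different route, and there the gap is more serious. You propose $\mathcal A'$ as a spectral algorithm: bound the null operator norm of a level-$\ell$ Kikuchi/self-avoiding-walk matrix with the correct constant, apply Davis--Kahan, then extract $x_*$ from the top eigenvector. But a null operator-norm bound at constant SNR for general $p\ge 3$ is precisely the obstruction that confines the Kikuchi approach of \cite{WAM19} to $\lambda\gg\operatorname{polylog}(n)$, and that \cite{BCSvH24+} removes only for even $p$ via free-probability machinery; you name this as the "main obstacle" and then build the recovery algorithm on top of it without indicating how to overcome it for odd $p$. The paper avoids spectral arguments entirely: it defines a pairwise statistic $\Phi^{\mathcal J}_{i,j}$ counting open necklace chains with leaves $\{i,j\}$ (Definition~\ref{def-mathcal-J}, equation~\eqref{eq-def-Phi-i,j-mathcal-J}), proves the $L^2$ bound $\mathbb E_{\Pb}[(\Phi^{\mathcal J}_{i,j}-x_*(i)x_*(j))^2]\le\delta^2$ by a direct conditional second-moment and overlap computation (Proposition~\ref{main-prop-recovery} and Appendix~\ref{sec:supp-proofs-sec-3}), and outputs signs — no eigenvector perturbation is ever needed. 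A minor further point: the width parameter should scale as $\lambda^{-4/(p-2)}$ (matching the low-degree prediction of \cite{KWB22} and the paper's condition~\eqref{eq-suffice-bound-m}), not $\lambda^{-4/p}$ as you wrote.
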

Our algorithm thus improves a poly-logarithmic factor compared to existing algorithms \cite{HSS15, WAM19} for general $p$. In addition, our results suggest a very interesting phenomenon that there is no sharp threshold for polynomial time algorithms in tensor PCA in the following sense: by increasing $C,C'$ (while remaining $\Theta(1)$, corresponding to increasing the computational cost of the algorithm while keeping it polynomial time), one would be able to decrease the critical signal-to-noise ratio in the sense of $\lambda$ getting arbitrarily close to zero. This confirms the conjecture of a smooth computational transition in \cite{KWB22}\footnote{The authors in \cite{KWB22} made this conjecture from the calculation of low-degree advantage; see Section~\ref{subsec:low-degree-framework} for a detailed discussion on this side.} and \cite{BKMR25+}\footnote{The authors in \cite{BKMR25+} conjectured this smooth computational transition with respect to the Kikuchi algorithm proposed in \cite{WAM19}. Here we prove the smooth computational transition phenomenon by considering a different algorithm.}.

\subsection{Detection and recovery in tensor PCA}{\label{subsec:problem-set-up}}

We first rigorously specify the spiked tensor model that we will consider. For an integer $p \geq 2$ let $\widetilde{\bm G} \in (\mathbb R^n)^{\otimes p}$ be an asymmetric tensor with i.i.d.\ standard normal entries. We then symmetrize this tensor and obtain a symmetric tensor $\bm G$,
\begin{align}{\label{eq-def-symmetric-tensor}}
    \bm G = \frac{1}{\sqrt{p!}} \sum_{ \pi\in\mathfrak S_p } \widetilde{\bm G}^{\pi} \,,
\end{align}
where $\mathfrak S_p$ is the symmetric group of permutations of $[p]$ and $\widetilde{\bm G}^{\pi}_{i_1,\ldots,i_p} = \widetilde{\bm G}_{\pi(i_1),\ldots,\pi(i_p)}$. Note that if $i_1,\ldots,i_p$ are distinct then $\bm G_{i_1,\ldots,i_p}$ is standard normal. Then, we draw a ``signal vector'' or a ``spike'' $x_* \in \mathbb R^n$ from a prior distribution $\mu$ supported on the sphere $\{ x \in \mathbb R^n: \|x\|=\sqrt{n} \}$. We will focus on the case of Rademacher prior such that $\mu$ is the uniform distribution over the hypercube $\{ -1,+1 \}^n$. Then we let $\bm Y$ be the tensor 
\begin{equation}{\label{eq-def-tensor-PCA}}
    \bm Y = \lambda n^{-\frac{p}{4}} x_*^{\otimes p} + \bm G \,,
\end{equation}
We denote $\Pb=\Pb_{n,p,\lambda}$ be the law of $\bm Y$ under \eqref{eq-def-tensor-PCA}. In addition, let $\mathbb Q=\Qb_{n,p}$ be the law of a symmetric tensor with standard normal entries given by \eqref{eq-def-symmetric-tensor}. We now rigorously state the definition of an algorithm achieves detection/recovery.

\begin{DEF}{\label{def-strong-detection}}
    We say an algorithm $\mathcal A: (\mathbb R^n)^{\otimes p} \to \{ 0,1 \}$ achieves {\em strong detection} between $\Pb$ and $\Qb$, if
    \begin{align}{\label{eq-def-strong-detection}}
        \Pb\big( \mathcal A( \bm{Y} )=0 \big) + \Qb( \mathcal A(\bm Y)=1 ) \to 0 \mbox{ as } n \to \infty \,.
    \end{align}
\end{DEF}
\begin{DEF}{\label{def-recovery}}
    We say an algorithm $\mathcal A: (\mathbb R^n)^{\otimes p} \to \widehat{x} \in \{ -1,+1 \}^n$ achieves {\em weak recovery} with probability $1-\epsilon$, if
    \begin{equation}{\label{eq-def-weak-recovery}}
        \Pb\Big(  \tfrac{ |\langle \widehat x, x_* \rangle| }{ \| \widehat x \| \| x_* \| }= \Omega(1) \Big) \geq 1-\epsilon  \,.
    \end{equation}
    In addition, we say an algorithm $\mathcal A: (\mathbb R^n)^{\otimes p} \to \widehat{x} \in \{ -1,+1 \}^n$ achieves {\em strong recovery}, if
    \begin{equation}{\label{eq-def-strong-recovery}}
        \Pb\Big(  \tfrac{ |\langle \widehat x, x_* \rangle| }{ \| \widehat x \| \| x_* \| }= 1-o(1) \Big) \to 1 \mbox{ as } n \to \infty \,.
    \end{equation}
\end{DEF}

\subsection{Subgraph and subhypergraph counts}{\label{subsec:subgraph-counts}}

Given the symmetry of the noise tensor $\bm G$ and the planted signal vector $x_*$, any detection or recovery statistic must rely on on hypergraph invariants --- hypergraph properties that are invariant under hypergraph isomorphisms, such as subhypergraph counts or spectral properties. This paper adopts a strategy based on subhypergraph counts, a method widely used for network analysis in both theory \cite{MNS15, BDER16} and practice \cite{MSOI+02, ADH+08, RPS+21}. For example, in the context of community detection, counting cycles of logarithmic length has been shown to achieve the optimal detection threshold for distinguishing the stochastic block model (SBM) with symmetric communities from the \ER graph model in the sparse regime \cite{MNS15, MNS24}. Similarly, in the dense regime, counting signed cycles turns out to achieve the optimal asymptotic power \cite{Ban18, BM17}. In addition, algorithms based counting non-backtracking or self-avoiding walks \cite{Mas14, MNS18, BLM15, AS15, AS18} have been proposed to achieve sharp threshold with respect to the related community recovery problem.

In the tensor PCA problem, our observation $\bm{Y}$ is a $p$-fold tensor ($p \geq 3$) with continuous random entries. Consequently, instead of counting standard graphs, our approach must count \emph{weighted hypergraphs}. The heuristic for our algorithm is that counting \emph{2-regular hypergraphs} serves as the natural analog to counting cycles in graphs. Here, a hypergraph is 2-regular if its every vertex belongs to exactly 2 hyperedges. Let $\mathcal H_K$ be the isomorphism class of connected, 2-regular connected hypergraphs on $K$ vertices. In addition, let $\mathcal J_K$ denote the isomorphism class of connected hypergraphs on $K$ vertices where exactly two vertices are leaves (i.e., belong to only one hyperedge) and the remaining vertices are 2-regular. A natural algorithmic approach can then be summarized as follows:
\begin{itemize}
    \item For the detection problem, we decide between the null and alternative hypothesis by counting all the hypergraphs in $\mathcal H_K$ for a carefully chosen $K$;
    \item For the recovery problem, we estimates $x_{*}(i)x_*(j)$ by counting all the hypergraphs in $\mathcal J_K$ whose leaves are exactly $\{ i,j \}$, again for a carefully chosen $K$.
\end{itemize}
This approach provides a high-level explanation for why the smooth computational transition phenomenon occurs only for $p \geq 3$. The reason is that 2-regular connected hypergraphs grows significantly faster than 2-regular connected standard graphs. In fact, there is only one connected 2-regular unlabeled graph on $K$ vertices (which is the $K$-cycle) but the number of connected 2-regular unlabeled hypergraphs on $K$ vertices grows super-exponentially in $K$ (see Lemma~\ref{lem-control-Aut-component-hypergraph} for a stronger assertion). 

However, implementing this approach presents several challenges. Firstly, the subhypergraph counts between different isomorphism class are correlated with each other. Thus, a key challenge is how to better utilize the subhypergraph counts so that the correlation between the counts of non-isomorphic hypergraphs does not overwhelm the signal (i.e., the expectation of counts of isomorphic hypergraphs). Furthermore, direct enumeration of hypergraphs of size $K$ takes $n^{O(K)}$ time which is super-polynomial if $K$ grows with $n$. In our paper, we take the approach following \cite{HS17, MWXY24, MWXY23} by showing that with high probability our test statistic involving counting hypergraphs can be efficiently approximated using the randomized algorithm of \emph{color coding} \cite{AYZ95, AR02, HS17, MWXY24, MWXY23}. This technique enables efficient counting of a subgraph provided it has {\em bounded tree-width}; this was later generalized to hypergraphs with {\em bounded fractional tree-width} \cite{GLS02}.

In conclusion, to obtain a smooth computational transition described in Theorem~\ref{MAIN-THM}, the main innovation in our work is to construct a carefully chosen family of hypergraphs in $\mathcal H_K(\lambda)$ and $\mathcal J_K(\lambda)$ satisfying the following conditions (see Definitions~\ref{def-mathcal-H} and \ref{def-mathcal-J} for the formal description): 
\begin{itemize}
    \item The size of this family grows with rate $e^{ K\cdot \mathsf{gro}(\lambda) }$ where $\mathsf{gro}(\lambda)$ tends to infinity when $\lambda$ tends to $0$ (i.e., this family becomes richer when $\lambda$ decreases);
    \item The correlation between counting different hypergraphs in this family decays ``fast enough'' (i.e., the fluctuation of subgraph counts does not overwhelm the signal);
    \item All the graphs in this family has fractional tree-width bounded by $\mathsf{wid}(\lambda)$, where $\mathsf{wid}(\lambda)$ tends to infinity as $\lambda$ tends to $0$ (i.e., this family can counted in polynomial time, with the power of the polynomial increases as $\lambda$ decreases).
\end{itemize}

\subsection{Connections to low-degree polynomial framework}{\label{subsec:low-degree-framework}}

Our work is motivated by \cite{KWB22}, which predicts the smooth computational transition in tensor PCA phenomenon using the framework of \emph{low-degree polynomials}. Inspired by the sum-of-squares hierarchy, the low-degree polynomial method offers a promising framework for establishing computational lower bounds in high-dimensional inference problems \cite{BHK+19, HS17, HKP+17, Hop18}. Roughly speaking, to study the computational efficiency of the hypothesis testing problem between two probability measures $\Pb$ and $\Qb$, the idea of this framework is to find a low-degree polynomial $f$ (usually with degree $O(\log n)$ where $n$ is the size of data) that separates $\Pb$ and $\Qb$ in a certain sense. The key quantity is the low-degree advantage
\begin{equation}{\label{eq-def-low-deg-adv}}
    \mathsf{Adv}(f) := \frac{ \mathbb E_{\Pb}[f] }{ \sqrt{\mathbb E_{\Qb}[f^2]} } \,.
\end{equation}
The low-degree polynomial method suggests that, if the low-degree advantage of $f$ is uniformly bounded for all polynomials $f$ with degree $D$, then no algorithm with running time $e^{\widetilde{\Theta}(D)}$ can strongly distinguish $\Pb$ and $\Qb$ (here $\widetilde{\Theta}(\cdot)$ means having the same order up to poly-logarithmic factors). The appeal of this framework lies in its ability to yield tight computational lower bounds for a wide range of problems, including detection and recovery problems such as planted clique, planted dense subgraph, community detection, sparse-PCA and graph alignment (see \cite{HS17, HKP+17, Hop18, KWB22, SW22, DMW23+, MW25, DKW22, BEH+22, DDL23+, CDGL24+}), optimization problems such as maximal independent sets in sparse random graphs \cite{GJW24, Wein22}, refutation problems such as certification of RIP and lift-monotone properties of random regular graphs \cite{WBP16, BKW20, KY24}, and constraint satisfaction problems such as random $k$-SAT \cite{BH22}.

In \cite{KWB22}, the authors consider the computational transition in tensor PCA with respect to the low-degree polynomials. For the model in \eqref{eq-def-tensor-PCA}, they show that there exist two constants $A=A(p)$ and $B=B(p)$ such that:
\begin{itemize}
    \item If $D \leq A(p) \cdot \lambda^{\frac{-4}{p-2}}$, then $\mathsf{Adv}(f)=O(1)$ for all polynomials $f$ with degree bounded by $D$;
    \item If $D \geq B(p) \cdot \lambda^{ \frac{-4}{p-2} }$ and $D=\omega(1)$, then there exists a polynomial with degree $D$ such that $\mathsf{Adv}(f)=\omega(1)$.\footnote{This existence result is proven indirectly by relating the optimal advantage to certain ``overlap structures'', rather than by an explicit construction.}
\end{itemize}
Assuming low-degree conjecture, this constitutes evidence for a smooth computational transition, as there exists a smooth tradeoff between the degree of the polynomial (roughly correspond to the computational cost of the algorithm) and its statistical power. However, (on the algorithmic side) they did not rigorously verify this computational transition scheme for two primary reasons. First, even when a polynomial $f$ with $\mathsf{Adv}(f)=\omega(1)$ exists, it is not proven that it can be leveraged into an distinguishing algorithm. This is because the predominant contribution to the low-degree advantage $\mathsf{Adv}(f)$ might originate from certain rare events (see \cite{BEH+22, DMW23+, DDL23+, CDGL24+} for example). Secondly, it is not sure whether such a polynomial can be computed efficiently, as the degree of the polynomial is order $\omega(1)$. 

In this work, we overcome these obstacles by explicitly constructing a low-degree polynomial $f$ that serves as a basis for an efficient distinguishing algorithm. Through a delicate second-moment analysis, we prove that $f$ achieves large values with high probability under $\Pb$ while remaining small with high probability under $\Qb$, thereby strongly distinguishing the two measures. In addition, the degree of $f$ equals $D(p) \cdot \lambda^{ \frac{-4}{p-2} } \cdot \ell$, where $D(p)$ is a constant that only depends on $p$ and $\ell$ is an arbitrary $\omega(1)$ quantity (see \eqref{eq-def-f-mathcal-H} and \eqref{eq-condition-strong-detection}). Thus, the degree of this polynomial nearly matches the low-degree lower bound established in  \cite{KWB22}. On the computational side, the polynomial we construct corresponds to counting specific subhypergraphs with bounded fractional tree-width, and thus we can efficiently compute this polynomial using hypergraph color coding (see the discussions in Section~\ref{subsec:subgraph-counts}).

\subsection{Notation and paper organization}{\label{subsec:notation}}

A hypergraph is a pair $H=(V(H),E(H))$, consisting of a set $V(H)$ of vertices and a set $E(H)$ of nonempty subsets of $V(H)$, the hyperedges of $H$. We say a hypergraph $H$ is $p$-uniform, if $|e|=p$ for all $e \in E(H)$, and in particular we say $H$ is a graph if it is $2$-uniform. Recall \eqref{eq-def-tensor-PCA}, for a $p$-uniform hypergraph $H$ and $e = \{ i_1,\ldots,i_{p} \} \in E(H)$ we will write $\bm Y_e = \bm Y_{i_1,\ldots,i_p}$ and $\bm G_e = \bm G_{i_1,\ldots,i_p}$ (note that $\bm Y$ and $\bm G$ are symmetric so the order of $i_1,\ldots,i_p$ does not matter). We say $H$ is a subhypergraph of $S$, denoted by $H \subset S$, if $V(H) \subset V(S)$ and $E(H) \subset E(S)$. Denote $H \subset \mathsf K_n$ if $V(H) \subset [n]$. For $H,S\subset \mathsf K_n$, denote $H \cap S$ to be the hypergraph with vertices $V(H) \cap V(S)$ and hyperedges $E(H) \cap E(S)$, and denote $H \cup S$ to be the hypergraph with vertices $V(H) \cup V(S)$ and hyperedges $E(H) \cup E(S)$. For all $v \in V(H)$, denote $\mathsf{deg}_H(v)=\#\{ e \in E(H): v \in e \}$ be the degree of $v$ in $H$. We say $v$ is an isolated vertex of $H$, if $\mathsf{deg}_H(v)=0$. Denote $\mathsf I(H)$ be the set of isolated vertices of $H$. We say $v$ is a leaf of $H$, if $\mathsf{deg}_H(v)=1$. Denote $\mathsf{L}(H)$ be the set of leaves in $H$. 

For a hypergraph $H$ and a set $X \subset V(H)$, denote the subhypergraph of $H$ induced by $X$ to be the hypergraph
\begin{align*}
    H|_X = \Big( X, \big\{ e \cap X: e \in E(H) \mbox{ with } e \cap X \neq \emptyset \big\} \Big) \,.
\end{align*}
Also, denote $H|_{\setminus X}$ be the subhypergraph of $H$ induced by $V(H)\setminus X$. The projected graph of a hypergraph is the graph
\begin{align*}
    \underline{H}= \Big( V(H), \big\{ \{ i,j \}: i,j \in e \mbox{ for some } e \in E(H) \big\} \Big) \,.
\end{align*}
A hypergraph $H$ is connected if $\underline{H}$ is connected. A set $X \subset V(H)$ is connected if $H_X$ is connected, and a connected component of $H$ is a maximal connected subset of $V(H)$. We say two hypergraphs $H$ and $H'$ are isomorphic, if there exists a bijection $\pi:V(H) \to V(H')$ such that $\pi(e) \in E(H')$ if and only if $e \in E(H)$. Denote by $[H]$ the isomorphism class of $H$; it is customary to refer to these isomorphic classes as unlabeled hypergraphs. Let $\mathsf{Aut}(H)$ be the set of automorphisms of $H$ (hypergraph isomorphisms to itself). For each $v \in V(H)$, let $\mathsf{Aut}_v(H)=\{ \pi\in\mathsf{Aut}(H):\pi(v)=v \}$. We say $u,v \in V(H)$ belongs to the same equivalence class under $\mathsf{Aut}(H)$, if there exists $\pi\in\mathsf{Aut}(H)$ such that $\pi(u)=v$. Denote $V_{\diamond}(H)$ be the equivalence class of $V(H)$ and $\mathsf{L}_{\diamond}(H)$ be the equivalence class of $\mathsf{L}(H)$ under $\mathsf{Aut}(H)$.

For two real numbers $a$ and $b$, we let $a \vee b = \max \{ a,b \}$ and $a \wedge b = \min \{ a,b \}$. For two sets $A$ and $B$, we define $A\sqcup B$ to be the disjoint union of $A$ and $B$ (so the notation $\sqcup$ only applies when $A, B$ are disjoint). We use standard asymptotic notations: for two sequences $a_n$ and $b_n$ of positive numbers, we write $a_n = O(b_n)$, if $a_n<Cb_n$ for an absolute constant $C$ and for all $n$ (similarly we use the notation $O_h$ if the constant $C$ is not absolute but depends only on $h$); we write $a_n = \Omega(b_n)$, if $b_n = O(a_n)$; we write $a_n = \Theta(b_n)$, if $a_n =O(b_n)$ and $a_n = \Omega(b_n)$; we write $a_n = o(b_n)$ or $b_n = \omega(a_n)$, if $a_n/b_n \to 0$ as $n \to \infty$. Denote $\mu$ to be the uniform distribution over $\{ -1,+1 \}^n$. For a set $\mathsf A$, we will use both $\# \mathsf A$ and $|\mathsf A|$ to denote its cardinality. 

The rest of the paper is organized as follows. In Section~\ref{sec:main-results}, we first present the construction of our detection and recovery statistics and present their theoretical guarantees. Section~\ref{sec:stat-analysis} provides the statistical analysis of our subhypergraph counting statistics. In Section~\ref{sec:hypergraph-color-coding}, we present efficient algorithms to approximately compute our subhypergraph counting statistics based on hypergraph color coding. Several auxiliary results are postponed to the appendix to ensure a smooth flow of presentation.

\section{Main results and discussions}{\label{sec:main-results}}

\subsection{The detection statistics and theoretical guarantees}{\label{subsec:detect-stat}}

We start with some preliminary definitions before specializing to the class of hypergraphs that we will consider. For any $p$-uniform hypergraph $S \subset \mathsf K_n$, define
\begin{equation}{\label{eq-def-f-S}}
    f_S(\bm Y) = \prod_{e \in E(S)} \bm Y_e \,.
\end{equation}
In addition, given a family $\mathcal H_{\star}$ of unlabeled $p$-uniform hypergraphs on $mp\ell$ vertices (here $m,\ell$ are parameters that will be decided later), define
\begin{equation}{\label{eq-def-f-mathcal-H}}
    f_{\mathcal H_{\star}} = \frac{1}{ \sqrt{ n^{mp\ell}\beta_{\mathcal H_{\star}} }} \sum_{ [H] \in \mathcal H_{\star} } \sum_{S \subset \mathsf{K}_n, S \cong H} f_{S}(\bm Y) \,, 
\end{equation}
where 
\begin{equation}{\label{eq-def-beta-mathcal-H}}
    \beta_{\mathcal H_{\star}} = \sum_{[H] \in \mathcal H_{\star}} \frac{ 1 }{ |\mathsf{Aut}(H)| } \,.
\end{equation}
Algorithm~\ref{alg:detection-meta} below describes our proposed method for tensor PCA based on subhypergraph counts.
\begin{breakablealgorithm}{\label{alg:detection-meta}}
\caption{Detection in tensor PCA by subhypergraph counts}
    \begin{algorithmic}[1]
    \STATE {\bf Input:} Adjacency tensor $\bm Y$, a family $\mathcal H_{\star}$ of non-isomorphic $p$-uniform hypergraphs, and a threshold $\tau\geq 0$.
    \STATE Compute $f_{\mathcal H_{\star}}(\bm Y)$ according to \eqref{eq-def-f-mathcal-H}. 
    \STATE Let $\mathtt q=1$ if $f_{\mathcal H_{\star}}(\bm Y) \geq \tau$ and $\mathtt q=0$ if $f_{\mathcal H_{\star}}(\bm Y) < \tau$.
    \STATE {\bf Output:} $\mathtt q$.
    \end{algorithmic}
\end{breakablealgorithm}
At this point Algorithm~\ref{alg:detection-meta} is a ``meta algorithm'' and the key to its application is to carefully choose this collection of hypergraphs $\mathcal H_{\star}$. We begin by considering $p$-uniform hypergraphs satisfying specific connecting properties, which is the staring point of our construction.

\begin{DEF}{\label{def-component-hypergraph}}
    For any integer $m$, define $\mathcal U=\mathcal U(m,p)$ to be the set of unlabeled $p$-uniform hypergraphs $[U]$ satisfying the following conditions:
    \begin{enumerate}
        \item[(1)] $|V(U)|=mp+1$, $|E(U)|=2m$ and $\mathsf{deg}_U(v)\in \{ 1,2 \}$ for all $v \in V(U)$;
        \item[(2)] $U$ is connected;
        \item[(3)] for all $v \in V(U)$, $U|_{\setminus \{v \}}$ is connected.
    \end{enumerate}
\end{DEF}
Note that since $\sum_{v \in V(H)} \mathsf{deg}_H(v)=p|E(H)|$, Item~(1) implies that $H$ has exactly two leaves and the degree of other non-leaf vertices are $2$. The next lemma provides an lower bound on the cardinality of $\mathcal U$, showing that it contains ``sufficiently many'' unlabeled hypergraphs.
\begin{lemma}{\label{lem-control-Aut-component-hypergraph}}
    For $p \geq 3$, there exists an absolute constant $R>0$ such that
    \begin{equation*}
        \beta_{\mathcal U} = \sum_{ [U] \in \mathcal U } \frac{ 1 }{ |\mathsf{Aut}(U)| } \geq R^{-1} \Big( 1-\tfrac{10^{10}R^3}{m} \Big) \frac{(2pm)!}{2^{mp}(p!)^{2m}(mp+1)!(2m)!} \,.
    \end{equation*}
\end{lemma}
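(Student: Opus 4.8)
\emph{Overview of the plan.} I would turn the symmetry-weighted sum $\beta_{\mathcal U}$ into an ordinary count of labelled hypergraphs, reformulate the connectivity conditions in terms of an auxiliary ``hyperedge graph,'' and then estimate that count by a configuration-model/first-moment argument in which a single constant-probability obstruction accounts for the constant $R$.

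\emph{Reduction to a labelled count, and a reformulation of $\mathcal U$.} By the orbit–stabiliser identity, a fixed isomorphism class $[U]$ with $mp+1$ vertices has $(mp+1)!/|\mathsf{Aut}(U)|$ labelled copies on $[mp+1]$, so $\beta_{\mathcal U}=\tfrac{1}{(mp+1)!}\,N$, where $N$ is the number of labelled $p$-uniform hypergraphs $U$ with $V(U)=[mp+1]$ satisfying (1)--(3). To handle (2)--(3), associate to a simple $U$ with the degree sequence in (1) its \emph{hyperedge graph} $\mathcal{G}(U)$: the vertex set of $\mathcal{G}(U)$ is $E(U)$, and for each degree-$2$ vertex $v$ of $U$ there is one edge of $\mathcal{G}(U)$ joining the two hyperedges containing $v$. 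One checks that $U$ is connected iff $\mathcal{G}(U)$ is connected, and that — since deleting a degree-$2$ vertex of $U$ deletes precisely the corresponding edge of $\mathcal{G}(U)$, while deleting a leaf leaves $\mathcal{G}(U)$ unchanged — condition (3) holds iff $\mathcal{G}(U)$ has no bridge. Hence $[U]\in\mathcal U$ exactly when $U$ is simple (hyperedges are genuine, pairwise distinct $p$-sets), has the prescribed degree sequence, and $\mathcal{G}(U)$ is $2$-edge-connected.

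\emph{Counting via configurations.} Introduce configurations: maps $\psi$ from the $2mp$ incidence slots ($p$ per hyperedge) onto $[mp+1]$ whose fibre-size profile is $(1,1,2,\dots,2)$. The number of all such $\psi$ (for a fixed choice of the two leaf-vertices, say) is a clean multinomial, $\asymp (2mp)!/2^{mp}$, and each simple $U$ with distinct hyperedges arises from exactly $(2m)!\,(p!)^{2m}$ configurations, so $N\ge\frac{1}{(2m)!(p!)^{2m}}\,\#\{\psi:\psi\text{ yields some }U\in\mathcal U\}$. A $\psi$ fails to yield a member of $\mathcal U$ precisely when one of the following holds: (a) some hyperedge repeats a vertex; (b) two hyperedges coincide; (c) $\mathcal{G}$ is disconnected; (d) $\mathcal{G}$ has a bridge. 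The dichotomy behind the lemma is that requiring \emph{not}-(a) costs only a \emph{constant} factor — intuitively $\approx e^{-(p-1)/2}$, since the number of ``within-hyperedge'' matched slot-pairs is roughly Poisson with mean $\tfrac{p-1}{2}$ — which is exactly why an absolute constant $R$ appears and the conclusion reads $\beta_{\mathcal U}\ge R^{-1}(\cdots)$; whereas each of (b),(c),(d) occurs for only an $O(1/m)$ fraction of configurations. For (b) this is a union bound over the $\binom{2m}{2}$ pairs of hyperedges (each pair coincides with probability $O(m^{-p})$). For (c),(d) one uses that $\mathcal{G}$ is a configuration-model $p$-regular multigraph on $2m$ vertices (differing from the standard model only in the two half-edges corresponding to the leaves): the expected number of hyperedge-subsets $S$ with at most one edge leaving $S$ is at most $\sum_{k\ge 2}\binom{2m}{k}(k/2m)^{pk/2}=O(m^{2-p})=O(1/m)$ since $p\ge 3$ (and $pk$ being odd in some cases only helps). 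Assembling these estimates shows the number of good configurations is at least $R^{-1}\bigl(1-\tfrac{10^{10}R^3}{m}\bigr)$ times the total, and unwinding the factorials yields the stated bound — in fact with room to spare, since the true proportion of good configurations is $\Theta(1)$ rather than $\Theta(1/m^2)$.

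\emph{Main obstacle.} The technical core is the two enumeration estimates. First, the lower bound on the fraction of configurations with no repeated vertex in any hyperedge: a naive union bound is useless (it becomes negative once $p\ge 5$), so one needs a Bonferroni-type inclusion–exclusion, or a negative-association argument for random matchings, to produce a genuine positive-constant lower bound — this pins down the admissible $R$. Second, the first-moment bound ruling out small edge-cuts in $\mathcal{G}$, which must be carried out uniformly in $p\ge 3$ and with the parity constraints and the leaf-hyperedges treated carefully; the tightest case is $p=3$, for which one only obtains $O(1/m)$. The remaining ingredients — the orbit–stabiliser identity, the $\mathcal{G}(U)$ reformulation of (2)--(3), and the factorial bookkeeping — are routine.
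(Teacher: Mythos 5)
Your route is genuinely different from the paper's and, at the level of strategy, viable. The paper also begins with the orbit--stabiliser reduction $\beta_{\mathcal U}=\#\mathfrak U_{m,p}/(mp+1)!$, but then outsources all enumeration to the Blinovsky--Greenhill asymptotic count of $p$-uniform hypergraphs with degree sequence in $\{1,2\}$ (Lemma~\ref{lem-labeled-hypergraph-given-degree-sequence}, which is where the absolute constant $R$ comes from), and handles conditions (2)--(3) by decomposing a bad $U$ into two hypergraph pieces $U_1,U_2$ (disjoint, or glued at one vertex), counting each piece with that same enumeration lemma, and summing over $|E(U_1)|=k\in[3,2m-3]$. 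You instead build the enumeration from scratch via the configuration model and recast (2)--(3) as $2$-edge-connectivity of the hyperedge multigraph $\mathcal G(U)$; that reformulation is correct (deleting a leaf does not change $\mathcal G$, deleting a degree-$2$ vertex deletes exactly one parallel edge, and $p\ge 3$ keeps all truncated hyperedges nonempty), and it is a clean conceptual gain. The price is that you must reprove the collision estimate that the paper gets for free from the citation; you correctly flag this as the main obstacle, but it is exactly the content of the constant $R$, so as written your argument has a hole precisely where the lemma's constant lives. Note also that your heuristic constant $e^{-(p-1)/2}$ depends on $p$, whereas the lemma asserts an absolute $R$; for fixed $p$ this is harmless (and your spare factor of order $(mp)^2$ can absorb it), but it is a mismatch with the statement.

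There is one concretely wrong step: the displayed cut bound $\sum_{k\ge 2}\binom{2m}{k}(k/2m)^{pk/2}=O(m^{2-p})$ is false for the very case you call tightest. At $k\asymp m$ and $p=3$ the summand is about $4^m\cdot 2^{-3m/2}=2^{m/2}$, which is exponentially large (and for $p=4$ it is $\Theta(1)$ per term, so the sum is $\Theta(m)$). The issue is that $(k/2m)^{pk/2}$ overestimates the probability that all $pk$ half-edges of a $k$-cell set are matched internally once $k$ is proportional to $m$; the exact probability is $\frac{(pk-1)!!\,(p(2m-k)-1)!!}{(2pm-1)!!}$, which at $k=m$ is about $2^{-pm}$ rather than $2^{-pm/2}$. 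With the exact expression (and restricting to $2\le k\le m$ by complementation, plus the parity and leaf-slot bookkeeping you mention), the first moment is indeed dominated by $k=2$ and is $O(m^{2-p})=O(1/m)$, so your conclusion survives, but the estimate must be redone. In summary: correct and interestingly different architecture, but with one erroneous displayed bound that must be replaced by the exact matching probability, and one acknowledged but unproved core estimate (the constant-probability no-collision bound) that carries the entire constant $R$.
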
 

The proof of Lemma~\ref{lem-control-Aut-component-hypergraph} is postponed to Section~\ref{subsec:proof-lem-2.2}. To this end, we construct a special family of hypergraphs $\mathcal H$ as below (see Figure~\ref{fig:necklace-hypergraph} for an illustration).

\begin{DEF}{\label{def-mathcal-H}}
    For any integer $m,\ell$, define $\mathcal H=\mathcal H(m,p,\ell)$ to be the set of unlabeled $p$-uniform hypergraphs $[H]$ such that there exists $U_1,\ldots,U_{\ell}$ with the following holds:  
    \begin{enumerate}
        \item[(1)] $[U_i] \in \mathcal U(m,p)$ for all $1 \leq i \leq \ell$;
        \item[(2)] $\mathsf{L}(U_i)=\{ v_i,v_{i+1} \}$ for all $1 \leq i \leq \ell$, and $V(U_i)\cap V(U_{i+1}) = \{ v_{i+1} \}$ for all $1 \leq i \leq \ell$ (we let $v_{\ell+1}=v_1$ and $U_{\ell+1}=U_1$);
        \item[(3)] $V(H) = \cup_{1 \leq i \leq \ell} V(U_i)$ and $E(H)=\cup_{1 \leq i \leq \ell} E(U_i)$.
    \end{enumerate}
\end{DEF}

\begin{figure}[!ht]
    \centering
    \vspace{0cm}
    \includegraphics[height=6.05cm,width=12.64cm]{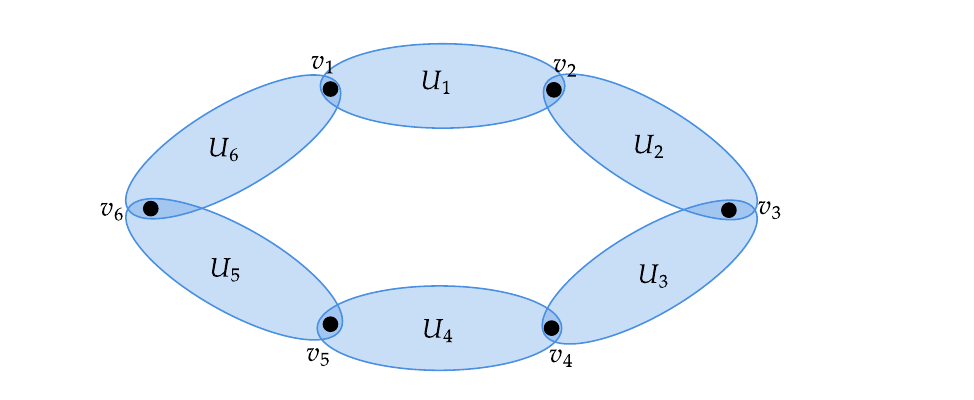}
    \caption{\noindent An unlabeled hypergraph in $\mathcal H(m,p,\ell)$ with $\ell=6$}
    \label{fig:necklace-hypergraph}
\end{figure}

Note that Definitions~\ref{def-component-hypergraph} and \ref{def-mathcal-H} yields that for all $[H] \in \mathcal H$, we have $|V(H)|=mp\ell$, $|E(H)|=2m\ell$ and $\mathsf{deg}_H(v)=2$ for all $v \in H$. We will also provide a similar lower bound on the cardinality of $\mathcal H$, as incorporated in the next lemma.
\begin{lemma}{\label{lem-control-Aut-necklace-hypergraph}}
    For $p \geq 3$, there exists an absolute constant $R>0$ such that
    \begin{equation*}
        \beta_{\mathcal H} = \sum_{ [H] \in \mathcal H } \frac{ 1 }{ |\mathsf{Aut}(H)| } \geq \frac{1}{(2\ell)^2} \cdot \Big( R^{-1} \big( 1-\tfrac{10^{10}R^3}{m} \big) \frac{(2pm)!}{2^{mp}(p!)^{m}(mp)!(2m)!} \Big)^{\ell} \,.
    \end{equation*}
\end{lemma}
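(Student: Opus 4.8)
The plan is to build the necklace hypergraphs $[H] \in \mathcal H$ by chaining together $\ell$ building blocks from $\mathcal U(m,p)$ and to count the resulting isomorphism classes with the automorphism-weighted measure $\beta_{\mathcal H}$, leveraging the lower bound on $\beta_{\mathcal U}$ from Lemma~\ref{lem-control-Aut-component-hypergraph}. First I would set up a labeled-counting framework: fix vertex labels $v_1, \dots, v_\ell$ for the ``junction'' vertices of the necklace, and for each $i \in [\ell]$ choose a labeled copy $U_i$ with leaf set exactly $\{v_i, v_{i+1}\}$ (indices mod $\ell$), with the interiors of the $U_i$ on disjoint fresh label sets. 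The key identity is the orbit–stabilizer / mass-transport principle: for a family of labeled structures closed under relabeling, the number of labeled copies equals $\sum_{[H]} \frac{(\text{number of labels})!}{|\mathsf{Aut}(H)|}$ up to bookkeeping, so a lower bound on the number of labeled necklaces (with the junction labels tracked) translates into a lower bound on $\beta_{\mathcal H}$. Concretely, I would show that assembling $\ell$ labeled blocks, each contributing a factor counted by $\beta_{\mathcal U}$-type quantity, and then quotienting by the dihedral symmetry of the cycle of blocks (which has order at most $2\ell$) and by the relabeling freedom of the junction vertices, yields the stated bound with the $\frac{1}{(2\ell)^2}$ prefactor absorbing the cyclic-rotation and reflection symmetries plus a $\frac{1}{\ell}$ or so from over-counting the distinguished starting block.

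The combinatorial heart is the following: each block $[U] \in \mathcal U(m,p)$ has $|V(U)| = mp+1$ vertices, two of which are its leaves; when we glue $\ell$ blocks cyclically by identifying consecutive leaves we get $|V(H)| = \ell(mp+1) - \ell = mp\ell$ vertices and $|E(H)| = 2m\ell$ edges, consistent with the remarks after Definition~\ref{def-mathcal-H}. The weighted count of labeled gluings is, up to the symmetry factor, $\prod_{i=1}^\ell$ (a $\beta_{\mathcal U}$-like contribution for block $i$ with its two leaves pinned to prescribed labels). One must be careful that $\beta_{\mathcal U}$ as defined in Lemma~\ref{lem-control-Aut-component-hypergraph} sums $1/|\mathsf{Aut}(U)|$ over \emph{all} automorphisms, whereas here the two leaves are distinguishable (they are glued to different neighbors), so the relevant quantity is closer to $\beta_{\mathcal U}$ times a factor accounting for whether the leaf-swap is an automorphism; this is exactly the source of the discrepancy between the $(p!)^{2m}(mp+1)!$ denominator in Lemma~\ref{lem-control-Aut-component-hypergraph} and the $(p!)^{m}(mp)!$ denominator here — gluing removes one vertex per junction and the edge-symmetry factors partially cancel. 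I would make this precise by passing to labeled hypergraphs throughout: let $N_{\mathrm{lab}}(\mathcal H)$ be the number of labeled necklaces on vertex set $[mp\ell]$ with a marked junction vertex $v_1$ and a marked orientation; then $N_{\mathrm{lab}}(\mathcal H) \geq \big(N_{\mathrm{lab}}^{\text{pinned}}(\mathcal U)\big)^{\ell}$ where $N_{\mathrm{lab}}^{\text{pinned}}(\mathcal U)$ counts labeled blocks with leaves pinned, and then divide by $2\ell$ for the orientation/marking and by (at most) another factor of order $\ell$ for residual overcounting, while the conversion $N_{\mathrm{lab}} \leftrightarrow \beta$ contributes the factorials.

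The main obstacle I anticipate is the careful bookkeeping of automorphism groups under gluing — specifically, verifying that $|\mathsf{Aut}(H)|$ for a generic necklace $H$ is at most (a small multiple of) $\prod_i |\mathsf{Aut}_{\text{leaf-pinned}}(U_i)|$ times the dihedral factor $2\ell$, i.e., that no unexpected ``large'' automorphisms arise from accidental coincidences among the blocks. The cleanest route is to argue that any automorphism of $H$ must permute the junction vertices $\{v_1,\dots,v_\ell\}$ as a set (since these are precisely the vertices whose removal disconnects $H$ into a path of blocks, or can be characterized intrinsically), hence induces a dihedral symmetry of the cycle $\mathbb Z/\ell$, and once that dihedral action is fixed it must map each block to the corresponding block, reducing to leaf-preserving (or leaf-swapping) automorphisms of the individual $U_i$. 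Handling degenerate cases where many blocks are isomorphic (which would make $|\mathsf{Aut}(H)|$ larger but also make such $H$ rarer) is where one uses the condition (3) in Definition~\ref{def-component-hypergraph} — that $U|_{\setminus\{v\}}$ stays connected — to control the structure; in the worst case one simply notes that the weighted count $\sum 1/|\mathsf{Aut}(H)|$ is robust to these degeneracies precisely because the weighting compensates, which is the standard reason configuration-model-type counts behave well. I would then combine the block bound from Lemma~\ref{lem-control-Aut-component-hypergraph}, raised to the $\ell$-th power, with the $\frac{1}{(2\ell)^2}$ symmetry discount and the factorial rearrangement $\frac{(2pm)!}{2^{mp}(p!)^{2m}(mp+1)!(2m)!} \cdot (\text{gluing correction}) = \frac{(2pm)!}{2^{mp}(p!)^{m}(mp)!(2m)!}$ per block to land exactly on the claimed inequality.
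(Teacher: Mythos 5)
Your overall strategy is exactly the paper's: map $\ell$-tuples $([U_1],\dots,[U_\ell]) \in \mathcal U^{\otimes\ell}$ to necklaces $[H]=[U_1\cup\dots\cup U_\ell]$, bound the fiber size of this map by $2\ell$ using the dihedral structure, and bound $|\mathsf{Aut}(H)|$ by $2\ell\prod_i|\mathsf{Aut}(U_i)|$ via the structural fact that every automorphism of $H$ must permute the junction vertices $\{v_1,\dots,v_\ell\}$ and hence act dihedrally on the blocks; this is precisely Item~(2) of Lemma~\ref{lem-prelim-mathcal-H}, which the paper proves using Item~(3) of Definition~\ref{def-component-hypergraph} just as you anticipate. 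Together these give $\beta_{\mathcal H}\ge \frac{1}{(2\ell)^2}\,\beta_{\mathcal U}^{\ell}$, which is the heart of the matter, and your outline of this part is sound.

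The one step that does not work is your ``gluing correction.'' The argument above delivers $\frac{1}{(2\ell)^2}\beta_{\mathcal U}^{\ell}$ and nothing more; substituting the block bound, the per-block factor you can honestly claim is $R^{-1}(1-\tfrac{10^{10}R^3}{m})\frac{(2pm)!}{2^{mp}(p!)^{2m}(mp)!(2m)!}$. (Replacing $(mp+1)!$ by $(mp)!$ is legitimate: it is just the simplification $(mp+1)/(mp+1)!=1/(mp)!$ already implicit in the proof of Lemma~\ref{lem-control-Aut-component-hypergraph}, where $\#\mathfrak U_{m,p}$ carries a factor $(mp+1)$ in the numerator.) But there is no mechanism in the gluing that converts $(p!)^{2m}$ into $(p!)^{m}$: each block still has $2m$ hyperedges after the junctions are identified, and pinning the two leaves of a block only changes its weight from $1/|\mathsf{Aut}(U)|$ to $\sum_{[v]\in\mathsf L_{\diamond}(U)}1/|\mathsf{Aut}_v(U)| = |\mathsf L(U)|/|\mathsf{Aut}(U)| = 2/|\mathsf{Aut}(U)|$ by orbit--stabilizer, i.e.\ a factor of $2$ per block, nowhere near the factor of order $(p!)^{m}(mp+1)$ that your claimed reconciliation requires. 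The same inconsistency between the constants of Lemma~\ref{lem-control-Aut-component-hypergraph} and Lemma~\ref{lem-control-Aut-necklace-hypergraph} is present in the paper itself and appears to be typographical; but as a proof of the lemma as literally stated, asserting that a gluing correction ``lands exactly on the claimed inequality'' without exhibiting it is a genuine gap, and you should instead either prove the (correct) bound with $(p!)^{2m}$ or justify the constant directly.
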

The proof of Lemma~\ref{lem-control-Aut-necklace-hypergraph} is incorporated in Section~\ref{subsec:proof-lem-2.4}. Our main result can be summarized as follows:
\begin{proposition}{\label{main-prop-detection}}
    Suppose  
    \begin{align}{\label{eq-condition-strong-detection}}
        p \geq 3, \ m \geq 10^{11} R^3, \ \frac{\lambda^{4m}(2pm)!}{2^{pm}(p!)^{m}(mp)!(2m)!} > 10^{pm} R \mbox{ and }  \omega(1)=\ell=o(\tfrac{\log n}{\log\log n})  \,.
    \end{align}
    Then we have
    \begin{align}{\label{eq-moment-control}}
        \mathbb E_{\Pb}\big[ f_{\mathcal H} \big]=\omega(1), \ \mathbb E_{\Qb}\big[ f_{\mathcal H}^2 \big]=1+o(1) \mbox{, and } \operatorname{Var}_{\Pb}\big[ f_{\mathcal H} \big]=o(1) \cdot \mathbb E_{\Pb}\big[ f_{\mathcal H} \big]^2 \,.
    \end{align}
\end{proposition}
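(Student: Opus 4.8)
The strategy is to evaluate the relevant moments of $f_{\mathcal H}$ exactly, reduce the variance bound to a combinatorial sum indexed by the ``overlap'' hypergraph $W=S\cap S'$, and control that sum using Lemma~\ref{lem-control-Aut-necklace-hypergraph} and the quantitative hypotheses \eqref{eq-condition-strong-detection}. Write $Q:=\tfrac{(2pm)!}{2^{pm}(p!)^m(mp)!(2m)!}$ and $(n)_j:=n(n-1)\cdots(n-j+1)$.

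\emph{Step 1: exact moments.} For $p$-uniform $S,S'\subset\mathsf K_n$ isomorphic to members of $\mathcal H$, put $k=|E(S)\cap E(S')|$. Conditioning on $x_*$, expanding $f_Sf_{S'}=\prod_{e\in E(S)\cap E(S')}\bm Y_e^2\prod_{e\in E(S)\triangle E(S')}\bm Y_e$, and using that the $\bm G_e$ over distinct $p$-subsets are i.i.d.\ $N(0,1)$, that every vertex has degree $2$ in $S$ and in $S'$, and that $x_*(i)^2=1$, the $x_*$-dependence cancels and one gets $\mathbb E_{\Qb}[f_Sf_{S'}]=\mathbbm{1}\{E(S)=E(S')\}$ and $\mathbb E_{\Pb}[f_Sf_{S'}]=(\lambda n^{-p/4})^{4m\ell}\big(1+\tfrac{n^{p/2}}{\lambda^2}\big)^{k}$, so $\mathbb E_{\Pb}[f_S]=(\lambda n^{-p/4})^{2m\ell}$. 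Since $E(S)=E(S')$ forces $S=S'$ (no isolated vertices, $H$ being $2$-regular), and $\#\{S\subset\mathsf K_n:S\cong H\}=(n)_{mp\ell}/|\mathsf{Aut}(H)|$, summation over labeled copies yields $\mathbb E_{\Qb}[f_{\mathcal H}^2]=(n)_{mp\ell}/n^{mp\ell}=1+o(1)$ (since $mp\ell=o(\log n)$) and $\mathbb E_{\Pb}[f_{\mathcal H}]=(1+o(1))\lambda^{2m\ell}\sqrt{\beta_{\mathcal H}}$. By Lemma~\ref{lem-control-Aut-necklace-hypergraph}, $\lambda^{2m\ell}\sqrt{\beta_{\mathcal H}}\ge(2\ell)^{-1}\big(\lambda^{4m}R^{-1}(1-\tfrac{10^{10}R^3}{m})Q\big)^{\ell/2}$, and by \eqref{eq-condition-strong-detection} the base here is $\ge\tfrac9{10}10^{pm}$, a constant exceeding $1$; as $\ell=\omega(1)$ this is $\omega(1)$. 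This gives the first two parts of \eqref{eq-moment-control}.

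\emph{Step 2: reducing the variance bound.} From Step~1, $\mathbb E_{\Pb}[f_{\mathcal H}^2]/\mathbb E_{\Pb}[f_{\mathcal H}]^2=\big((n)_{mp\ell}\beta_{\mathcal H}\big)^{-2}\sum_{(S,S')}\big(1+\tfrac{n^{p/2}}{\lambda^2}\big)^{|E(S)\cap E(S')|}$, the sum being over ordered pairs of labeled copies of members of $\mathcal H$. Pairs with $E(S)\cap E(S')=\emptyset$ contribute at most $((n)_{mp\ell}\beta_{\mathcal H})^2$ in total, so the third part of \eqref{eq-moment-control} reduces to showing the contribution of pairs with $E(S)\cap E(S')\ne\emptyset$ is $o(((n)_{mp\ell}\beta_{\mathcal H})^2)$. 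Organize this by the core $W$ of $S\cap S'$ ($S\cap S'$ with isolated vertices removed; this only decreases the counts): $W$ has $1\le|E(W)|=:k\le2m\ell$, and as a subhypergraph of the $2$-regular $S$ it has $\mathsf{deg}_W(v)\le2$ for all $v$, so $|V(W)|\ge pk/2$. An embedding-extension count gives, for each labeled $W$, $\#\{S:[S]\in\mathcal H,\,W\subseteq S\}\le|\mathsf{Aut}(W)|\,n^{mp\ell-|V(W)|}D(W)$ with $D(W):=\sum_{[H]\in\mathcal H}|\mathsf{Aut}(H)|^{-1}\#\{W_0\subseteq H:W_0\cong W\}$, and summing its square over labeled $W$ with $k$ edges,
\[
\#\{(S,S'):|E(S)\cap E(S')|=k\}\ \le\ n^{2mp\ell}\!\!\sum_{[W]:\,|E(W)|=k}\!\!|\mathsf{Aut}(W)|\,n^{-|V(W)|}D(W)^2 .
\]
Thus it remains to prove $\sum_{k\ge1}(2n^{p/2}/\lambda^2)^k\sum_{[W]:|E(W)|=k}|\mathsf{Aut}(W)|\,n^{-|V(W)|}D(W)^2=o(\beta_{\mathcal H}^2)$.

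\emph{Step 3: the combinatorial estimate (the crux).} The decisive observation is that a connected $2$-regular subhypergraph of a connected $2$-regular hypergraph must be the whole hypergraph; hence $W$ is $2$-regular only when $W=S=S'$, i.e.\ $k=2m\ell$, and for $k<2m\ell$ the deficiency $L(W):=2|V(W)|-pk=\sum_v(2-\mathsf{deg}_W(v))$ is $\ge1$, so $(2n^{p/2}/\lambda^2)^kn^{-|V(W)|}=2^k\lambda^{-2k}n^{-L(W)/2}$. For $k<2m\ell$ I bound the inner sum crudely: $D(W)\le\binom{2m\ell}{k}\beta_{\mathcal H}$ (from $\sum_{[W]:\,k\text{ edges, no isolated vertex}}D(W)=\binom{2m\ell}{k}\beta_{\mathcal H}$), and the number of realizable classes $[W]$, the factor $|\mathsf{Aut}(W)|$, and $\binom{2m\ell}{k}$ are each $n^{o(1)}$ — using $k\le2m\ell$ together with $\ell=o(\log n/\log\log n)$ (which forces $k\log k=o(\log n)$) and the fact that subhypergraphs of necklaces have automorphism groups and isomorphism-class counts at most exponential in their number of edges. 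Splitting by $L\ge1$, each $(k,L)$-slice is then $\le(2/\lambda^2)^kn^{-L/2}n^{o(1)}\beta_{\mathcal H}^2$, and summing over $1\le k<2m\ell$ and $L\ge1$ (note $\sum_k(2/\lambda^2)^k\le2m\ell\,(2/\lambda^2)^{2m\ell}=n^{o(1)}$ since $\ell=o(\log n)$) gives $n^{o(1)-1/2}\beta_{\mathcal H}^2=o(\beta_{\mathcal H}^2)$. For $k=2m\ell$ the overlap is a full necklace, $D(W)=|\mathsf{Aut}(W)|^{-1}\mathbbm{1}\{[W]\in\mathcal H\}$, so that term is $\le(2/\lambda^2)^{2m\ell}\beta_{\mathcal H}$; comparing with $\beta_{\mathcal H}^2\ge(2\ell)^{-2}\big(\lambda^{4m}R^{-1}(1-\tfrac{10^{10}R^3}{m})Q\big)^{\ell}\lambda^{-4m\ell}$ and using $\lambda^{4m}R^{-1}(1-\tfrac{10^{10}R^3}{m})Q\ge\tfrac9{10}10^{pm}\ge\tfrac9{10}10^{3m}\gg4^m$ from \eqref{eq-condition-strong-detection} makes this term $o(\beta_{\mathcal H}^2)$ as well. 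The main obstacle is Step~3: getting clean, structure-aware bounds on $D(W)$, on $|\mathsf{Aut}(W)|$, and on the number of overlap shapes, uniformly over all subhypergraphs $W$ of necklaces, and arranging the bookkeeping so that the blow-up $(1+n^{p/2}/\lambda^2)^k$ is always beaten by the per-block entropy $Q$ furnished by Lemma~\ref{lem-control-Aut-necklace-hypergraph}. Steps~1 and~2 are routine Gaussian-moment and counting computations.
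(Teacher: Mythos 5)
Your proposal is correct and follows essentially the same route as the paper: exact Gaussian-moment computations give the first two claims, and the variance is handled by a second-moment overlap analysis whose engine is the identical structural fact (a proper subhypergraph $W$ of a connected $2$-regular hypergraph with $E(W)\neq\emptyset$ satisfies $2|V(W)|>p|E(W)|$, i.e.\ the paper's Lemma~\ref{lem-prelim-mathcal-H}, Item~(1)), so each proper overlap costs $n^{-1/2}$ while all combinatorial prefactors are $n^{o(1)}$ thanks to $\ell=o(\log n/\log\log n)$, and the full-overlap term $k=2m\ell$ is beaten by the lower bound on $\lambda^{4m\ell}\beta_{\mathcal H}$ coming from Lemma~\ref{lem-control-Aut-necklace-hypergraph} and \eqref{eq-condition-strong-detection}. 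The only difference is bookkeeping: you index the overlap sum by the isomorphism class of the overlap core $W$ with the weights $D(W)$ and $|\mathsf{Aut}(W)|$, whereas the paper sums over the pair $(|E(S)\cap E(K)|,|V(S)\cap V(K)|)$ and uses the cruder count $\mathrm{ENUM}(k,h)\le\beta_{\mathcal H}\,n^{pm\ell-h}(pm\ell)^{pm\ell}$; both reduce to the same $o(1)+n^{-1/2+o(1)}$ estimate.
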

Note that by standard estimates we have
\begin{align*}
    \frac{\lambda^{4m}(2pm)!}{(p!)^{m}(mp)!(2m)!} < \Big( \frac{ \lambda^{4}(2pm)^{2p} }{ p^{p}(pm)^{p}(2m) } \Big)^{2m} < \Big( \frac{ \lambda^{4} m^{p-2} }{ p^{2p} } \Big)^{2m} \,,
\end{align*}
thus choosing 
\begin{equation}{\label{eq-suffice-bound-m}}
    m > 100R (2p)^{\frac{2p}{p-2}} \cdot \max\{ \lambda^{-\frac{4}{p-2}}, 10^{11} R^3 \}
\end{equation}
suffices to satisfy \eqref{eq-condition-strong-detection}. Combining these
variance bounds with Chebyshev’s inequality, we arrive at the following sufficient condition for the statistic $f_{\mathcal H}$ to achieve strong detection.
\begin{thm}{\label{MAIN-THM-detection}}
    Suppose \eqref{eq-condition-strong-detection} is satisfied. Then the testing error satisfies
    \begin{equation}{\label{eq-testing-error}}
        \Pb\big( f_{\mathcal H}(\bm Y)\leq\tau \big) + \Qb\big( f_{\mathcal H}(\bm Y)\geq\tau \big) = o(1) \,,
    \end{equation}
    where the threshold is chosen as
    \begin{equation*}
        \tau = C \mathbb E_{\Pb}\big[ f_{\mathcal H}(\bm Y) \big]
    \end{equation*}
    for any fixed constant $0<C<1$. In particular, Algorithm~\ref{alg:detection-meta} described above achieves strong detection between $\Pb$ and $\Qb$.
\end{thm}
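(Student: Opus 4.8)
The plan is to read off \eqref{eq-testing-error} directly from the three moment estimates furnished by Proposition~\ref{main-prop-detection}, via one application of Markov's inequality under $\Qb$ and one application of Chebyshev's inequality under $\Pb$. Throughout, abbreviate $\mu_n := \mathbb E_{\Pb}\big[ f_{\mathcal H}(\bm Y) \big]$. Proposition~\ref{main-prop-detection} gives $\mu_n = \omega(1)$, so in particular $\mu_n > 0$ for all $n$ large and the threshold $\tau = C\mu_n$ is nonnegative; it also gives $\mathbb E_{\Qb}\big[ f_{\mathcal H}^2 \big] = 1 + o(1)$ and $\operatorname{Var}_{\Pb}\big[ f_{\mathcal H} \big] = o(1)\cdot\mu_n^2$.

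For the failure probability under $\Qb$: since $\tau \ge 0$, the event $\{ f_{\mathcal H}(\bm Y) \ge \tau \}$ is contained in $\{ f_{\mathcal H}(\bm Y)^2 \ge \tau^2 \}$, so Markov's inequality yields
\[
    \Qb\big( f_{\mathcal H}(\bm Y) \ge \tau \big) \ \le\ \frac{ \mathbb E_{\Qb}\big[ f_{\mathcal H}^2 \big] }{ \tau^2 } \ =\ \frac{ 1 + o(1) }{ C^2 \mu_n^2 } \ =\ o(1) \,,
\]
using $\mu_n \to \infty$ and that $C \in (0,1)$ is a fixed constant. (We do not even need $\mathbb E_{\Qb}[f_{\mathcal H}]$ here, although one checks it equals $0$ because each $\bm G_e$ is centered and, as the hyperedges of any $p$-uniform $S$ are distinct, the family $\{\bm G_e\}_{e \in E(S)}$ is independent.) For the failure probability under $\Pb$: on the event $\{ f_{\mathcal H}(\bm Y) \le \tau \} = \{ f_{\mathcal H}(\bm Y) \le C\mu_n \}$ one has $f_{\mathcal H}(\bm Y) - \mu_n \le -(1-C)\mu_n < 0$, hence $| f_{\mathcal H}(\bm Y) - \mu_n | \ge (1-C)\mu_n$, and Chebyshev's inequality gives
\[
    \Pb\big( f_{\mathcal H}(\bm Y) \le \tau \big) \ \le\ \frac{ \operatorname{Var}_{\Pb}\big[ f_{\mathcal H} \big] }{ (1-C)^2 \mu_n^2 } \ =\ \frac{ o(1)\cdot\mu_n^2 }{ (1-C)^2 \mu_n^2 } \ =\ o(1) \,.
\]
Summing the two displays gives \eqref{eq-testing-error}. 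Finally, taking $\mathcal A(\bm Y)$ to be the output $\mathtt q$ of Algorithm~\ref{alg:detection-meta} with this choice of $\tau$, the left-hand side of \eqref{eq-testing-error} is exactly $\Pb(\mathcal A(\bm Y) = 0) + \Qb(\mathcal A(\bm Y) = 1)$, so Definition~\ref{def-strong-detection} is satisfied and Algorithm~\ref{alg:detection-meta} achieves strong detection.

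The deduction above is completely routine: all the substance of the theorem is packaged in Proposition~\ref{main-prop-detection}, and that is where I expect the real work to lie. In particular, establishing $\operatorname{Var}_{\Pb}[f_{\mathcal H}] = o(1)\cdot\mu_n^2$ amounts to showing that the correlations between the subhypergraph counts of different (non-isomorphic) members of the family $\mathcal H$ decay fast enough that they do not swamp the signal $\mu_n$ — the delicate second-moment analysis to be carried out in Section~\ref{sec:stat-analysis}, and the reason the particular ``necklace'' construction of $\mathcal H$ in Definition~\ref{def-mathcal-H} (together with the counting lower bounds in Lemmas~\ref{lem-control-Aut-component-hypergraph} and~\ref{lem-control-Aut-necklace-hypergraph}) is needed.
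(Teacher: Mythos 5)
Your proposal is correct and is essentially the paper's own argument: Chebyshev under $\Pb$ against the threshold $C\,\mathbb E_{\Pb}[f_{\mathcal H}]$, and a second-moment/Markov bound under $\Qb$ (which, since $\mathbb E_{\Qb}[f_{\mathcal H}]=0$, coincides with the paper's Chebyshev bound $\operatorname{Var}_{\Qb}[f_{\mathcal H}]/\tau^2$), with all the substance deferred to Proposition~\ref{main-prop-detection} exactly as in the paper.
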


From a computational perspective, evaluating each
\begin{align*}
    \sum_{ S \subset \mathsf K_n: S \cong H } f_S(\bm Y)
\end{align*}
in \eqref{eq-def-f-mathcal-H} by exhaustive search takes $n^{O(mp\ell)}$ time which is super-polynomial when $\ell=\omega(1)$. To resolve this computational issue, in Section~\ref{subsec:approx-detection} we design an polynomial-time algorithm (see Algorithm~\ref{alg:cal-widetilde-f}) to compute an approximation $\widetilde{f}_{\mathcal H}(\bm Y)$ (see \eqref{eq-def-widetilde-f-H}) for $f_{\mathcal H}(\bm Y)$ using the strategy of color coding \cite{AYZ95, AR02, HS17, MWXY24, MWXY23}. The following result shows that the statistic $\widetilde f_{\mathcal H}$ achieves strong detection under the same condition as in Theorem~\ref{MAIN-THM-detection}.
\begin{thm}{\label{MAIN-THM-detection-algorithmic}}
    Suppose \eqref{eq-condition-strong-detection} holds. Then \eqref{eq-testing-error} holds with $\widetilde f_{\mathcal H}$ in place of $f_{\mathcal H}$, namely
    \begin{equation}{\label{eq-testing-error-algorithmic}}
        \Pb\big( \widetilde f_{\mathcal H}(\bm Y)\leq\tau \big) + \Qb\big( \widetilde f_{\mathcal H}(\bm Y)\geq\tau \big) = o(1) \,,
    \end{equation}
    Moreover, $\widetilde f_{\mathcal H}$ can be computed in time $n^{C+o(1)}$ for some constant $C=C(m,p)$.
\end{thm}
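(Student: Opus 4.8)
\medskip
\noindent\textbf{Proof strategy.}
The plan is to deduce Theorem~\ref{MAIN-THM-detection-algorithmic} from Theorem~\ref{MAIN-THM-detection} by controlling the error introduced by color coding. Concretely, I would show that the surrogate $\widetilde f_{\mathcal H}$ produced by Algorithm~\ref{alg:cal-widetilde-f} (cf.\ \eqref{eq-def-widetilde-f-H}) satisfies $|\widetilde f_{\mathcal H}(\bm Y)-f_{\mathcal H}(\bm Y)|=o(1)$ with probability $1-o(1)$ under each of $\Pb$ and $\Qb$ (the probability being over $\bm Y$ and the internal randomness of the algorithm). Granting this, \eqref{eq-testing-error-algorithmic} follows exactly as \eqref{eq-testing-error} did: by \eqref{eq-moment-control} and Chebyshev, under $\Pb$ one has $f_{\mathcal H}\ge(1-o(1))\mathbb E_\Pb[f_{\mathcal H}]$ w.h.p.\ with $\mathbb E_\Pb[f_{\mathcal H}]=\omega(1)$, so $\widetilde f_{\mathcal H}\ge f_{\mathcal H}-o(1)>\tau=C\mathbb E_\Pb[f_{\mathcal H}]$; under $\Qb$, $\mathbb E_\Qb[f_{\mathcal H}^2]=1+o(1)$ forces $|f_{\mathcal H}|=o(\tau)$ w.h.p., so $\widetilde f_{\mathcal H}\le|f_{\mathcal H}|+o(1)<\tau$; a union bound over $O(1)$ bad events closes this half.

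For the approximation bound, recall that Algorithm~\ref{alg:cal-widetilde-f} draws $T$ i.i.d.\ uniform colourings $\rho\colon[n]\to[q]$ with $q=\Theta(mp\ell)$ colours, forms for each $\rho$ the $f_S(\bm Y)$-weighted count of \emph{colourful} copies $S\cong H$, $[H]\in\mathcal H$ (those on which $\rho$ is injective), rescales by the inverse colourful probability $\pi_{\mathrm{col}}^{-1}$ with $\pi_{\mathrm{col}}=q!/((q-mp\ell)!\,q^{mp\ell})$, and averages over the $T$ colourings. Since every copy has exactly $mp\ell$ vertices, the one-shot estimator is conditionally unbiased, $\mathbb E_\rho[\,\cdot\mid\bm Y]=f_{\mathcal H}$, so it suffices to bound the one-shot variance and divide by $T$. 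The decisive observation is that \eqref{eq-condition-strong-detection} forces $mp\ell=O(\log n/\log\log n)$, hence $\pi_{\mathrm{col}}\ge e^{-mp\ell}=n^{-o(1)}$, so the rescaling inflates second moments only by a factor $n^{o(1)}$. Expanding the one-shot second moment over pairs of copies $(S,S')$ and using $\Pr_\rho(S,S'\text{ both colourful})\le\pi_{\mathrm{col}}$: under $\Qb$, where $\mathbb E_\Qb[f_Sf_{S'}]=\mathbbm 1\{S=S'\}$ (connected leafless copies with equal edge sets coincide), one gets one-shot variance $\le\pi_{\mathrm{col}}^{-1}=n^{o(1)}$; under $\Pb$, one uses the nonnegativity $\mathbb E_\Pb[f_Sf_{S'}]\ge0$ for every pair --- the Gaussian average factors out a nonnegative term times $\prod_{e\in E(S)\triangle E(S')}x_*(e)$, whose $x_*\sim\mu$ average equals $1$ if every vertex has even degree in $S\triangle S'$ and $0$ otherwise --- so the inflation factor may be pulled out of the sum to bound the one-shot variance by $n^{o(1)}\mathbb E_\Pb[f_{\mathcal H}^2]=n^{o(1)}\mathbb E_\Pb[f_{\mathcal H}]^2$ via \eqref{eq-moment-control}. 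Dividing by a suitably large $T=n^{o(1)}$ makes the expected squared error $o(1)$ under $\Qb$ and $o(\mathbb E_\Pb[f_{\mathcal H}]^2)$ under $\Pb$, and Markov's inequality gives the claimed approximation in both cases.

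For the running time, I would first verify that every $[H]\in\mathcal H$ has fractional tree-width (in the sense of \cite{GLS02}) bounded by a constant $t=t(m,p)$ independent of $\ell$: by Definitions~\ref{def-component-hypergraph} and \ref{def-mathcal-H}, $H$ is a cyclic necklace of $\ell$ blocks from $\mathcal U(m,p)$, each on $mp+1$ vertices and sharing a single vertex with each of its two neighbours, so a cycle-like decomposition whose bags are unions of $O(1)$ consecutive blocks has width $O(mp)$ and bounded fractional edge cover per bag. Given such a decomposition, for each colouring the weighted colourful count is computed by the standard color-coding dynamic program \cite{AYZ95,AR02,HS17,MWXY24,MWXY23,GLS02} that sweeps around the necklace: at each junction vertex it records the image in $[n]$, the colour, and the set of used colours ($2^{mp\ell}=n^{o(1)}$ values), and between junctions it aggregates over the $|\mathcal U(m,p)|=O_{m,p}(1)$ admissible block types and their labelled colourful embeddings (each such block aggregation being itself a bounded-fractional-tree-width colourful count in $n^{O(mp)}$ time). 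Multiplying by $T=n^{o(1)}$ colourings, $O(\ell)=n^{o(1)}$ junctions, $2^{mp\ell}=n^{o(1)}$ colour subsets and $O_{m,p}(1)$ block types gives total time $n^{O(t)+o(1)}=n^{C+o(1)}$ with $C=C(m,p)$; crucially the sums $\sum_{[H]\in\mathcal H}$ and $\sum_{S\cong H}$ in \eqref{eq-def-f-mathcal-H} are realized as the ranges of the dynamic program, so $\mathcal H$ is never enumerated explicitly.

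The step I expect to be the main obstacle is the approximation bound: because $\mathbb E_\Pb[f_{\mathcal H}]$ is only guaranteed to be $\omega(1)$, the surrogate must agree with $f_{\mathcal H}$ up to additive error $o(1)$, yet the number of colourings must remain $n^{o(1)}$ to keep the running time $n^{C+o(1)}$. Reconciling these requires exploiting \emph{both} the smallness $mp\ell=O(\log n/\log\log n)$ of the hypergraphs (so colourful events have probability $n^{-o(1)}$) and the nonnegativity $\mathbb E_\Pb[f_Sf_{S'}]\ge0$ (so that the color-coding variance is bounded by the second moment already computed in Section~\ref{sec:stat-analysis}, rather than by an unsigned $\ell_1$-type sum that one cannot control). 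A secondary, more routine technical point is the uniform fractional-tree-width bound for the necklace family together with the careful bookkeeping of the cyclic dynamic program in Section~\ref{sec:hypergraph-color-coding}.
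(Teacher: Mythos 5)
Your proposal is correct and follows essentially the same route as the paper: Theorem~\ref{MAIN-THM-detection-algorithmic} is deduced from an $L_2$ approximation bound between $\widetilde f_{\mathcal H}$ and $f_{\mathcal H}$ (Proposition~\ref{prop-approximate-detection-statistics}), proved exactly as you describe via conditional unbiasedness of the colorful count, the bound $\Pr(\text{both colorful})\le r$ together with $1/r=n^{o(1)}$ (since $mp\ell=o(\log n/\log\log n)$), and the nonnegativity of $\mathbb E_{\Pb}[f_Sf_K]$ and $\mathbb E_{\Qb}[f_Sf_K]$ so that the second moments already computed in Section~\ref{sec:stat-analysis} control the color-coding variance, while the running time is handled by the same necklace dynamic program (Algorithms~\ref{alg:dynamic-programming} and \ref{alg:cal-widetilde-f}). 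The only deviations are cosmetic: the paper takes just $t=\lceil 1/r\rceil$ colorings and settles for error $o(\mathbb E_{\Pb}[f_{\mathcal H}])$ rather than your additive $o(1)$ obtained from a larger $T=n^{o(1)}$ (either suffices since $\tau=\omega(1)$), and the paper \emph{does} enumerate $\mathcal H$ explicitly — this costs only $n^{o(1)}$ and is in any case unavoidable for computing the normalization $\beta_{\mathcal H}$ and the threshold $\tau$, and for handling the per-$H$ overcounting factor $\mathfrak a(H)$, a point your "never enumerated" variant of the dynamic program would still need to address.
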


\subsection{The recovery statistics and theoretical guarantees}{\label{subsec:recovery-stat}}

Again, we start by introducing our general methodologies before specializing to the class of hypergraphs that we will consider. Recall \eqref{eq-def-f-S}. Given a family $\mathcal J_{\star}$ of unlabeled $p$-uniform hypergraphs such that $|V(J)|=mp\ell+1$ and $|\mathsf L(J)|=2$ for all $[J] \in \mathcal J$ (here again $m,\ell$ are parameters that will be decided later), define 
\begin{equation}{\label{eq-def-Phi-i,j-mathcal-J}}
    \Phi_{i,j}^{\mathcal J_{\star}} := \frac{1}{ \lambda^{2m\ell} n^{\frac{pm\ell}{2}-1} \beta_{\mathcal J_{\star}} } \sum_{[J] \in \mathcal J_{\star}} \sum_{ \substack{ S \subset \mathsf K_n: S \cong J \\ \mathsf L(S)=\{ i,j \} } } f_{S}(\bm Y) \,. 
\end{equation}
for each $i,j \in [n]$. Here $\beta_{\mathcal J_{\star}}$ is defined in \eqref{eq-def-beta-mathcal-H}. Our proposed method of recovery in tensor PCA is as follows.

\begin{breakablealgorithm}{\label{alg:recovery-meta}}
\caption{Recovery in tensor PCA by subhypergraph counts}
    \begin{algorithmic}[1]
    \STATE {\bf Input:} Adjacency tensor $\bm Y$, a family $\mathcal J_{\star}$ of non-isomorphic $p$-uniform hypergraphs.
    \STATE For each pair $i,j \in [n]$, compute $\Phi_{i,j}^{\mathcal J_{\star}}$ as in \eqref{eq-def-Phi-i,j-mathcal-J}. 
    \STATE Arbitrarily choose $i \in [n]$. Let $\widehat{x}(i)=1$ and $\widehat{x}(j) = \mathbf 1( \Phi_{i,j}^{\mathcal J_{\star}}>0 ) - \mathbf 1( \Phi_{i,j}^{\mathcal J_{\star}}\leq 0 )$ for $j \neq i$.
    \STATE {\bf Output:} $\widehat x$.
    \end{algorithmic}
\end{breakablealgorithm}
Again, at this point Algorithm~\ref{alg:recovery-meta} is still a ``meta algorithm'' and the key to its application is to carefully choose this collection of hypergraphs $\mathcal J_{\star}$. Ideally, we would like $\Phi_{i,j}^{\mathcal J_{\star}}$ to be positively correlated with $x_*(i)x_*(j)$. To this end, we construct a special family of hypergraphs $\mathcal J$ as below (see Figure~\ref{fig:necklace-chain} for an illustration). 

\begin{DEF}{\label{def-mathcal-J}}
    Recall Definition~\ref{def-component-hypergraph}. For any integer $m,\ell$, define $\mathcal J=\mathcal J(m,p,\ell)$ to be the set of unlabeled $p$-uniform hypergraphs $[J]$ such that there exists $U_1,\ldots,U_{\ell}$ with the following holds:  
    \begin{enumerate}
        \item[(1)] $[U_i] \in \mathcal U$ for all $1 \leq i \leq \ell$;
        \item[(2)] $\mathsf{L}(U_i)=\{ v_i,v_{i+1} \}$ for all $1 \leq i \leq \ell$, and $V(U_i)\cap V(U_{i+1}) = \{ v_{i+1} \}$ for all $1 \leq i \leq \ell$;
        \item[(3)] $V(J) = \cup_{1 \leq i \leq \ell} V(U_i)$ and $E(J)=\cup_{1 \leq i \leq \ell} E(U_i)$.
        \item[(4)] For all $\pi\in\mathsf{Aut}(J)$ we have $\pi(v_1)=v_1$ and $\pi(v_{\ell+1})=v_{\ell+1}$.
    \end{enumerate}    
\end{DEF}

\begin{figure}[!ht]
    \centering
    \vspace{0cm}
    \includegraphics[height=4.70cm,width=13.42cm]{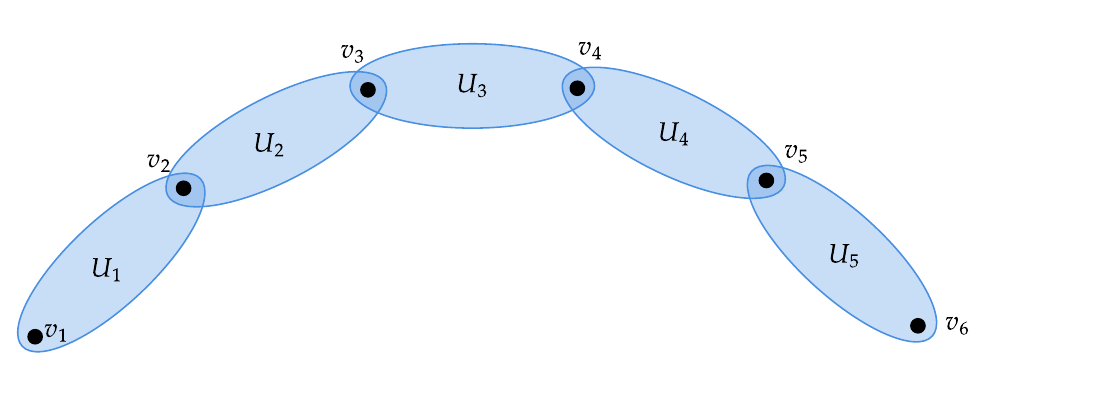}
    \caption{\noindent An unlabeled hypergraph in $\mathcal J(m,p,\ell)$ with $\ell=5$}
    \label{fig:necklace-chain}
\end{figure}

Note that Definitions~\ref{def-component-hypergraph} and \ref{def-mathcal-J} yields that for all $[J] \in \mathcal J$, we have $|V(J)|=mp\ell+1$, $|E(H)|=2m\ell$ and $\mathsf{L}(H)=\{ v_1,v_{\ell+1} \}$. We will provide a similar but more delicate lower bound on the cardinality of $\mathcal J$, as incorporated in the next lemma.
\begin{lemma}{\label{lem-control-Aut-necklace-chain}}
    For all $p,m,\ell>0$, define
    \begin{equation}{\label{eq-def-beta-diamond-U}}
        \beta^{\diamond}_{\mathcal U}= \sum_{ [U] \in \mathcal U } \sum_{ [v] \in \mathsf L_{\diamond}(U) } \frac{ 1 }{ |\mathsf{Aut}_v(U)| } \,.
    \end{equation}
    Then we have for $p \geq 3$
    \begin{align*}
        \big( \beta^{\diamond}_{\mathcal U} \big)^{\ell} \geq \beta_{\mathcal J} := \sum_{ [J] \in \mathcal J } \frac{ 1 }{ |\mathsf{Aut}(J)| } \geq \frac{1}{2} \big( \beta^{\diamond}_{\mathcal U} \big)^{\ell-1} \big( \beta^{\diamond}_{\mathcal U}-2 \big) \,.
    \end{align*}
    In particular, for $p \geq 3$ there exists an absolute constant $R>0$ such that
    \begin{equation*}
        \beta_{\mathcal J} \geq \frac{1}{2} \Big( \frac{ R^{-1}(1-\tfrac{10^{10}R^3}{m}) (2pm)!}{2^{mp}(p!)^{m}(mp)!(2m)!} \Big)^{\ell-1} \Big( \frac{ R^{-1}(1-\tfrac{10^{10}R^3}{m} ) (2pm)!}{2^{mp}(p!)^{m}(mp)!(2m)!} - 2 \Big)  \,.
    \end{equation*}
\end{lemma}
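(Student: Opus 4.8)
The plan is to parametrize $\mathcal J$ by the canonical block decomposition of its members and thereby reduce $\beta_{\mathcal J}$ to a sum over ordered $\ell$‑tuples of leaf‑rooted copies of hypergraphs in $\mathcal U$. Fix $[J]\in\mathcal J$ with a realization $J=U_1\cup\cdots\cup U_\ell$ as in Definition~\ref{def-mathcal-J}. The first step is a structural analysis of $J$. Since each $U_i$ satisfies Item~(3) of Definition~\ref{def-component-hypergraph}, the projected graph $\underline{U_i}$ has no cut vertex; on the other hand, removing a gluing vertex $v_{i+1}$ disconnects $J$ (separating $U_1\cup\cdots\cup U_i$ from $U_{i+1}\cup\cdots\cup U_\ell$), whereas removing any other vertex leaves $J$ connected. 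Hence the set of cut vertices of $J$ is exactly $\{v_2,\dots,v_\ell\}$, the blocks of $J$ are exactly $U_1,\dots,U_\ell$, and the block–cut tree of $J$ is a path. In particular the decomposition of $[J]$ into a chain of $\mathcal U$‑pieces is canonical up to reversing the path (which swaps the ends $v_1\leftrightarrow v_{\ell+1}$ and the two leaves of each piece). Moreover every $\pi\in\mathsf{Aut}(J)$ induces an automorphism of this path; since $J$ has exactly the two leaves $v_1,v_{\ell+1}$, $\pi$ either fixes both or swaps them, and Item~(4) forces it to fix both. Thus $\pi$ fixes each $v_i$ and restricts to an element of $\mathsf{Aut}_{v_i}(U_i)$ on each block, and conversely any tuple $(\pi_i\in\mathsf{Aut}_{v_i}(U_i))_i$ glues to an automorphism of $J$; together with the fact that $U_i$ has only the two leaves $v_i,v_{i+1}$ (so fixing $v_i$ already fixes $v_{i+1}$), this gives $|\mathsf{Aut}(J)|=\prod_{i=1}^\ell|\mathsf{Aut}_{v_i}(U_i)|$.

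Next I would set up the count. Let $\mathcal W$ be the set of isomorphism classes of pairs $(U,v)$ with $[U]\in\mathcal U$ and $v\in\mathsf L(U)$, so that $\sum_{[(U,v)]\in\mathcal W}\tfrac1{|\mathsf{Aut}_v(U)|}=\beta^\diamond_{\mathcal U}$, and write $\overline{(U,v)}=(U,w)$ for $w$ the other leaf (an involution of $\mathcal W$). Gluing the right leaf of the $i$‑th piece to the left leaf of the $(i{+}1)$‑st defines $\Psi\colon\mathcal W^\ell\to\{\text{unlabeled hypergraphs}\}$, and $\Psi(t_1,\dots,t_\ell)=\Psi(\overline{t_\ell},\dots,\overline{t_1})$ always. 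By the structural analysis: $\Psi(t_1,\dots,t_\ell)\in\mathcal J$ iff $(t_1,\dots,t_\ell)\neq(\overline{t_\ell},\dots,\overline{t_1})$ (a palindromic tuple produces an end‑swapping automorphism, contradicting Item~(4), and conversely a non‑palindromic tuple yields a $J$ all of whose automorphisms fix the ends, so Item~(4) holds); each $[J]\in\mathcal J$ has exactly two $\Psi$‑preimages, the two readings of its block–cut path; and for any such preimage $\tfrac1{|\mathsf{Aut}(J)|}=\prod_i\tfrac1{|\mathsf{Aut}(t_i)|}$. Summing over $[J]\in\mathcal J$,
\begin{equation*}
2\beta_{\mathcal J}=\sum_{[J]\in\mathcal J}\frac{2}{|\mathsf{Aut}(J)|}=\sum_{\substack{(t_1,\dots,t_\ell)\in\mathcal W^\ell\\ \text{non-palindromic}}}\ \prod_{i=1}^\ell\frac1{|\mathsf{Aut}(t_i)|}=(\beta^\diamond_{\mathcal U})^\ell-P,
\end{equation*}
where $P:=\sum_{\text{palindromic }(t_i)}\prod_i\tfrac1{|\mathsf{Aut}(t_i)|}\ge 0$. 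Dropping $P$ already gives $\beta_{\mathcal J}\le\tfrac12(\beta^\diamond_{\mathcal U})^\ell\le(\beta^\diamond_{\mathcal U})^\ell$, the claimed upper bound.

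For the lower bound I would estimate $P$. A palindromic tuple is determined by $t_1,\dots,t_{\lfloor\ell/2\rfloor}$ chosen freely, with $t_{\ell+1-i}=\overline{t_i}$, together with (when $\ell$ is odd) a middle coordinate that is a fixed point of the involution $\overline{\,\cdot\,}$; using $|\mathsf{Aut}(\overline t)|=|\mathsf{Aut}(t)|$ the weight factorizes, so
\begin{equation*}
P=\Big(\sum_{t\in\mathcal W}\tfrac1{|\mathsf{Aut}(t)|^2}\Big)^{\lfloor\ell/2\rfloor}\Big(\sum_{t:\,\overline t=t}\tfrac1{|\mathsf{Aut}(t)|}\Big)^{\mathbf 1\{\ell\text{ odd}\}}\le(\beta^\diamond_{\mathcal U})^{\lceil\ell/2\rceil}.
\end{equation*}
Since $\lceil\ell/2\rceil\le\ell-1$ for $\ell\ge2$ and we may assume $\beta^\diamond_{\mathcal U}\ge1$ (otherwise the asserted lower bound is vacuous), this yields $P\le2(\beta^\diamond_{\mathcal U})^{\ell-1}$, hence $2\beta_{\mathcal J}\ge(\beta^\diamond_{\mathcal U})^\ell-2(\beta^\diamond_{\mathcal U})^{\ell-1}$, i.e.\ the middle inequality; the case $\ell=1$ reduces to $\beta_{\mathcal J}=\tfrac12(\beta^\diamond_{\mathcal U}-\sum_{\overline t=t}|\mathsf{Aut}(t)|^{-1})$ and is checked directly. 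Finally the explicit estimate follows by inserting into the middle inequality the lower bound on $\beta^\diamond_{\mathcal U}$ of the stated form, proved by the same first‑moment computation as Lemma~\ref{lem-control-Aut-component-hypergraph} (now tracking a rooted leaf; note $\beta^\diamond_{\mathcal U}=2\beta_{\mathcal U}$ by orbit–stabilizer applied to the two leaves of $U$), together with the fact that $x\mapsto x^{\ell-1}(x-2)$ is nondecreasing on $[2,\infty)$.

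\textbf{Main obstacle.} The delicate part is the structural bookkeeping of the first two paragraphs: showing the cut‑vertex/block decomposition of $J$ is canonical up to path reversal, that Item~(4) is precisely the statement that the chain is non‑palindromic, and that $\mathsf{Aut}(J)$ is the product of the block stabilizers $\mathsf{Aut}_{v_i}(U_i)$. Once the exact $2$‑to‑$1$ correspondence and this product formula are in place, the identity $2\beta_{\mathcal J}=(\beta^\diamond_{\mathcal U})^\ell-P$, the factorization of $P$, and the arithmetic leading to the explicit bound are all routine.
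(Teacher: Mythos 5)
Your proposal is correct and follows essentially the same route as the paper: canonical decomposition of $J$ into an ordered chain of leaf-rooted pieces from $\mathcal U$, the product formula $|\mathsf{Aut}(J)|=\prod_i|\mathsf{Aut}_{v_i}(U_i)|$, and a (near) two-to-one correspondence with $\ell$-tuples to relate $\beta_{\mathcal J}$ to $(\beta^\diamond_{\mathcal U})^\ell$. The only difference is cosmetic: the paper restricts to tuples with $U_1\not\cong U_\ell$ and subtracts their complement, whereas you characterize membership in $\mathcal J$ exactly as non-palindromicity and obtain the identity $2\beta_{\mathcal J}=(\beta^\diamond_{\mathcal U})^\ell-P$ before bounding the palindromic term, which is a slightly sharper but equivalent bookkeeping.
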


The proof of Lemma~\ref{lem-control-Aut-necklace-chain} is postponed to Section~\ref{subsec:proof-lem-2.9}. The key of our argument is the following result, which shows that $\Phi_{i,j}^{\mathcal J}$ is indeed positively correlated with $x_*(i)x_*(j)$. 
\begin{proposition}{\label{main-prop-recovery}}
    Suppose for some sufficiently small constant $0<\delta<0.1$ 
    \begin{align}{\label{eq-condition-weak-recovery}}
        p \geq 3, \ m>10^{11} R^3, \  \frac{ R^{-1}\lambda^{4m}(2pm)!}{2^{pm}(p!)^{m}(mp)!(2m)!} > (0.1\delta)^{-pm} \mbox{ and }  \ell=\log n  \,.
    \end{align}
    Then we have
    \begin{align}
        \mathbb E_{\Pb}\Big[ \big( \Phi^{\mathcal J}_{i,j} - x_*(i)x_*(j) \big)^2 \Big] \leq \delta^2 \,.  \label{eq-L2-estimation-error}
    \end{align}
\end{proposition}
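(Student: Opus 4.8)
The plan is to carry out a second-moment analysis of $\Phi_{i,j}^{\mathcal J}$ under $\Pb$, expanding everything into sums over labelled hypergraph copies and tracking which pairs of copies produce non-negligible contributions. First I would fix $i,j$ and write $\Phi_{i,j}^{\mathcal J} = \frac{1}{\lambda^{2m\ell} n^{pm\ell/2-1}\beta_{\mathcal J}} \sum_{[J]\in\mathcal J}\sum_{S\cong J,\, \mathsf L(S)=\{i,j\}} f_S(\bm Y)$, and compute $\mathbb E_{\Pb}[f_S(\bm Y)]$ for a single copy. Since $\bm Y_e = \lambda n^{-p/4} x_*(e) + \bm G_e$ with $x_*(e) := \prod_{v\in e} x_*(v)$, and since every vertex of $J$ except the two leaves $i,j$ has degree exactly $2$ (so $x_*(v)^2 = 1$ for internal vertices while the leaves contribute $x_*(i)x_*(j)$), the ``all-signal'' term of $\prod_{e\in E(S)}\bm Y_e$ has expectation $(\lambda n^{-p/4})^{2m\ell} x_*(i)x_*(j) = \lambda^{2m\ell} n^{-pm\ell/2} x_*(i)x_*(j)$; all mixed terms vanish in expectation because any edge contributing a $\bm G_e$ factor that appears to the first power integrates to zero, and one checks the $2$-regularity forbids the surviving cross terms from being anything other than the pure-signal one (this is where Definition~\ref{def-mathcal-J}(4), forcing the leaves to be fixed by every automorphism, is used so that the number of copies with a prescribed leaf set is exactly $\frac{(n-2)!}{(n-pm\ell-1)!}\cdot\frac{1}{|\mathsf{Aut}(J)|}$ up to lower-order terms). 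Summing over copies and over $[J]\in\mathcal J$, the normalization is designed precisely so that $\mathbb E_{\Pb}[\Phi_{i,j}^{\mathcal J}] = (1+o(1))\, x_*(i)x_*(j)$.

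The heart of the argument is then bounding $\mathbb E_{\Pb}[(\Phi_{i,j}^{\mathcal J})^2]$, equivalently $\operatorname{Var}_{\Pb}$. Expanding the square gives a double sum over ordered pairs of copies $(S,S')$ both with leaf set $\{i,j\}$, and $\mathbb E_{\Pb}[f_S f_{S'}]$ factorizes over edges: edges in $E(S)\triangle E(S')$ contribute a signal factor $\lambda n^{-p/4}$ (the $\bm G$-part being mean zero and appearing linearly), edges in $E(S)\cap E(S')$ contribute $\mathbb E[\bm Y_e^2] = 1 + \lambda^2 n^{-p/2}$, and the resulting product of $x_*$-values over all vertices is $1$ when the vertex has even total degree across $S\cup S'$ and contributes a surviving $x_*$-factor otherwise. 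The standard move is to classify pairs by the overlap hypergraph $S\cap S'$: when $S$ and $S'$ are edge-disjoint the contribution multiplies out to exactly $\mathbb E_{\Pb}[\Phi_{i,j}^{\mathcal J}]^2 = 1 + o(1)$ (the "diagonal-free" leading term), and every other overlap pattern must be shown to contribute a total that is $o(1)$. For a fixed nonempty overlap, the count of pairs $(S,S')$ with that overlap is smaller than the count of disjoint pairs by roughly a factor $n^{-(\text{vertices shared beyond }\{i,j\})}$, while the per-pair weight is larger by a factor that is at most $\lambda^{-O(\text{edges in the overlap})}$ times a bounded combinatorial factor; the necklace-chain structure (each $U_\ell$-block being $2$-regular-except-leaves and, crucially, "robustly connected" via Definition~\ref{def-component-hypergraph}(3) so that deleting any vertex keeps it connected) forces any overlap that shares $t$ vertices to share at least on the order of $t$ edges, so the net factor is $n^{-\Omega(t)}\lambda^{-O(t)}$, which is summable to $o(1)$ provided the lower bound on $\beta_{\mathcal U}$ from Lemma~\ref{lem-control-Aut-component-hypergraph} (i.e. the third hypothesis in \eqref{eq-condition-weak-recovery}) is large enough to dominate the $\lambda^{-O(t)}$ losses, and provided $\ell=\log n$ is small enough relative to $n$ that the $\ell$-fold products of such error terms still sum up. I would organize this by first reducing to a single necklace block via the product/transfer-matrix structure of $\mathcal H,\mathcal J$ over the $\ell$ blocks, then doing the overlap analysis block-by-block, and finally taking the $\ell$-th power of the block-level estimate.

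Putting these together, $\mathbb E_{\Pb}[(\Phi_{i,j}^{\mathcal J})^2] = \mathbb E_{\Pb}[\Phi_{i,j}^{\mathcal J}]^2 + o(1) = 1 + o(1)$, and since $x_*(i)x_*(j) \in \{\pm1\}$ we get $\mathbb E_{\Pb}[(\Phi_{i,j}^{\mathcal J} - x_*(i)x_*(j))^2] = \mathbb E_{\Pb}[(\Phi_{i,j}^{\mathcal J})^2] - 2x_*(i)x_*(j)\mathbb E_{\Pb}[\Phi_{i,j}^{\mathcal J}] + 1 = o(1)$. To get the quantitative bound $\le \delta^2$ rather than merely $o(1)$, one tracks constants carefully: the "bias" term $\big(\mathbb E_{\Pb}[\Phi_{i,j}^{\mathcal J}] - x_*(i)x_*(j)\big)^2$ and the overlap-error term must each be bounded by roughly $\delta^2/2$, and this is exactly arranged by the quantitative hypothesis $\frac{R^{-1}\lambda^{4m}(2pm)!}{2^{pm}(p!)^m (mp)!(2m)!} > (0.1\delta)^{-pm}$, which says the family $\mathcal U$ (and hence $\mathcal J$) is rich enough that the signal $\beta_{\mathcal J}$-normalized sum drowns out the overlap corrections by a margin controlled by $\delta$. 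I expect the main obstacle to be the bookkeeping in the overlap classification: precisely establishing that, for the necklace-chain hypergraphs in $\mathcal J$, any pair sharing $t$ internal vertices shares $\gtrsim t$ edges (so the $n$-power gain beats the $\lambda$-power and $|\mathsf{Aut}|$-combinatorial losses uniformly over all overlap types and over all $\ell=\log n$ blocks), and making the resulting geometric-type series sum to something explicitly $\le \delta^2$; the delete-a-vertex-connectivity condition in Definition~\ref{def-component-hypergraph}(3) and the automorphism-rigidity condition in Definition~\ref{def-mathcal-J}(4) are the structural inputs that make this work, but verifying the edge-sharing lower bound in full generality for hypergraphs (as opposed to graphs, where it is the familiar cycle-overlap argument) is the delicate combinatorial step.
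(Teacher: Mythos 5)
Your overall architecture (compute the conditional mean, reduce to a variance bound, classify pairs $(S,K)$ by their overlap, and let the richness of $\beta_{\mathcal U}$ absorb the $\lambda^{-O(1)}$ losses so that the final bound is $\delta^2$ rather than $o(1)$) matches the paper's, and you correctly locate where the hypothesis $\lambda^{4m}\beta_{\mathcal U}\gtrsim\delta^{-pm}$ enters. But there are two substantive problems with the core estimate as you state it.

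First, your accounting of the weight gain per overlapping pair is wrong. For $[S],[K]\in\mathcal J$ the ratio $\mathbb E_{\Pb}[f_Sf_K\mid x_*(i),x_*(j)]/\big(\mathbb E_{\Pb}[f_S\mid\cdot]\,\mathbb E_{\Pb}[f_K\mid\cdot]\big)$ equals $(1+\lambda^2n^{-p/2})^{k}(\lambda n^{-p/4})^{-2k}\approx \lambda^{-2k}n^{pk/2}$ where $k=|E(S)\cap E(K)|$ — each shared edge contributes a factor $n^{p/2}$, not merely a $\lambda^{-O(1)}$ times a bounded combinatorial factor. Consequently the balance against the entropy loss $n^{-(h-2)}$ ($h$ = shared vertices) is $n^{pk/2-h+2}\lambda^{-2k}$, and the whole proof hinges on the inequality $2h\ge pk+(\text{excess})$. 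Your structural claim is stated in the wrong direction: you assert that sharing $t$ vertices forces sharing $\Omega(t)$ edges, whereas what is needed (and what Definition~\ref{def-component-hypergraph} and the $2$-regularity actually give, via degree counting) is that sharing $k$ edges forces sharing strictly more than $pk/2$ vertices, with a quantified surplus. Sharing vertices without edges is harmless; sharing edges cheaply in vertices is the danger, and ruling it out with enough slack to beat the $\ell^{O(\cdot)}$ and $\lambda^{-4m\gamma}$ multiplicities is exactly the content of Lemmas~\ref{lem-key-property-Part-III} and \ref{lem-key-property-Part-IV}. Relatedly, your intermediate assertion that every non-disjoint overlap pattern contributes $o(1)$ is false and inconsistent with your own conclusion: the pattern in which the overlap is a union of whole blocks adjacent to the leaves (the paper's $\mathsf{Part}_{II}$) has $2h=pk+2$, so there is no power-of-$n$ gain at all, and its total contribution is $\Theta\big(\sum_{a+b\ge1}(\lambda^{4m}\beta^{\diamond}_{\mathcal U})^{-(a+b)}\big)$, which is only bounded by $\delta^2$ — this is the term that determines the final constant.

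Second, your proposed reduction "to a single necklace block, then take the $\ell$-th power" does not factorize: a single block $U'_{\mathtt k}$ of $K$ can meet several consecutive non-shared blocks of $S$, and the shared blocks of $S$ can sit inside $K$ at shifted positions via a nontrivial matching $\phi$, so a block-by-block estimate misses the global constraint that both chains run from $i$ to $j$. The paper handles this with the $\mathsf{SEQ}/\overline{\mathsf{SEQ}}$ bookkeeping and the four-part decomposition, which you flag as "the delicate combinatorial step" but do not supply; that step is the proof. A minor further point: since $x_*(i)x_*(j)$ is random, the identity you write for $\mathbb E_{\Pb}[(\Phi^{\mathcal J}_{i,j}-x_*(i)x_*(j))^2]$ must be run conditionally on $(x_*(i),x_*(j))$ and then averaged (the paper's law-of-total-variance step); as written it treats the target as deterministic.
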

Combining these variance bounds with Markov's inequality, we arrive at the following sufficient condition for Algorithm~\ref{alg:recovery-meta} to achieve weak recovery.
\begin{thm}{\label{MAIN-THM-recovery}}
    Assume \eqref{eq-condition-weak-recovery} holds. Then we have (below we write $\widehat{x}$ to be the output of Algorithm~\ref{alg:recovery-meta})
    \begin{align*}
        \Pb\Big( \frac{ \langle \widehat x, x_* \rangle }{ \| \widehat x \| \| x_* \| } \geq 1-\delta \Big) \geq 1-\delta \,.
    \end{align*}
\end{thm}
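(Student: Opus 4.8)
The plan is to deduce Theorem~\ref{MAIN-THM-recovery} from Proposition~\ref{main-prop-recovery} by a second-moment/Markov argument, exactly parallel to how Theorem~\ref{MAIN-THM-detection} follows from Proposition~\ref{main-prop-detection}. First I would fix the reference coordinate $i \in [n]$ chosen in Algorithm~\ref{alg:recovery-meta} and set $\widehat x(i) = 1 = x_*(i) x_*(i)$. For each $j \neq i$, the estimate $\widehat x(j) = \mathbf 1(\Phi^{\mathcal J}_{i,j} > 0) - \mathbf 1(\Phi^{\mathcal J}_{i,j} \leq 0)$ agrees with $x_*(i) x_*(j)$ unless $\Phi^{\mathcal J}_{i,j}$ has the wrong sign relative to $x_*(i) x_*(j) \in \{-1,+1\}$; in that event $|\Phi^{\mathcal J}_{i,j} - x_*(i) x_*(j)| \geq 1$. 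Hence by Markov's inequality applied to the nonnegative random variable $(\Phi^{\mathcal J}_{i,j} - x_*(i) x_*(j))^2$, together with \eqref{eq-L2-estimation-error},
\begin{align*}
    \Pb\big( \widehat x(j) \neq x_*(i) x_*(j) \big) \leq \Pb\Big( \big( \Phi^{\mathcal J}_{i,j} - x_*(i) x_*(j) \big)^2 \geq 1 \Big) \leq \mathbb E_{\Pb}\Big[ \big( \Phi^{\mathcal J}_{i,j} - x_*(i) x_*(j) \big)^2 \Big] \leq \delta^2 \,.
\end{align*}

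Next I would pass from this per-coordinate error bound to a bound on the overlap. Write $N = \#\{ j \in [n] : \widehat x(j) \neq x_*(i) x_*(j) \}$. Since $\widehat x(i) x_*(i) = x_*(i)^2 = 1$ and $\widehat x(j) x_*(i) x_*(j) \in \{-1, +1\}$ with value $-1$ exactly on the $N$ bad coordinates, we get $\langle \widehat x, x_*(i) x_* \rangle = (n - N) - N = n - 2N$, and $\langle \widehat x, x_* \rangle = x_*(i) \langle \widehat x, x_*(i) x_* \rangle$, so that $\frac{|\langle \widehat x, x_* \rangle|}{\|\widehat x\| \|x_*\|} = \frac{|n - 2N|}{n} \geq 1 - \frac{2N}{n}$ (using $\|\widehat x\| = \|x_*\| = \sqrt n$; note $\widehat x \in \{-1,+1\}^n$). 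By linearity of expectation, $\mathbb E_{\Pb}[N] = \sum_{j \neq i} \Pb(\widehat x(j) \neq x_*(i) x_*(j)) \leq (n-1)\delta^2 \leq n \delta^2$, and another application of Markov's inequality gives $\Pb(N \geq \tfrac{\delta}{2} n) \leq \frac{\mathbb E_{\Pb}[N]}{\delta n / 2} \leq 2\delta$. On the complementary event $\{N < \tfrac{\delta}{2} n\}$ we have $\frac{|\langle \widehat x, x_* \rangle|}{\|\widehat x\| \|x_*\|} > 1 - \delta$. To match the exact constant $1-\delta$ with failure probability $1-\delta$ in the statement, I would instead threshold $N$ at $\tfrac{\delta}{2}n$ with the Markov bound $\Pb(N \geq \tfrac{\delta}{2}n) \leq 2\delta^2/\delta = 2\delta$; since $\delta < 0.1$ one can absorb the factor $2$ by re-parametrizing $\delta$ (or by noting the statement only needs the conclusion for "some sufficiently small constant $\delta$", so replacing $\delta$ in \eqref{eq-condition-weak-recovery} with $\delta/2$ throughout yields the clean bound). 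I would state this re-parametrization once at the start of the proof so the arithmetic lines up.

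The only subtlety — and it is minor — is the sign ambiguity: Algorithm~\ref{alg:recovery-meta} recovers $x_*$ only up to the global sign $x_*(i)$, which is why the statement uses $|\langle \widehat x, x_* \rangle|$ (via the guarantee in Definition~\ref{def-recovery}) even though Theorem~\ref{MAIN-THM-recovery} is written without absolute values; I would remark that $\widehat x$ agrees with $x_*(i) \cdot x_*$ up to $N$ coordinates, so the overlap is automatically nonnegative once $N < n/2$, and the displayed inequality without absolute value is then legitimate. I do not anticipate a real obstacle here: all the analytic content — the correlation of the hypergraph-counting statistic $\Phi^{\mathcal J}_{i,j}$ with $x_*(i)x_*(j)$, and the resulting $L^2$ bound — is already packaged in Proposition~\ref{main-prop-recovery}, whose proof (deferred to later sections and relying on Lemma~\ref{lem-control-Aut-necklace-chain} and the combinatorial/second-moment machinery) is where the actual difficulty lies. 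The proof of Theorem~\ref{MAIN-THM-recovery} itself is a two-line application of Markov's inequality plus the elementary overlap bookkeeping above.
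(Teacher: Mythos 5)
Your proposal is correct and follows essentially the same route as the paper: Markov's inequality applied to the $L^2$ bound of Proposition~\ref{main-prop-recovery}, followed by elementary bookkeeping converting the per-coordinate errors into an overlap bound (the paper phrases this via $\|\widehat x - x_*(i)x_*\|^2$, which equals $4N$ in your notation). If anything you are slightly more careful than the paper on the constant-factor reparametrization of $\delta$ and on the global-sign ambiguity, both of which the paper's two-line proof glosses over.
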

Again, to resolve the computational issue of calculating $\Phi_{i,j}^{\mathcal J}$, in Section~\ref{subsec:approx-recovery}, we give a polynomial-time algorithm (see Algorithm~\ref{alg:cal-widetilde-Phi}) that computes an approximation $\widetilde{\Phi}_{i,j}^{\mathcal J}$ for $\Phi_{i,j}^{\mathcal J}$ using the strategy of color coding. The following result shows that the approximated similarity score $\widetilde{\Phi}_{i,j}^{\mathcal J}$ enjoys the same statistical guarantee under the same condition \eqref{eq-condition-weak-recovery} as Theorem~\ref{MAIN-THM-recovery}.
\begin{thm}{\label{MAIN-THM-recovery-algorithmic}}
    Proposition~\ref{main-prop-recovery} and Theorem~\ref{MAIN-THM-recovery} continues to hold with $\widetilde{\Phi}_{i,j}^{\mathcal J}$ in place of $\Phi_{i,j}^{\mathcal J}$. In addition, $\{ \widetilde{\Phi}_{i,j}^{\mathcal J}: i,j \in [n] \}$ can be computed in time $n^{C'+o(1)}$ for some constant $C'=C(m,p,\delta)$.
\end{thm}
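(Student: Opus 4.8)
The plan is to check the two assertions of Theorem~\ref{MAIN-THM-recovery-algorithmic} in turn. Recall that the color-coding estimator $\widetilde\Phi_{i,j}^{\mathcal J}$ constructed in Section~\ref{subsec:approx-recovery} via Algorithm~\ref{alg:cal-widetilde-Phi} has the following shape: one draws $N$ independent uniformly random colorings $c_1,\dots,c_N\colon[n]\to[K]$ with $K:=|V(J)|=mp\ell+1$ colors; under a coloring $c$, a copy $S\subset\mathsf K_n$ of some $[J]\in\mathcal J$ with $\mathsf L(S)=\{i,j\}$ is \emph{colorful} if $c|_{V(S)}$ is injective, which for each fixed $S$ happens with probability exactly $\rho_K:=K!/K^{K}$; one then forms the analogue $\widetilde\Phi_{i,j}^{(c)}$ of \eqref{eq-def-Phi-i,j-mathcal-J} with the inner sum restricted to colorful copies and an extra factor $\rho_K^{-1}$, and outputs $\widetilde\Phi_{i,j}^{\mathcal J}:=\tfrac1N\sum_{a=1}^N\widetilde\Phi_{i,j}^{(c_a)}$. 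Since $\rho_K$ does not depend on $S$, for every fixed $\bm Y$ we have $\mathbb E_{c}[\widetilde\Phi_{i,j}^{(c)}\mid\bm Y]=\Phi_{i,j}^{\mathcal J}$, hence $\mathbb E[\widetilde\Phi_{i,j}^{\mathcal J}\mid\bm Y]=\Phi_{i,j}^{\mathcal J}$; and as the colorings are independent of $(\bm Y,x_*)$, the Pythagorean identity gives $\mathbb E_{\Pb}\big[(\widetilde\Phi_{i,j}^{\mathcal J}-x_*(i)x_*(j))^2\big]=\mathbb E_{\Pb}\big[\operatorname{Var}(\widetilde\Phi_{i,j}^{\mathcal J}\mid\bm Y)\big]+\mathbb E_{\Pb}\big[(\Phi_{i,j}^{\mathcal J}-x_*(i)x_*(j))^2\big]$. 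The second term is $\le\delta^2/2$ by Proposition~\ref{main-prop-recovery} applied with target $\delta/\sqrt2$ in place of $\delta$ (which only strengthens the hypothesis on $m$ in \eqref{eq-condition-weak-recovery}), so it suffices to make the first term $\le\delta^2/2$.

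To bound $\mathbb E_{\Pb}[\operatorname{Var}(\widetilde\Phi_{i,j}^{\mathcal J}\mid\bm Y)]=\tfrac1N\mathbb E_{\Pb}[\operatorname{Var}_c(\widetilde\Phi_{i,j}^{(c)}\mid\bm Y)]$, expand $\operatorname{Var}_c(\widetilde\Phi_{i,j}^{(c)}\mid\bm Y)$ as a double sum over pairs $(S,S')$ of copies with leaves $\{i,j\}$: the coefficient of $f_S(\bm Y)f_{S'}(\bm Y)$ is $\Pr_c[S,S'\text{ both colorful}]/\rho_K^2-1$, and a short computation shows this equals $K^{r}/K^{\underline r}-1$ when $|V(S)\cap V(S')|=r$, where $K^{\underline r}=K(K-1)\cdots(K-r+1)$. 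This coefficient is $0$ for $r=0$ and at most $K^{K}/K!\le e^{K}=n^{O(mp)}$ for every $r$ (here we use $\ell=\log n$). Taking $\Pb$-expectation, the vertex-disjoint pairs drop out and every remaining pair is inflated by at most $n^{O(mp)}$; after the normalization in \eqref{eq-def-Phi-i,j-mathcal-J}, the resulting sum $\mathbb E_{\Pb}\big[\sum_{S,S'\colon V(S)\cap V(S')\neq\emptyset}f_S(\bm Y)f_{S'}(\bm Y)\big]$ over leaf-pinned copies is, up to a constant factor, exactly the overlapping-pair contribution estimated in the second-moment analysis of Section~\ref{sec:stat-analysis} en route to \eqref{eq-L2-estimation-error}. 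Hence $\mathbb E_{\Pb}[\operatorname{Var}_c(\widetilde\Phi_{i,j}^{(c)}\mid\bm Y)]\le n^{O(mp)}\cdot O_{m,p,\delta}(1)$, and choosing $N=n^{O(mp)}$ large enough makes the first term $\le\delta^2/2$, completing the proof that \eqref{eq-L2-estimation-error} holds for $\widetilde\Phi$; the Markov-inequality argument behind Theorem~\ref{MAIN-THM-recovery} then applies verbatim.

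For the running time, fix a coloring $c$. All colorful weighted counts $\sum_{[J]\in\mathcal J}\sum_{S\cong J,\ \mathsf L(S)=\{i,j\},\ S\text{ colorful}}f_S(\bm Y)$, over all leaf-pairs $\{i,j\}$ at once, are computed by a dynamic program running along the length-$\ell$ chain of blocks $U_1,\dots,U_\ell$ from Definition~\ref{def-mathcal-J}: the DP table is indexed by the current junction vertex $u\in[n]$ together with the set $C\subseteq[K]$ of colors used so far, giving $n\cdot 2^{K}=n^{1+O(mp)}$ entries since $2^{K}=2^{mp\ell+1}=n^{O(mp)}$; one transition appends a block $[U]\in\mathcal U(m,p)$ (a constant-size family), and for each shape $[U]$ and each set of $mp$ fresh colors it adds $\prod_{e\in E(U)}\bm Y_{\phi(e)}$ summed over color-respecting embeddings $\phi$ of $U$. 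As each $U$ has $2m$ hyperedges and fractional edge-cover number $O(m)$ (assign weight $\tfrac12$ to every hyperedge), this inner weighted count is a conjunctive-query evaluation computable in time $n^{O(m)}$ by a worst-case optimal join, i.e.\ by the bounded fractional hypertreewidth algorithm of \cite{GLS02}. Multiplying the $N=n^{O(mp)}$ colorings, the $\ell=O(\log n)$ chain steps, the $n^{1+O(mp)}$ table entries, and the $n^{O(m)}$ cost per transition, the total running time is $n^{C'+o(1)}$ with $C'=C'(m,p)$; since $N$ and the admissible value of $m$ also depend on $\delta$, this is $C'=C'(m,p,\delta)$ as claimed.

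The main obstacle is the variance bound of the second paragraph: one must re-examine the classification of overlapping pairs of necklace-chains from Section~\ref{sec:stat-analysis} in the presence of the color-coincidence weights $K^{r}/K^{\underline r}\le e^{K}$ and confirm that these never exceed $n^{O(mp)}$, so that a polynomial number of colorings suffices to drive the extra fluctuation below $\delta^2/2$. A secondary, routine point is that the dynamic program must be arranged to enumerate only those chains $[J]$ whose automorphism group fixes the two endpoints, i.e.\ only $[J]\in\mathcal J$ in the sense of Definition~\ref{def-mathcal-J}(4); this is handled by a minor modification of the recursion in Section~\ref{subsec:approx-recovery}, analogous to the one used for the necklace family $\mathcal H$ in Theorem~\ref{MAIN-THM-detection-algorithmic} (where the chain is instead closed into a cycle by additionally recording the starting junction vertex, and where the smaller range $\ell=o(\log n/\log\log n)$ makes $2^{K}=n^{o(1)}$, so the color-coding overhead is negligible).
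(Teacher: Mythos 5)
Your overall strategy is sound and lands on the same two pillars as the paper (an $L_2$ approximation bound for the color-coded statistic, plus a chain dynamic program over the blocks $U_1,\dots,U_\ell$), but the variance argument takes a genuinely different and cruder route, and one step in it is mis-attributed. The paper fixes the number of colorings at $t=\lceil 1/\varkappa\rceil\approx e^{mp\ell}=n^{mp+o(1)}$ and proves $\mathbb E_{\Pb}[(\widetilde\Phi-\Phi)^2]\le\delta^2+o(1)$ by splitting the pairs $(S,K)$ into three classes: edge-overlapping pairs (controlled by Lemma~\ref{lem-very-technical-sec-3}, with the color weight bounded crudely by $1/(t\varkappa)\le 1$), edge-disjoint pairs with vertex overlap at most $(mp\ell)^{0.1}$ (where the color-coincidence weight is shown to be $o(1)$), and edge-disjoint pairs with large vertex overlap (which are so few that a weight of $1$ is affordable). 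You instead bound the color-coincidence coefficient $K^{r}/K^{\underline r}-1$ uniformly by $K^K/K!\le e\cdot n^{mp}$ and absorb everything into $\mathbb E_{\Pb}[\Phi^2]=O(1)$, then divide by the number of colorings $N$. This works, and is simpler, but only because you are free to take $N\gg e^{K}/\delta^2$, i.e.\ strictly more colorings than the paper's $t=\lceil1/\varkappa\rceil$: with the paper's choice of $t$ your uniform bound yields only $\tfrac1N\cdot e^K\cdot O(1)=O(1)$, not $\le\delta^2/2$. So your argument proves the theorem for a (polynomially) more expensive variant of the estimator, while the paper's refined decomposition is what lets it keep $t$ at the minimal $\lceil1/\varkappa\rceil$. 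Either way the running time stays $n^{C'(m,p,\delta)+o(1)}$, so the theorem as a statement about the existence of such an algorithm is established.

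The one step I would correct: your claim that the post-expectation sum $\mathbb E_{\Pb}\bigl[\sum_{V(S)\cap V(S')\neq\emptyset}f_Sf_{S'}\bigr]$ is ``up to a constant factor, exactly the overlapping-pair contribution estimated in the second-moment analysis of Section~\ref{sec:stat-analysis}'' is not right. Lemma~\ref{lem-very-technical-sec-3} only controls pairs with $E(S)\cap E(S')\neq\emptyset$; the pairs that are edge-disjoint but vertex-overlapping contribute nothing to $\operatorname{Var}_\Pb[\Phi]$ (their covariance vanishes) yet are exactly the new class that color coding correlates, and they are \emph{not} small --- their normalized total is of order $\mathbb E_\Pb[\Phi]^2\approx 1$, not $\delta^2$. (Note also that every leaf-pinned pair shares $\{i,j\}$, so ``vertex-disjoint pairs drop out'' is vacuous here; what saves the $r=2$ terms is that the coefficient $\tfrac{1}{K-1}$ is $o(1)$.) The correct, and sufficient, justification for your bound is simply positivity: all $\mathbb E_\Pb[f_Sf_{S'}]\ge0$ by Lemma~\ref{lem-conditional-exp-f-S-f-K}, so the weighted sum is at most $(\max\text{coeff})\cdot\mathbb E_\Pb[\Phi^2]=n^{mp+o(1)}\cdot O(1)$, which is all you use downstream. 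With that repair the argument goes through.
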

\begin{remark}
    Our result in Theorem~\ref{MAIN-THM-recovery} also shows a smooth computational transition in another sense: by increasing $m$ (while remaining $\Theta(1)$, corresponding to increasing the computational cost of the algorithm while keeping it polynomial) so that \eqref{eq-condition-weak-recovery} holds for smaller constant $\delta$, we can found a polynomial time weak recovery algorithm whose failure probability and recovery error is an arbitrary small constant. In particular, when $\lambda=\omega(1)$ we can choose a suitable $\delta=o(1)$ so that our algorithm achieves strong recovery. 
\end{remark}

\section{Statistical analysis of subhypergraph counts}{\label{sec:stat-analysis}}

In this section, we establish the statistical guarantee of our subhypergraph counting statistic $f_{\mathcal H}$ and $\{ \Phi_{i,j}^{\mathcal J} \}$, as stated in Propositions~\ref{main-prop-detection} and \ref{main-prop-recovery}. Then, we prove of Theorems~\ref{MAIN-THM-detection} and \ref{MAIN-THM-recovery} provided with Propositions~\ref{main-prop-detection} and \ref{main-prop-recovery}.

\subsection{Proof of Proposition~\ref{main-prop-detection}}{\label{proof-prop-2.5}}

\begin{lemma}{\label{lem-mean-var-f-H-part-1}}
    Suppose that \eqref{eq-condition-strong-detection} holds, then
    \begin{align}
        &\mathbb E_{\Qb}[ f_{\mathcal H} ] =0 \,, \label{eq-mean-Qb-f-H} \\
        &\mathbb E_{\Pb}[ f_{\mathcal H} ] =[1+o(1)] \lambda^{2m\ell} \sqrt{\beta_{\mathcal H}} \,, \label{eq-mean-pb-f-H} \\
        &\mathbb E_{\Qb}[ f_{\mathcal H}^2 ] = 1+o(1) \,. \label{eq-var-Qb-f-H}
    \end{align}
\end{lemma}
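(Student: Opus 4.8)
\textbf{Proof proposal for Lemma~\ref{lem-mean-var-f-H-part-1}.}

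The plan is to compute the three moments directly from the definition \eqref{eq-def-f-mathcal-H} by expanding $f_{\mathcal H}$ into a sum of monomials $f_S(\bm Y) = \prod_{e \in E(S)} \bm Y_e$ and using the independence structure of the noise tensor $\bm G$. Recall that each $[H] \in \mathcal H$ has $|V(H)| = mp\ell$, $|E(H)| = 2m\ell$, and is $2$-regular.

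\emph{Null mean \eqref{eq-mean-Qb-f-H}.} Under $\Qb$ the entries $\bm Y_e = \bm G_e$ are (for hyperedges with distinct vertices) mean-zero Gaussians, and $\E_\Qb[\prod_{e \in E(S)} \bm G_e] = 0$ whenever some hyperedge appears an odd number of times — which always holds here since $S$ is a simple hypergraph (each $e \in E(S)$ appears exactly once) and $|E(S)| = 2m\ell \geq 1$. Thus every monomial has zero expectation and \eqref{eq-mean-Qb-f-H} follows. (One should note the mild subtlety that $\bm G_e$ for a hyperedge $e$ with a repeated index is not standard normal, but the hypergraphs in $\mathcal U$, hence in $\mathcal H$, have genuine $p$-element hyperedges, and distinct vertices within each hyperedge, so $\bm G_e$ is genuinely $N(0,1)$; this is where conditions defining $\mathcal U$ enter.)

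\emph{Planted mean \eqref{eq-mean-pb-f-H}.} Under $\Pb$ we have $\bm Y_e = \lambda n^{-p/4} \prod_{v \in e} x_*(v) + \bm G_e$, so $\E_\Pb[\bm Y_e] = \lambda n^{-p/4}\prod_{v \in e} x_*(v)$ and, since distinct hyperedges of a simple hypergraph involve independent Gaussians,
\[
  \E_\Pb\Big[ \prod_{e \in E(S)} \bm Y_e \Big] = \prod_{e \in E(S)} \lambda n^{-p/4} \prod_{v \in e} x_*(v) = \lambda^{2m\ell} n^{-pm\ell/2} \prod_{v \in V(S)} x_*(v)^{\mathsf{deg}_S(v)} = \lambda^{2m\ell} n^{-pm\ell/2},
\]
using $2$-regularity so that $x_*(v)^{\mathsf{deg}_S(v)} = x_*(v)^2 = 1$. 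Summing over $[H] \in \mathcal H$ and over all $S \cong H$, and recalling that the number of labelled copies of $H$ in $\mathsf K_n$ is $n(n-1)\cdots(n-mp\ell+1)/|\mathsf{Aut}(H)| = [1+o(1)]\, n^{mp\ell}/|\mathsf{Aut}(H)|$ (here $\ell = o(\log n/\log\log n)$ guarantees $mp\ell = o(\sqrt n)$ say, so the falling factorial is $[1+o(1)]n^{mp\ell}$), the normalization $1/\sqrt{n^{mp\ell}\beta_{\mathcal H}}$ combines with $\sum_{[H]} |\mathsf{Aut}(H)|^{-1} = \beta_{\mathcal H}$ to give $[1+o(1)]\lambda^{2m\ell}\sqrt{\beta_{\mathcal H}}$.

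\emph{Null second moment \eqref{eq-var-Qb-f-H}.} This is the main obstacle. Expanding,
\[
  \E_\Qb[f_{\mathcal H}^2] = \frac{1}{n^{mp\ell}\beta_{\mathcal H}} \sum_{[H],[H'] \in \mathcal H} \ \sum_{\substack{S \cong H,\, S' \cong H'}} \E_\Qb\Big[ \prod_{e \in E(S)} \bm G_e \prod_{e' \in E(S')} \bm G_{e'} \Big].
\]
By Wick/independence, the inner expectation is nonzero only when every hyperedge of $S \cup S'$ is covered an even number of times, i.e. (since each is simple) exactly when $E(S) = E(S')$ — but one must be careful: it suffices that $E(S) \cap E(S')$ accounts for all edges with multiplicity two, which because both are simple forces $E(S) = E(S')$, hence $S = S'$ as hypergraphs once we also match vertices (isolated vertices cannot occur since every vertex of $H$ has degree $2$). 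So only the diagonal $S = S'$ survives, each contributing $\prod_{e}\E[\bm G_e^2] = 1$. The number of such diagonal terms is $\sum_{[H] \in \mathcal H} (\text{number of labelled copies of } H) = [1+o(1)] n^{mp\ell} \sum_{[H]} |\mathsf{Aut}(H)|^{-1} = [1+o(1)] n^{mp\ell}\beta_{\mathcal H}$, and dividing by $n^{mp\ell}\beta_{\mathcal H}$ yields $1 + o(1)$. The delicate points to get right are: (i) justifying $E(S)=E(S') \Rightarrow S=S'$ using that hypergraphs in $\mathcal H$ have no isolated vertices (so $V(S)$ is determined by $E(S)$); (ii) the uniform control of falling factorials $n^{\underline{mp\ell}} = [1+o(1)]n^{mp\ell}$ across all terms, valid since $mp\ell = O(\ell) = o(\log n)$; and (iii) confirming that non-diagonal pairs truly give zero — including the case $[H] \neq [H']$ but $S, S'$ share some but not all edges, where the leftover edges appear with odd multiplicity and kill the Gaussian moment. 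Combining these three computations completes the proof.
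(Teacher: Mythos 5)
Your proposal is correct and follows essentially the same route as the paper: zero mean under $\Qb$ by linearity, the planted mean via conditioning on $x_*$ and $2$-regularity (so the sign product is identically $1$) together with the $(1+o(1))n^{mp\ell}/|\mathsf{Aut}(H)|$ count of labelled copies, and the second moment via the orthogonality $\mathbb E_{\Qb}[f_S f_K]=\mathbf 1_{\{S=K\}}$ reducing to the diagonal count. The extra care you take in justifying the orthogonality (independence of $\bm G_e$ across distinct hyperedges, no isolated vertices so $E(S)$ determines $S$) is sound and only makes explicit what the paper asserts in \eqref{eq-standard-orthogonal}.
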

\begin{proof}
    Recall \eqref{eq-def-f-mathcal-H}. Since $\mathbb E_{\Qb}[ f_S(\bm Y) ]=0$ for all $S \subset \mathsf K_n, E(S) \neq \emptyset$, we have $\mathbb E_{\Qb}[f_{\mathcal H}]=0$ by linearity. In addition, recall \eqref{eq-def-tensor-PCA}, by writing $x^{e} = x_{i_1} \ldots x_{i_p}$ for $e=\{ i_1,\ldots,i_p \}$ we have
    \begin{align*}
        \mathbb E_{\Pb}[f_S(\bm Y)] &= \mathbb E_{x\sim\mu} \mathbb E_{\Pb}\Big[ \prod_{e \in E(S)} \big( \lambda n^{-\frac{p}{4}} x^{e} + \bm G_e \big) \mid x \Big] \\
        &= \mathbb E_{x \sim \mu}\Big[ \prod_{ e \in E(S) } \big( \lambda n^{-\frac{p}{4}} x^{e} \big) \Big] = \lambda^{|E(S)|} n^{-\frac{p|E(S)|}{4}} \mathbb E_{x \sim \mu}\Big[ \prod_{ e \in E(S) } x^{e} \Big] \\
        &= \lambda^{2m\ell} n^{ -\frac{pm\ell}{2} } \,,
    \end{align*}
    where the last equality follows from $|E(S)|=2m\ell$ and 
    \begin{align*}
        \prod_{ e \in E(S) } x^{e} = \prod_{ e\in E(S) } \prod_{ v \in V(S), v \in e } x_v = \prod_{ v \in V(S) } x_v^{ \mathsf{deg}_S(v) } = \prod_{ v \in V(S) } x_v^{2} = 1
    \end{align*}
    for all $[S] \in \mathcal H$. Thus, we have
    \begin{align*}
        \mathbb E_{\Pb}[f_{\mathcal H}] &\overset{\eqref{eq-def-f-mathcal-H}}{=} \frac{1}{\sqrt{ n^{mp\ell}\beta_{\mathcal H} }}  \sum_{ [H] \in \mathcal H } \sum_{S \subset \mathsf{K}_n, S \cong H} \lambda^{2m\ell} n^{ -\frac{pm\ell}{2} } \\
        &= \frac{ \lambda^{2m\ell} }{ n^{pm\ell}\sqrt{\beta_{\mathcal H}} }  \sum_{[H] \in \mathcal H} \#\big\{ S \cong \mathsf K_n: S \cong H \big\} \\
        & \overset{\text{Lemma~\ref{lem-prelim-graphs},(2)}}{=} [1+o(1)] \cdot \frac{ \lambda^{2m\ell} }{ n^{pm\ell}\sqrt{\beta_{\mathcal H}} }  \sum_{[H] \in \mathcal H} \frac{n^{pm\ell}}{ |\mathsf{Aut}(H)| } \overset{\eqref{eq-def-beta-mathcal-H}}{=} \lambda^{2m\ell} \sqrt{\beta_{\mathcal H}} \,.
    \end{align*}
    Also, from Lemma~\ref{lem-control-Aut-necklace-hypergraph} and \eqref{eq-condition-strong-detection} we see that $\lambda^{2m\ell} \sqrt{\beta_{\mathcal H}}=\omega(1)$. Finally, note that
    \begin{align}{\label{eq-standard-orthogonal}}
        \mathbb E_{\Qb}\big[ f_{S}(\bm Y) f_{K}(\bm Y) \big] = \mathbf 1_{ \{ S=K \} } \,,
    \end{align}
    we have
    \begin{align*}
        \mathbb E_{\Qb}[f_{\mathcal H}^2] &\overset{\eqref{eq-def-f-mathcal-H}}{=} \frac{ 1 }{ n^{pm\ell} \beta_{\mathcal H} } \sum_{ [H],[I] \in \mathcal H } \sum_{ \substack{ S,K \subset \mathsf K_n \\ S \cong H, K \cong I } } \mathbb E_{\Qb}\big[ f_{S}(\bm Y) f_{K}(\bm Y) \big] \\
        &= \frac{ 1 }{ n^{pm\ell} \beta_{\mathcal H} } \sum_{ [H],[I] \in \mathcal H } \sum_{ \substack{ S,K \subset \mathsf K_n \\ S \cong H, K \cong I } } \mathbf 1_{ \{ S=K \} } \\
        &= \frac{ 1 }{ n^{pm\ell} \beta_{\mathcal H} } \sum_{ [H] \in \mathcal H } \#\big\{ S \cong \mathsf K_n: S \cong H \big\}  \\
        &\overset{\text{Lemma~\ref{lem-prelim-graphs},(2)}}{=} [1+o(1)] \cdot \frac{ 1 }{ n^{pm\ell} \beta_{\mathcal H} } \sum_{ [H] \in \mathcal H } \frac{ n^{pm\ell} }{ |\mathsf{Aut}(H)| } \overset{\eqref{eq-def-beta-mathcal-H}}{=} 1+o(1) \,. \qedhere
    \end{align*}
\end{proof}

Now we bound the variance of $f_{\mathcal H}$ under the alternative hypothesis $\Pb$. Note that
\begin{align}{\label{eq-var-Pb-f-H-relax-1}}
    \operatorname{Var}_{\Pb}[f_{\mathcal H}] \overset{\eqref{eq-def-f-mathcal-H}}{=} \frac{ 1 }{ n^{mp\ell} \beta_{\mathcal H} } \sum_{[H],[I] \in \mathcal H} \sum_{ \substack{ S,K \subset \mathsf K_n \\ S \cong H, K \cong I } } \operatorname{Cov}_{\Pb}( f_S(\bm Y), f_K(\bm Y) ) \,.
\end{align}

\begin{lemma}{\label{lem-est-cov-f-S-f-K}}
    We have
    \begin{align*}
        \operatorname{Cov}_{\Pb}( f_S(\bm Y), f_K(\bm Y) ) \leq \mathbf 1_{ \{ E(S) \cap E(K) \neq \emptyset \} } \cdot \lambda^{4m\ell} n^{-pm\ell} \cdot \big( \lambda^{-1} n^{\frac{p}{4}} \big)^{2|E(S) \cap E(K)|}
    \end{align*}
\end{lemma}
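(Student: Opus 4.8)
The plan is to evaluate $\operatorname{Cov}_{\Pb}(f_S(\bm Y), f_K(\bm Y))$ in closed form, by conditioning on the spike $x_*$ and using that the noise entries indexed by distinct hyperedges are independent. Recall from \eqref{eq-def-tensor-PCA} that $\bm Y_e = \lambda n^{-p/4} x_*^e + \bm G_e$ with $x_*^e = \prod_{i \in e} x_*(i)$, and that the symmetrization \eqref{eq-def-symmetric-tensor} makes $\{\bm G_e\}$ a family of independent standard Gaussians indexed by the $p$-subsets of $[n]$ (for $e \ne e'$ the entries of $\widetilde{\bm G}$ underlying $\bm G_e$ and $\bm G_{e'}$ are disjoint, the same fact already used for \eqref{eq-standard-orthogonal}). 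Put $E_0 = E(S) \cap E(K)$, $E_1 = E(S) \setminus E(K)$, $E_2 = E(K) \setminus E(S)$, so that
\[
 f_S(\bm Y)\, f_K(\bm Y) \;=\; \prod_{e \in E_0} \bm Y_e^2 \;\cdot\!\! \prod_{e \in E_1 \sqcup E_2}\!\! \bm Y_e \,.
\]
Conditioning on $x_*$, these factors are conditionally independent, with $\mathbb E[\bm Y_e \mid x_*] = \lambda n^{-p/4} x_*^e$ and, since $(x_*^e)^2 = 1$, $\mathbb E[\bm Y_e^2 \mid x_*] = 1 + \lambda^2 n^{-p/2}$; hence
\[
 \mathbb E_{\Pb}\big[ f_S(\bm Y) f_K(\bm Y) \,\big|\, x_* \big] \;=\; \big( 1 + \lambda^2 n^{-p/2} \big)^{|E_0|} \big( \lambda n^{-p/4} \big)^{|E_1| + |E_2|} \!\!\prod_{e \in E_1 \sqcup E_2}\!\! x_*^e \,.
\]

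Next I would remove the residual $x_*$-dependence using $2$-regularity: since every vertex of $S$ and of $K$ has degree exactly $2$ (Definition~\ref{def-mathcal-H}), $\prod_{e \in E(S)} x_*^e = \prod_{v \in V(S)} x_*(v)^{\mathsf{deg}_S(v)} = 1$ and likewise for $K$; multiplying these two identities and cancelling the squared factors over $E_0$ gives $\prod_{e \in E_1 \sqcup E_2} x_*^e = 1$ \emph{deterministically}. Thus $\mathbb E_{\Pb}[f_S f_K] = (1 + \lambda^2 n^{-p/2})^{|E_0|} (\lambda n^{-p/4})^{|E_1| + |E_2|}$; combining this with $\mathbb E_{\Pb}[f_S]\mathbb E_{\Pb}[f_K] = \lambda^{4m\ell} n^{-pm\ell}$ (from the evaluation of $\mathbb E_{\Pb}[f_S]$ in the proof of Lemma~\ref{lem-mean-var-f-H-part-1}), the identity $|E_1| + |E_2| = |E(S)| + |E(K)| - 2|E_0| = 4m\ell - 2|E_0|$, and the simplification $(1 + \lambda^2 n^{-p/2})\lambda^{-2}n^{p/2} = 1 + \lambda^{-2} n^{p/2}$, I arrive at the closed form
\[
 \operatorname{Cov}_{\Pb}\big( f_S(\bm Y), f_K(\bm Y) \big) \;=\; \lambda^{4m\ell} n^{-pm\ell} \Big[ \big( 1 + \lambda^{-2} n^{p/2} \big)^{|E_0|} - 1 \Big] \,.
\]
When $E(S) \cap E(K) = \emptyset$ the bracket vanishes, which gives the indicator; when $|E_0| \ge 1$, since $\lambda^{-2}n^{p/2} \to \infty$ dominates $|E_0| \le 2m\ell$ the leading binomial term governs, and the bracket is bounded by $(\lambda^{-2} n^{p/2})^{|E_0|} = (\lambda^{-1} n^{p/4})^{2|E_0|}$, yielding the lemma.

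I do not expect a genuine obstacle here; the argument is a direct second-moment computation. The two points that require attention are (i) invoking the conditional independence of $\{\bm G_e\}$ across distinct hyperedges, which is exactly where the precise form of the symmetrization \eqref{eq-def-symmetric-tensor} enters, and (ii) the cancellation $\prod_{e \in E_1 \sqcup E_2} x_*^e = 1$, where $2$-regularity of the hypergraphs in $\mathcal H$ is used — this is the structural reason the first and second moments align so cleanly. The only bookkeeping subtlety is the final step: one should check that the lower-order terms of $(1+\lambda^{-2}n^{p/2})^{|E_0|}-1$ are dominated by $(\lambda^{-1}n^{p/4})^{2|E_0|}$ in the regime at hand (alternatively one may carry a benign factor $2^{|E_0|}$, which is absorbed in the subsequent summation over $S,K$ in \eqref{eq-var-Pb-f-H-relax-1}).
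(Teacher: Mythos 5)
Your proposal is correct and follows essentially the same route as the paper: condition on $x_*$, use independence of the $\bm G_e$ across distinct hyperedges to factor the second moment, cancel the residual $\prod_{e\in E(S)\triangle E(K)}x_*^e$ via $2$-regularity, and compare with the product of means. The only cosmetic difference is that you evaluate the covariance exactly and bound the bracket $(1+\lambda^{-2}n^{p/2})^{|E_0|}-1$, while the paper simply drops the (positive) product of means and bounds the second moment; both arguments incur the same harmless $[1+o(1)]$ (or $2^{|E_0|}$) factor relative to the literal statement, which you correctly flag and which is absorbed in the downstream summation exactly as in the paper.
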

\begin{proof}
    Note that
    \begin{align}
        \mathbb E_{\Pb}[ f_S(\bm Y) f_{K}(\bm Y) ] &\overset{\eqref{eq-def-tensor-PCA},\eqref{eq-def-f-mathcal-H}}{=} \mathbb E_{\Pb}\Big[ \prod_{ e \in E(S) } \big( \lambda n^{-\frac{p}{4}} x^{e}+\bm G_{e} \big) \prod_{ e \in E(K) } \big( \lambda n^{-\frac{p}{4}} x^{e}+\bm G_{e} \big) \Big] \nonumber \\
        &= \mathbb E_{x \sim\mu} \mathbb E_{\Pb}\Big[ \prod_{ e \in E(S) \cap E(K) } \big( \lambda n^{-\frac{p}{4}} x^{e}+\bm G_{e} \big)^2 \prod_{ e \in E(S) \triangle E(K) } \big( \lambda n^{-\frac{p}{4}} x^{e}+\bm G_{e} \big) \mid x \Big] \nonumber \\
        &= \mathbb E_{x \sim\mu}\Big[ \prod_{ e \in E(S) \cap E(K) } \big( 1+\lambda^2 n^{-\frac{p}{2}} \big) \prod_{ e \in E(S) \triangle E(K) } \big( \lambda n^{-\frac{p}{4}} x^{e} \big)  \Big] \,. \label{eq-cov-f-S-f-K-simplify-1}
    \end{align}
    In addition, from Definition~\ref{def-mathcal-H} we see that $|E(S)|=2m\ell$ and $\mathsf{deg}_S(v)=2$ for all $v \in V(S)$ if $[S] \in\mathcal H$. Thus, for $[S],[K] \in \mathcal H$ we have
    \begin{align*}
        \prod_{ e \in E(S) \triangle E(K) } x^{e} &= \prod_{ e \in E(S) \triangle E(K) } \prod_{ v \in e } x_v = \prod_{v \in V(S) \cup V(K)} x_v^{\#\{ e \in E(S) \triangle E(K): v \in e \}} \\
        &= \prod_{v \in V(S) \cup V(K)} x_v^{ \#\{ e \in E(S): v \in e \} + \#\{ e \in E(K): v \in e \} - 2 \#\{ e \in E(S) \cap E(K): v \in e \} } \\
        &= \prod_{v \in V(S) \cup V(K)} x_v^{ 2+2-2\#\{ e \in E(S) \cap E(K): v \in e \} } =1 \,.
    \end{align*}
    Thus, we see that 
    \begin{align*}
        \eqref{eq-cov-f-S-f-K-simplify-1} = \big( \lambda n^{-\frac{p}{4}} \big)^{|E(S) \triangle E(K)|} \big( 1+\lambda^2 n^{-\frac{p}{2}} \big)^{ |E(S) \cap E(K)| } 
    \end{align*}
    Recall that in Lemma~\ref{lem-mean-var-f-H-part-1} we have shown that 
    \begin{align*}
        \mathbb E_{\Pb}[f_S(\bm Y)]=\mathbb E_{\Pb}[f_K(\bm Y)]= \lambda^{2m\ell} n^{-\frac{pm\ell}{2}} \,,
    \end{align*}
    thus we have $\operatorname{Cov}_{\Pb}( f_S(\bm Y),f_K(\bm Y) )=0$ if $E(S) \cap E(K)=\emptyset$. In addition, when $E(S) \cap E(K) \neq \emptyset$ we have
    \begin{align*}
        \operatorname{Cov}_{\Pb}( f_S(\bm Y),f_K(\bm Y) ) &\leq \mathbb E_{\Pb}[ f_S(\bm Y) f_{K}(\bm Y) ] \leq (1+o(1)) \big( \lambda n^{-\frac{p}{4}} \big)^{|E(S) \triangle E(K)|} \\
        &= [1+o(1)] \lambda^{4m\ell} n^{-pm\ell} \cdot \big( \lambda^{-1} n^{\frac{p}{4}} \big)^{2|E(S) \cap E(K)|} \,. \qedhere
    \end{align*}
\end{proof}

\begin{lemma}{\label{lem-mean-var-f-H-part-2}}
    Assume \eqref{eq-condition-strong-detection} holds. Then $\frac{ \operatorname{Var}_{\Pb}[f_{\mathcal H}] }{ \mathbb E_{\Pb}[f_{\mathcal H}]^2 }=o(1)$.
\end{lemma}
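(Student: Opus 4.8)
The plan is to combine the decomposition~\eqref{eq-var-Pb-f-H-relax-1}, the covariance bound of Lemma~\ref{lem-est-cov-f-S-f-K}, and the mean formula of Lemma~\ref{lem-mean-var-f-H-part-1}, organizing the resulting double sum over pairs $(S,K)$ according to the number of shared hyperedges. Throughout, write $F = E(S)\cap E(K)$ and let $W_0 = W_0(S,K)$ be the subhypergraph of $S$ with $E(W_0)=F$ and $V(W_0)=\bigcup_{e\in F}e$. Since $W_0\subseteq S\cong H\in\mathcal H$ and every vertex of $H$ has degree $2$, every vertex of $W_0$ has degree at most $2$, so $p\,|F| = \sum_{v\in V(W_0)}\mathsf{deg}_{W_0}(v)\le 2|V(W_0)|$, i.e. $|V(W_0)|\ge \tfrac p2|F|$. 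The covariance vanishes when $F=\emptyset$, so I would split the sum into the diagonal $S=K$ and the off-diagonal $S\neq K$.

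For the diagonal, $\operatorname{Cov}_{\Pb}(f_S,f_S)=\operatorname{Var}_{\Pb}(f_S)\le\mathbb E_{\Pb}[f_S^2]=(1+\lambda^2 n^{-p/2})^{2m\ell}=1+o(1)$ since $\ell=o(\log n)$; hence by Lemma~\ref{lem-prelim-graphs}(2) the diagonal contributes $\frac{1}{n^{mp\ell}\beta_{\mathcal H}}\sum_{[H]\in\mathcal H}\#\{S\cong H\}\cdot(1+o(1))=1+o(1)$ to $\operatorname{Var}_{\Pb}[f_{\mathcal H}]$. On the other hand $\mathbb E_{\Pb}[f_{\mathcal H}]^2=[1+o(1)]\lambda^{4m\ell}\beta_{\mathcal H}$, and \eqref{eq-condition-strong-detection} together with Lemma~\ref{lem-control-Aut-necklace-hypergraph} forces $\lambda^{4m\ell}\beta_{\mathcal H}\ge \frac{1}{4\ell^2}\big((1-\tfrac{10^{10}R^3}{m})\,10^{pm}\big)^{\ell}=\omega(1)$, so the diagonal part is negligible compared with $\mathbb E_{\Pb}[f_{\mathcal H}]^2$.

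For the off-diagonal I would fix $S$ and, for each nonempty $F\subseteq E(S)$, bound the number of admissible $K$ with $E(S)\cap E(K)\supseteq F$: any such $K$ contains the embedded hypergraph $W_0$ spanned by $F$, so $K$ is obtained by choosing how $W_0$ sits inside the isomorphism type $[I]$ and then placing the remaining $v(I)-|V(W_0)|$ vertices, giving $\#\{K:K\cong I\in\mathcal H,\ K\supseteq W_0\}\le n^{o(1)}\,\beta_{\mathcal H}\,n^{mp\ell-|V(W_0)|}$; here the $n^{o(1)}$ absorbs $2^{2m\ell}$, $|\mathsf{Aut}(W_0)|\le(2pm\ell)!$, $|\mathcal H|\le (mp\ell)!\,\beta_{\mathcal H}$ and similar multiplicities, all of which are $e^{O(\ell\log\log n)}=n^{o(1)}$ because $\ell=o(\log n/\log\log n)$. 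The decisive structural point is that $W_0$ is $2$-regular only if $F=E(S)$ (hence $S=K$): if every vertex of $W_0$ had degree $2$, then at each such vertex both incident hyperedges of $S$ would lie in $F$, and connectedness of $S$ would propagate this to all of $E(S)$. Therefore for $S\neq K$ the hypergraph $W_0$ has a vertex of degree $1$, so $|V(W_0)|\ge \tfrac p2|F|+\tfrac12$, which produces a factor $n^{\,p|F|/2-|V(W_0)|}\le n^{-1/2}$.

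Assembling these estimates, the off-diagonal part of $\operatorname{Var}_{\Pb}[f_{\mathcal H}]$ is at most
\[
\frac{\lambda^{4m\ell}n^{o(1)}}{n^{2pm\ell}}\sum_{[H]\in\mathcal H}\#\{S\cong H\}\sum_{\emptyset\neq F\subsetneq E(S)}\frac{n^{mp\ell}\,n^{p|F|/2-|V(W_0)|}}{\lambda^{2|F|}}\ \le\ \lambda^{4m\ell}\beta_{\mathcal H}\cdot n^{-1/2+o(1)},
\]
using $\sum_{\emptyset\neq F\subseteq E(S)}\lambda^{-2|F|}\le(1+\lambda^{-2})^{2m\ell}=n^{o(1)}$ (as $\lambda=\Omega(1)$, $\ell=o(\log n)$) and $\sum_{[H]\in\mathcal H}\#\{S\cong H\}=[1+o(1)]n^{mp\ell}\beta_{\mathcal H}$. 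Dividing by $\mathbb E_{\Pb}[f_{\mathcal H}]^2=[1+o(1)]\lambda^{4m\ell}\beta_{\mathcal H}$ gives a ratio $n^{-1/2+o(1)}=o(1)$, and together with the diagonal estimate this yields $\operatorname{Var}_{\Pb}[f_{\mathcal H}]/\mathbb E_{\Pb}[f_{\mathcal H}]^2=o(1)$. I expect the main obstacle to be the combinatorial bookkeeping in the off-diagonal count: one must verify that all automorphism factors and counts of isomorphism types stay $n^{o(1)}$ — precisely where $\ell=o(\log n/\log\log n)$ is needed, so that $(mp\ell)!$, $(2m\ell)!$ and the like remain subpolynomial — and that the crude ``$K\supseteq W_0$'' overcounting still carries exactly the single factor $\beta_{\mathcal H}$ required to cancel against the normalization hidden in $\mathbb E_{\Pb}[f_{\mathcal H}]^2$.
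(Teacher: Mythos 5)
Your proposal is correct and follows essentially the same route as the paper: the same covariance bound from Lemma~\ref{lem-est-cov-f-S-f-K}, the same isolation of the full-overlap term (negligible because $\lambda^{4m\ell}\beta_{\mathcal H}=\omega(1)$ under \eqref{eq-condition-strong-detection}), and the same key structural fact that a nonempty proper edge-subset of a connected $2$-regular hypergraph satisfies $2|V|>p|E|$ (the paper's Lemma~\ref{lem-prelim-mathcal-H}, Item~(1), proved by the identical degree-counting plus connectedness argument), yielding the same $n^{-1/2+o(1)}$ gain. The only difference is bookkeeping: you index the off-diagonal sum by the shared edge set $F$ and count extensions $K\supseteq W_0$, whereas the paper indexes by the pair $(|E(S)\cap E(K)|,|V(S)\cap V(K)|)$ via $\mathrm{ENUM}(k,h)$.
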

\begin{proof}
    Plugging Lemma~\ref{lem-est-cov-f-S-f-K} into \eqref{eq-var-Pb-f-H-relax-1} we get that
    \begin{align}
        \operatorname{Var}_{\Pb}[f_{\mathcal H}] &\leq \frac{1}{n^{pm\ell} \beta_{\mathcal H}} \sum_{[H],[I] \in \mathcal H} \sum_{ \substack{ S,K \subset \mathsf K_n: S \cong H, K \cong I \\ E(S) \cap E(K) \neq \emptyset } } \lambda^{4m\ell} n^{-pm\ell} \cdot \big( \lambda^{-1} n^{\frac{p}{4}} \big)^{2|E(S) \cap E(K)|} \nonumber \\
        &= \frac{ \lambda^{4m\ell} }{ n^{2pm\ell}\beta_{\mathcal H} } \sum_{ \substack{ S \subset \mathsf{K}_n \\ [S] \in \mathcal H } } \sum_{ \substack{ K \subset \mathsf K_n: [K] \in \mathcal H \\ E(S) \cap E(K) \neq \emptyset } } \big( \lambda^{-1} n^{\frac{p}{4}} \big)^{2|E(S) \cap E(K)|} \,. \nonumber
    \end{align}
    In addition, from Lemma~\ref{lem-mean-var-f-H-part-1} we have $\mathbb E_{\Pb}[f_{\mathcal H}]=[1+o(1)] \lambda^{2m\ell} \sqrt{\beta_{\mathcal H}}$. Thus,
    \begin{align}
        \frac{ \operatorname{Var}_{\Pb}[f_{\mathcal H}] }{ \mathbb E_{\Pb}[f_{\mathcal H}]^2 } &\leq \frac{ 1+o(1) }{ n^{2pm\ell} \beta_{\mathcal H}^2 } \sum_{ \substack{ S \subset \mathsf{K}_n \\ [S] \in \mathcal H } } \sum_{ \substack{ K \subset \mathsf K_n: [K] \in \mathcal H \\ E(S) \cap E(K) \neq \emptyset } } \big( \lambda^{-1} n^{\frac{p}{4}} \big)^{2|E(S) \cap E(K)|} \nonumber \\
        &= \frac{ 1+o(1) }{ n^{2pm\ell} \beta_{\mathcal H}^2 } \sum_{ \substack{ S \subset \mathsf{K}_n \\ [S] \in \mathcal H } } \sum_{ 1 \leq k \leq 2m\ell } \big( \lambda^{-1} n^{\frac{p}{4}} \big)^{2k} \#\big\{ K \subset \mathsf K_n: [K] \in \mathcal H, |E(K) \cap E(S)|=k \big\} \,. \label{eq-var-Pb-f-H-relax-2}
    \end{align}
    Note that
    \begin{align*}
        \frac{ 1+o(1) }{ n^{2pm\ell} \beta_{\mathcal H}^2 } \sum_{ \substack{ S \subset \mathsf{K}_n, [S] \in \mathcal H } } \big( \lambda^{-1} n^{\frac{p}{4}} \big)^{2\cdot 2m\ell} =\ & \frac{1+o(1)}{ \lambda^{4m\ell} n^{pm\ell} \beta_{\mathcal H}^2 } \#\big\{ S: S \subset \mathsf K_n, [S] \in \mathcal H \big\} \\
        =\ & \frac{1+o(1)}{ \lambda^{4m\ell} n^{pm\ell} \beta_{\mathcal H}^2 } \sum_{[H] \in \mathcal H} \#\big\{ S: S \subset \mathsf K_n, S \cong H \big\} \\
        \overset{\text{Lemma~\ref{lem-prelim-graphs},(1)}}{=} & \frac{1+o(1)}{ \lambda^{4m\ell} n^{pm\ell} \beta_{\mathcal H}^2 } \sum_{[H] \in \mathcal H} \frac{ n^{pm\ell} }{ |\mathsf{Aut}(H)| } \overset{\eqref{eq-def-beta-mathcal-H}}{=} \frac{1+o(1)}{ \lambda^{4m\ell} \beta_{\mathcal H} } \overset{\eqref{eq-condition-strong-detection}}{=} o(1) \,.
    \end{align*}
    Thus, 
    \begin{align}
        \eqref{eq-var-Pb-f-H-relax-2} &= o(1) + \frac{ 1+o(1) }{ n^{2pm\ell} \beta_{\mathcal H}^2 } \sum_{ \substack{ S \subset \mathsf{K}_n \\ [S] \in \mathcal H } } \sum_{ 1 \leq k \leq 2m\ell-1 } \big( \lambda^{-1} n^{\frac{p}{4}} \big)^{2k} \#\big\{ K \subset \mathsf K_n: [K] \in \mathcal H, |E(K) \cap E(S)|=k \big\} \nonumber \\
        & \overset{\text{Lemma~\ref{lem-prelim-graphs},(3)}}{=} o(1) + \frac{ 1+o(1) }{ n^{2pm\ell} \beta_{\mathcal H}^2 } \sum_{ \substack{ S \subset \mathsf{K}_n \\ [S] \in \mathcal H } } \sum_{ 1 \leq k \leq 2m\ell-1 } \sum_{ 2h>pk } \big( \lambda^{-1} n^{\frac{p}{4}} \big)^{2k} \mathrm{ENUM}(k,h) \,,  \label{eq-var-Pb-f-H-relax-3}
    \end{align}
    where 
    \begin{align}{\label{eq-def-ENUM}}
        \mathrm{ENUM}(k,h) = \#\big\{ K \subset \mathsf K_n: [K] \in \mathcal H, |E(K) \cap E(S)|=k, |V(S) \cap V(K)|=h \big\} \,.
    \end{align}
    From Item~(4) in Lemma~\ref{lem-prelim-graphs}, we have
    \begin{align*}
        \mathrm{ENUM}(k,h) \leq \binom{pm\ell}{h} \sum_{[I] \in \mathcal H} \frac{ n^{pm\ell-h} (pm\ell)! }{ |\mathsf{Aut}(I)| } \overset{\eqref{eq-def-beta-mathcal-H}}{=} \beta_{\mathcal H} n^{pm\ell-h} (pm\ell)! \binom{pm\ell}{h} \leq \beta_{\mathcal H} n^{pm\ell-h} (pm\ell)^{pm\ell} \,.
    \end{align*}
    Plugging this estimation into \eqref{eq-var-Pb-f-H-relax-3}, we get that
    \begin{align*}
        \frac{ \operatorname{Var}_{\Pb}[f_{\mathcal H}] }{ \mathbb E_{\Pb}[f_{\mathcal H}]^2 } &\leq o(1) + \frac{ 1+o(1) }{ n^{2pm\ell} \beta_{\mathcal H}^2 } \sum_{ \substack{ S \subset \mathsf{K}_n \\ [S] \in \mathcal H } } \sum_{ 1 \leq k \leq 2m\ell-1 } \sum_{ 2h>pk } \beta_{\mathcal H} n^{pm\ell-h} (pm\ell)^{pm\ell}  \\
        &= o(1) + \frac{ 1+o(1) }{ n^{pm\ell} \beta_{\mathcal H} } \sum_{ \substack{ S \subset \mathsf{K}_n \\ [S] \in \mathcal H } } \sum_{ 1 \leq k \leq 2m\ell-1 } \sum_{ 2h>pk } \big( \lambda^{-1} n^{\frac{p}{4}} \big)^{2k} n^{-h} (pm\ell)^{pm\ell}  \\
        &\overset{\eqref{eq-condition-strong-detection}}{=} o(1) + \frac{ 1+o(1) }{ n^{pm\ell} \beta_{\mathcal H} } \sum_{ \substack{ S \subset \mathsf{K}_n \\ [S] \in \mathcal H } } \sum_{ 1 \leq k \leq 2m\ell-1 } \sum_{ 2h>pk } n^{o(1)} \cdot n^{-h+\frac{k}{2}}  \\
        &= o(1) + n^{-\frac{1}{2}+o(1)} \cdot \frac{ 1 }{ n^{pm\ell} \beta_{\mathcal H} } \#\big\{ S \subset \mathsf K_n: [S] \in \mathcal H \big\} \overset{\eqref{eq-def-beta-mathcal-H}}{=} o(1) \,. \qedhere
    \end{align*}
\end{proof}

\subsection{Proof of Theorem~\ref{MAIN-THM-detection}}{\label{subsec:proof-thm-2.6}}

With Proposition~\ref{main-prop-detection}, we are ready to prove Theorem~\ref{MAIN-THM-detection}.
\begin{proof}[Proof of Theorem~\ref{MAIN-THM-detection}]
    Under condition \eqref{eq-condition-strong-detection}, from Proposition~\ref{main-prop-detection} we get that
    \begin{align*}
        \mathbb E_{\Qb}[f_{\mathcal H}]=0, \ \mathbb E_{\Qb}[f_{\mathcal H}^2] = 1+o(1), \ \mathbb E_{\Pb}[f_{\mathcal H}] = \omega(1), \ \operatorname{Var}_{\Pb}[f_{\mathcal H}] = o(1) \cdot \mathbb E_{\Pb}[f_{\mathcal H}]^2 \,.
    \end{align*}
    Thus, for any constant $0<C<1$ we obtain
    \begin{align*}
        &\Pb\Big( f_{\mathcal H}(\bm Y) \leq C\mathbb E_{\Pb}[f_{\mathcal H}] \Big) \leq \frac{ \operatorname{Var}_{\Pb}[f_{\mathcal H}] }{ (1-C)^2 \mathbb E_{\Pb}[f_{\mathcal H}]^2 } = o(1) \,, \\
        &\Qb\Big( f_{\mathcal H}(\bm Y) \geq C\mathbb E_{\Pb}[f_{\mathcal H}] \Big) \leq \frac{ \operatorname{Var}_{\Qb}[f_{\mathcal H}] }{ C^2 \mathbb E_{\Pb}[f_{\mathcal H}]^2 } = o(1) \,. \qedhere
    \end{align*}
\end{proof}

\subsection{Proof of Proposition~\ref{main-prop-recovery}}{\label{subsec:proof-prop-2.10}}

\begin{lemma}{\label{lem-conditional-exp-given-endpoints}}
    We have 
    \begin{align*}
        \mathbb E[ f_S(\bm Y) \mid x_*(i),x_*(j) ] = \big( \lambda n^{-\frac{p}{4}} \big)^{2m\ell} x_*(i) x_*(j)
    \end{align*}
    for all $[S] \in \mathcal J$ and $\mathsf L(S)=\{ i,j \}$.
\end{lemma}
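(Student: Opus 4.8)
The plan is to condition first on the \emph{entire} spike $x_*$ rather than only on $\big( x_*(i), x_*(j) \big)$, compute that conditional expectation exactly, observe that it is a function of $x_*(i) x_*(j)$ alone, and then conclude by the tower property.

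Recall from \eqref{eq-def-tensor-PCA} that $\bm Y_e = \lambda n^{-\frac p4} x_*^e + \bm G_e$ for each hyperedge $e$, where I write $x_*^e = \prod_{v \in e} x_*(v)$. Since every hyperedge $e \in E(S)$ is a set of $p$ distinct vertices, $\bm G_e$ is a standard Gaussian by \eqref{eq-def-symmetric-tensor}; moreover distinct hyperedges of $S$ correspond to distinct unordered $p$-tuples, so $\{ \bm G_e : e \in E(S) \}$ is a collection of i.i.d.\ $N(0,1)$ random variables, independent of $x_*$. Expanding $f_S(\bm Y) = \prod_{e \in E(S)} \big( \lambda n^{-\frac p4} x_*^e + \bm G_e \big)$ and taking conditional expectation given $x_*$, every term containing at least one factor $\bm G_e$ vanishes by independence and mean zero, leaving
\[
    \mathbb E\big[ f_S(\bm Y) \mid x_* \big] = \prod_{e \in E(S)} \big( \lambda n^{-\frac p4} x_*^e \big) = \big( \lambda n^{-\frac p4} \big)^{|E(S)|} \prod_{e \in E(S)} x_*^e \,.
\]

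It then remains to simplify $\prod_{e \in E(S)} x_*^e$ using the structure of $\mathcal J$ from Definition~\ref{def-mathcal-J}: one has $|E(S)| = 2m\ell$, the two leaves are $\{ i,j \} = \mathsf L(S)$, and $\mathsf{deg}_S(v) = 2$ for every other vertex. Since
\[
    \prod_{e \in E(S)} x_*^e = \prod_{v \in V(S)} x_*(v)^{\mathsf{deg}_S(v)}
\]
and $x_*(v)^2 = 1$ under the Rademacher prior, this product collapses to $x_*(i) x_*(j)$. Hence $\mathbb E[ f_S(\bm Y) \mid x_* ] = \big( \lambda n^{-\frac p4} \big)^{2m\ell} x_*(i) x_*(j)$, which depends on $x_*$ only through $\big( x_*(i), x_*(j) \big)$, so the claim follows from $\mathbb E[\,\cdot \mid x_*(i), x_*(j)] = \mathbb E\big[ \mathbb E[\,\cdot \mid x_*] \mid x_*(i), x_*(j) \big]$.

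This is the conditional analogue of the mean computation carried out in Lemma~\ref{lem-mean-var-f-H-part-1}, so I do not expect a serious obstacle; the only points that warrant care are the independence assertion for $\{ \bm G_e : e \in E(S) \}$ --- i.e.\ that the symmetrization \eqref{eq-def-symmetric-tensor} introduces no correlation among entries indexed by distinct unordered $p$-subsets --- and the bookkeeping of the degree sequence that isolates exactly the two surviving leaf factors $x_*(i), x_*(j)$.
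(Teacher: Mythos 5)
Your proposal is correct and follows essentially the same route as the paper's own proof: condition on the full spike $x_*$, factor the conditional expectation over the (independent, mean-zero) Gaussian hyperedge weights, collapse $\prod_{e \in E(S)} x_*^e = \prod_v x_*(v)^{\mathsf{deg}_S(v)}$ to $x_*(i)x_*(j)$ using the degree sequence of $[S] \in \mathcal J$, and finish with the tower property. The extra care you take with the independence of $\{\bm G_e : e \in E(S)\}$ after symmetrization is a valid (and welcome) elaboration of a step the paper treats implicitly.
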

\begin{proof}
    Denote $\mu_{ \setminus\{ i,j \} }$ be the law of $\{ x_*(k): k \neq i,j \}$. We then have
    \begin{align*}
        \mathbb E[ f_S(\bm Y) \mid x_*(i),x_*(j) ] = \mathbb E_{ \{ x_*(k): k \neq i,j \} \sim \mu_{ \setminus\{ i,j \} } }\Big[ \mathbb E\big[ f_S(\bm Y) \mid x_* \big] \Big] \,.
    \end{align*}
    Recall \eqref{eq-def-f-S}, we have
    \begin{align*}
        \mathbb E\big[ f_S(\bm Y) \mid x_* \big] = \mathbb E\Big[ \prod_{e \in E(S)} \big( \lambda n^{-\frac{p}{4}} x_*^{e} + \bm G_e \big) \mid x_* \Big] = \prod_{e \in E(S)} \big( \lambda n^{-\frac{p}{4}} x_*^{e} \big) \,.
    \end{align*}
    Note that for all $[S] \in \mathcal J$ with $\mathsf L(S)=\{ i,j \}$, we have $|E(S)|=2m\ell$ and $\mathsf{deg}_S(v)=2$ for all $v \in V(S) \setminus \{ i,j \}$. Thus,
    \begin{align*}
        \prod_{e \in E(S)} \big( \lambda n^{-\frac{p}{4}} x_*^{e} \big) &= \big( \lambda n^{-\frac{p}{4}} \big)^{|E(S)|} \prod_{e \in E(S)} \prod_{v \in e} x_*(v) \\
        &= \big( \lambda n^{-\frac{p}{4}} \big)^{2m\ell} \prod_{v \in V(S)} x_*(v)^{ \mathsf{deg}_S(v) } = \big( \lambda n^{-\frac{p}{4}} \big)^{2m\ell} x_*(i) x_*(j) \,,
    \end{align*}
    leading to the desired result.
\end{proof}
Clearly, Lemma~\ref{lem-conditional-exp-given-endpoints} implies that 
\begin{align*}
    \mathbb E_{\Pb}[f_S(\bm Y)]=0 \mbox{ for all } [S] \in \mathcal J, \mathsf L(S)=\{ i,j \} \,.
\end{align*}
In addition, we see that
\begin{align*}
    \mathbb E_{\Pb}\big[ \Phi^{\mathcal J}_{i,j} \mid x_*(i), x_*(j) \big] &= \frac{ 1 }{ \lambda^{2m\ell} n^{\frac{pm\ell}{2}-1} \beta_{\mathcal J}  } \sum_{[J] \in \mathcal J} \sum_{ \substack{ S \subset \mathsf K_n: S \cong J \\ \mathsf L(S)=\{ i,j \} } } \big( \lambda n^{-\frac{p}{4}} \big)^{2m\ell} x_*(i) x_*(j) \\
    &= \frac{ x_*(i)x_*(j) }{ n^{pm\ell-1} \beta_{\mathcal J} } \sum_{[J] \in \mathcal J} \#\big\{ S \subset \mathsf K_n: S \cong J, \mathsf L(S)=\{ i,j \} \big\} \\
    & \overset{\text{Lemma~\ref{lem-prelim-mathcal-J},(1)}}{=} \frac{ 1+o(1) }{ n^{pm\ell-1} \beta_{\mathcal J} } \sum_{[J] \in \mathcal J} \frac{ n^{pm\ell-1} }{ |\mathsf{Aut}(J)| } x_*(i)x_*(j) \overset{\eqref{eq-def-beta-mathcal-H}}{=} (1+o(1)) x_*(i)x_*(j) \,.
\end{align*}
Thus, we have
\begin{align}
    \mathbb E_{\Pb}\big[ (\Phi_{i,j}^{\mathcal J}-x_*(i)x_*(j))^2 \big] &= \operatorname{Var}_{\Pb}\big[ \Phi_{i,j}^{\mathcal J}-x_*(i)x_*(j) \big] \nonumber \\
    &= \mathbb E\Big[ \operatorname{Var}_{\Pb}\big[ \Phi_{i,j}^{\mathcal J} \mid x_*(i),x_*(j) \big] \Big] + \operatorname{Var}\Big[ \mathbb E_{\Pb}\big[ \Phi_{i,j}^{\mathcal J} \mid x_*(i),x_*(j) \big] \Big] \nonumber  \\
    &= o(1) + \mathbb E\Big[ \operatorname{Var}_{\Pb}\big[ \Phi_{i,j}^{\mathcal J} \mid x_*(i),x_*(j) \big] \Big]  \,. \label{eq-var-Phi-J-i,j-relax-1}
\end{align}
So it suffices to show that
\begin{equation}{\label{eq-var-Phi-i,j-relax-2}}
    \mathbb E\Big[ \operatorname{Var}_{\Pb}\big[ \Phi_{i,j}^{\mathcal J} \mid x_*(i),x_*(j) \big] \Big] \leq \delta^2  \,.
\end{equation}
Recall \eqref{eq-def-Phi-i,j-mathcal-J}, we have
\begin{align}
    & \operatorname{Var}_{\Pb}\big[ \Phi_{i,j}^{\mathcal J} \mid x_*(i),x_*(j) \big]  \nonumber   \\
    =\ & \frac{ 1 }{ \lambda^{4m\ell} n^{pm\ell-2} \beta_{\mathcal J}^2 } \sum_{ [J],[I] \in \mathcal J } \sum_{ \substack{ S \cong J, K \cong I \\ \mathsf L(S)=\mathsf L(K)=\{ i,j \} } } \operatorname{Cov}_{\Pb}\big( f_S(\bm Y), f_{K}(\bm Y) \mid x_*(i),x_*(j) \big)  \nonumber  \\
    =\ & \frac{ 1 }{ \lambda^{4m\ell} n^{pm\ell-2} \beta_{\mathcal J}^2 } \sum_{ [J],[I] \in \mathcal J } \sum_{ \substack{ S \cong J, K \cong I \\ \mathsf L(S)=\mathsf L(K)=\{ i,j \} } } \Big( \mathbb E_{\Pb}\big[ f_S(\bm Y) f_{K}(\bm Y) \mid x_*(i),x_*(j) \big] - \lambda^{4m\ell} n^{-pm\ell} \Big) \,,  \label{eq-var-Phi-i,j-relax-3}
\end{align}
where the last equality follows from Lemma~\ref{lem-conditional-exp-given-endpoints}.
\begin{lemma}{\label{lem-conditional-exp-f-S-f-K}}
    For $[S],[K] \in \mathcal J$ with $\mathsf L(S)=\mathsf L(K)=\{ i,j \}$, we have
    \begin{align*}
        \mathbb E_{\Pb}\big[ f_S(\bm Y) f_{K}(\bm Y) \mid x_*(i),x_*(j) \big] = \big( \lambda^2 n^{-\frac{p}{2}} + 1 \big)^{|E(S)\cap E(K)|} \big( \lambda n^{-\frac{p}{4}} \big)^{ |E(S) \triangle E(K)| } \,.
    \end{align*}
\end{lemma}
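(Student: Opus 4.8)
The plan is to follow the same route as Lemma~\ref{lem-conditional-exp-given-endpoints} and the proof of Lemma~\ref{lem-est-cov-f-S-f-K}. First I would expand $f_S(\bm Y) f_K(\bm Y) = \prod_{e \in E(S)} \bm Y_e \cdot \prod_{e \in E(K)} \bm Y_e$ by grouping hyperedges according to whether they are shared, obtaining $\prod_{e \in E(S) \cap E(K)} \bm Y_e^2 \cdot \prod_{e \in E(S) \triangle E(K)} \bm Y_e$. Conditioning on the entire signal $x_*$, the noise variables $\{\bm G_e\}$ indexed by distinct $p$-element hyperedges are independent standard normals (distinct $p$-subsets involve disjoint collections of entries of $\widetilde{\bm G}$, and $\mathrm{Var}(\bm G_e) = 1$, both from \eqref{eq-def-symmetric-tensor}; this is the fact underlying \eqref{eq-standard-orthogonal}), so the conditional expectation factorizes over hyperedges. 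Since $\bm Y_e = \lambda n^{-p/4} x_*^e + \bm G_e$ and $(x_*^e)^2 = 1$, we get $\mathbb E[\bm Y_e^2 \mid x_*] = \lambda^2 n^{-p/2} + 1$ and $\mathbb E[\bm Y_e \mid x_*] = \lambda n^{-p/4} x_*^e$, hence
\[
  \mathbb E_{\Pb}\big[ f_S(\bm Y) f_K(\bm Y) \mid x_* \big] = \big(\lambda^2 n^{-p/2}+1\big)^{|E(S)\cap E(K)|} \big(\lambda n^{-p/4}\big)^{|E(S)\triangle E(K)|} \prod_{e \in E(S) \triangle E(K)} x_*^e \,.
\]

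The second step is to show the residual factor $\prod_{e \in E(S) \triangle E(K)} x_*^e$ equals $1$. Writing it as $\prod_v x_*(v)^{d_v}$ with $d_v = \#\{ e \in E(S) \triangle E(K): v \in e \}$ and using $d_v = \mathsf{deg}_S(v) + \mathsf{deg}_K(v) - 2\#\{ e \in E(S) \cap E(K): v \in e \}$, it suffices to check that $d_v$ is even for every $v$. By Definition~\ref{def-mathcal-J}, $\mathsf{deg}_S(v),\mathsf{deg}_K(v) \in \{1,2\}$, with value $1$ exactly at the common leaves $i$ and $j$ (and value $0$ at a vertex outside the respective vertex set); since $\{i,j\} \subseteq V(S) \cap V(K)$, the number $d_v$ equals $2+2-2(\cdots)$ at interior vertices, $1+1-2(\cdots)$ at $i$ and $j$, and is $0$ or $2$ at a vertex lying in only one of $V(S), V(K)$ — even in every case. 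Hence the residual factor is $1$, the displayed conditional expectation is deterministic, and taking a further expectation over $\{x_*(k): k \neq i,j\}$ (which leaves it unchanged) yields the stated identity for $\mathbb E_{\Pb}[ f_S(\bm Y) f_K(\bm Y) \mid x_*(i), x_*(j)]$.

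I do not anticipate a genuine difficulty: the computation is a routine conditional-expectation calculation, and the only place demanding care is the parity bookkeeping for $E(S)\triangle E(K)$ — in particular noting that the leaves $i$ and $j$, despite having odd degree in $S$ and in $K$ individually, acquire even degree in the symmetric difference, because an edge incident to $i$ is either private to one of $S,K$ (contributing $1+1$ to $d_i$) or shared (contributing $1+1-2$).
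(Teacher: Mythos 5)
Your proof is correct and follows essentially the same route as the paper's: condition on the full signal $x_*$, factorize the conditional expectation over hyperedges using the independence of the symmetrized Gaussian entries, compute the per-edge moments, and verify that $\prod_{e\in E(S)\triangle E(K)}x_*^e=1$ by the same parity count $\mathsf{deg}_S(v)+\mathsf{deg}_K(v)-2\#\{e\in E(S)\cap E(K):v\in e\}$, including the observation that the leaves $i,j$ contribute even exponents. No gaps.
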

\begin{proof}
    Denote $\mu_{i,j}=\mu(\cdot \mid x(i)=x_*(i),x_j=x_*(j))$, then
    \begin{align}
        & \mathbb E_{\Pb}\big[ f_S(\bm Y) f_{K}(\bm Y) \mid x_*(i),x_*(j) \big] = \mathbb E_{x \sim \mu_{i,j}} \mathbb E_{\Pb}\Big[ f_S(\bm Y) f_K(\bm Y) \mid x_*=x \Big] \nonumber  \\
        \overset{\eqref{eq-def-f-S}}{=}\ & \mathbb E_{x \sim \mu_{i,j}} \mathbb E_{\Pb}\Big[ \prod_{ e \in E(S) \triangle E(K) } \big( \lambda n^{-\frac{p}{4}} x^{e} + \bm G_e \big) \prod_{ e \in E(S) \cap E(K) } \big( \lambda n^{-\frac{p}{4}} x^{e} + \bm G_e \big)^2 \mid x \Big] \nonumber  \\
        =\ & \mathbb E_{x \sim \mu_{i,j}}\Big[ \prod_{ e \in E(S) \triangle E(K) } \big( \lambda n^{-\frac{p}{4}} x^{e} \big) \prod_{ e \in E(S) \cap E(K) } \big( \lambda^2 n^{-\frac{p}{2}} + 1 \big) \Big] \,.  \label{eq-conditional-exp-f-S-f-K-relax-1}
    \end{align}
    Using Definition~\ref{def-mathcal-J}, we see that $\mathsf{deg}_S(v), \mathsf{deg}_K(v) \in \{ 0,2 \}$ for all $v \in [n] \setminus \{ i,j \}$. Thus, 
    \begin{align*}
        \prod_{e \in E(S) \triangle E(K)} x^{e} &= \prod_{e \in E(S) \triangle E(K)} \prod_{v \in e} x_v = \prod_{v \in V(S) \cup V(K)} x_v^{ \#\{ e \in \in E(S) \triangle E(K): v \in e \} } \\
        &= \prod_{v \in V(S) \cup V(K)} x_v^{ \mathsf{deg}_S(v) + \mathsf{deg}_K(v) - 2\#\{ e \in \in E(S) \cap E(K): v \in e \} } =1 \,.
    \end{align*}
    Plugging this equality into \eqref{eq-conditional-exp-f-S-f-K-relax-1}, we get that
    \begin{equation*}
        \eqref{eq-conditional-exp-f-S-f-K-relax-1} = \big( \lambda^2 n^{-\frac{p}{2}} + 1 \big)^{ |E(S) \cap E(K)| } \big( \lambda n^{-\frac{p}{4}} \big)^{ |E(S) \triangle E(K)| } \,.  \qedhere
    \end{equation*}
\end{proof}
Using Lemma~\ref{lem-conditional-exp-f-S-f-K}, we see that when $E(S) \cap E(K)=\emptyset$ 
\begin{align*}
    \mathbb E_{\Pb}\big[ f_S(\bm Y) f_{K}(\bm Y) \mid x_*(i),x_*(j) \big] - \lambda^{4m\ell} n^{-pm\ell} = 0 \,.
\end{align*}
In addition, for $E(S) \cap E(K) \neq \emptyset$
\begin{align*}
    \mathbb E_{\Pb}\big[ f_S(\bm Y) f_{K}(\bm Y) \mid x_*(i),x_*(j) \big] - \lambda^{4m\ell} n^{-pm\ell} &\leq \big( \lambda^2 n^{-\frac{p}{2}} + 1 \big)^{ |E(S) \cap E(K)| } \big( \lambda n^{-\frac{p}{4}} \big)^{ |E(S) \triangle E(K)| } \\
    &\leq [1+o(1)] \big( \lambda n^{-\frac{p}{4}} \big)^{ 4m\ell-2|E(S) \cap E(K)| }
\end{align*}
So we have
\begin{align}
    \eqref{eq-var-Phi-i,j-relax-3} &\leq \frac{ 1+o(1) }{ \lambda^{4m\ell} n^{pm\ell-2} \beta_{\mathcal J}^2 } \sum_{ \substack{ [S], [K] \in \mathcal J: E(K) \cap E(K) \neq \emptyset \\ \mathsf L(S)=\mathsf L(K)=\{ i,j \} } } \big( \lambda n^{-\frac{p}{4}} \big)^{ 4m\ell-2|E(S) \cap E(K)| } \nonumber  \\
    &= \frac{ 1+o(1) }{ n^{2pm\ell-2} \beta_{\mathcal J}^2 } \sum_{ \substack{ [S], [K] \in \mathcal J: E(K) \cap E(K) \neq \emptyset \\ \mathsf L(S)=\mathsf L(K)=\{ i,j \} } } \big( \lambda^{-2} n^{\frac{p}{2}} \big)^{ |E(S) \cap E(K)| }  \,. \label{eq-var-Phi-i,j-relax-4}
\end{align}
Thus, it suffices to show the following lemma.
\begin{lemma}{\label{lem-very-technical-sec-3}}
    Suppose \eqref{eq-condition-weak-recovery} holds. Then
    \begin{align*}
        \frac{ 1 }{ n^{2pm\ell-2} \beta_{\mathcal J}^2 } \sum_{ \substack{ [S], [K] \in \mathcal J: E(S) \cap E(K) \neq \emptyset \\ \mathsf L(S)=\mathsf L(K)=\{ i,j \} } } \big( \lambda^{-2} n^{\frac{p}{2}} \big)^{ |E(S) \cap E(K)| } \leq \delta^2 \,.
    \end{align*}
\end{lemma}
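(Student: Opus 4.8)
The plan is to bound, for each fixed pair $i\ne j$, the quantity
\begin{equation*}
\Sigma\ :=\ \sum_{\substack{[S],[K]\in\mathcal J:\ E(S)\cap E(K)\ne\emptyset\\ \mathsf L(S)=\mathsf L(K)=\{i,j\}}}\big(\lambda^{-2}n^{\frac p2}\big)^{|E(S)\cap E(K)|}
\end{equation*}
by $\delta^2\,n^{2pm\ell-2}\beta_{\mathcal J}^2$, and to split $\Sigma=\Sigma_{\mathrm{diag}}+\Sigma_{\mathrm{off}}$ according to whether $S=K$ or $S\ne K$. Throughout I use the two facts from \eqref{eq-condition-weak-recovery} and Lemma~\ref{lem-control-Aut-necklace-chain} that, since $m>10^{11}R^3$ forces $1-\tfrac{10^{10}R^3}{m}>0.9$, the base
$$
\mathsf{b}\ :=\ \lambda^{4m}\beta^{\diamond}_{\mathcal U}\ \ge\ 0.9\,(10/\delta)^{pm}
$$
is a \emph{fixed} constant that can be made as large as we like, and $\beta_{\mathcal J}\ge\tfrac12(\beta^{\diamond}_{\mathcal U})^{\ell-1}(\beta^{\diamond}_{\mathcal U}-2)\ge\tfrac14(\beta^{\diamond}_{\mathcal U})^{\ell}$.

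For $\Sigma_{\mathrm{diag}}$: since every $[S]\in\mathcal J$ has $|E(S)|=2m\ell$ and no isolated vertex, $|E(S)\cap E(K)|=2m\ell$ forces $S=K$, and $\#\{S:[S]\in\mathcal J,\ \mathsf L(S)=\{i,j\}\}=[1+o(1)]\beta_{\mathcal J}n^{pm\ell-1}$ by Lemma~\ref{lem-prelim-mathcal-J}(1). Hence $\Sigma_{\mathrm{diag}}=[1+o(1)]\lambda^{-4m\ell}\beta_{\mathcal J}n^{2pm\ell-1}$, so
\begin{equation*}
\frac{\Sigma_{\mathrm{diag}}}{n^{2pm\ell-2}\beta_{\mathcal J}^2}\ =\ [1+o(1)]\frac{\lambda^{-4m\ell}n}{\beta_{\mathcal J}}\ \le\ O(1)\cdot n\,\mathsf{b}^{-\ell}\ =\ O\!\left(n^{\,1-\log\mathsf{b}}\right)\ =\ o(1),
\end{equation*}
where in the last step $\ell=\log n$ and $\log\mathsf{b}>1$ (which holds since $\mathsf{b}$ is huge).

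For $\Sigma_{\mathrm{off}}$, I would organize the pairs $(S,K)$ with $S\ne K$ by their overlap pattern. The key structural input is that a $\mathcal J$-hypergraph decomposes into $\ell$ blocks (the beads $U_1,\dots,U_\ell$) glued along cut vertices, and by the connectivity conditions in Definition~\ref{def-component-hypergraph}(2),(3) a bead has no proper $2$-regular subhypergraph; consequently the shared edge set $E(S)\cap E(K)$ breaks up into a collection of full beads of $S$ on which $K$ coincides exactly, plus strictly smaller ``partial'' pieces, and it always satisfies $2|V(S)\cap V(K)|>p\,|E(S)\cap E(K)|$ — the analogue for $\mathcal J$ of Lemma~\ref{lem-prelim-graphs}(3). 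Fix $S$ and let $j\in\{1,\dots,\ell-1\}$ be the number of full coinciding beads (the pure partial-overlap case $j=0$ is treated separately, below). The decisive counting estimate, proved via the copy-counting bounds of Lemma~\ref{lem-prelim-graphs}(4) and Lemma~\ref{lem-prelim-mathcal-J} just as in the proof of Lemma~\ref{lem-mean-var-f-H-part-2}, is that the number of admissible $K$ is largest when the $\ell-j$ ``rerouted'' beads of $K$ form a single contiguous block and are disjoint from $V(S)$ apart from the two cut vertices bordering that block: then
\begin{equation*}
\#\{K:[K]\in\mathcal J,\ \mathsf L(K)=\{i,j\},\ \text{$j$ coinciding beads in this way}\}\ \le\ O(1)\cdot(\beta^{\diamond}_{\mathcal U})^{\ell-j}\,n^{(\ell-j)mp-1},
\end{equation*}
and there are only $O(j)$ positions for the rerouted block. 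Every other configuration — several rerouted runs, runs reusing vertices of $V(S)$, coinciding beads placed at non-matching positions, or partial bead overlaps — pins strictly more vertices and so loses at least one factor of $n$. Multiplying the displayed count by $\#\{S\}=[1+o(1)]\beta_{\mathcal J}n^{pm\ell-1}$ and by the weight $(\lambda^{-2}n^{p/2})^{2mj}=\lambda^{-4mj}n^{pmj}$, the powers of $n$ telescope to $n^{2pm\ell-2}$; using $\beta_{\mathcal J}\ge\tfrac14(\beta^{\diamond}_{\mathcal U})^{\ell}$ each such configuration contributes $O(1)\,\mathsf{b}^{-j}$ to $\Sigma_{\mathrm{off}}/(n^{2pm\ell-2}\beta_{\mathcal J}^2)$, and all the ``other'' configurations together (including $j=0$) contribute only $o(1)$, because the extra $n^{-\Omega(1)}$ they carry dominates the $O(\ell^{O(1)})$ many ways of placing them.

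Summing,
\begin{equation*}
\frac{\Sigma_{\mathrm{off}}}{n^{2pm\ell-2}\beta_{\mathcal J}^2}\ \le\ O(1)\sum_{j=1}^{\ell-1}j\,\mathsf{b}^{-j}+o(1)\ \le\ O(1)\,\frac{\mathsf{b}^{-1}}{(1-\mathsf{b}^{-1})^2}+o(1)\ \le\ O(1)\,\mathsf{b}^{-1}+o(1)\ \le\ O(1)\,(0.1\delta)^{pm}+o(1),
\end{equation*}
and since $pm\ge 3\cdot10^{11}R^3$ and $0<0.1\delta<1$ the quantity $O(1)\,(0.1\delta)^{pm}$ is below $\tfrac12\delta^2$; adding $\Sigma_{\mathrm{diag}}/(n^{2pm\ell-2}\beta_{\mathcal J}^2)=o(1)$ gives $\Sigma\le\delta^2 n^{2pm\ell-2}\beta_{\mathcal J}^2$ for all large $n$, which is the claim (recall $\Sigma$ is exactly the sum appearing, up to the $1+o(1)$ of \eqref{eq-var-Phi-i,j-relax-4}, in the statement).

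The main obstacle is the off-diagonal bookkeeping: one must classify \emph{all} ways two necklace-chains can share edges and verify that everything except the ``single contiguous rerouted block, disjoint from $V(S)$'' family carries a genuine extra factor of $n^{-\Omega(1)}$. What makes this delicate is that the whole estimate lives on the edge of a precipice in powers of $n$ — because $\ell=\log n$, both $\lambda^{-4m\ell}$ and the crude iso-type count $(\beta^{\diamond}_{\mathcal U})^{\ell}$ are already polynomial in $n$, so a careless bound that loses even a factor of $\ell$ (rather than $O(j)$) per coincidence, or that mislocates a single cut-vertex constraint, would blow the sum up; the exact cancellation of $n$-exponents, together with Lemma~\ref{lem-control-Aut-necklace-chain}'s lower bound $\beta_{\mathcal J}\gtrsim(\beta^{\diamond}_{\mathcal U})^{\ell}$ and the condition \eqref{eq-condition-weak-recovery} that makes $\mathsf{b}$ arbitrarily large, is precisely what turns the series into a convergent geometric one whose sum is $\ll\delta^2$.
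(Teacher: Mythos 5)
Your decomposition is the same one the paper uses (its $\mathsf{Part}_I$--$\mathsf{Part}_{IV}$ in Definition~\ref{def-different-parts} and Lemma~\ref{lem-very-technical-sec-3-transfer}): the diagonal $S=K$, the family where the coinciding beads form a prefix and a suffix anchored at $i$ and $j$ with a single contiguous rerouted block otherwise disjoint from $K$, and ``everything else.'' Your treatment of the first two families is correct and reproduces the paper's computation: the diagonal gives $n\lambda^{-4m\ell}\beta_{\mathcal J}^{-1}=o(1)$, and the prefix/suffix family gives $O(1)\sum_{j\geq 1} j\,(\lambda^{4m}\beta^{\diamond}_{\mathcal U})^{-j}$, which \eqref{eq-condition-weak-recovery} makes $\ll\delta^2$. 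This is exactly \eqref{eq-technical-part-I} and \eqref{eq-technical-part-II}.

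The gap is in the ``everything else'' bucket, which you dispose of with the assertion that each such configuration ``loses at least one factor of $n$'' and that this beats ``the $O(\ell^{O(1)})$ many ways of placing them.'' That accounting is not sufficient as stated, because the number of configurations is not $\ell^{O(1)}$ uniformly: if $\gamma$ beads of $K$ partially overlap the rerouted portion of $S$ and there are $\mathtt T-1$ rerouted runs, the enumeration of placements, shared-vertex sets, and shared-edge counts grows like $\ell^{O(\mathtt T)}\cdot(m^2p^2\ell^2)^{O(mp\gamma)}\cdot\lambda^{-O(m\gamma)}$, and $\gamma,\mathtt T$ can be as large as $\ell=\log n$. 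A single saved factor of $n^{-\Omega(1)}$ does not dominate this; you need a saving that scales \emph{linearly} in $\gamma$ and in $\mathtt T$. This is precisely what the paper's Lemmas~\ref{lem-key-property-Part-III} and~\ref{lem-key-property-Part-IV} establish, namely the quantitative deficiency bounds $2|V(S)\cap V(K)|\geq p|E(S)\cap E(K)|+2\gamma+2+\mathbf 1\{\cdots\}$ and its $\mathtt T\geq 3$ analogue, proved by applying Lemma~\ref{lem-prelim-result-mathcal-U}~(2) (every proper nonempty subhypergraph of a bead acquires at least two new leaves) to \emph{each} partially overlapped bead and to \emph{each} endpoint of each rerouted run, and then tracking these deficiencies through the degree count $\sum_v(\mathsf{deg}_K(v)-\mathsf{deg}_{S\cap K}(v))$. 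The qualitative fact you cite ($2|V|>p|E|$ for proper subgraphs, i.e.\ Lemma~\ref{lem-prelim-mathcal-H}~(1)) gives only one unit of saving in total and would not close the sum. You correctly flag this as ``the main obstacle'' and as living ``on the edge of a precipice,'' but the obstacle is exactly the content of the proof (Sections~\ref{subsec:proof-lem-3.8-part-3} and~\ref{subsec:proof-lem-3.8-part-4}, which the paper calls its most technical part), so leaving it as an assertion leaves the lemma unproved.
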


To prove Lemma~\ref{lem-very-technical-sec-3}, we first introduce some notations. For $S,K \subset \mathsf K_n$ with $[S],[K] \in \mathcal J$, denote 
\begin{equation}{\label{eq-decomposition-S}}
    S = U_1 \cup \ldots \cup U_{\ell} \mbox{ with } \mathsf L(U_{\mathtt k}) = \{ v_{\mathtt k},v_{\mathtt k+1} \}, V(U_{\mathtt k}) \cap V(U_{\mathtt k+1}) = \{ v_{\mathtt k+1} \}, v_{1}=i, v_{\ell+1}=j
\end{equation}
and 
\begin{equation}{\label{eq-decomposition-K}}
    K = U_1' \cup \ldots \cup U_{\ell}' \mbox{ with } \mathsf L(U_{\mathtt k}') = \{ v_{\mathtt k}',v_{\mathtt k+1}' \}, V(U_{\mathtt k}') \cap V(U_{\mathtt k+1}') = \{ v_{\mathtt k+1}' \}, v_{1}'=i, v_{\ell+1}'=j
\end{equation}
are decomposition as in Definition~\ref{def-mathcal-J} (using Item~(4) in Definition~\ref{def-mathcal-J} and Item~(2) in Lemma~\ref{lem-prelim-mathcal-J}, it is clear that such decompositions are unique). Define
\begin{equation}{\label{eq-def-mathtt-W}}
    \mathsf{IND}(S,K):= \big\{ 1 \leq \mathtt k \leq \ell: U_{\mathtt k} \subset K \big\} \,.
\end{equation}
Using Item~(2) in Lemma~\ref{lem-prelim-mathcal-J}, we see that for all $\mathtt k \in \mathsf{IND}(S,K)$ there exists some $1 \leq \mathtt m \leq \ell$ such that $U_{\mathtt k}=U'_{\mathtt m}$. Thus, there exists a function $\phi:\mathsf{IND}(S,K) \to[\ell]$ such that $U_{\mathtt k}= U'_{\phi(\mathtt k)}$ for all $\mathtt k \in \mathsf{IND}(S,K)$. In addition, since $V(U_{\mathtt k}) \cap V(U_{\mathtt m}) \neq \emptyset$ if and only if $|\mathtt m-\mathtt k|=1$, we see that (we in addition define $\phi(0)=0$ and $\phi(\ell+1)=\ell+1$) 
\begin{align}
    |\phi(\mathtt k)-\phi(\mathtt m)|=1 \mbox{ if and only if } |\mathtt k-\mathtt m|=1, \mathtt k,\mathtt m \in \mathsf{IND}(S,K) \cup \{ 0,\ell \} \,. \label{eq-property-phi}
\end{align}
In addition, given $\mathsf{IND}(S,K)$ we can uniquely determine a sequence $\mathsf{SEQ}(S,K)=(\mathtt s_1,\mathtt q_1,\ldots,\mathtt s_{\mathtt T}, \mathtt q_{\mathtt T})$ such that
\begin{equation}{\label{eq-def-SEQ(S,K)}}
    \begin{aligned}
        & 0 = \mathtt s_1 < \mathtt q_1 \leq \mathtt s_2 < \mathtt q_2 \ldots \leq \mathtt s_{\mathtt T} < \mathtt q_{\mathtt T} = \ell+1 \,, \\
        & \{ 1,\ldots,\ell \} \setminus \mathsf{IND}(S,K) = [\mathtt q_1,\mathtt s_2] \cup \ldots \cup [\mathtt q_{\mathtt T-1},\mathtt s_{\mathtt T}] \,.
    \end{aligned}
\end{equation}
(if $\mathsf{IND}(S,K)=\{ 1,\ldots,\ell \}$ we simply let $\mathtt T=1$). Then, it is straightforward to check that there exists a unique sequence $\overline{\mathsf{SEQ}}(S,K)=(\mathtt s_1',\mathtt q_1',\ldots,\mathtt s_{\mathtt T}', \mathtt q_{\mathtt T}')$ such that
\begin{equation}{\label{eq-def-overline-SEQ(S,K)}}
    \begin{aligned}
        & \mathtt q_1'= \mathtt q_1, \mathtt s_{\mathtt T}'=\mathtt s_{\mathtt T} \mbox{ and } \{ \mathtt q_2' - \mathtt s_2', \ldots, \mathtt q_{\mathtt T-1}'-\mathtt s_{\mathtt T-2}' \} = \{ \mathtt q_2 - \mathtt s_2, \ldots, \mathtt q_{\mathtt T-1}-s_{\mathtt T-2} \} \,,   \\
        & \{ 1,\ldots,\ell \} \setminus \phi(\mathsf{IND}(S,K)) = [\mathtt q_1',\mathtt s_2'] \cup \ldots \cup [\mathtt q_{\mathtt T-1}',\mathtt s_{\mathtt T}'] \,.
    \end{aligned}
\end{equation}
Finally, define $U_{[\mathtt q_{\mathtt t},\mathtt s_{\mathtt t+1}]} = \cup_{ \mathtt q_{\mathtt t} \leq \mathtt k \leq \mathtt s_{\mathtt t+1} } U_{\mathtt k}$, define
\begin{equation}{\label{eq-def-gamma-t}}
    \gamma_{\mathtt t}= \#\Gamma_{\mathtt t} := \#\{ 1 \leq \mathtt k \leq \ell: E(U_{\mathtt k}') \cap E(U_{[\mathtt q_{\mathtt t},\mathtt s_{\mathtt t+1}]}) \neq \emptyset \} \mbox{ for all } 1 \leq \mathtt t \leq \mathtt T-1 \,.
\end{equation}
We now divide the summation \eqref{eq-var-Phi-i,j-relax-4} into several different parts.
\begin{DEF}{\label{def-different-parts}}
    We write
    \begin{enumerate}
        \item[(I)] $(S,K) \in \mathsf{Part}_{I}$, if $\mathtt T=1$ in \eqref{eq-def-SEQ(S,K)};
        \item[(II)] $(S,K) \in \mathsf{Part}_{II}$, if $\mathtt T=2$ in \eqref{eq-def-SEQ(S,K)} and $\gamma_1=0$ in \eqref{eq-def-gamma-t};
        \item[(III)] $(S,K) \in \mathsf{Part}_{III}$, if $\mathtt T=2$ in \eqref{eq-def-SEQ(S,K)} and $\gamma_1 \geq 1$ in \eqref{eq-def-gamma-t};
        \item[(IV)] $(S,K) \in \mathsf{Part}_{IV}$, if $\mathtt T\geq 3$ in \eqref{eq-def-SEQ(S,K)}.
    \end{enumerate}
\end{DEF}
We will control the contribution in \eqref{eq-var-Phi-i,j-relax-4} from four parts separately.

\begin{lemma}{\label{lem-very-technical-sec-3-transfer}}
    Suppose \eqref{eq-condition-weak-recovery} holds. We have the following estimates: for any $[K] \in \mathcal J$ with $\mathsf{L}(K)=\{ i,j \}$
    \begin{align}
        &\sum_{ S:(S,K) \in \mathsf{Part}_I } \big( \lambda^{-2} n^{\frac{p}{2}} \big)^{ |E(S) \cap E(K)| } = \lambda^{-4m\ell} n^{pm\ell} \,; \label{eq-technical-part-I} \\
        &\sum_{ S:(S,K) \in \mathsf{Part}_{II} } \big( \lambda^{-2} n^{\frac{p}{2}} \big)^{ |E(S) \cap E(K)| } \leq \delta^2 \beta_{\mathcal J} n^{pm\ell-1} \,;  \label{eq-technical-part-II} \\
        &\sum_{ S:(S,K) \in \mathsf{Part}_{III} } \big( \lambda^{-2} n^{\frac{p}{2}} \big)^{ |E(S) \cap E(K)| } = n^{-\frac{1}{2}+o(1)} \cdot \beta_{\mathcal J} n^{pm\ell-1} \,; \label{eq-technical-part-III} \\
        &\sum_{ S:(S,K) \in \mathsf{Part}_{VI} } \big( \lambda^{-2} n^{\frac{p}{2}} \big)^{ |E(S) \cap E(K)| } = n^{-1+o(1)} \cdot \beta_{\mathcal J} n^{pm\ell-1} \,.  \label{eq-technical-part-IV}
    \end{align}
\end{lemma}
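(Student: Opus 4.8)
The plan is to fix a labelled $K\subset\mathsf K_n$ with $[K]\in\mathcal J$, $\mathsf L(K)=\{i,j\}$, and to organise the inner sum over $S$ according to the combinatorial ``overlap pattern'' $\big(\mathsf{IND}(S,K),\phi,\mathsf{SEQ}(S,K),\overline{\mathsf{SEQ}}(S,K),(\gamma_{\mathtt t})\big)$. The starting observation is that, by the anchored adjacency‑preservation \eqref{eq-property-phi} together with $\phi(0)=0$ and $\phi(\ell+1)=\ell+1$, the map $\phi$ must be the identity on each maximal run of consecutive indices of $\mathsf{IND}(S,K)$; combined with the rigidity of the block decomposition (Lemma~\ref{lem-prelim-mathcal-J}) this forces $S$ to coincide with $K$ literally (same vertices, same hyperedges) on the ``matched'' runs, while on each gap interval $[\mathtt q_{\mathtt t},\mathtt s_{\mathtt t+1}]$ it carries a ``detour'' $D_{\mathtt t}=U_{[\mathtt q_{\mathtt t},\mathtt s_{\mathtt t+1}]}$, a necklace chain of $g_{\mathtt t}=\mathtt s_{\mathtt t+1}-\mathtt q_{\mathtt t}+1$ blocks attached to $K$ exactly at the two $K$‑vertices $v'_{\mathtt q_{\mathtt t}}$ and $v'_{\mathtt s_{\mathtt t+1}}$. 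The counting tools are Lemma~\ref{lem-prelim-mathcal-J}~(1) for labelled copies with prescribed leaves, Lemma~\ref{lem-control-Aut-necklace-chain} (via $\beta^\diamond_{\mathcal U}$) for the number of labelled detours with prescribed endpoints which is $(1+o(1))n^{g mp-1}(\beta^\diamond_{\mathcal U})^{g}$ when the detour is vertex‑disjoint from $K$ except at its endpoints, and the reformulation of \eqref{eq-condition-weak-recovery} (using $m>10^{11}R^3$) as $\lambda^{-4m}/\beta^\diamond_{\mathcal U}\le 2(0.1\delta)^{pm}$, which supplies geometric decay in the number of unmatched blocks. I will also use the ``inefficient overlap'' principle: since every $\mathcal U$‑block is $2$‑regular away from its two leaves, any sub‑hypergraph $W$ of $K$ has $2|V(W)|-p|E(W)|\ge 0$, with equality only if $W$ is $2$‑regular, which by the no‑cut‑vertex property~(3) of Definition~\ref{def-component-hypergraph} forces $W=\emptyset$; in particular, for $W=D_{\mathtt t}\cap K$ the two attaching vertices are (generically) isolated in $W$, so $2|V(W)|-p|E(W)|\ge 4$ always, $\ge 5$ once $\gamma_{\mathtt t}\ge 1$, and moreover each detour block genuinely meeting $K$ contributes at least $1$ to this surplus.

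\textbf{Parts I and II.} For \eqref{eq-technical-part-I}: $\mathtt T=1$ gives $\mathsf{IND}(S,K)=\{1,\dots,\ell\}$, so $\phi$ is an adjacency‑preserving permutation of $\{0,\dots,\ell+1\}$ fixing the endpoints, hence $\phi=\mathrm{id}$, hence $S=K$ by Lemma~\ref{lem-prelim-mathcal-J}, and the left side is the single term $(\lambda^{-2}n^{p/2})^{2m\ell}=\lambda^{-4m\ell}n^{pm\ell}$. For \eqref{eq-technical-part-II} there is one detour of $g\ge 1$ blocks edge‑disjoint from $K$, so $|E(S)\cap E(K)|=2m(\ell-g)$; the number of admissible $S$ with a fixed gap position is at most $(1+o(1))n^{gmp-1}(\beta^\diamond_{\mathcal U})^{g}$ and there are at most $\ell-g+1$ positions, whence
\begin{align*}
    \sum_{S:(S,K)\in\mathsf{Part}_{II}}\big(\lambda^{-2}n^{p/2}\big)^{|E(S)\cap E(K)|}\le (1+o(1))\,n^{pm\ell-1}\sum_{g\ge1}(\ell-g+1)\,\lambda^{-4m(\ell-g)}(\beta^\diamond_{\mathcal U})^{g}\,.
\end{align*}
Since $\beta_{\mathcal J}\ge\tfrac14(\beta^\diamond_{\mathcal U})^{\ell}$ (Lemma~\ref{lem-control-Aut-necklace-chain}), writing $j=\ell-g$ this is at most $4\beta_{\mathcal J}n^{pm\ell-1}\sum_{j\ge1}(j+1)(\lambda^{-4m}/\beta^\diamond_{\mathcal U})^{j}$, and because $\lambda^{-4m}/\beta^\diamond_{\mathcal U}\le 2(0.1\delta)^{pm}$ is far below $\delta^2$ (as $pm\gg 2$, $\delta<0.1$) the series is $\le \delta^2/4$, giving \eqref{eq-technical-part-II}. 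This is the dominant contribution.

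\textbf{Parts III and IV.} Here the crude bound used in the detection analysis—paying $(pm\ell)^{pm\ell}$ for choosing the overlap vertex‑by‑vertex—is no longer affordable, since $\ell=\log n$ makes it $n^{\Theta(\log\log n)}$; instead one counts overlaps through the detour structure. Writing $D_1$ (resp. $D_1,\dots,D_d$ with $d\ge 2$) for the detours, $g=\sum g_{\mathtt t}$, $k=\sum_{\mathtt t}|E(D_{\mathtt t})\cap E(K)|\le 2mg$, and $s=\sum_{\mathtt t}\big(2|V(D_{\mathtt t})\cap V(K)|-p|E(D_{\mathtt t})\cap E(K)|\big)$, the bookkeeping above shows that the number of $S$ realising a fixed pattern of total surplus $s$ is $O\big(n^{gmp+d-pk/2-s/2}\big)(\beta^\diamond_{\mathcal U})^{g}(2m\ell)^{O(s)}$ (the last factor because a near‑$2$‑regular overlap is a union of $O(s)$ consecutive chunks of the two necklaces, so is specified by $O(s)$ cut points), while the weight is $(\lambda^{-2}n^{p/2})^{2m(\ell-g)+k}$. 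Dividing by the base value $\beta_{\mathcal J}n^{pm\ell-1}\asymp(\beta^\diamond_{\mathcal U})^{\ell}n^{pm\ell-1}$, the contribution is $\lesssim\sum_{g,k,s,\text{pos}}n^{d+1-s/2}\,\lambda^{-2k}\,(2m\ell)^{O(s)}\,(\lambda^{-4m}/\beta^\diamond_{\mathcal U})^{\ell-g}$; using $k\le 2ms$ (each detour block meeting $K$ costs a unit of surplus) the $k$‑sum converges geometrically and is $n^{o(1)}$, and since in Part~III one has $d=1$ with $s\ge 5$ while in Part~IV one has $d\ge 2$ with $s\ge 4d$, the exponent $d+1-s/2$ is $\le -\tfrac12$ in the first case and $\le -1$ in the second; summing the (geometrically small) series over $s$ and the $n^{o(1)}$ many choices of $g$ and gap positions yields $n^{-1/2+o(1)}\beta_{\mathcal J}n^{pm\ell-1}$ and $n^{-1+o(1)}\beta_{\mathcal J}n^{pm\ell-1}$, i.e. \eqref{eq-technical-part-III} and \eqref{eq-technical-part-IV}.

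\textbf{The main obstacle} will be exactly the Part~III/IV accounting, in two respects. First, one must prove the sharp overlap estimate carefully: quantitatively, that each returning detour costs a surplus of at least $4$ (two isolated attaching vertices), with at least one further unit per detour block that genuinely shares an edge with $K$, and in particular one must dispose of the degenerate cases where an attaching vertex of a detour happens not to be isolated in the overlap (these are handled by the extra structure that such a coincidence forces elsewhere). Second, one must verify that the total number of patterns—choices of $\mathsf{IND}(S,K)$, of $\phi$, of the two sequences $\mathsf{SEQ},\overline{\mathsf{SEQ}}$, of the $\gamma_{\mathtt t}$, of the gap positions, and of the fine overlap structure inside each detour—is $n^{o(1)}$; this holds only because $\ell=\log n$ and because the necklace structure lets one index overlaps by a number of cut points proportional to the surplus rather than by enumerating the overlap vertex‑by‑vertex. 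With these two ingredients in hand, the four estimates follow by the elementary manipulations sketched above together with the smallness of $\lambda^{-4m}/\beta^\diamond_{\mathcal U}$.
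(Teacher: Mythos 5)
Your overall architecture matches the paper's: Part I reduces to $S=K$; Part II is a geometric series in the number of unmatched blocks, controlled by $\lambda^{-4m}/\beta^{\diamond}_{\mathcal U}$ being small (this is exactly the paper's argument, including the $(j+1)$ positional factor); and Parts III--IV are handled by an enumeration bound of the form $(\beta^{\diamond}_{\mathcal U})^{g}\,n^{mpg-d-|\overrightarrow{\mathtt V}|}(m^2p^2\ell^2)^{O(|\overrightarrow{\mathtt V}|)}$ together with a ``surplus'' inequality forcing $|\overrightarrow{\mathtt V}|\geq \tfrac{p}{2}|\overrightarrow{\mathtt E}|+(\text{extra})$. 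Your exponent bookkeeping $d+1-s/2$ is algebraically equivalent to the paper's $n^{mp\ell-(\mathtt T-1)-(|\overrightarrow\gamma|-\mathtt T+1)_+}$ computation, and your observation that the pattern count must be indexed by $O(s)$ data rather than vertex-by-vertex (to avoid the fatal $(pm\ell)^{pm\ell}$ factor when $\ell=\log n$) is the right point and is what the paper's $\binom{|V(K)|}{\mathtt X_{\mathtt k}}(mp)^{\mathtt X_{\mathtt k}}$ accounting achieves.

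However, there is a genuine gap: the surplus inequalities, which you correctly identify as ``the main obstacle,'' are asserted rather than proved, and one of them is asserted in a form that is false. For Part III you claim $s\geq 5$ whenever $\gamma_1\geq 1$; what is actually provable (the paper's Lemma~\ref{lem-key-property-Part-III}) is $s\geq 2\gamma+2+\mathbf 1_{\{\gamma=1,\,a+b\leq\ell-2\}}$, which gives only $s\geq 4$ when $\gamma=1$ and the gap consists of a single block. In that case your exponent $d+1-s/2$ is $0$, not $-\tfrac12$; the term is still harmless, but only because of the factor $(\lambda^{-4m}/\beta^{\diamond}_{\mathcal U})^{\ell-1}=n^{-3\log(1/\delta)+o(1)}$, which you carry in your master formula but do not invoke for this purpose. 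For Part IV, your claim $s\geq 4d$ is equivalent to $|\overrightarrow{\mathtt V}|\geq\tfrac p2|\overrightarrow{\mathtt E}|$, i.e.\ to the paper's Lemma~\ref{lem-key-property-Part-IV}; your heuristic (``the two attaching vertices are generically isolated in the overlap'') does not prove it, since an attaching vertex is only guaranteed to have degree $\leq 1$ in $D_{\mathtt t}\cap K$, which by itself yields merely $s\geq 2d$ and the useless exponent $1-d/ \!\!\phantom{x}+d$. The actual proof requires the degree-deficiency count $\sum_{v}(\mathsf{deg}_K(v)-\mathsf{deg}_{S\cap K}(v))\geq 2\sum_{\mathtt t}\gamma_{\mathtt t}+2(\mathtt T-1-\sum_{\mathtt t}\gamma_{\mathtt t})_+$, whose key input is that every proper nonempty sub-hypergraph $W$ of a block $U\in\mathcal U$ satisfies $|\mathsf L(W)\setminus\mathsf L(U)|\geq 2$ (Item~(2) of Lemma~\ref{lem-prelim-result-mathcal-U}, resting on the no-cut-vertex property), together with a separate accounting of the junction vertices $v'_{\mathtt q_{\mathtt t}'},v'_{\mathtt s_{\mathtt t+1}'+1}$ when their blocks do not meet the detours. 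A second, minor, imprecision: $\phi$ is the identity only on the runs of $\mathsf{IND}(S,K)$ anchored at $0$ and $\ell+1$; interior runs in Part IV may be translated or reversed, so the matched blocks of $S$ are blocks of $K$ in permuted positions and the detour endpoints are not simply $v'_{\mathtt q_{\mathtt t}},v'_{\mathtt s_{\mathtt t+1}}$ --- this is absorbed into the $\ell^{O(\mathtt T)}$ pattern count but changes the geometry you describe.
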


The proof of Lemma~\ref{lem-very-technical-sec-3-transfer} is the most technical part in our paper and we postpone it to Section~\ref{sec:supp-proofs-sec-3}. Provided with Lemma~\ref{lem-very-technical-sec-3-transfer}, we can now provide the proof of Lemma~\ref{lem-very-technical-sec-3}.
\begin{proof}[Proof of Lemma~\ref{lem-very-technical-sec-3} assuming Lemma~\ref{lem-very-technical-sec-3-transfer}]
    Combining \eqref{eq-technical-part-I}--\eqref{eq-technical-part-IV}, we see that for each fixed $[K] \in \mathcal J$ with $\mathsf{L}(K)=\{ i,j \}$,
    \begin{align*}
        \sum_{ \substack{ S \subset \mathsf K_n: E(S) \cap E(K) \neq \emptyset \\ [S] \in \mathcal J, \mathsf{L}(S)=\{ i,j \} } } \big( \lambda^{-2} n^{\frac{p}{2}} \big)^{ |E(S) \cap E(K)| } &\leq \beta_{\mathcal J} n^{pm\ell-1} \cdot \Big( n \lambda^{-4m\ell} \beta_{\mathcal J}^{-1} + \delta^2 + n^{-\frac{1}{2}+o(1)} + n^{-1+o(1)} \Big) \\
        & \overset{\eqref{eq-condition-weak-recovery}}{=} (\delta^2+o(1)) \beta_{\mathcal J} n^{pm\ell-1} \,.
    \end{align*}
    Thus, we have
    \begin{align*}
        \frac{ 1 }{ n^{2pm\ell-2} \beta_{\mathcal J}^2 } \sum_{ \substack{ [S], [K] \in \mathcal J: E(S) \cap E(K) \neq \emptyset \\ \mathsf L(S)=\mathsf L(K)=\{ i,j \} } } &\leq \frac{ \delta^2+o(1) }{ n^{2pm\ell-2} \beta_{\mathcal J}^2 } \sum_{[J] \in \mathcal J} \#\big\{ K \subset\mathsf K_n: K \cong J, \mathsf{L}(K)=\{ i,j \} \big\} \\
        &\overset{\text{Lemma~\ref{lem-prelim-mathcal-J},(1)}}{=} \frac{ \delta^2+o(1) }{ n^{2pm\ell-2} \beta_{\mathcal J}^2 } \frac{ n^{mp\ell-1} }{ |\mathsf{Aut}(J)| } \overset{\eqref{eq-def-beta-mathcal-H}}{=} \delta^2 + o(1) \,. \qedhere
    \end{align*}
\end{proof}

\subsection{Proof of Theorem~\ref{MAIN-THM-recovery}}{\label{subsec:proof-thm-2.11}}

With Proposition~\ref{main-prop-recovery}, we are ready to prove Theorem~\ref{MAIN-THM-recovery}.
\begin{proof}[Proof of Theorem~\ref{MAIN-THM-recovery}]
    For all $j \neq i$, note that 
    \begin{align*}
        \big( \widehat{x}(j) - x_*(i)x_*(j) \big)^2 \leq \big( \Phi_{i,j}^{\mathcal J} - x_*(i)x_*(j) \big)^2 \,.
    \end{align*}
    Thus, suppose \eqref{eq-condition-weak-recovery} holds, using Proposition~\ref{main-prop-recovery} we see that
    \begin{align*}
        \mathbb E_{\Pb}\Big[ \big( \widehat{x}(j) - x_*(i)x_*(j) \big)^2 \Big] \leq \mathbb E_{\Pb}\Big[ \big( \Phi_{i,j}^{\mathcal J} - x_*(i)x_*(j) \big)^2 \Big] \leq \delta^2 \,.
    \end{align*}
    Thus, we have
    \begin{align*}
        \mathbb E_{\Pb}\Big[ \big\| \widehat{x} - x_*(i) \cdot x_*  \big\|^2 \Big] \leq \delta^2 n \,.
    \end{align*}
    By Markov inequality, we then have
    \begin{align*}
        \Pb\Big( \big\| \widehat{x} - x_*(i) \cdot x_*  \big\|^2 \leq \delta n \Big) \geq 1-\delta \,.
    \end{align*}
    Combined with the fact that $\| \widehat x \|=\| x_* \|=\sqrt{n}$, we see that
    \begin{equation*}
        \Pb\Big( \frac{ \langle \widehat x,x_* \rangle }{ \| \widehat x \| \| x_* \| } \geq 1-\delta \Big) = \Pb\Big( \big\| \widehat{x} - x_*(i) \cdot x_*  \big\|^2 \leq 2\delta n \Big) \geq 1- \delta \,. \qedhere
    \end{equation*}
\end{proof}

\section{Approximate statistics by hypergraph color coding}{\label{sec:hypergraph-color-coding}}

In this section, we provide an efficient algorithm to approximately compute the detection statistic $f_{\mathcal H}$ given in \eqref{eq-def-f-mathcal-H} and the recovery statistic $\{ \Phi^{\mathcal J}_{i,j} \}$ given in \eqref{eq-def-Phi-i,j-mathcal-J}, using the idea of hypergraph color coding.

\subsection{Approximate the detection statistics}{\label{subsec:approx-detection}}

Recall Definition~\ref{def-mathcal-H}. By applying the hypergraph color coding method, we first approximately count the signed subgraphs that are isomorphic to a query hypergraph in $\mathcal H$ with $2m\ell$ edges and $mp\ell$ vertices. Specifically, recall \eqref{eq-def-tensor-PCA}, we generate a random coloring $\tau:[n] \to [mp\ell]$ that assigns a color to each vertex of $[n]$ from the color set $mp\ell$ independently and uniformly at random. Given any $V \subset [n]$, let $\chi_{\tau}(V)$ indicate that $\tau(V)$ is colorful, i.e., $\tau(x) \neq \tau(y)$ for any distinct $x,y \in V$. In particular, if $|V|=mp\ell$, then $\chi_{\tau}(V)=1$ with probability
\begin{equation}{\label{eq-def-r}}
    r := \frac{ (mp\ell)! }{ (mp\ell)^{mp\ell} } = e^{-\Theta(mp\ell)} \,.
\end{equation}
For any $p$-uniform unlabeled hypergraph $[H]$ with $mp\ell$ vertices, we define
\begin{equation}{\label{eq-def-g-S}}
    g_{H}(\bm Y,\tau) := \sum_{ S \subset \mathsf K_n, S \cong H } \chi_{\tau}(V(S)) \prod_{e \in E(S)} \bm Y_e \,.
\end{equation}
Then $\mathbb E[ g_{H}(\bm Y,\tau) \mid \bm Y ] = r\sum_{S \cong H} f_S(\bm Y)$ where $f_S(\bm Y)$ was defined in \eqref{eq-def-f-mathcal-H}. To further obtain an accurate approximation of $f_S(\bm Y)$, we average over multiple copies of $g_S(\bm Y,\tau)$ by generating $t$ independent random colorings, where
\begin{align*}
    t := \lceil 1/r \rceil \,.
\end{align*}
Next, we plug in the averaged subgraph count to approximately compute $f_{\mathcal H}(\bm Y)$. Specifically, we generate $t$ random colorings $ \{ \tau_{\mathtt k} \}_{\mathtt k=1}^{t}$ which are independent copies of $\tau$ that map $[n]$ to $[mp\ell]$. Then, we define
\begin{equation}{\label{eq-def-widetilde-f-H}}
    \widetilde{f}_{\mathcal H} := \frac{1}{\sqrt{ n^{mp\ell}\beta_{\mathcal H} }}  \sum_{ [H] \in \mathcal H } \Big( \frac{1}{tr} \sum_{\mathtt k=1}^{t} g_H(\bm Y,\tau_{\mathtt k}) \Big) \,.
\end{equation}
The following result shows that $\widetilde{f}_{\mathcal H}(\bm Y)$ well approximates $f_{\mathcal H}(\bm Y)$ in a relative sense.
\begin{proposition}{\label{prop-approximate-detection-statistics}}
    Suppose \eqref{eq-condition-strong-detection} holds. Then under both $\Pb$ and $\Qb$
    \begin{align}{\label{eq-approximate-detection-statistics}}
        \frac{ \widetilde{f}_{\mathcal H} - f_{\mathcal H} }{ \mathbb E_{\Pb}[f_{\mathcal H}] } \overset{L_2}{\longrightarrow} 0  \,.
    \end{align}
\end{proposition}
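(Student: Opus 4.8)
The plan is to exploit that the color-coding estimator is conditionally unbiased. Since $\mathbb E[g_H(\bm Y,\tau)\mid\bm Y]=r\sum_{S\cong H}f_S(\bm Y)$ and $\widetilde f_{\mathcal H}$ averages $t$ independent colorings with the normalization $\tfrac1{tr}$, one gets $\mathbb E[\widetilde f_{\mathcal H}\mid\bm Y]=f_{\mathcal H}$, so $\widetilde f_{\mathcal H}-f_{\mathcal H}$ has conditional mean zero and
\[
    \mathbb E\big[(\widetilde f_{\mathcal H}-f_{\mathcal H})^2\big]=\mathbb E\big[\operatorname{Var}(\widetilde f_{\mathcal H}\mid\bm Y)\big].
\]
Because $\mathbb E_{\Pb}[f_{\mathcal H}]=\omega(1)$ by Proposition~\ref{main-prop-detection}, it then suffices to show the right-hand side is $O(1)$ under each of $\Pb$ and $\Qb$. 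Setting $G(\bm Y,\tau):=\sum_{[H]\in\mathcal H}g_H(\bm Y,\tau)=\sum_{S\subset\mathsf K_n,\,[S]\in\mathcal H}\chi_{\tau}(V(S))f_S(\bm Y)$, so that $\widetilde f_{\mathcal H}-f_{\mathcal H}=(n^{mp\ell}\beta_{\mathcal H})^{-1/2}(tr)^{-1}\sum_{\mathtt k=1}^{t}\big(G(\bm Y,\tau_{\mathtt k})-\mathbb E[G(\bm Y,\tau)\mid\bm Y]\big)$, the independence of the $\tau_{\mathtt k}$ gives
\[
    \operatorname{Var}(\widetilde f_{\mathcal H}\mid\bm Y)=\frac{\operatorname{Var}(G(\bm Y,\tau)\mid\bm Y)}{n^{mp\ell}\beta_{\mathcal H}\,t\,r^2}\le\frac{\mathbb E[G(\bm Y,\tau)^2\mid\bm Y]}{n^{mp\ell}\beta_{\mathcal H}\,t\,r^2}.
\]

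The key structural observation is that $\chi_{\tau}(V(S))\chi_{\tau}(V(K))\ne0$ forces $V(S)\cup V(K)$ to be rainbow under a palette of only $mp\ell$ colors, and since every $[S],[K]\in\mathcal H$ has exactly $mp\ell$ vertices this forces $V(S)=V(K)$. Hence $\mathbb E[G(\bm Y,\tau)^2\mid\bm Y]=r\sum_{S:\,[S]\in\mathcal H}\sum_{K:\,[K]\in\mathcal H,\,V(K)=V(S)}f_S(\bm Y)f_K(\bm Y)$. Under $\Qb$, orthogonality \eqref{eq-standard-orthogonal} gives $\mathbb E_{\Qb}[f_Sf_K]=\mathbf 1_{\{S=K\}}$, so only the diagonal $K=S$ survives and
\[
    \mathbb E_{\Qb}\big[(\widetilde f_{\mathcal H}-f_{\mathcal H})^2\big]\le\frac{r\cdot\#\{S\subset\mathsf K_n:[S]\in\mathcal H\}}{n^{mp\ell}\beta_{\mathcal H}\,t\,r^2}=\frac{1+o(1)}{tr}\le 1+o(1),
\]
using Lemma~\ref{lem-prelim-graphs} to write $\#\{S\subset\mathsf K_n:[S]\in\mathcal H\}=[1+o(1)]n^{mp\ell}\beta_{\mathcal H}$ and $tr\ge1$.

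Under $\Pb$ I will invoke the identity established inside the proof of Lemma~\ref{lem-est-cov-f-S-f-K}, namely $\mathbb E_{\Pb}[f_Sf_K]=(1+\lambda^2n^{-p/2})^{|E(S)\cap E(K)|}(\lambda n^{-p/4})^{|E(S)\triangle E(K)|}$ for $[S],[K]\in\mathcal H$ (the monomial cancellation uses only that both are $2$-regular). The diagonal $K=S$ again contributes $[1+o(1)]r\,n^{mp\ell}\beta_{\mathcal H}$. For the off-diagonal part, fix $S$ with $V(S)=W$; any $K\ne S$ with $V(K)=W$, $[K]\in\mathcal H$ and $|E(S)\cap E(K)|=k$ is obtained by keeping $k$ of the $2m\ell$ edges of $S$ and adding $2m\ell-k$ further $p$-subsets of $W$, so the number of such $K$ is at most $\binom{2m\ell}{k}\binom{mp\ell}{p}^{2m\ell-k}\le n^{o(1)}$ by the constraint $\ell=o(\log n/\log\log n)$. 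Since $|E(S)\triangle E(K)|=2(2m\ell-k)\ge2$ when $K\ne S$, summing the geometric-type series over $0\le k\le2m\ell-1$ yields $\sum_{K\ne S}\mathbb E_{\Pb}[f_Sf_K]\le n^{-p/2+o(1)}$ uniformly in $S$, so the off-diagonal contribution to $\mathbb E_{\Pb}[G(\bm Y,\tau)^2]$ is at most $n^{-p/2+o(1)}\,r\,n^{mp\ell}\beta_{\mathcal H}$. Altogether $\mathbb E_{\Pb}[(\widetilde f_{\mathcal H}-f_{\mathcal H})^2]\le\tfrac{1+o(1)}{tr}+n^{-p/2+o(1)}=O(1)$, which finishes the proof. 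The only genuinely delicate point is this off-diagonal count under $\Pb$: it is precisely the rainbow collapse $V(S)=V(K)$ that makes the set of competing $K$ of size $n^{o(1)}$ rather than polynomially large, so the extra $n^{-p/2}$ factor from the edge mismatch wins; one also has to check throughout that $m$ and $\ell$ grow slowly enough for every combinatorial prefactor to be $n^{o(1)}$.
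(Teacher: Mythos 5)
Your setup is fine and matches the paper's starting point: $\mathbb E[\widetilde f_{\mathcal H}\mid\bm Y]=f_{\mathcal H}$, so the error is a conditional variance over the colorings, and since $\mathbb E_{\Pb}[f_{\mathcal H}]^2=\omega(1)$ it suffices to control that variance by $O(1)$ (in fact by $o(1)\cdot\mathbb E_{\Pb}[f_{\mathcal H}]^2$). However, your ``key structural observation'' is false, and the proof collapses at that point. The event $\chi_{\tau}(V(S))\chi_{\tau}(V(K))\neq 0$ only requires $\tau$ to be injective on $V(S)$ \emph{and} injective on $V(K)$ separately; it does not require injectivity on $V(S)\cup V(K)$, and hence does not force $V(S)=V(K)$. (Concretely, if $V(S)$ and $V(K)$ are disjoint, each can be bijectively colored by the $mp\ell$ colors with the two colorings unrelated, and $\mathbb E[\chi_{\tau}(V(S))\chi_{\tau}(V(K))]=r^2\neq 0$.) Consequently $\mathbb E[G(\bm Y,\tau)^2\mid\bm Y]$ is \emph{not} supported on pairs with $V(K)=V(S)$: it contains all pairs $(S,K)$, and the entire off-diagonal analysis that follows — where you argue the set of competing $K$ has size $n^{o(1)}$ — addresses only a negligible slice of the sum.

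What actually has to be controlled is the contribution of pairs with $V(S)\cap V(K)\neq\emptyset$ but $V(S)\neq V(K)$ (pairs with disjoint vertex sets genuinely drop out, but only after you keep the centering, i.e.\ work with $\operatorname{Var}(G\mid\bm Y)=\sum_{S,K}\bigl(\Pr[\chi_\tau(V(S))=\chi_\tau(V(K))=1]-r^2\bigr)f_Sf_K$ rather than $\mathbb E[G^2\mid\bm Y]$; the covariance vanishes exactly when $V(S)\cap V(K)=\emptyset$ and is at most $r$ otherwise). For the surviving overlapping pairs there are polynomially many choices of $K$ for each $S$ (roughly $n^{mp\ell-h}$ at overlap $h\ge 1$), and under $\Pb$ one must combine $\mathbb E_{\Pb}[f_Sf_K]\asymp\lambda^{4m\ell}n^{-pm\ell}(\lambda^{-1}n^{p/4})^{2|E(S)\cap E(K)|}$ with the structural inequality $2|V(S)\cap V(K)|>p|E(S)\cap E(K)|$ for proper overlaps and the $\mathrm{ENUM}(k,h)$ count — exactly the computation in the proof of Lemma~\ref{lem-mean-var-f-H-part-2}, which the paper reuses here. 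Your proof never touches these terms, so the main part of the argument is missing.
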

Since the convergence in $L_2$ implies the convergence in probability, as an immediate corollary of Theorem~\ref{MAIN-THM-detection} and Proposition~\ref{prop-approximate-detection-statistics}, we conclude that the approximate test statistic $\widetilde{f}_{\mathcal H}$ succeeds under the same condition as the original test statistic $f_{\mathcal H}$. Now we show the approximate hypergraph counts $g_{H}(\bm Y,\tau)$ can be computed efficiently via the following algorithm.
\begin{breakablealgorithm}{\label{alg:dynamic-programming}}
    \caption{Computation of $g_{H}(\bm Y,\tau)$}
    \begin{algorithmic}[1]
    \STATE \textbf{Input}: A weighted adjacency tensor $\bm Y$ on $[n]$ with its vertices colored by $\tau$, and an element $[H] \in \mathcal H$ with $H = U_1 \cup \ldots U_{\ell}$ be the decomposition satisfying Definition~\ref{def-mathcal-H}. 
    \STATE Compute $\mathfrak a(H) = \mathfrak a_+(H)+\mathfrak a_{-}(H)$, where $\mathfrak a_{+}(H)=\#\{ 1 \leq k \leq \ell: \exists \pi \in \mathsf{Aut}(H), \pi(v_i)=v_{i+k} \}$ and $\mathfrak a_{-}(H)=\#\{ 1 \leq k \leq \ell: \exists \pi \in \mathsf{Aut}(H), \pi(v_i)=v_{\ell+i-k} \}$.
    \STATE Denote $\widehat{H}=U_2 \cup \ldots \cup U_{\ell}$. Choose the endpoints of $\widehat{H}$ as $\{ v_2,v_\ell \}$.
    \STATE For every subset $C \in [mp\ell]$ with $|C|=mp+1$, every distinct $c_1,c_2 \in C$, every possible $U_{\ell}$ and every distinct $x,y \in [n]$, compute $Y_{\ell}(x,y;c_1,c_2;C,U_{\ell})= \Lambda(x,y;c_1,c_2;C,[U_{\ell}])$, where for $[U] \in \mathcal U_m$
    \begin{align*}
        \Lambda( x,y;c_1,c_2;C,[U] ) = \sum_{ S \cong U } f_{S}(\bm Y) \cdot \mathbf 1\Big\{ \tau(V(S))=C, \mathsf{L}(S)=\{ x,y \}, \tau(x)=c_1, \tau(y)=c_2 \Big\} \,.
    \end{align*}
    \STATE For every $1 \leq k \leq \ell-2$ and $c_1,c_2 \in C, |C|=mp(k+1)+1$, compute recursively  
    \begin{align*}
       &Y_{\ell-k}(x,y;c_1,c_2,C;U_{\ell-k} \cup \ldots \cup U_{\ell}) \\
       =\ & \sum_{c \in C} \sum_{ \substack{ (C_1,C_2): C_1 \cup C_2 = C \\ c_1 \in C_1, c_2 \in C_2 \\ C_1 \cap C_2 = \{ c \} } } \sum_{ \substack{ z \in [n] \\ z \neq x,y } } \Lambda( x,z;c_1,c;C_1,[U_{\ell-k}] ) Y_{\ell-k+1}(y,z;c,c_2,C_2;U_{\ell-k+1} \cup \ldots \cup U_{\ell}) \,.
    \end{align*}
    \STATE Compute
    \begin{align*}
        g_H(\bm Y;\tau) =\ & \frac{1}{\mathfrak a(H)} \sum_{ \substack{ c_1,c_2 \in [mp\ell] \\ c_1 \neq c_2 } } \sum_{ \substack{ (C_1,C_2): C_1 \cup C_2 = [mp\ell] \\ C_1 \cap C_2 = \{ c_1,c_2 \} } } \sum_{ \substack{ x,y \in [n] \\ x \neq y } } \Lambda( x,y;c_1,c_2;C_1,[U_{1}] ) \\
        & \cdot Y_{2}(y,x;c_2,c_1,C_2;U_{2} \cup \ldots \cup U_{\ell}) \,.
    \end{align*}
    \textbf{Output}: $g_H(\bm Y;\tau)$.
    \end{algorithmic}
\end{breakablealgorithm}

\begin{lemma}{\label{lem-color-coding}}
    For any coloring $\tau:[n] \to [K+1]$ and any hypergraph $[H] \in \mathcal H$, Algorithm~\ref{alg:dynamic-programming} computes $g_H(\bm Y,\tau)$ in time $O(n^{mp+3+o(1)})$.
\end{lemma}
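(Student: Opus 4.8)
The plan is to verify that the recursive scheme in Algorithm~\ref{alg:dynamic-programming} correctly computes $g_H(\bm Y,\tau)$ and to bound its running time. First I would establish correctness by induction on the ``necklace chain'' length. The key observation is that for $[H] \in \mathcal H$ with decomposition $H = U_1 \cup \ldots \cup U_\ell$ (cyclic, with $v_{\ell+1}=v_1$), any labeled copy $S \subset \mathsf K_n$ with $S \cong H$ and $\chi_\tau(V(S))=1$ decomposes uniquely into $S = S_1 \cup \ldots \cup S_\ell$ where $S_i \cong U_i$ and consecutive pieces share exactly one vertex. I would show that the quantity $Y_{\ell-k}(x,y;c_1,c_2,C;U_{\ell-k}\cup\ldots\cup U_\ell)$ defined by the recursion equals $\sum_{S'} f_{S'}(\bm Y)$, where the sum is over labeled copies of the sub-chain $U_{\ell-k}\cup\ldots\cup U_\ell$ whose two endpoints are $x,y$ with colors $c_1,c_2$ and whose vertex set receives precisely the color palette $C$ in a colorful (rainbow) fashion. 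The base case $k=0$ is exactly the definition of $Y_\ell$ via $\Lambda$. For the inductive step, I would argue that because each sub-chain vertex set must be colorful and the palettes used by $U_{\ell-k}$ and by $U_{\ell-k+1}\cup\ldots\cup U_\ell$ can only overlap in the shared vertex's color $c$, the split $(C_1,C_2)$ with $C_1\cap C_2=\{c\}$ is forced; summing over the shared vertex $z\neq x,y$, its color $c$, and all valid palette splits reconstructs exactly the contribution of the longer chain. This is the standard color-coding gluing argument, now carried out for hypergraph pieces rather than single edges.

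The second point to address is the assembly in the final step and the role of the normalization $\mathfrak{a}(H)$. Closing the chain cyclically at $U_1$ overcounts each labeled copy $S \cong H$ by exactly the number of ``rotational/reflective'' automorphism actions on the cyclic necklace structure, which is precisely $\mathfrak{a}(H) = \mathfrak{a}_+(H) + \mathfrak{a}_-(H)$: an automorphism of $H$ either cyclically rotates the blocks $U_1,\ldots,U_\ell$ or reverses their order. Dividing by $\mathfrak{a}(H)$ corrects this, so the output equals $\sum_{S \cong H} \chi_\tau(V(S)) \prod_{e\in E(S)} \bm Y_e = g_H(\bm Y,\tau)$ as desired. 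I would make precise that every colorful copy of $H$ is generated exactly $\mathfrak{a}(H)$ times and no non-colorful copy contributes (because every intermediate $Y$-quantity carries the colorful constraint).

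For the running time, the dominant cost is the recursion in Step~5. There are $O(\ell)$ levels; at each level we enumerate a color palette $C$ of size $O(mp\ell) = O_{m,p}(\ell)$ from $[mp\ell]$, which is $2^{O_{m,p}(\ell)} = n^{o(1)}$ since $\ell = o(\log n/\log\log n)$; the inner color $c$, the split $(C_1,C_2)$, and the endpoint colors each contribute another $n^{o(1)}$ factor. The genuinely $n$-dependent work is: the vertices $x,y,z$ range over $[n]$, giving $n^3$; and evaluating $\Lambda(x,z;c_1,c;C_1,[U])$ requires, for a single component $[U]\in\mathcal U$ on $mp+1$ vertices with two prescribed endpoints, summing $f_S(\bm Y)$ over $S\cong U$ with $\mathsf L(S)=\{x,z\}$ colored by $C_1$ — this is a sum over $O(n^{mp-1})$ labelings of the remaining $mp-1$ vertices, times $n^{o(1)}$ bookkeeping. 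Multiplying these, each recursion level costs $n^{mp-1}\cdot n^3 \cdot n^{o(1)} = n^{mp+2+o(1)}$, and precomputing all the $\Lambda$ tables in Step~4 costs $n^{mp+1+o(1)}$; summing over $O(\ell)=n^{o(1)}$ levels gives the claimed $O(n^{mp+3+o(1)})$.

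The main obstacle is the careful bookkeeping of the \emph{colorful} constraint through the recursion — specifically, proving that the palette-split condition $C_1\cup C_2=C$, $C_1\cap C_2=\{c\}$ is both necessary and sufficient to glue a colorful copy of $U_{\ell-k}$ to a colorful copy of the remaining chain into a colorful copy of the union, and that this correspondence is a bijection. One must check that no vertex outside the shared vertex can have a repeated color (else $\chi_\tau$ would vanish) and that, conversely, any such consistent pair of colorful sub-copies with matching endpoint $z$ and color $c$ does glue to a colorful copy on $C$. The automorphism-count normalization $\mathfrak{a}(H)$ also requires a short but genuine argument that the cyclic gluing overcounts uniformly by exactly this factor across all copies of a fixed $[H]$; this uses that $\mathcal H$ consists of necklaces whose automorphism group acts on the block decomposition through the dihedral-type action captured by $\mathfrak{a}_+$ and $\mathfrak{a}_-$.
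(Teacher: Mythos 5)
Your proposal is correct and follows essentially the same route as the paper: the paper likewise introduces the explicit combinatorial quantities $Z_{\ell-k}$ (sums of $f_{S'}(\bm Y)$ over colorful labeled copies of the sub-chain with prescribed endpoints, endpoint colors, and palette), verifies they satisfy the same base case and palette-splitting recursion as the $Y_{\ell-k}$, and closes the cycle with the overcounting factor $\mathfrak a(H)$ coming from the dihedral action of $\mathsf{Aut}(H)$ on the block decomposition (Item~(2) of Lemma~\ref{lem-prelim-mathcal-H}). Your running-time accounting (palette enumeration $=n^{o(1)}$ since $\ell=o(\log n/\log\log n)$, vertices contributing $n^{3}$ per level, and $n^{mp\pm O(1)}$ for the $\Lambda$ tables) matches the paper's bound of $O(n^{mp+3+o(1)})$.
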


Finally, we show that the approximate test statistic $\widetilde{f}$ can be computed efficiently.
\begin{breakablealgorithm}{\label{alg:cal-widetilde-f}}
    \caption{Computation of $\widetilde{f}_{\mathcal H}$}
    \begin{algorithmic}[1]
    \STATE {\bf Input:} Adjacency tensor $\bm Y$, signal-to-noise ratio parameter $\lambda$, dimension parameter $p$, and integers $m,\ell$.
    \STATE List all non-isomorphic, unrooted unlabeled hypergraphs satisfying Definition~\ref{def-component-hypergraph}. Denote this list as $\mathcal U_m$.
    \STATE List all non-isomorphic, unrooted unlabeled hypergraphs satisfying Definition~\ref{def-mathcal-H} in the following way: list all hypergraphs sequences $(U_1,\ldots,U_{\ell})$ such that $U_i \in \mathcal U_m$, $\mathsf L(U_i)=\{ v_i,v_{i+1} \}$, $V(U_i) \cap V(U_{i+1})=\{ v_{i+1} \}$, and delete the sequences that generates isomorphic $H=U_1 \cup \ldots \cup U_{\ell}$. Denote this list as $\mathcal H$.
    \STATE For each $[H] \in \mathcal H$, compute $\operatorname{Aut}(H)$ via Algorithm~\ref{alg:cal-Aut-H-in-mathcal-H}. 
    \STATE Generate i.i.d.\ random colorings $\{ \tau_{\mathtt k} : 1 \leq \mathtt k \leq t \}$ that map $[n]$ to $[mp\ell]$ uniformly at random. 
    \FOR{each $1 \leq \mathtt k \leq t$}
    \STATE For each $[H] \in \mathcal H$, compute $g_{H}(\bm Y,\tau_{\mathtt k})$ via Algorithm~\ref{alg:dynamic-programming}. 
    \ENDFOR
    \STATE Compute $\widetilde{f}_{\mathcal H}(\bm Y)$ according to \eqref{eq-def-widetilde-f-H}.
    \STATE {\bf Output:} $\widetilde{f}_{\mathcal H}(\bm Y)$.
    \end{algorithmic}
\end{breakablealgorithm}

\begin{proposition}{\label{prop-running-time-detection}}
    Under condition~\ref{eq-condition-strong-detection}, Algorithm~\ref{alg:cal-widetilde-f} computes $\widetilde{f}_{\mathcal H}(\bm Y)$ in time $n^{C+o(1)}$ for some constant $C=C(m,p)$. 
\end{proposition}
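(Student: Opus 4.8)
The plan is to trace Algorithm~\ref{alg:cal-widetilde-f} line by line, bound the cost of each step, and show that the running time is dominated by the calls to Algorithm~\ref{alg:dynamic-programming} inside the main loop, whose cost is already pinned down by Lemma~\ref{lem-color-coding}. The first thing I would do is extract the relevant asymptotics from \eqref{eq-condition-strong-detection}: since $m,p$ are fixed and $\ell=o(\tfrac{\log n}{\log\log n})$, we have $mp\ell=o(\tfrac{\log n}{\log\log n})$, hence $e^{O(mp\ell)}=n^{o(1)}$, and by Stirling $(mp\ell)!=e^{O(mp\ell\log(mp\ell))}=e^{o(\log n)}=n^{o(1)}$; combined with \eqref{eq-def-r} this yields $t=\lceil 1/r\rceil=e^{\Theta(mp\ell)}=n^{o(1)}$. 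I will use these three estimates repeatedly.

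Next I would dispatch the preprocessing lines. Line~2: because $\mathcal U_m$ consists of $p$-uniform hypergraphs on $mp+1$ vertices with $2m$ hyperedges, $|\mathcal U_m|=O_{m,p}(1)$ and the list can be produced in $O_{m,p}(1)$ time. Line~3: there are at most $e^{O_{m,p}(\ell)}=n^{o(1)}$ admissible gadget-sequences $(U_1,\ldots,U_\ell)$; building each necklace $H=U_1\cup\ldots\cup U_\ell$ costs $O(mp\ell)=n^{o(1)}$, and since each such $H$ has $mp\ell=o(\log n)$ vertices, deciding isomorphism between two necklaces costs at most $(mp\ell)!\cdot\mathrm{poly}(mp\ell)=n^{o(1)}$ by brute force (the cyclic structure could be exploited for a faster test, but this crude bound is enough), so $\mathcal H$ is assembled in $n^{o(1)}$ time and $|\mathcal H|\le n^{o(1)}$. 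Line~4: for each $[H]\in\mathcal H$ one runs Algorithm~\ref{alg:cal-Aut-H-in-mathcal-H} to obtain $\mathsf{Aut}(H)$; even brute-force enumeration over the $(mp\ell)!=n^{o(1)}$ candidate bijections runs in $n^{o(1)}$ time per hypergraph, so Line~4 costs $n^{o(1)}$ overall and also delivers every $|\mathsf{Aut}(H)|$ needed to evaluate $\beta_{\mathcal H}$ through \eqref{eq-def-beta-mathcal-H}. Line~5: drawing $t=n^{o(1)}$ colorings $\tau_{\mathtt k}:[n]\to[mp\ell]$ costs $O(tn)=n^{1+o(1)}$.

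The main loop (Lines~6--8) is where the polynomial cost is incurred: for each of the $t=n^{o(1)}$ colorings and each of the $|\mathcal H|=n^{o(1)}$ hypergraphs, Lemma~\ref{lem-color-coding} computes $g_H(\bm Y,\tau_{\mathtt k})$ in time $O(n^{mp+3+o(1)})$, so the loop costs $t\cdot|\mathcal H|\cdot O(n^{mp+3+o(1)})=n^{mp+3+o(1)}$. Line~9 then assembles $\widetilde f_{\mathcal H}$ via \eqref{eq-def-widetilde-f-H} from the $t\cdot|\mathcal H|=n^{o(1)}$ already-computed values, using $n^{o(1)}$ further arithmetic operations (and $\beta_{\mathcal H}$ was obtained in Line~4). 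Summing the contributions of all lines, the total running time is $n^{mp+3+o(1)}$, so the proposition holds with $C=C(m,p)=mp+3$.

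The only source of danger is that two quantities scale exponentially in $\ell$ rather than polynomially in $n$: the number of necklaces, which is $e^{O(\ell)}$, and the number of colorings $t=\lceil 1/r\rceil=e^{\Theta(mp\ell)}$. The main point of the argument — and the one thing worth stating carefully — is that the hypothesis $\ell=o(\tfrac{\log n}{\log\log n})$ in \eqref{eq-condition-strong-detection} is exactly what forces $e^{O(\ell)}$, $(mp\ell)!$, and $t$ all to be $n^{o(1)}$; after that, every step except the dynamic-programming calls is $n^{1+o(1)}$ or smaller, and the exponent $mp+3$ is imported verbatim from Lemma~\ref{lem-color-coding} and depends only on $m$ and $p$, as claimed.
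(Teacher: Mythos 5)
Your proposal is correct and follows essentially the same route as the paper: bound each preprocessing step by $n^{o(1)}$ using $\ell=o(\tfrac{\log n}{\log\log n})$, and let the $t\cdot|\mathcal H|=n^{o(1)}$ calls to Algorithm~\ref{alg:dynamic-programming}, each costing $n^{mp+3+o(1)}$ by Lemma~\ref{lem-color-coding}, dominate, giving $C=mp+3$. The only cosmetic difference is that you compute $\mathsf{Aut}(H)$ by brute force over $(mp\ell)!=n^{o(1)}$ bijections rather than invoking Corollary~\ref{cor-bound-Aut-H-in-mathcal-H}, which is equally valid here; you are also slightly more careful than the paper in accounting for the deduplication in Step~3, the evaluation of $\beta_{\mathcal H}$, and the cost of sampling the colorings.
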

\begin{proof}
    Listing all non-isomorphic, unrooted unlabeled hypergraphs in $\mathcal U_m$ takes time $\binom{ \binom{mp+1}{p} }{ 2m }=n^{O(1)}$. Listing all non-isomorphic, unrooted unlabeled hypergraphs satisfying Definition~\ref{def-mathcal-H} in Step~3 takes time
    \begin{align*}
        O\Big( |\mathcal U_m|^{\ell} \Big) = O\Big( \tbinom{ (mp+1)^{p} }{ 2m }^{\ell} \Big) = n^{o(1)} \,.
    \end{align*}
    From Corollary~\ref{cor-bound-Aut-H-in-mathcal-H}, calculating $\mathsf{Aut}(H)$ for $[H]\in \mathcal H$ takes time
    \begin{align*}
        |\mathcal H| \cdot O\Big( 2\ell \cdot ((mp+1)!)^{\ell} \Big) = O\Big( \tbinom{ (mp+1)^{p} }{ 2m }^{\ell} \Big) \cdot O\Big( 2\ell \cdot ((mp+1)!)^{\ell} \Big) = n^{o(1)} \,.
    \end{align*}
    For all $1 \leq i \leq t$, calculating $g_{H}(\bm Y,\tau_i)$ takes time $n^{mp+3+o(1)}$. Thus, the total running time of Algorithm~\ref{alg:cal-widetilde-f} is 
    \begin{equation*}
        O(n^{o(1)}) + O(n^{o(1)}) + 2t \cdot O(n^{mp+3+o(1)}) = O(n^{mp+3+o(1)}) \,. \qedhere
    \end{equation*}
\end{proof}
\begin{remark}
    Recall \eqref{eq-suffice-bound-m}. The running time of Algorithm~\ref{alg:cal-widetilde-f} can be chosen as $n^{C(\lambda)+o(1)}$, where $C(\lambda)=O_p(1) \cdot \max\{1,\lambda\}^{-\frac{4p}{p-2}}$.
\end{remark}

We complete this subsection by pointing out that Theorem~\ref{MAIN-THM-detection-algorithmic} follows directly from Proposition~\ref{prop-approximate-detection-statistics} and Proposition~\ref{prop-running-time-detection}.

\subsection{Approximate the recovery statistics}{\label{subsec:approx-recovery}}

Recall Definition~\ref{def-mathcal-J}, we generate a random coloring $\xi:[n] \to [mp\ell]$ that assigns a color to each vertex of $[n]$ from the color set $mp\ell$ independently and uniformly at random. Given any $V \subset [n]$, let $\chi_{\xi}(V)$ indicate that $\xi(V)$ is colorful, i.e., $\xi(x) \neq \xi(y)$ for any distinct $x,y \in V$. In particular, if $|V|=mp\ell$, then $\chi_{\xi}(V)=1$ with probability
\begin{equation}{\label{eq-def-varsigma}}
    \varkappa := \frac{ (mp\ell+1)! }{ (mp\ell+1)^{mp\ell+1} } \,.
\end{equation}
For any $p$-uniform unlabeled hypergraph $[J] \in \mathcal J$ with $mp\ell$ vertices, we define
\begin{equation}{\label{eq-def-g-J-xi}}
    h_{J}(\bm Y,\xi) := \sum_{ \substack{ S \subset \mathsf K_n: S \cong J \\ \mathsf{L}(S)=\{ i,j \} } } \chi_{\xi}(V(S)) \prod_{e \in E(S)} \bm Y_e \,.
\end{equation}
Then $\mathbb E[ g_{J}(\bm Y,\xi) \mid \bm Y ] = \varkappa \sum_{S \cong J, \mathsf{L}(S)=\{i,j\} } f_S(\bm Y)$ where $f_S(\bm Y)$ was defined in \eqref{eq-def-f-mathcal-H}. Next, we define $t=\lceil \frac{1}{\varkappa} \rceil$ and generate $t$ random colorings $ \{ \xi_{\mathtt k} \}_{\mathtt k=1}^{t}$ which are independent copies of $\xi$ that map $[n]$ to $[mp\ell]$. Then, we define
\begin{equation}{\label{eq-def-widetilde-Phi-H}}
    \widetilde{\Phi}^{\mathcal J}_{i,j} := \frac{1}{ \lambda^{2m\ell} n^{\frac{mp\ell}{2}-1} \beta_{\mathcal J} } \sum_{ [J] \in \mathcal J } \Big( \frac{1}{t\varkappa} \sum_{\mathtt k=1}^{t} g_J(\bm Y,\xi_{\mathtt k}) \Big) \,.
\end{equation}
Moreover, the following result bounds the approximation error under
the same conditions as those in Proposition~\ref{main-prop-recovery} for the second moment calculation.
\begin{proposition}{\label{prop-approximate-recovery-statistics}}
    Suppose \eqref{eq-condition-weak-recovery} holds. Then
    \begin{equation}{\label{eq-approximate-recovery-statistics}}
        \mathbb E_{\Pb}\Big[ \big( \widetilde{\Phi}_{i,j}^{\mathcal J} - \Phi^{\mathcal J}_{i,j} \big)^2 \Big] \leq \delta^2 \,.
    \end{equation}
\end{proposition}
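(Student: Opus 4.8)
The plan is to mirror the $L_2$-approximation argument for the detection statistic (Proposition~\ref{prop-approximate-detection-statistics}), but now working conditionally on $\bm Y$ and tracking the dependence on the endpoints $\{i,j\}$. First I would recall that $\widetilde{\Phi}^{\mathcal J}_{i,j} - \Phi^{\mathcal J}_{i,j}$ is, up to the deterministic normalization $\lambda^{-2m\ell} n^{1 - pm\ell/2}\beta_{\mathcal J}^{-1}$, the quantity
\begin{align*}
    \sum_{[J]\in\mathcal J}\Big( \tfrac{1}{t\varkappa}\sum_{\mathtt k=1}^{t} h_J(\bm Y,\xi_{\mathtt k}) - \sum_{\substack{S\cong J\\ \mathsf L(S)=\{i,j\}}} f_S(\bm Y) \Big).
\end{align*}
Since $\mathbb E[h_J(\bm Y,\xi)\mid\bm Y] = \varkappa\sum_{S\cong J,\,\mathsf L(S)=\{i,j\}}f_S(\bm Y)$, the conditional expectation of this difference given $\bm Y$ is zero; hence its conditional variance given $\bm Y$ is all that matters, and by averaging over the $t$ i.i.d.\ colorings this is $\frac{1}{t\varkappa^2}$ times the conditional variance of $\sum_{[J]\in\mathcal J} h_J(\bm Y,\xi)$ for a single coloring $\xi$. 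So the target reduces to bounding $\mathbb E_{\Pb}\big[\operatorname{Var}_{\xi}\big(\sum_{[J]} h_J(\bm Y,\xi)\big)\big]$ and then taking $t = \lceil 1/\varkappa\rceil$ large.

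Next I would expand the single-coloring variance. Writing $\sum_{[J]} h_J(\bm Y,\xi) = \sum_{S:\,[S]\in\mathcal J,\ \mathsf L(S)=\{i,j\}} \chi_\xi(V(S)) f_S(\bm Y)$, its second moment over $\xi$ involves $\mathbb E_\xi[\chi_\xi(V(S))\chi_\xi(V(K))] = \mathbb E_\xi[\chi_\xi(V(S)\cup V(K))]$, which equals $(mp\ell+1)!\big/(mp\ell+1)^{|V(S)\cup V(K)|}$ when $|V(S)\cup V(K)|\le mp\ell+1$ and $0$ otherwise. Subtracting the square of the mean cancels the ``$|V(S)\cup V(K)| = 2mp\ell+2 - |V(S)\cap V(K)|$ with $V(S)\cap V(K)=\{i,j\}$'' contribution (disjoint-apart-from-endpoints pairs), leaving only pairs with $|V(S)\cap V(K)|\ge 3$, i.e.\ genuinely overlapping pairs. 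For those, $|V(S)\cup V(K)| \le mp\ell+1$ forces $|V(S)\cap V(K)| \ge mp\ell + 3$, which is an enormous overlap, and in particular $E(S)\cap E(K)\neq\emptyset$. Taking $\mathbb E_{\Pb}$ and using $|\mathbb E_{\Pb}[f_S f_K]| \le [1+o(1)](\lambda n^{-p/4})^{|E(S)\triangle E(K)|}$ (exactly the bound extracted in Lemma~\ref{lem-conditional-exp-f-S-f-K} and the lines following it, after conditioning on $x_*(i),x_*(j)$ and taking an outer expectation), the whole thing is controlled by
\begin{align*}
    \frac{\varkappa}{t\varkappa^2}\cdot\frac{1+o(1)}{\lambda^{4m\ell} n^{pm\ell-2}\beta_{\mathcal J}^2}\sum_{\substack{[S],[K]\in\mathcal J,\ \mathsf L(S)=\mathsf L(K)=\{i,j\}\\ |V(S)\cap V(K)|\ge mp\ell+3}} \big(\lambda^{-2} n^{p/2}\big)^{|E(S)\cap E(K)|}.
\end{align*}
Since $t\ge 1/\varkappa$ we have $\varkappa/(t\varkappa^2)\le 1$, so this is dominated by the full sum already estimated in Lemma~\ref{lem-very-technical-sec-3} (the restriction to large $|V(S)\cap V(K)|$ only shrinks it), giving the bound $\delta^2$ — or, if one wants the sharper $o(1)$, one restricts to $\mathsf{Part}_{III}\cup\mathsf{Part}_{IV}$ via Lemma~\ref{lem-very-technical-sec-3-transfer}, since an overlap of size $\ge mp\ell+3$ cannot occur in $\mathsf{Part}_I$ or $\mathsf{Part}_{II}$ (those have at most one shared necklace-block worth of vertices touched). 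Either way the right-hand side of \eqref{eq-approximate-recovery-statistics} follows.

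The main obstacle I anticipate is bookkeeping the colorful-indicator cancellation cleanly: one must verify that after subtracting $(\mathbb E_\xi[\,\cdot\,])^2$ the surviving pairs are exactly the overlapping ones with $|V(S)\cup V(K)|\le mp\ell+1$, and that these are a sub-collection of the pairs handled by Lemma~\ref{lem-very-technical-sec-3} (so no new enumeration is needed), rather than re-deriving a fresh second-moment bound. The rest is the same normalization arithmetic already appearing in the proof of Proposition~\ref{main-prop-recovery}: divide by $\lambda^{4m\ell}n^{pm\ell-2}\beta_{\mathcal J}^2$, use Lemma~\ref{lem-prelim-mathcal-J}(1) to count labeled copies of each $[J]$ with leaves $\{i,j\}$, and conclude via $\eqref{eq-def-beta-mathcal-H}$.
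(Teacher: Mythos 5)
There is a genuine gap, and it originates in the step where you compute the coloring covariance. The identity $\mathbb E_\xi[\chi_\xi(V(S))\chi_\xi(V(K))]=\mathbb E_\xi[\chi_\xi(V(S)\cup V(K))]$ is false: two vertices $u\in V(S)\setminus V(K)$ and $v\in V(K)\setminus V(S)$ may share a color while $V(S)$ and $V(K)$ are each individually colorful, so both indicators can equal $1$ even when $|V(S)\cup V(K)|>mp\ell+1$. The correct value, for $|V(S)\cap V(K)|=A$, is $\varkappa\cdot\frac{(mp\ell+1-A)!}{(mp\ell+1)^{mp\ell+1-A}}$, which is neither $0$ nor equal to $\varkappa^2$ for $A=2$. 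Consequently your claim that subtracting the squared mean cancels all pairs with $V(S)\cap V(K)=\{i,j\}$, leaving only pairs with near-total overlap, is wrong; under a single coloring $\chi_\xi(V(S))$ and $\chi_\xi(V(K))$ remain correlated even for edge-disjoint, vertex-almost-disjoint pairs. (Your arithmetic is also off: $|V(S)\cup V(K)|\le mp\ell+1$ would force $|V(S)\cap V(K)|\ge mp\ell+1$, i.e.\ $V(S)=V(K)$, not $\ge mp\ell+3$, which exceeds $|V(S)|$.)

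The practical consequence is that your argument silently drops the dominant bookkeeping work. Lemma~\ref{lem-very-technical-sec-3} only controls pairs with $E(S)\cap E(K)\neq\emptyset$; the pairs with $E(S)\cap E(K)=\emptyset$ still satisfy $\mathbb E_{\Pb}[f_Sf_K\mid x_*(i),x_*(j)]=\lambda^{4m\ell}n^{-pm\ell}>0$ (Lemma~\ref{lem-conditional-exp-f-S-f-K}) and carry a nonzero coloring covariance, so they cannot be discarded. The paper's proof splits exactly these into two further regimes: when $|V(S)\cap V(K)|\le(mp\ell)^{0.1}$ it shows the normalized coloring covariance $\frac{1}{t\varkappa}\cdot\frac{(mp\ell-A)!}{(mp\ell)^{mp\ell-A}}-1$ is $o(1)$, and when $|V(S)\cap V(K)|>(mp\ell)^{0.1}$ it bounds the number of such pairs by $\beta_{\mathcal J}n^{mp\ell+2-(mp\ell)^{0.1}}$, which is negligible. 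Your treatment of the edge-intersecting pairs (bounding the coloring factor by $1$ via $t\varkappa\ge1$ and invoking Lemma~\ref{lem-very-technical-sec-3} for the $\delta^2$ bound) does match the paper's first part, but without the two additional regimes the proof is incomplete.
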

Using triangle inequality, as an immediate corollary of Proposition~\ref{main-prop-recovery} and Proposition~\ref{prop-approximate-recovery-statistics}, we conclude that the approximate test statistic $\{ \widetilde{\Phi}^{\mathcal J}_{i,j} \}$ satisfy \eqref{eq-L2-estimation-error} with $\delta$ replaced by $2\delta$. Now we show the approximate hypergraph counts $h_{J}(\bm Y,\xi)$ can be computed efficiently via the following algorithm.
\begin{breakablealgorithm}{\label{alg:dynamic-programming-recovery}}
    \caption{Computation of $h_{J}(\bm Y,\xi)$}
    \begin{algorithmic}[1]
    \STATE \textbf{Input}: A weighted adjacency tensor $\bm Y$ on $[n]$ with its vertices colored by $\xi$, and an element $[J] \in \mathcal J$. 
    \STATE Let $H = U_1 \cup \ldots U_{\ell}$ be the decomposition in Definition~\ref{def-mathcal-H}. 
    \STATE For every subset $C \in [mp\ell]$ with $|C|=mp+1$, every distinct $c_1,c_2 \in C$, every possible $U_{\ell}$ and every distinct $x,y \in [n]$, compute $Y_{\ell}(x,y;c_1,c_2;C,U_{\ell})= \Lambda(x,y;c_1,c_2;C,[U_{\ell}])$, where for $[U] \in \mathcal U_m$
    \begin{align*}
        \Lambda( x,y;c_1,c_2;C,[U] ) = \sum_{ S \cong U } f_{S}(\bm Y) \cdot \mathbf 1\Big\{ \xi(V(S))=C, \mathsf{L}(S)=\{ x,y \}, \tau(x)=c_1, \tau(y)=c_2 \Big\} \,.
    \end{align*}
    \STATE For every $1 \leq k \leq \ell-1$ and $c_1,c_2 \in C, |C|=mp(k+1)+1$, compute recursively  
    \begin{align*}
       &Y_{\ell-k}(x,y;c_1,c_2,C;U_{\ell-k} \cup \ldots \cup U_{\ell}) \\
       =\ & \sum_{c \in C} \sum_{ \substack{ (C_1,C_2): C_1 \cup C_2 = C \\ c_1 \in C_1, c_2 \in C_2 \\ C_1 \cap C_2 = \{ c \} } } \sum_{ \substack{ z \in [n] \\ z \neq x,y } } \Lambda( x,z;c_1,c;C_1,[U_{\ell-k}] ) Y_{\ell-k+1}(y,z;c,c_2,C_2;U_{\ell-k+1} \cup \ldots \cup U_{\ell}) \,.
    \end{align*}
    \STATE Compute
    \begin{align*}
        h_J(\bm Y,\xi) = \sum_{ c_1\neq c_2 } Y_{1}(i,j;c_1,c_2,C;U_{1} \cup \ldots \cup U_{\ell})
    \end{align*}
    \textbf{Output}: $h_J(\bm Y;\xi)$.
    \end{algorithmic}
\end{breakablealgorithm}

\begin{lemma}{\label{lem-color-coding-recovery}}
    For any coloring $\xi:[n] \to [mp\ell+1]$ and any hypergraph $[J] \in \mathcal J$, Algorithm~\ref{alg:dynamic-programming} computes $g_H(\bm Y,\xi)$ in time $O(n^{2mp+3+o(1)})$.
\end{lemma}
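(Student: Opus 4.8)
The plan is to establish two things: that Algorithm~\ref{alg:dynamic-programming-recovery} returns the quantity $h_J(\bm Y,\xi)$ of \eqref{eq-def-g-J-xi}, and that it does so within the stated time budget. The argument closely parallels the proof of Lemma~\ref{lem-color-coding} for the detection statistic, so I will emphasize the points specific to the recovery setting.

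For correctness I would argue by downward induction on $j$. Fix $[J]\in\mathcal J$ with bead decomposition $J=U_1\cup\cdots\cup U_\ell$ as in Definition~\ref{def-mathcal-J}; by Item~(4) in Definition~\ref{def-mathcal-J} together with Lemma~\ref{lem-prelim-mathcal-J} this decomposition is unique and its two leaves are not interchangeable by any automorphism. The claim to prove is that, for all $x\neq y\in[n]$, distinct colors $c_1,c_2$ and color sets $C$ with $|C|=mp(\ell-j+1)+1$,
\[
  Y_j(x,y;c_1,c_2,C;U_j\cup\cdots\cup U_\ell)=\sum_{\substack{S\subset\mathsf K_n:\ S\cong U_j\cup\cdots\cup U_\ell\\ \mathsf L(S)=\{x,y\},\ \chi_\xi(V(S))=1\\ \xi(V(S))=C,\ \xi(x)=c_1,\ \xi(y)=c_2}}f_S(\bm Y)\,.
\]
The base case $j=\ell$ is the definition of $\Lambda$ computed in Step~3. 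For the inductive step one uses the rigidity of $\mathcal J$: any copy $S\cong U_j\cup\cdots\cup U_\ell$ splits uniquely as $S=S_j\cup S'$ with $S_j\cong U_j$, $S'\cong U_{j+1}\cup\cdots\cup U_\ell$, $V(S_j)\cap V(S')=\{z\}$ a single vertex and $E(S_j)\cap E(S')=\emptyset$ (the $2$-connectivity of beads in Items~(2)--(3) of Definition~\ref{def-component-hypergraph} and the uniqueness statement of Lemma~\ref{lem-prelim-mathcal-J} rule out any other overlap), so that $f_S(\bm Y)=f_{S_j}(\bm Y)\,f_{S'}(\bm Y)$. Writing $C_1=\xi(V(S_j))$, $C_2=\xi(V(S'))$, $c=\xi(z)$, colorfulness of $S$ together with $\xi(V(S))=C$ is equivalent to colorfulness of $S_j$ and of $S'$ separately together with $C_1\cup C_2=C$ and $C_1\cap C_2=\{c\}$; this is exactly the summation range of Step~4, and $S\mapsto(z,c,C_1,C_2,S_j,S')$ is a bijection onto the admissible tuples. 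Hence the recursion is valid, and Step~5 extracts $h_J(\bm Y,\xi)=\sum_{c_1\neq c_2}Y_1(i,j;c_1,c_2,[mp\ell+1];J)$, each labeled copy being counted once because the two leaves of $J$ are canonically ordered (Item~(4) in Definition~\ref{def-mathcal-J}).

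For the running time I would account for the three stages of the algorithm. Step~3 loops over the $O(1)$ beads $[U]\in\mathcal U_m$, over at most $\binom{mp\ell+1}{mp+1}=n^{o(1)}$ color sets and at most $(mp\ell+1)^2=n^{o(1)}$ boundary-color pairs (using $\ell=\log n$, so $mp\ell=O(\log n)$), and for each it enumerates the at most $n^{mp+1}$ labeled copies of a bead, each processed in $O(|E(U)|)=O(1)$ time; this costs $n^{mp+1+o(1)}$. The dynamic program in Step~4 runs over $\ell=\log n$ levels; at a level the table is indexed by $(x,y)\in[n]^2$, by a boundary-color pair ($n^{o(1)}$ choices), and by a color set $C$, of which there are at most $2^{mp\ell+1}=O(n^{mp})$ (this is where $\ell=\log n$ keeps the subset count polynomial), and each entry is filled by summing over $z\in[n]$, over the internal color $c$, and over splits $(C_1,C_2)$, the last bounded crudely by $2^{mp\ell+1}=O(n^{mp})$, using only stored table values; this is $n^2\cdot n^{mp}\cdot n\cdot n^{mp}\cdot n^{o(1)}=n^{2mp+3+o(1)}$ per level, hence $n^{2mp+3+o(1)}$ over all $\ell=n^{o(1)}$ levels. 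Step~5 is $n^{o(1)}$ further look-ups. Summing, the running time is $O(n^{2mp+3+o(1)})$.

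The main obstacle is the correctness of the recursion — verifying that the color-coding split is a bijection onto the colorful copies of $J$ with prescribed leaves. This rests on the structural rigidity that Definitions~\ref{def-component-hypergraph} and \ref{def-mathcal-J} were designed to enforce: every labeled copy of $J$ has a unique bead decomposition whose ordering is pinned down once its two non-interchangeable leaves are identified. Once this is in place the remaining steps follow the hypergraph color-coding template already used for Lemma~\ref{lem-color-coding}, and the time bound is routine bookkeeping.
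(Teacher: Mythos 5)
Your proposal is correct and follows essentially the same route as the paper: the paper proves only the running-time bound (via the same accounting of color sets, splits, vertex pairs and the intermediate vertex $z$, with $\ell=\log n$ keeping the $2^{O(mp\ell)}$ color-subset factors at $O(n^{mp})$ and hence the total at $n^{2mp+3+o(1)}$) and dismisses correctness as ``almost identical'' to the detection case, which is precisely the downward induction on the bead decomposition that you spell out. Your only addition is to make explicit the rigidity facts from Definition~\ref{def-mathcal-J} and Lemma~\ref{lem-prelim-mathcal-J} that guarantee the split in the recursion is a bijection --- a detail the paper leaves implicit.
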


Finally, we show that the approximate test statistic $\widetilde{f}$ can be computed efficiently.
\begin{breakablealgorithm}{\label{alg:cal-widetilde-Phi}}
    \caption{Computation of $\widetilde{\Phi}^{\mathcal J}_{i,j}$}
    \begin{algorithmic}[1]
    \STATE {\bf Input:} Adjacency tensor $\bm Y$, signal-to-noise ratio parameter $\lambda$, dimension parameter $p$, and integers $m,\ell$.
    \STATE List all non-isomorphic, unrooted unlabeled hypergraphs satisfying Definition~\ref{def-component-hypergraph}. Denote this list as $\mathcal U$.
    \STATE List all non-isomorphic, unrooted unlabeled hypergraphs satisfying Definition~\ref{def-mathcal-H} in the following way: list all hypergraphs sequences $(U_1,\ldots,U_{\ell})$ such that $U_i \in \mathcal U$, $\mathsf L(U_i)=\{ v_i,v_{i+1} \}$, $V(U_i) \cap V(U_{i+1})=\{ v_{i+1} \}$, and delete the sequences that generates isomorphic $H=U_1 \cup \ldots \cup U_{\ell}$. Denote this list as $\mathcal J$.
    \STATE For each $[J] \in \mathcal J$, compute $\operatorname{Aut}(H)$ via Algorithm~\ref{alg:cal-Aut-H-in-mathcal-H}. 
    \STATE Generate i.i.d.\ random colorings $\{ \xi_{\mathtt k} : 1 \leq \mathtt k \leq t \}$ that map $[n]$ to $[mp\ell+1]$ uniformly at random. 
    \FOR{each $1 \leq \mathtt k \leq t$}
    \STATE For each $[J] \in \mathcal J$, compute $h_{J}(\bm Y,\xi_{\mathtt k})$ via Algorithm~\ref{alg:dynamic-programming}. 
    \ENDFOR
    \STATE Compute $\widetilde{\Phi}^{\mathcal J}_{i,j}(\bm Y)$ according to \eqref{eq-def-widetilde-Phi-H}.
    \STATE {\bf Output:} $\{ \widetilde{\Phi}^{\mathcal J}_{i,j}(\bm Y): 1 \leq i,j \leq n \}$.
    \end{algorithmic}
\end{breakablealgorithm}

\begin{proposition}{\label{prop-running-time-recovery}}
    Under \eqref{eq-condition-weak-recovery}, Algorithm~\ref{alg:cal-widetilde-Phi} computes $\{ \widetilde{\Phi}^{\mathcal J}_{i,j}(\bm Y): 1 \leq i,j \leq n \}$ in time $n^{C'+o(1)}$ for some $C'=C'(m,p)$. 
\end{proposition}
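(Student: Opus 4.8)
The plan is to mirror the proof of Proposition~\ref{prop-running-time-detection}: bound the running time of each line of Algorithm~\ref{alg:cal-widetilde-Phi} in turn and sum. The one genuinely new feature relative to the detection case is quantitative. Condition~\eqref{eq-condition-weak-recovery} pins $\ell=\log n$ (rather than $\ell=o(\log n/\log\log n)$), so the two combinatorial quantities $|\mathcal J|$ and $t=\lceil 1/\varkappa\rceil$ are now honest powers of $n$, with exponents depending only on $m,p$, rather than $n^{o(1)}$ factors; the work is to verify that they are polynomial of this controlled size and that they inflate only the constant $C'$ and not the $o(1)$. I would first record the size bounds needed: the hypergraphs in $\mathcal U=\mathcal U(m,p)$ of Definition~\ref{def-component-hypergraph} have $mp+1$ vertices and $2m$ hyperedges, so $|\mathcal U|\le\binom{\binom{mp+1}{p}}{2m}=:A=O_{m,p}(1)$, and since every $[J]\in\mathcal J$ arises from a sequence $(U_1,\ldots,U_\ell)\in\mathcal U^\ell$ we get $|\mathcal J|\le A^{\ell}=n^{O_{m,p}(1)}$; applying Stirling to \eqref{eq-def-varsigma} gives $\varkappa=(mp\ell+1)!/(mp\ell+1)^{mp\ell+1}=e^{-mp\ell}n^{o(1)}=n^{-mp+o(1)}$, hence $t=\lceil 1/\varkappa\rceil=n^{mp+o(1)}$.

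I would then walk through the algorithm. Listing $\mathcal U$ means enumerating $2m$-subsets of the $\binom{mp+1}{p}$ candidate hyperedges on a fixed $(mp+1)$-set, discarding those violating (2)--(3) of Definition~\ref{def-component-hypergraph}, and reducing modulo isomorphism; since these hypergraphs have boundedly many vertices this is $O_{m,p}(1)=n^{o(1)}$. Listing $\mathcal J$ means enumerating the at most $A^\ell$ sequences respecting the leaf/intersection constraints, forming $J=U_1\cup\ldots\cup U_\ell$, checking condition (4) of Definition~\ref{def-mathcal-J}, and deleting isomorphic duplicates; isomorphism testing on these necklace-chain hypergraphs on $O(\log n)$ vertices is easy, so this costs $n^{O_{m,p}(1)}$. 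Computing each $\mathsf{Aut}(J)$ costs $O(2\ell\cdot((mp+1)!)^\ell)$ by Corollary~\ref{cor-bound-Aut-H-in-mathcal-H}, hence $|\mathcal J|\cdot O(2\ell\cdot((mp+1)!)^\ell)=n^{O_{m,p}(1)}$ overall. Sampling the $t$ colorings $\xi_1,\ldots,\xi_t:[n]\to[mp\ell+1]$ costs $O(tn)=n^{mp+1+o(1)}$. For the main loop, Lemma~\ref{lem-color-coding-recovery} says one call of Algorithm~\ref{alg:dynamic-programming-recovery} on a fixed $([J],\xi_{\mathtt k})$ costs $O(n^{2mp+3+o(1)})$ and produces the table $Y_1(x,y;\cdot)$ from which $h_J(\bm Y,\xi_{\mathtt k})$ is read off for every endpoint pair $\{i,j\}$ simultaneously; looping over $\mathtt k\le t$ and $[J]\in\mathcal J$ thus costs $t\cdot|\mathcal J|\cdot O(n^{2mp+3+o(1)})=n^{3mp+3+\ln A+o(1)}$. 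Finally, assembling all $\widetilde\Phi^{\mathcal J}_{i,j}$ from \eqref{eq-def-widetilde-Phi-H} amounts, for each of the $n^2$ pairs, to summing the $t\cdot|\mathcal J|$ precomputed values and dividing by the already-computed normalisation, i.e.\ $O(n^2\cdot t\cdot|\mathcal J|)=n^{O_{m,p}(1)}$. Summing over all lines, the dominant term is the color-coding loop, so the total is $n^{C'+o(1)}$ with $C'=C'(m,p)$ (crudely, $C'=3mp+3+\ln A$).

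Most of this is bookkeeping; the point I would stress is the interaction of $t$, $|\mathcal J|$ and $\ell=\log n$. Because $\ell$ is now $\Theta(\log n)$, I must (i) use Stirling carefully to confirm $t=n^{mp+o(1)}$, and (ii) ensure Algorithm~\ref{alg:dynamic-programming-recovery} is invoked only $t\cdot|\mathcal J|$ times in total, i.e.\ that one run already delivers $h_J(\bm Y,\xi)$ for all $\{i,j\}$ at once --- which is precisely what the dynamic program over all $(x,y)\in[n]^2$ in Algorithm~\ref{alg:dynamic-programming-recovery} provides and what Lemma~\ref{lem-color-coding-recovery} certifies; otherwise one would pay a spurious extra factor $n^2$. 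Everything else --- the isomorphism reductions, the automorphism computations via Algorithm~\ref{alg:cal-Aut-H-in-mathcal-H}, the normalisation arithmetic --- involves only hypergraphs on $O(\log n)$ vertices and so remains $n^{O_{m,p}(1)}$, keeping the $o(1)$ in the final exponent intact.
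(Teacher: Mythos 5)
Your proposal is correct and follows essentially the same route as the paper's proof of Proposition~\ref{prop-running-time-recovery}: bound the cost of each line of Algorithm~\ref{alg:cal-widetilde-Phi} in turn (listing $\mathcal U$ and $\mathcal J$, computing the automorphism groups via Corollary~\ref{cor-bound-Aut-H-in-mathcal-H}, and invoking the color-coding dynamic program of Lemma~\ref{lem-color-coding-recovery} for each pair $(\xi_{\mathtt k},[J])$), then note that with $\ell=\log n$ every resulting exponent depends only on $m,p$. If anything your accounting is slightly more careful than the paper's, which omits the $|\mathcal J|$ factor from the main-loop term and does not spell out that a single run of Algorithm~\ref{alg:dynamic-programming-recovery} yields $h_J(\bm Y,\xi)$ for all endpoint pairs $\{i,j\}$ simultaneously; both points are handled correctly in your write-up and do not change the conclusion.
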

\begin{proof}
    Listing all non-isomorphic, unrooted unlabeled hypergraphs in $\mathcal U$ takes time $\binom{ \binom{mp+1}{p} }{ 2m }=n^{o(1)}$. Listing all non-isomorphic, unrooted unlabeled hypergraphs satisfying Definition~\ref{def-mathcal-H} in Step~3 takes time
    \begin{align*}
        O\Big( |\mathcal U|^{\ell} \Big) = O\Big( \tbinom{ (mp+1)^{p} }{ 2m }^{\ell} \Big) \overset{\eqref{eq-condition-weak-recovery}}{\leq} n^{ (mp+1)^{2mp} } \,.
    \end{align*}
    From Corollary~\ref{cor-bound-Aut-H-in-mathcal-H}, calculating $\mathsf{Aut}(H)$ for $[H]\in \mathcal H$ takes time
    \begin{align*}
        |\mathcal H| \cdot O\Big( 2\ell \cdot ((mp+1)!)^{\ell} \Big) = O\Big( \tbinom{ (mp+1)^{p} }{ 2m }^{\ell} \Big) \cdot O\Big( 2\ell \cdot ((mp+1)!)^{\ell} \Big) \overset{\eqref{eq-condition-weak-recovery}}{\leq} n^{ (mp+1)! + (mp+1)^{2pm} } \,.
    \end{align*}
    For all $1 \leq \mathtt k \leq t$, calculating $h_{J}(\bm Y,\xi_{\mathtt k})$ takes time $n^{2mp+3+o(1)}$. Thus, the total running time of Algorithm~\ref{alg:cal-widetilde-f} is 
    \begin{equation*}
        O\big( n^{(mp+1)^{2pm}} \big) + O\big( n^{(mp+1)!+(mp+1)^{2pm}} \big) + 2t \cdot O(n^{2mp+3+o(1)}) = O\big( n^{(mp+1)!+(mp+1)^{2pm}} \big) \,. \qedhere
    \end{equation*}
\end{proof}

\appendix

\section{Preliminary facts about graphs}{\label{sec:prelim-graphs}}

\begin{lemma}{\label{lem-prelim-graphs}}
    Let $S,K \subset \mathsf{K}_n$ be hypergraphs and $[H],[I]$ be unlabeled hypergraphs.
    \begin{enumerate}
        \item[(1)] We have $|E(S \cap K)|+|E(S \cup K)|=|E(S)|+|E(K)|$ and $|V(S \cap K)|+|V(S \cup K)|=|V(S)|+|V(K)|$.
        \item[(2)] If $|V(H)|=n^{o(1)}$, then $\#\{ S \subset \mathsf K_n: S \cong H \}=\frac{ (1+o(1)) n^{|V(H)|} }{ |\mathsf{Aut}(H)| }$.  
        \item[(3)] If $S \subset \mathsf K_n$, then $\#\{ K \subset \mathsf K_n: K \cong [I], |V(S) \cap V(K)|=h \} \leq \binom{|V(S)|}{h} \frac{ n^{|V(I)|-h} |V(I)|^{h} }{ |\mathsf{Aut}(I)| } $.
    \end{enumerate}
\end{lemma}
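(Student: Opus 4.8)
The plan is to prove the three items in turn, each by an elementary counting argument. Item~(1) is immediate from the definitions recalled earlier: $S\cap K$ has vertex set $V(S)\cap V(K)$ and edge set $E(S)\cap E(K)$, while $S\cup K$ has vertex set $V(S)\cup V(K)$ and edge set $E(S)\cup E(K)$, so both asserted identities are instances of the inclusion--exclusion identity $|A\cap B|+|A\cup B|=|A|+|B|$, applied with $(A,B)=(E(S),E(K))$ and with $(A,B)=(V(S),V(K))$ respectively.

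For item~(2), I would use an orbit--stabilizer count. Write $k=|V(H)|$. A subhypergraph $S\subset\mathsf K_n$ with $S\cong H$ is precisely the image of an injection $\phi\colon V(H)\hookrightarrow[n]$ together with the edge set $\phi(E(H))$, and two injections $\phi,\psi$ produce the same $S$ if and only if $\phi^{-1}\circ\psi\in\mathsf{Aut}(H)$; hence each such $S$ is obtained from exactly $|\mathsf{Aut}(H)|$ injections. Since the number of injections of a $k$-element set into $[n]$ is $n(n-1)\cdots(n-k+1)$, we get $\#\{S\subset\mathsf K_n:S\cong H\}=n(n-1)\cdots(n-k+1)/|\mathsf{Aut}(H)|$. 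It remains to observe that when $k=n^{o(1)}$ one has $1\ge \prod_{i=0}^{k-1}(1-i/n)\ge 1-k^{2}/n=1-o(1)$, so $n(n-1)\cdots(n-k+1)=(1+o(1))\,n^{k}$, which gives the stated asymptotic.

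For item~(3), I would refine the embedding count of item~(2) by conditioning on the overlap set. For each $K$ counted, $T:=V(S)\cap V(K)$ is an $h$-element subset of $V(S)$, so the quantity equals $\sum_{T\subset V(S),\,|T|=h}\#\{K\cong I: V(S)\cap V(K)=T\}$, and it suffices to bound each summand by $\#\{K\cong I: T\subset V(K)\}$ and sum over the $\binom{|V(S)|}{h}$ subsets $T$. To bound $\#\{K\cong I: T\subset V(K)\}$, I count injections $\phi\colon V(I)\hookrightarrow[n]$ with $T\subset\operatorname{im}(\phi)$: choosing which ordered $h$-tuple of vertices of $I$ maps bijectively onto $T$ costs at most $|V(I)|^{h}$, and mapping the remaining $|V(I)|-h$ vertices injectively into $[n]$ costs at most $n^{|V(I)|-h}$, for a total of at most $|V(I)|^{h}\,n^{|V(I)|-h}$ injections; since each $K$ with $T\subset V(K)$ arises from exactly $|\mathsf{Aut}(I)|$ of them, $\#\{K\cong I: T\subset V(K)\}\le |V(I)|^{h}\,n^{|V(I)|-h}/|\mathsf{Aut}(I)|$, and summing over $T$ gives the bound. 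All three arguments are routine; the only point requiring mild care is the bookkeeping in item~(3) --- keeping the automorphism quotient and the constraint $T\subset\operatorname{im}(\phi)$ consistent at once --- but since only an upper bound is needed, replacing the exact condition $\operatorname{im}(\phi)\cap V(S)=T$ by the weaker $T\subset\operatorname{im}(\phi)$ removes the subtlety, so I anticipate no genuine obstacle.
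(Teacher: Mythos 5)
Your proposal is correct and follows essentially the same route as the paper: item (1) by inclusion--exclusion, item (2) by counting injections modulo $\mathsf{Aut}(H)$ and estimating the falling factorial, and item (3) by choosing the overlap set in $\binom{|V(S)|}{h}$ ways and bounding the remaining embeddings by $|V(I)|^{h}n^{|V(I)|-h}/|\mathsf{Aut}(I)|$. The only cosmetic difference is that in item (3) the paper first picks $V(K)\setminus V(S)$ as a set and then counts labelings, while you count injections with $T\subset\operatorname{im}(\phi)$ directly; both yield the identical bound.
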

\begin{proof}
    Note that Item~(1) directly follows from the definition of $S \cap K$ and $S \cup K$ in Section~\ref{subsec:notation} and the inclusion-exclusion principle. As for Item~(2), we have
    \begin{align*}
        \#\{ S \subset \mathsf K_n: S \cong H \} &= \binom{n}{|V(H)|} \cdot \frac{|V(H)|!}{|\mathsf{Aut}(H)|} = \frac{ n^{|V(H)|} }{ |\mathsf{Aut}(H)| } \cdot \prod_{1 \leq i \leq |V(H)|} \frac{ n-i+1 }{ n } \\
        & \overset{|V(H)|=n^{o(1)}}{=} [1+o(1)] \frac{ n^{|V(H)|} }{ |\mathsf{Aut}(H)| } \,.
    \end{align*}

    As for Item~(3), note that the choices of $V(S) \cap V(K) \subset V(S)$ is bounded by $\binom{|V(S)|}{h}$ and the choices of $V(K) \setminus V(S)$ is bounded by $\binom{n}{|V(I)|-h}$. In addition, once $V(K)$ is fixed the choices of $K$ is bounded by $\frac{|V(I)|!}{|\mathsf{Aut}(I)|}$. Thus, we have
    \begin{align*}
        \#\{ K \subset \mathsf K_n: K \cong [I], |V(S) \cap V(K)|=h \} &\leq \frac{|V(I)|!}{|\mathsf{Aut}(I)|} \binom{|V(S)|}{h} \binom{n}{|V(I)|-h} \\
        &\leq \binom{|V(S)|}{h} \frac{ n^{|V(I)|-h} |V(I)|^{h} }{ |\mathsf{Aut}(I)| } \,. \qedhere
    \end{align*} 
\end{proof}

We also need the following result on the asymptotic number of hypergraphs with a given degree sequences. The proof of the following result can be found in \cite[Theorem~1.1]{BG16}.
\begin{lemma}{\label{lem-labeled-hypergraph-given-degree-sequence}}
    Suppose $N,M,p \geq 1$ and let $d=(d_1,\ldots,d_N)$ such that $\sum_{1 \leq i \leq N} d_i=pM$. Denote $d_{\operatorname{max}}=\max_{1 \leq i \leq N} d_i$. Suppose that $N,M \to \infty$ and that $d_{\operatorname{max}}^3 = o(M)$. Then
    \begin{align*}
        &\#\big\{ p\mbox{-uniform hypergraph } U: V(U)=[N], \mathsf{deg}_U(i)=d_i \mbox{ for } 1 \leq i \leq N \big\} \\
        =\ & \frac{ (pM)! }{ M! (p!)^{M} \prod_{1 \leq i \leq N} d_i! } \cdot \exp\Big( -\frac{ \sum_{1 \leq i \leq N} d_i(d_i-1) }{ 2pM } + O\big( \frac{ d_{\operatorname{max}}^3 }{ pM } \big) \Big) \,.
    \end{align*}
    In particular, if $d_{\operatorname{max}}=2$ then there exists a universal constant $R>0$ such that
    \begin{align*}
        &\#\big\{ p\mbox{-uniform hypergraph } U: V(U)=[N], \mathsf{deg}_U(i)=d_i \mbox{ for } 1 \leq i \leq N \big\} \\
        & \in \Big( \frac{1}{R} \cdot \frac{ (pM)! }{ M! (p!)^{M} \prod_{1 \leq i \leq N} d_i! } , R \cdot \frac{ (pM)! }{ M! (p!)^{M} \prod_{1 \leq i \leq N} d_i! } \Big) \,.
    \end{align*}
\end{lemma}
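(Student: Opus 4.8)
\emph{Proof proposal.} The general asymptotic formula is exactly \cite[Theorem~1.1]{BG16}, and the route I would follow to it is the \emph{configuration (pairing) model} combined with McKay-style \emph{switchings}. Encode the degree sequence $d=(d_1,\dots,d_N)$ with $\sum_i d_i=pM$ by a ground set of $pM$ ``points'', $d_i$ of them labelled by vertex $i$, and pick a uniformly random partition of the points into $M$ unordered blocks of size $p$; there are $\frac{(pM)!}{(p!)^M M!}$ such partitions. Collapsing each point to its vertex label turns a partition into a $p$-uniform multi-hypergraph on $[N]$ with degree sequence $d$, and a \emph{simple} hypergraph (no repeated vertex inside an edge, no repeated edge) is the image of exactly $\prod_i d_i!$ partitions. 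Hence
\begin{equation*}
  \#\{\text{simple }U:\ \mathsf{deg}_U(i)=d_i\ \forall i\}\;=\;\frac{(pM)!}{(p!)^M M!\,\prod_i d_i!}\;\cdot\;\Pr\big[\text{a uniformly random partition is simple}\big],
\end{equation*}
so the whole problem reduces to estimating the bracketed probability.

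To estimate $\Pr[\text{simple}]$ I would classify the possible ``defects'' of a random partition --- a block containing two points of the same vertex (a loop), a pair of blocks inducing the same $p$-set (a repeated edge), and their higher-multiplicity or mutually overlapping analogues. A first-moment computation shows that in the sparse regime $d_{\operatorname{max}}^3=o(M)$ the bulk of the correction comes from loops, whose expected number is of order $\frac{\sum_i d_i(d_i-1)}{pM}$, that configurations carrying a ``compound'' defect have total probability $O(d_{\operatorname{max}}^3/(pM))$, and that (when $p\ge 3$) repeated edges are negligible. A switching argument --- delete one loop at a time, compare the number of forward and backward switchings, and iterate --- then converts this into the stated formula, with the precise constant in the exponent falling out of the switching count and the $O(d_{\operatorname{max}}^3/(pM))$ error absorbing everything else. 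The genuinely delicate step is this last one: pinning down the exponent and, above all, making the error term \emph{uniform} over all admissible $d$, which requires two-sided control of the switching counts; this is the technical core of \cite{BG16}, which I would invoke rather than reprove.

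Finally, the ``in particular'' clause is immediate from the general formula. Adding or removing isolated vertices changes neither side, so I may assume $d_i\ge 1$ for all $i$; then $N\le\sum_i d_i=pM$, while $d_{\operatorname{max}}=2$ forces $pM=\sum_i d_i\le 2N$, so $N\le pM\le 2N$. Since each $d_i\in\{1,2\}$ gives $d_i(d_i-1)\le 2$, we obtain
\begin{equation*}
  0\;\le\;\frac{\sum_i d_i(d_i-1)}{2pM}\;\le\;\frac{2N}{2pM}\;=\;\frac{N}{pM}\;\le\;1,\qquad\text{and}\qquad\frac{d_{\operatorname{max}}^3}{pM}\;=\;\frac{8}{pM}\;\le\;\frac{8}{N}\;=\;o(1).
\end{equation*}
Therefore the exponential correction factor in the general estimate lies in $[e^{-2},e^{2}]$ once $M$ exceeds an absolute constant $M_0$, so the claimed two-sided bound holds with the universal constant $R=e^{2}$ for all $M>M_0$ --- which covers every use of the lemma in this paper, since there $N$ (hence $M$) grows with $n$.
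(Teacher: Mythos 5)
Your proposal matches the paper exactly: the paper gives no proof of the general formula beyond citing \cite[Theorem~1.1]{BG16}, which is precisely what you do (your configuration-model/switching sketch is a faithful outline of that reference's method, and you rightly identify the uniform two-sided control of switching counts as the part to invoke rather than reprove). Your derivation of the ``in particular'' clause from the general formula --- which the paper leaves entirely implicit --- is correct: after discarding isolated vertices, $d_{\operatorname{max}}=2$ forces $N\le pM\le 2N$, so the exponent is bounded by an absolute constant and a universal $R$ exists for all sufficiently large $M$, which is the only regime in which the paper applies the two-sided bound.
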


Recall Definition~\ref{def-component-hypergraph}. We now provide some preliminary results for $[U] \in \mathcal U$.

\begin{lemma}{\label{lem-prelim-result-mathcal-U}}
    Suppose $[U] \in \mathcal U$ and $S,K \subset \mathsf K_n$.
    \begin{enumerate}
        \item[(1)] We have $|\mathsf{Aut}(U)| \leq (2m)(p!)^{2m} p^{2m}$.
        \item[(2)] For all subgraph $W \subset U$ with $V(W) \neq \emptyset$ and $E(W) \neq E(U)$ we have $|\mathsf L(W) \setminus \mathsf L(U)| \geq 2$.
        \item[(3)] Recall \eqref{eq-def-beta-diamond-U}. For each $i,j \in [n]$ we have 
        \begin{align*}
            & \#\big\{ U \subset \mathsf K_n: i \in \mathsf L(U), [U] \in \mathcal U \big\} = [1+o(1)] n^{mp} \beta^{\diamond}_{\mathcal U} \,; \\
            & \#\big\{ U \subset \mathsf K_n: \mathsf L(U)=\{ i,j \}, [U] \in \mathcal U \big\} = [1+o(1)] n^{mp-1} \beta^{\diamond}_{\mathcal U} \,.
        \end{align*}  
        \item[(4)] For all $i,j \in [n]$ and all $V \subset [n] \setminus \{ i,j \}$ we have
        \begin{align*}
            & \#\big\{ U \subset \mathsf K_n: [U] \in \mathcal U, i \in \mathsf L(U), V \subset V(U) \big\} \leq (mp+1)^{|V|} n^{mp-|V|} \beta^{\diamond}_{\mathcal U} \,; \\
            & \#\big\{ U \subset \mathsf K_n: [U] \in \mathcal U, \mathsf L(U)=\{ i,j \}, V \subset V(U) \big\} \leq (mp+1)^{|V|} n^{mp-1-|V|} \beta^{\diamond}_{\mathcal U} \,.
        \end{align*}
    \end{enumerate}
\end{lemma}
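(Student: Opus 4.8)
The plan is to establish the four items in order, the first two being structural and the last two routine enumeration. Throughout I would use repeatedly that in any $[U]\in\mathcal U$ every vertex shared by two hyperedges has degree exactly $2$ (degrees lie in $\{1,2\}$ and a shared vertex has degree at least $2$), and that the \emph{line graph} $L(U)$ --- with vertex set $E(U)$, two hyperedges adjacent iff they intersect --- is connected because $\underline U$ is (any path in $\underline U$ transports to a walk in $L(U)$).

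For item~(1) I would fix a spanning tree of $L(U)$ rooted at a hyperedge $e_1$ and order the hyperedges $e_1,\ldots,e_{2m}$ so that each $e_k$ with $k\ge 2$ meets some earlier $e_{\pi(k)}$. Given $\sigma\in\mathsf{Aut}(U)$, once $\sigma(e_1)$ and the bijection $\sigma|_{e_1}$ are chosen, $\sigma$ is forced up to bounded ambiguity along the tree: for $v\in e_k\cap e_{\pi(k)}$ one has $\mathsf{deg}_U(v)=2$ with incident edges $e_k,e_{\pi(k)}$, so $\sigma(v)$ (already known) has degree $2$ and its two incident edges are $\sigma(e_{\pi(k)})$ (known) and $\sigma(e_k)$; hence $\sigma(e_k)$ is determined, and the bijection $\sigma|_{e_k}$ must send $v\mapsto\sigma(v)$, leaving at most $(p-1)!$ choices. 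Multiplying the $\le 2m$ choices for $\sigma(e_1)$, the $\le p!$ choices for $\sigma|_{e_1}$, and the $\le(p-1)!$ choices at each of the remaining $2m-1$ steps gives $|\mathsf{Aut}(U)|\le(2m)\,p!\,((p-1)!)^{2m-1}\le(2m)(p!)^{2m}p^{2m}$.

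Item~(2) is the crux. Assuming $\emptyset\neq E(W)\subsetneq E(U)$ (the only non-vacuous case), suppose for contradiction that $\mathsf L(W)\setminus\mathsf L(U)$ has at most one element, and let $w$ be that vertex, or any vertex of $U$ if the set is empty; thus every ``new leaf'' of $W$ lies in $\{w\}$. Put $F:=E(U)\setminus E(W)\neq\emptyset$. I claim $F$ is closed under adjacency in $L(U|_{\setminus\{w\}})$: if $e\in F$ and $e'$ shares a vertex $v\neq w$ with $e$, then $\mathsf{deg}_U(v)=2$ with incident edges $e,e'$; since $v$ is not a leaf of $U$ and $v\neq w$, it is not a new leaf of $W$, so $\mathsf{deg}_W(v)\neq 1$; but $v$ lies in the removed edge $e\in F$, so $\mathsf{deg}_W(v)\le 1$, forcing $\mathsf{deg}_W(v)=0$, i.e.\ $e'\in F$ as well. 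By hypothesis~(3) of Definition~\ref{def-component-hypergraph}, $U|_{\setminus\{w\}}$ is connected, hence $L(U|_{\setminus\{w\}})$ is connected, so a nonempty vertex subset closed under adjacency is everything: $F=E(U)$, contradicting $E(W)\neq\emptyset$. The role of the robustness assumption here is precisely to promote the single forced new leaf (which connectivity of $L(U)$ alone would already give) to at least two; making this line-graph connectivity and ``closure under adjacency'' propagation precise is the only delicate point, and it is reused in item~(1).

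For items~(3) and~(4) I would translate subgraph counts into embedding counts. Fix $[U]\in\mathcal U$ with leaves $a,b$; a subgraph $S\cong U$ is the image of exactly $|\mathsf{Aut}(U)|$ isomorphisms $U\hookrightarrow\mathsf K_n$, each carrying $\{a,b\}$ onto $\mathsf L(S)$. Hence $\#\{S\cong U:\,i\in\mathsf L(S)\}\cdot|\mathsf{Aut}(U)|$ is the number of embeddings $\phi$ with $i\in\{\phi(a),\phi(b)\}$, which by symmetry of $[n]$ and Lemma~\ref{lem-prelim-graphs}(2) (recall $|V(U)|=mp+1=n^{o(1)}$) equals $[1+o(1)]\cdot 2n^{mp}$; combined with the orbit--stabilizer identity $\sum_{[v]\in\mathsf L_{\diamond}(U)}\tfrac{1}{|\mathsf{Aut}_v(U)|}=\tfrac{\#\mathsf L(U)}{|\mathsf{Aut}(U)|}=\tfrac{2}{|\mathsf{Aut}(U)|}$ this gives $\#\{S\cong U:\,i\in\mathsf L(S)\}=[1+o(1)]\,n^{mp}\sum_{[v]\in\mathsf L_{\diamond}(U)}\tfrac{1}{|\mathsf{Aut}_v(U)|}$, and summing over $[U]\in\mathcal U$ yields the first identity of~(3); the $\mathsf L(S)=\{i,j\}$ case is identical with one extra vertex pinned, i.e.\ $n^{mp}$ replaced by $n^{mp-1}$. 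For~(4) only an upper bound is needed: besides pinning a leaf to $i$ (resp.\ both leaves to $\{i,j\}$) in at most $2$ ways, one chooses an injection of $V$ into $V(U)$ (at most $(mp+1)^{|V|}$ ways) and places the remaining $mp-|V|$ (resp.\ $mp-1-|V|$) vertices anywhere (at most $n^{mp-|V|}$, resp.\ $n^{mp-1-|V|}$, ways); dividing by $|\mathsf{Aut}(U)|$, invoking the same orbit--stabilizer identity, and summing over $[U]\in\mathcal U$ produces the two stated bounds.
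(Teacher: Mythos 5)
Your proof is correct. Items (1), (3) and (4) follow essentially the paper's route: the edge-by-edge propagation bound for $|\mathsf{Aut}(U)|$ (your version, fixing the image of the shared vertex first, is in fact slightly sharper than needed), and the enumeration of labeled copies with pinned leaves. The only cosmetic difference in (3)--(4) is that you count unrooted embeddings and then convert via the orbit--stabilizer identity $\sum_{[v]\in\mathsf L_{\diamond}(U)}|\mathsf{Aut}_v(U)|^{-1}=|\mathsf L(U)|/|\mathsf{Aut}(U)|=2/|\mathsf{Aut}(U)|$, whereas the paper decomposes the count directly over the rooted classes $([v],[U])$ and works with $|\mathsf{Aut}_v(U)|$ throughout; the two computations are identical in substance. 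Item (2) is where you genuinely diverge. The paper splits into the cases $\mathsf L(W)\setminus\mathsf L(U)=\emptyset$ and $\mathsf L(W)\setminus\mathsf L(U)=\{v\}$ and in each case exhibits a disconnection contradicting Item (2) or Item (3) of Definition~\ref{def-component-hypergraph}. You instead run a single propagation argument: $F=E(U)\setminus E(W)$ is closed under adjacency in the line graph of $U|_{\setminus\{w\}}$ (the key point being that a shared vertex $v\neq w$ has $\mathsf{deg}_U(v)=2$, cannot be a new leaf, and already lost one incident edge to $F$, hence loses both), and connectivity of $U|_{\setminus\{w\}}$ then forces $F=E(U)$. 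This is cleaner, handles both cases uniformly, and makes transparent exactly where the robustness hypothesis of Definition~\ref{def-component-hypergraph} is consumed. One caveat you already flag correctly: as literally stated the lemma fails for $E(W)=\emptyset$ (then $\mathsf L(W)=\emptyset$), so the intended hypothesis is $\emptyset\neq E(W)\subsetneq E(U)$, which is how the lemma is invoked elsewhere in the paper; restricting to that case, as you do, is the right reading.
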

\begin{proof}
    As for Item~(1), for all $\pi\in\mathsf{Aut}(U)$, consider the induced bijection 
    \begin{align*}
        \pi_E: E(U) \overset{\pi}{\longrightarrow} E(U) \,.
    \end{align*}
    We first argue that the possible choices of $\pi_E$ is bounded by $(2m)\cdot p^{m}$. Since $U$ is connected, we can list $E(U)$ in the following order $e_1,\ldots,e_{2m}$ such that
    \begin{align*}
        e_{i+1} \cap e_{j} \neq \emptyset \mbox{ for some } j \leq i \,.
    \end{align*}
    The choices of $\pi(e_1)$ is bounded by $2m$. Having decided $\pi(e_1),\ldots,\pi(e_{i})$, since $e_{i+1} \cap e_j \neq \emptyset$ for some $j \leq i$ we see that we must have $\pi(e_{i+1}) \cap \pi(e_j) \neq \emptyset$; however, since $\mathsf{deg}_U(v) \in \{ 1,2 \}$ for all $v \in V(H)$ we see that $\#\{ e \in E(H): e \cap e_j \neq \emptyset \} \leq p$. Thus, the choices of $e_{i+1}$ is bounded by $p$. Thus the total choices of $\pi_E$ is bounded by $(2m) p^{2m}$. In addition, having fixed $\pi_E$, for each $1 \leq i \leq m$ the choices of $\pi$ restricted on $e_i$ is bounded by $p!$. Thus, the possible choices of $\pi$ is bounded by $(2m)p^{2m}(p!)^{2m}$.

    As for Item~(2), without losing of generality, we may assume that $\mathsf I(W)=\emptyset$. If $\mathsf L(W) \subset \mathsf L(U)$, then  $\mathsf{deg}_W(v)=\mathsf{deg}_U(v)$ for all $v \in V(W)$. Thus, we have either $V(W)=V(U)$ (which implies that $W=U$) or $V(W)$, $V(U) \setminus V(W)$ are disconnected in $U$, which forms a contradiction. In addition, if $\mathsf L(W) \setminus \mathsf L(U)=\{ v \}$ for some $v \in V(U)$, then we have $\mathsf{deg}_W(v)=\mathsf{deg}_U(v)$ for all $v \in V(W) \setminus \{ v \}$. Also, since $\mathsf{deg}_U(v)=2$ there exists $e \not \in E(W)$ and $v \in e$. This implies that 
    \begin{align*}
        \big( e \setminus \{ v \} \big) \cap V(W) = \emptyset
    \end{align*}
    and thus $V(W) \setminus \{ v \}$ and $e \setminus \{ v \}$ are disconnected in $V|_{\setminus \{ v \}}$. This also forms a contradiction and thus we must have $|\mathsf L(W) \setminus \mathsf L(U)| \geq 2$.

    As for Item~(3), for $u,v \in V(U)$ and $u',v' \in V(U')$ write $(v,U) \cong (v',U')$ if there exists an isomorphism $\pi:V(U) \to V(U')$ such that $\pi(U)=U'$ and $\pi(v)=v'$, and write $(u,v,U) \cong (u',v',U')$ if such isomorphism $\pi$ maps $u$ to $u'$ and maps $v$ to $v'$. Now we show the first equality. Note that 
    \begin{align*}
        \#\big\{ U \subset \mathsf K_n: i \in \mathsf L(U), [U] \in \mathcal U \big\} = \sum_{ [U] \in \mathcal U } \sum_{ [v] \in \mathsf L_{\diamond}(U) } \#\big\{ W \subset \mathsf K_n: i \in V(W), (i,W) \cong (v,U) \big\} \,.
    \end{align*}
    Thus, according to \eqref{eq-def-beta-diamond-U} it suffices to show that
    \begin{align*}
        \#\big\{ W \subset \mathsf K_n: i \in V(W), (i,W) \cong (v,U) \big\} = \frac{ [1+o(1)] n^{mp} }{ |\mathsf{Aut}_v(U)| } \,.
    \end{align*}
    Note that $|V(W)|=mp+1$ for $W \cong U$, there are $\binom{n-1}{mp}$ ways to choose $V(W)$ and given $i \in V(W)$, there are $\frac{ (mp)! }{ |\mathsf{Aut}_v(U)| }$ ways to choose $(i,W) \cong (v,U)$. Thus, we have
    \begin{align*}
        \#\big\{ W \subset \mathsf K_n: i \in V(W), (i,W) \cong (v,U) \big\} = \binom{n-1}{mp} \cdot \frac{ (mp)! }{ |\mathsf{Aut}_v(U)| } = \frac{ [1+o(1)] n^{mp} }{ |\mathsf{Aut}_v(U)| } \,.
    \end{align*}
    The second equality can be derived in the similar manner, using the fact that
    \begin{align*}
        \#\big\{ U \subset \mathsf K_n: i \in \mathsf L(U), [U] \in \mathcal U \big\} = \sum_{ [U] \in \mathcal U } \sum_{ [v] \in \mathsf L_{\diamond}(U) } \#\big\{ W \subset \mathsf K_n: i,j \in V(W), (i,j;W) \cong (v,v',U) \big\} 
    \end{align*}
    where $v'$ is the element in $\mathsf{L}(U) \setminus \{ v \}$ (note that the choice of the representative element $v$ of $[v]$ does not affect the quantity $\#\{ W \subset \mathsf K_n: i,j \in V(W), (i,j;W) \cong (v,v',U) \}$, so the right hand side is well-defined). Thus, it suffices to show that 
    \begin{align*}
        \#\big\{ W \subset \mathsf K_n: i,j \in V(W), (i,j;W) \cong (v,v',U) \big\} = \frac{ [1+o(1)] n^{mp-1} }{ |\mathsf{Aut}_v(U)| } \,.
    \end{align*}
    Note that $|V(W)|=mp+1$ for $W \cong U$, there are $\binom{n-1}{mp-1}$ ways to choose $V(W)$ and given $i \in V(W)$, there are $\frac{ (mp-1)! }{ |\mathsf{Aut}_v(U)| }$ ways to choose $(i,j,W) \cong (v,v',U)$. Thus, we have
    \begin{align*}
        \#\big\{ W \subset \mathsf K_n: i,j \in V(W), (i,j;W) \cong (v,v',U) \big\} = \frac{ (mp-1)! }{ |\mathsf{Aut}_v(U)| } \cdot \binom{n-1}{mp-1} = \frac{ [1+o(1)] n^{mp-1} }{ |\mathsf{Aut}_v(U)| } \,.
    \end{align*}

    As for Item~(4), note that
    \begin{align*}
        & \#\big\{ [U] \in \mathcal U, U \subset \mathsf K_n: i \in \mathsf L(U), V \subset V(U) \big\} \\
        =\ & \sum_{ [U] \in \mathcal U } \sum_{ [v] \in \mathsf L_{\diamond}(U) } \#\big\{ W \subset \mathsf K_n: i \in V(W), V \subset V(W), (i,W) \cong (v,U) \big\} \,.
    \end{align*}
    Thus, according to \eqref{eq-def-beta-diamond-U} it suffices to show that
    \begin{align*}
        \#\big\{ W \subset \mathsf K_n: i \in V(W), V \subset V(W), (i,W) \cong (v,U) \big\} \leq \frac{ (mp+1)^{|V|} n^{mp-|V|} }{ |\mathsf{Aut}_v(U)| } \,.
    \end{align*}
    Note that $V(W) \setminus \{ i \}$ has $\binom{n-|V|-1}{mp-|V|}$ choices. In addition, once we decided $V(W)$ there are at most $\frac{ (mp)! }{ |\mathsf{Aut}_v(U)| }$ number of $W$ such that $(i,W) \cong (v,U)$. Thus, we have
    \begin{align*}
        \#\big\{ W \subset \mathsf K_n: i \in V(W), V \subset V(W), (i,W) \cong (v,U) \big\} \leq \frac{ \binom{n-|V|-1}{mp-|V|} (mp)! }{ |\mathsf{Aut}_v(U)| } \leq \frac{ n^{mp-|V|} (mp)^{|V|} }{ |\mathsf{Aut}_v(U)| } \,,
    \end{align*}
    leading to the first equality. The second equality can be derived in the similar manner.
\end{proof}

Recall Definition~\ref{def-mathcal-H}. We now provide some preliminary results for $[H] \in \mathcal H$.
\begin{lemma}{\label{lem-prelim-mathcal-H}}
    Suppose $[H] \in \mathcal H$.
    \begin{enumerate}
        \item[(1)] For all $H' \subset H$ with $|E(H')| \neq \emptyset$ and $E(H') \neq E(H)$ we have $2|V(H')|> p|E(H)|$.
        \item[(2)] Denote $H=U_1\cap\ldots\cap U_{\ell}$ with $U_i \in \mathcal U$ satisfying Definition~\ref{def-mathcal-H}. Then for any $\pi \in \mathsf{Aut}(H)$ there exists $1 \leq k \leq \ell$ such that either $\pi(U_i)=U_{i+k}$ for all $1 \leq i \leq \ell$ or $\pi(U_i)=U_{i-k+\ell}$ for all $1 \leq i \leq \ell$ (we again denote $U_{i+\ell}=U_{i}$). 
    \end{enumerate}
\end{lemma}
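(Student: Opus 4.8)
The plan is to handle the two parts separately; both rest on Lemma~\ref{lem-prelim-result-mathcal-U}(2) --- a proper non-empty sub-hypergraph of a bead $U\in\mathcal U$ has at least two leaves outside $\mathsf L(U)$ --- together with the elementary facts that distinct beads of the necklace are edge-disjoint and non-consecutive beads are vertex-disjoint (both forced by $\mathsf{deg}_H\equiv 2$, $\mathsf L(U_i)=\{v_i,v_{i+1}\}$ and $p\ge 3$).

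\emph{Part (1).} From $\sum_v\mathsf{deg}_{H'}(v)=p|E(H')|$ and $\mathsf{deg}_{H'}(v)\le\mathsf{deg}_H(v)=2$ one always gets $p|E(H')|\le 2\#\{v:\mathsf{deg}_{H'}(v)\ge1\}\le 2|V(H')|$, with equality forcing $H'$ to have neither an isolated vertex nor a leaf. So I only need to exhibit, whenever $\emptyset\ne E(H')\subsetneq E(H)$, an isolated vertex or a leaf of $H'$; discarding isolated vertices, I may assume $H'$ has no isolated vertex and look for a leaf. Writing $H=U_1\cup\cdots\cup U_\ell$ and $I=\{i:E(H')\cap E(U_i)\ne\emptyset\}$: if $H'$ meets some bead $U_i$ without covering it, Lemma~\ref{lem-prelim-result-mathcal-U}(2) supplies a vertex interior to $U_i$ that is a leaf of $H'\cap U_i$, and hence --- lying in no other bead --- a leaf of $H'$; otherwise $H'$ is the union of the full beads indexed by a proper subset $I\subsetneq[\ell]$ of the cycle, and any boundary index $i\in I$ with $i-1\notin I$ makes $v_i$ a leaf of $H'$. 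In either case the inequality is strict; this part is routine.

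\emph{Part (2).} The crux is that every $\pi\in\mathsf{Aut}(H)$ sends each bead onto a bead (the argument below is stated for $U_1$ but is identical for each $U_i$). Fix $W:=\pi(U_1)$; since $\pi$ restricts to an isomorphism $U_1\to W$, $W$ is a subhypergraph of $H$ with $|E(W)|=2m$ and exactly two leaves. Let $I=\{i:E(W)\cap E(U_i)\ne\emptyset\}$. By Lemma~\ref{lem-prelim-result-mathcal-U}(2), every bead that $W$ meets but does not cover contributes at least two leaves of $W$ interior to that bead, and these are distinct across beads; as $W$ has only two leaves, at most one bead is partially covered. That case is itself impossible: if $U_b$ is partially covered then $2m=|E(W)|=|E(W)\cap E(U_b)|+2m(|I|-1)$ with $1\le|E(W)\cap E(U_b)|\le 2m-1$, forcing $|I|=1$ and hence $|E(W)\cap E(U_b)|=2m$, a contradiction. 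Therefore $E(W)=\bigsqcup_{i\in I}E(U_i)$, so $2m=|E(W)|=2m|I|$, $|I|=1$, and $W$ is a bead. Making this leaf-and-edge bookkeeping fully rigorous is the one step I expect to require care.

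Granting that $\pi$ permutes the beads, write $\pi(U_i)=U_{\sigma(i)}$ for a permutation $\sigma$ of $[\ell]$. Since $U_i\cap U_{i+1}\ni v_{i+1}$, the beads $U_{\sigma(i)}$ and $U_{\sigma(i+1)}$ share $\pi(v_{i+1})$ and hence are consecutive, so $\sigma(i+1)\equiv\sigma(i)\pm1\pmod\ell$ for every $i$. Two successive increments of opposite sign would give $\sigma(i+2)=\sigma(i)$, contradicting injectivity (here $\ell\ge 3$; $\ell\le 2$ is trivial), so all increments agree and $\sigma$ is either a rotation $i\mapsto i+k$ or a reflection $i\mapsto k-i$ modulo $\ell$ --- exactly the dichotomy asserted. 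Thus the dihedral description is a short combinatorial consequence once the ``$W$ is a bead'' step is established.
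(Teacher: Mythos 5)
Your proof is correct, and while Part (1) is essentially the paper's argument, Part (2) takes a genuinely different route. For Part (1) both proofs start from $p|E(H')|=\sum_v\mathsf{deg}_{H'}(v)\le 2|V(H')|$; the paper then observes that equality would force $\mathsf{deg}_{H'}(v)=\mathsf{deg}_H(v)$ on all of $V(H')$ and hence, by connectedness of $H$, $E(H')\in\{\emptyset,E(H)\}$ --- a shorter route that never needs the bead decomposition --- whereas you exhibit a leaf or isolated vertex of $H'$ explicitly; both work, and both implicitly read the (evidently mistyped) conclusion as $2|V(H')|>p|E(H')|$. For Part (2), the paper's key step is to show that $\pi$ preserves the set of joint vertices $\{v_1,\dots,v_\ell\}$ via a vertex-connectivity argument: deleting two joints $v_i,v_j$ splits $H$ into two components of sizes $mp|j-i|-1$ and $mp(\ell-|j-i|)-1$, while (by Item~(3) of Definition~\ref{def-component-hypergraph}) deleting bead-interior vertices cannot reproduce that component structure; from this it deduces that $\pi$ permutes the beads. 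You instead show directly that $\pi(U_1)$ is a bead by playing its two leaves off against Lemma~\ref{lem-prelim-result-mathcal-U}(2) and its $2m$ edges off against multiples of $2m$; this is essentially the mechanism the paper uses for the analogous statement in Lemma~\ref{lem-prelim-mathcal-J}(2) for the chain family $\mathcal J$, transplanted to the cyclic setting, and it is arguably cleaner and more self-contained (Item~(3) of Definition~\ref{def-component-hypergraph} still enters, but only through the quoted leaf lemma). Your explicit edge count $2m=|E(W)\cap E(U_b)|+2m(|I|-1)$ to rule out a partially covered bead is in fact slightly more careful than the paper's treatment of the corresponding step for $\mathcal J$, which tacitly assumes both intersections are proper. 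The concluding dihedral step ($\sigma(i+1)\equiv\sigma(i)\pm1$ with a constant sign) is the same in both proofs.
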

\begin{proof}
    As for Item~(1), note that $\mathsf{deg}_H(v)=2$ for all $v \in V(H)$. Thus, since $H' \subset H$ is $p$-uniform
    \begin{align*}
        p|E(H')| = \sum_{ v \in V(H) } \mathsf{deg}_{H'}(v) \leq \sum_{ v \in V(H) } \mathsf{deg}_{H}(v) \leq 2|V(H)| \,.
    \end{align*}
    In addition, if equality holds we must have $\mathsf{deg}_{H'}(v)= \mathsf{deg}_{H}(v)$ for all $v \in V(H')$, which means that for all $v \in V(H')$ the neighbors of $v$ in $H$ also belongs to $H'$. Combines with the fact that $H$ is connected, this implies that either $|E(H')|=\emptyset$ or $H'=H$. Thus Item~(1) holds.

    As for Item~(2), recall that we have $V(U_{i}) \cap V(U_{i+1})=\{ v_{i+1} \}$ for some $v_i \in \mathsf L(U_i), \mathsf L(U_{i+1})$. We first argue that we must have $\pi(v_i) \in \{ v_1,\ldots,v_{\ell} \}$. Assume on the contrary that $\pi(v_i) \in V(U_k) \setminus \{ u_k,u_{k+1} \}$. Note that in the hypergraph $H_{ \setminus \{ v_i,v_j \} }$, there are exactly two connected components, one has $mp|j-i|-1$ vertices and the other has $mp(\ell-|j-i|)-1$ vertices. Then
    \begin{itemize}
        \item If $\pi(v_j) \in V(U_k)$, then from Item~(3) in Definition~\ref{def-component-hypergraph} we see that $V(U_k) \setminus \{ \pi(v_i) \}$ is connected. Thus, we see that either $U_k|_{ \setminus \{ \pi(v_i), \pi(v_j) \} }$ is connected or it contains two components $C_k,C_k'$ with $|V(C_k)|,|V(C_k')|<mp-1$. Thus, we have either $H|_{ \setminus \{ \pi(v_i),\pi(v_j) \} }$ is connected (if $U_k|_{ \setminus \{ \pi(v_i), \pi(v_j) \} }$ is connected or if $C_k \cap \{ v_k,v_{k+1} \}, C_{k}' \cap \{ v_k,v_{k+1} \} \neq \emptyset$) or $H|_{ \setminus \{ \pi(v_i),\pi(v_j) \} }$ contains a component with strictly less than $mp-1$ vertices (if $C_k \cap \{ v_k,v_{k+1} \}=\emptyset$ or $C_k' \cap \{ v_k,v_{k+1} \}=\emptyset$). This forms a contradiction.
        \item If $\pi(v_j) \in V(U_{l}) \setminus V(U_k)$ for some $l \neq k$, then from Item~(3) in Definition~\ref{def-component-hypergraph} we see that $U_k|_{\setminus \{ \pi(v_i) \}}$ and $U_{l}|_{\setminus \{ \pi(v_j) \}}$ is connected, and $\{ v_k,v_{k+1} \} \in V(U_k) \setminus \{ v_i,v_j \}$. Thus, we have $H|_{\setminus \{ v_i,v_j \}}$ is connected, which also leads to contradiction. 
    \end{itemize}
    In conclusion, we have $\pi$ maps $\{ v_1,\ldots,v_{\ell} \}$ to itself. Then clearly there exists $k$ that either $\pi(v_i)=v_{i+k}$ for all $1 \leq i \leq \ell$ or $\pi(v_i)=v_{\ell+i-k}$ for all $1 \leq i \leq \ell$. This shows that there exists $1 \leq k \leq \ell$ such that either $\pi(U_i)=U_{i+k}$ for all $1 \leq i \leq \ell$ or $\pi(U_i)=U_{i-k+\ell}$ for all $1 \leq i \leq \ell$.
\end{proof}

\begin{cor}{\label{cor-bound-Aut-H-in-mathcal-H}}
    For all $H \in \mathcal H$, we have 
    \begin{align*}
        |\mathsf{Aut}(H)| \leq (2\ell) \cdot \Big( (2m)(p!)^{2m} p^{2m} \Big)^{\ell} \,.
    \end{align*}
    In addition, there exists an algorithm with running time $O\big( 2\ell \cdot ((mp+1)!)^{\ell} \big)$ to compute $\mathsf{Aut}(H)$.
\end{cor}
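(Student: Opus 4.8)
The plan is to exploit the rigidity of the necklace decomposition of $H$ furnished by Lemma~\ref{lem-prelim-mathcal-H}(2). Fix $[H]\in\mathcal H$ together with its decomposition $H=U_1\cup\ldots\cup U_\ell$ from Definition~\ref{def-mathcal-H}, so that $\mathsf L(U_i)=\{v_i,v_{i+1}\}$ and $V(U_i)\cap V(U_{i+1})=\{v_{i+1}\}$. I would begin by recording the elementary structural facts (immediate from Definition~\ref{def-mathcal-H} and the identity $|V(H)|=mp\ell$): the junction vertices $v_1,\ldots,v_\ell$ are pairwise distinct, $|V(U_i)\cap V(U_j)|\le 1$ for all $i\ne j$, and hence --- since every hyperedge spans $p\ge 3$ vertices --- the edge sets $E(U_1),\ldots,E(U_\ell)$ are pairwise disjoint, so $E(H)=\bigsqcup_{i=1}^\ell E(U_i)$. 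By Lemma~\ref{lem-prelim-mathcal-H}(2), every $\pi\in\mathsf{Aut}(H)$ permutes the blocks $U_1,\ldots,U_\ell$ according to one of at most $2\ell$ index maps (the rotations and reflections of the cyclic arrangement), which gives a group homomorphism $\rho\colon\mathsf{Aut}(H)\to S_\ell$ with $|\operatorname{Im}(\rho)|\le 2\ell$.

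Next I would bound $\ker\rho$, the subgroup of automorphisms fixing every $U_i$ setwise. For such a $\pi$ and any $e\in E(U_i)$ we have $\pi(e)\in E(H)=\bigsqcup_j E(U_j)$ and $\pi(e)\subseteq\pi(V(U_i))=V(U_i)$; since $|V(U_i)\cap V(U_j)|\le 1<p$ whenever $j\ne i$, this forces $\pi(e)\in E(U_i)$, so $\pi|_{V(U_i)}\in\mathsf{Aut}(U_i)$. As $V(H)=\bigcup_i V(U_i)$, the assignment $\pi\mapsto(\pi|_{V(U_1)},\ldots,\pi|_{V(U_\ell)})$ is injective, whence $|\ker\rho|\le\prod_{i=1}^\ell|\mathsf{Aut}(U_i)|\le\big((2m)(p!)^{2m}p^{2m}\big)^\ell$ by Lemma~\ref{lem-prelim-result-mathcal-U}(1). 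Combining with the previous paragraph yields $|\mathsf{Aut}(H)|=|\ker\rho|\cdot|\operatorname{Im}(\rho)|\le 2\ell\cdot\big((2m)(p!)^{2m}p^{2m}\big)^\ell$, which is the first assertion.

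For the algorithmic claim I would convert this analysis into a brute-force search. Enumerate the $\le 2\ell$ candidate block actions $\sigma\in S_\ell$ permitted by Lemma~\ref{lem-prelim-mathcal-H}(2); for each such $\sigma$, enumerate all $\ell$-tuples $(\pi_1,\ldots,\pi_\ell)$ of bijections $\pi_i\colon V(U_i)\to V(U_{\sigma(i)})$ --- at most $((mp+1)!)^\ell$ of them --- and test in time polynomial in $m,p,\ell$ whether (i) $\pi_i(E(U_i))=E(U_{\sigma(i)})$ for all $i$ and (ii) consecutive maps agree on the shared junction vertex, $\pi_i(v_{i+1})=\pi_{i+1}(v_{i+1})$. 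Any tuple passing both tests patches together --- using $E(H)=\bigsqcup_i E(U_i)$ and $V(H)=\bigcup_i V(U_i)$ --- into a well-defined element of $\mathsf{Aut}(H)$, and conversely every automorphism of $H$ arises in this way by Lemma~\ref{lem-prelim-mathcal-H}(2); collecting the passing tuples therefore produces $\mathsf{Aut}(H)$ in time $O\big(2\ell\cdot((mp+1)!)^\ell\big)$ once the per-tuple polynomial overhead is absorbed. The only step that needs genuine care --- and the one I would write out in detail --- is this \emph{gluing} verification: that block-wise hypergraph isomorphisms agreeing on the single-vertex overlaps patch into an edge-preserving bijection of all of $H$, which is precisely where the disjointness of the $E(U_i)$ and the fact that distinct blocks meet in fewer than $p$ vertices get used.
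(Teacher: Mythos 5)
Your proof is correct and follows essentially the same route as the paper: use Lemma~\ref{lem-prelim-mathcal-H}(2) to restrict the block action to the $2\ell$ rotations/reflections, bound the block-wise maps by $|\mathsf{Aut}(U_i)|\le (2m)(p!)^{2m}p^{2m}$ from Lemma~\ref{lem-prelim-result-mathcal-U}(1), and compute $\mathsf{Aut}(H)$ by enumerating and gluing block-wise isomorphisms exactly as in the paper's Algorithm~\ref{alg:cal-Aut-H-in-mathcal-H}. Your kernel/image packaging and the explicit verification that edge sets $E(U_i)$ are disjoint and that restrictions land in $\mathsf{Aut}(U_i)$ are slightly more careful than the paper's terse argument, but not a different proof.
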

\begin{proof}
    Note that by Item~(2) in Lemma~\ref{lem-prelim-graphs} for all $1 \leq i,j \leq \ell$, the number of isomorphisms between $U_i$ and $U_{j}$ is bounded by $(2m)(p!)^{2m} p^{2m}$. Combined with Item~(2) in Lemma~\ref{lem-prelim-mathcal-H}, we get that 
    \begin{align*}
        |\mathsf{Aut}(H)| \leq (2\ell) \cdot \Big( (2m)(p!)^{2m} p^{2m} \Big)^{\ell} \,.
    \end{align*} 
    In addition, consider the following algorithm:
    \begin{breakablealgorithm}{\label{alg:cal-Aut-H-in-mathcal-H}}
    \caption{Computation of $\mathsf{Aut}(H)$ for $[H] \in \mathcal H$}
        \begin{algorithmic}[1]
        \STATE {\bf Input:} Unlabeled hypergraph $[H] \in\mathcal H$ with decomposition $H = U_1 \cup \ldots \cup U_{\ell}$ satisfying Definition~\ref{def-mathcal-H}.
        \FOR{each $1 \leq k \leq \ell$}
        \STATE Compute $\mathsf{Aut}_+(k,H)_{i} := \big\{ \pi_i: V(U_i) \to V(U_{i+k}): \pi_i(U_i)=U_{i+k}, \pi_i(v_i)=v_{i+k} \big\}$.
        \STATE Compute $\mathsf{Aut}_-(k,H)_{i} := \big\{ \pi_i: V(U_i) \to V(U_{\ell+i-k}): \pi_i(U_i)=U_{\ell+i-k}, \pi_i(v_i)=v_{\ell+i-k} \big\} $.
        \STATE Compute $\mathsf{Aut}_+(k,H) := \big\{ \pi: \forall 1 \leq i \leq \ell, \exists \pi_i \in \mathsf{Aut}_+(k,H)_{i},  \pi=\pi_i \mbox{ restricted on } V(U_i) \big\}$.
        \STATE Compute $\mathsf{Aut}_-(k,H) := \big\{ \pi: \forall 1 \leq i \leq \ell, \exists \pi_i \in \mathsf{Aut}_-(k,H)_{i},  \pi=\pi_i \mbox{ restricted on } V(U_i) \big\}$.
        \ENDFOR
        \STATE Compute $\mathsf{Aut}(H)=\cup_{1 \leq k \leq \ell} (\mathsf{Aut}_+(k,H) \cup \mathsf{Aut}_-(k,H))$. 
        \STATE {\bf Output:} $\mathsf{Aut}(H)$.
        \end{algorithmic}
    \end{breakablealgorithm}
    From Item~(2) in Lemma~\ref{lem-prelim-mathcal-H}, we see that Algorithm~\ref{alg:cal-Aut-H-in-mathcal-H} correctly outputs all the automorphisms in $\mathsf{Aut}(H)$. In addition, it is clear that Algorithm~\ref{alg:cal-Aut-H-in-mathcal-H} runs in time $O\big( 2\ell \cdot ((mp+1)!)^{\ell} \big)$, since the computation of $\mathsf{Aut}_+(k,H)_{i}, \mathsf{Aut}_-(k,H)_{i}$ takes time $O((mp+1)!)$ and the computation of $\mathsf{Aut}_+(k,H), \mathsf{Aut}_-(k,H)$ takes time $O(((mp+1)!)^{\ell})$.
\end{proof}

Recall Definition~\ref{def-mathcal-J}. Now we provide some preliminary results for $[J] \subset \mathcal J$.
\begin{lemma}{\label{lem-prelim-mathcal-J}}
    Suppose $[J] \subset \mathcal J$.
    \begin{enumerate}
        \item[(1)] We have $\#\{ S \subset \mathsf K_n: S \cong J, \mathsf L(S)=\{ i,j \} \} = \frac{ (1+o(1)) n^{|V(J)|-2} }{ |\mathsf{Aut}(J)| }$.  
        \item[(2)] Suppose $J=U_1 \cup \ldots \cup U_{\ell}$ such that $U_1,\ldots,U_{\ell}$ satisfies Items~(1)--(4) in Definition~\ref{def-mathcal-J}. Also suppose that there exists some $U \subset J$ with $[U] \in \mathcal U$. Then $U=U_i$ for some $1 \leq i \leq \ell$.
        \item[(3)] Suppose $J=U_1 \cup \ldots \cup U_{\ell}$ such that $(U_1,\ldots,U_{\ell})$ and $(v_1,\ldots,v_{\ell+1})$ satisfies Items~(1)--(4) in Definition~\ref{def-mathcal-J}. Then $\pi(U_i)=U_i$ and $\pi(v_{j})=v_j$ for all $1 \leq i \leq \ell$ and $1 \leq j \leq \ell+1$.
        \item[(4)] Suppose $J=U_1 \cup \ldots \cup U_{\ell}$ such that $(U_1,\ldots,U_{\ell})$ and $(v_1,\ldots,v_{\ell+1})$ satisfies Items~(1)--(4) in Definition~\ref{def-mathcal-J}. Then $|\mathsf{Aut}(J)|=\prod_{1 \leq i \leq \ell}|\mathsf{Aut}_{v_{i}}(U_i)|$.
    \end{enumerate}
\end{lemma}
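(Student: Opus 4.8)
The backbone of all four parts is the \emph{block structure} of $J$: writing $J = U_1\cup\cdots\cup U_\ell$ as in Definition~\ref{def-mathcal-J}, consecutive blocks meet only in the single vertex $v_{i+1}$, which is a leaf of each, so the edge sets are pairwise disjoint, $E(J) = E(U_1)\sqcup\cdots\sqcup E(U_\ell)$, and the only cut-vertices of the projected graph $\underline J$ are the gluing vertices $v_2,\dots,v_\ell$ (each $U_i\in\mathcal U$ has no cut-vertex of its own, by Item~(3) of Definition~\ref{def-component-hypergraph}). I would prove Part~(2) first, since Parts~(3) and~(4) follow from it almost formally, and Part~(1) is then a routine count.

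\textbf{Part~(2).} Let $U\subset J$ with $[U]\in\mathcal U$, so $U$ is connected, has $2m$ edges, and $U|_{\setminus\{v\}}$ is connected for every $v\in V(U)$; set $I = \{\,i : E(U)\cap E(U_i)\neq\emptyset\,\}$. Suppose toward a contradiction that $I$ contains two indices $i<j$. For $U$ to be connected it must contain the cut-vertex $v_{i+1}$ together with a path through it joining an edge of $U$ in block $i$ to an edge of $U$ in block $j$. Deleting $v_{i+1}$ from $U$ separates the edges of $U$ lying in blocks $\le i$ from those lying in blocks $\ge i+1$; I would check that the ``$\le i$'' side remains nonempty after the deletion: if $U$ has at least two edges in block $i$ then, since $v_{i+1}$ lies in only one edge of $U_i$, some such edge survives intact; if $U$ has exactly one edge $e$ in block $i$ then necessarily $v_{i+1}\in e$ and $e\setminus\{v_{i+1}\}$ is nonempty (as $p\ge 3$) with all its vertices internal to block $i$, hence not joined to blocks $\ge i+1$ after the deletion. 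Either way $U|_{\setminus\{v_{i+1}\}}$ is disconnected, contradicting $[U]\in\mathcal U$. Therefore $E(U)\subseteq E(U_i)$ for a single $i$, and $|E(U)| = 2m = |E(U_i)|$ forces $E(U) = E(U_i)$; since $U$ is connected with at least one edge it has no isolated vertex, so $V(U)$ is exactly the set of endpoints of $E(U_i)$, namely $V(U_i)$, giving $U = U_i$. (One can alternatively run this through Item~(2) of Lemma~\ref{lem-prelim-result-mathcal-U}: a partially covered block would create at least two new leaves of $U$, and $U$ has only two leaves in total.)

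\textbf{Parts~(3) and~(4).} Fix $\pi\in\mathsf{Aut}(J)$. Each $\pi(U_i)$ is a subhypergraph of $J$ isomorphic to $U_i\in\mathcal U$, so Part~(2) gives $\pi(U_i) = U_{\sigma(i)}$ for a permutation $\sigma$ of $\{1,\dots,\ell\}$. Since $\pi$ preserves the incidence relation ``$V(U_i)\cap V(U_j)\neq\emptyset$'' (which holds precisely when $|i-j|\le 1$), the map $\sigma$ is an automorphism of the path graph on $\{1,\dots,\ell\}$, hence either the identity or the reversal $i\mapsto\ell+1-i$. The reversal would send the leaf $v_1$ into $\mathsf L(U_\ell) = \{v_\ell,v_{\ell+1}\}$, so $\pi(v_1)\neq v_1$, contradicting Item~(4) of Definition~\ref{def-mathcal-J}; thus $\sigma=\mathrm{id}$ and $\pi(U_i)=U_i$ for all $i$. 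Consequently $\pi$ maps $V(U_i)\cap V(U_{i+1}) = \{v_{i+1}\}$ onto itself and hence fixes $v_{i+1}$ for $1\le i\le\ell-1$, while $\pi(v_1)=v_1$ and $\pi(v_{\ell+1})=v_{\ell+1}$ by Definition~\ref{def-mathcal-J}(4); this proves Part~(3). For Part~(4), Part~(3) shows the restriction map $\pi\mapsto(\pi|_{V(U_i)})_{i=1}^\ell$ sends $\mathsf{Aut}(J)$ into $\prod_{i=1}^\ell\mathsf{Aut}_{v_i}(U_i)$ --- noting that $\mathsf{Aut}_{v_i}(U_i) = \mathsf{Aut}_{v_i,v_{i+1}}(U_i)$ because $U_i$ has exactly the two leaves $v_i,v_{i+1}$, which any of its automorphisms permutes --- and it is injective since $V(J)=\bigcup_i V(U_i)$. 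Conversely, any tuple $(\sigma_i)_i$ with $\sigma_i\in\mathsf{Aut}_{v_i}(U_i)$ glues to a well-defined bijection of $V(J)$, since on each overlap $\{v_{i+1}\}$ both $\sigma_i$ and $\sigma_{i+1}$ act trivially, and this bijection preserves $E(J) = E(U_1)\sqcup\cdots\sqcup E(U_\ell)$, hence lies in $\mathsf{Aut}(J)$. The resulting bijection yields $|\mathsf{Aut}(J)| = \prod_{i=1}^\ell|\mathsf{Aut}_{v_i}(U_i)|$.

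\textbf{Part~(1)} is then a routine count, parallel to Item~(2) of Lemma~\ref{lem-prelim-graphs}: a labeled copy $S\subset\mathsf K_n$ of $J$ with $\mathsf L(S)=\{i,j\}$ is the image of an injection $V(J)\hookrightarrow[n]$ taking the leaf pair $\{v_1,v_{\ell+1}\}$ onto $\{i,j\}$; there are $[1+o(1)]\,n^{|V(J)|-2}$ such injections (here $|V(J)| = mp\ell+1 = n^{o(1)}$ under \eqref{eq-condition-weak-recovery}, which is what absorbs the falling-factorial error), and two of them determine the same $S$ exactly when they differ by an element of $\mathsf{Aut}(J)$, so dividing by $|\mathsf{Aut}(J)|$ gives the stated formula. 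The main obstacle is Part~(2): the delicate point is verifying that deleting the cut-vertex $v_{i+1}$ genuinely disconnects $U$ in both the ``several edges in block $i$'' and the ``single edge in block $i$'' cases, and it is precisely this structural rigidity of the members of $\mathcal U$ that makes Parts~(3), (4) and~(1) fall out cleanly.
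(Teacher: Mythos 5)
Your proof is correct, and Parts~(1), (3) and (4) follow essentially the same lines as the paper's: the same injection--modulo--automorphism count for (1), and for (3)--(4) the same reduction to Part~(2) plus Item~(4) of Definition~\ref{def-mathcal-J} to pin down the induced block permutation, followed by the restriction/gluing bijection onto $\prod_i \mathsf{Aut}_{v_i}(U_i)$ (you are in fact a bit more explicit than the paper in checking surjectivity of the gluing map and in noting that any element of $\mathsf{Aut}_{v_i}(U_i)$ automatically fixes the other leaf $v_{i+1}$, which is what makes the gluing well defined). The genuine divergence is in Part~(2). The paper argues via Item~(2) of Lemma~\ref{lem-prelim-result-mathcal-U}: if $U$ met two blocks, each partial intersection $U\cap U_i$ would be a proper nonempty subhypergraph of $U_i$ and hence contribute at least two leaves outside $\mathsf L(U_i)$; since interior vertices of a block have no neighbours outside that block, these are leaves of $U$ itself, giving $|\mathsf L(U)|\ge 4$ and contradicting $|\mathsf L(U)|=2$. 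You instead invoke Item~(3) of Definition~\ref{def-component-hypergraph} directly, exhibiting the gluing vertex $v_{i+1}$ as a cut-vertex of $U$. This is equally valid; the one step needing care is that both sides of the cut stay nonempty, and your handling of the single-edge case is correct though compressed --- the assertion ``necessarily $v_{i+1}\in e$'' deserves the extra line that if the unique $U_i$-edge of $U$ avoided $v_{i+1}$, connectivity of $U$ would force the (distinct) $U_i$-edge through $v_{i+1}$ into $E(U)$ as well, contradicting uniqueness. The paper's route reuses a leaf-counting lemma needed elsewhere (e.g.\ in the proofs of Lemmas~\ref{lem-very-technical-sec-3-transfer} and \ref{lem-prelim-mathcal-H}); yours uses the $2$-connectivity built into Definition~\ref{def-component-hypergraph} for exactly the purpose it was designed for and avoids the paper's unproved assertion that $U\cap U_i$ is connected. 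One shared caveat on Part~(1): both you and the paper count injections sending the leaf pair of $J$ to $\{i,j\}$ with a fixed orientation; since by Item~(4) of Definition~\ref{def-mathcal-J} no automorphism of $J$ swaps its two leaves, the unoriented count is twice that, so the constant in the stated formula is really an orientation convention --- this is inherited from the paper, not a defect of your argument relative to it.
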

\begin{proof}
    As for Item~(1), list $V(J)$ in an arbitrary but fixed order $\{ v_1, \ldots, v_{|V(J)|} \}$ such that $\mathsf L(S)=\{ v_{|V(J)|-1}, v_{|V(J)|} \}$. Denote $\mathfrak W$ be the set of sequences $\mathfrak w=(u_1,\ldots,u_{|V(J)|-2},i,j)$ such that $u_k \in [n] \setminus \{ i,j \}$ and $u_{k} \neq u_{k'}$. Let $\psi(\mathfrak w) \subset \mathsf K_n$ be the hypergraph such that 
    \begin{align*}
        \pi_{\mathfrak w}(v_{|V(H)|-1})=i, \pi_{\mathfrak w}(v_{|V(H)|})=j, \pi_{\mathfrak w}( v_{k} ) = u_k \mbox{ for } k \leq |V(H)|-2
    \end{align*}
    is an isomorphism between $H$ and $\psi(\mathfrak w)$. Note that if $\psi(\mathfrak w')=\psi(\mathfrak w)$, then $\pi_{\mathfrak w} \circ \pi_{\mathfrak w'}^{-1} \in \mathsf{Aut}(J)$. Thus, from Item~(4) in Definition~\ref{def-mathcal-J} we see that
    \begin{align*}
        \#\big\{ \mathfrak w': \psi(\mathfrak w')=\psi(\mathfrak w) \big\} = |\mathsf{Aut}(J)| \,.
    \end{align*}
    Thus, we have
    \begin{align*}
        & \#\Big\{ S \subset \mathsf K_n: S \cong J, \mathsf L(S)=\{ i,j \} \Big\} = \frac{ |\mathfrak W| }{ |\mathsf{Aut}(J)| } \\
        =\ & \binom{n}{|V(J)|-2} \cdot \frac{(|V(J)|-2)!}{|\mathsf{Aut}(J)|} = [1+o(1)] \frac{n^{|V(J)|-2}}{|\mathsf{Aut}(J)|} \,,
    \end{align*}
    where the last equation follows from $|V(J)|=n^{o(1)}$.

    As for Item~(2), suppose on the contrary that $E(U) \cap E(U_i), E(U) \cap E(U_j) \neq \emptyset$ for two different $i,j$. Then since $|E(U)|=|E(U_i)|=|E(U_j)|=2m$ we see that $U \cap U_i$ and $U \cap U_{j}$ are two connected non-empty subgraph of $U_i$ and $U_j$ respectively, with $U \cap U_i \neq U_i$ and $U \cap U_j \neq U_j$. Thus, using Item~(2) in Lemma~\ref{lem-prelim-result-mathcal-U} we see that $|\mathsf L(U \cap U_i) \setminus \mathsf L(U_i)|, |\mathsf L(U \cap U_j) \setminus \mathsf L(U_j)| \geq 2$. As vertices in $V(U_i) \setminus \mathsf L(U_i)$ do not have neighbors in $V(J) \setminus V(U_i)$, we see that
    \begin{align*}
        \mathsf L(U \cap U_i) \setminus \mathsf L(U_i), \mathsf L(U \cap U_j) \setminus \mathsf L(U_j) \subset \mathsf L(U) \,.
    \end{align*}
    This yields $|\mathsf L(U)| \geq 4$ and thus forms a contradiction.

    As for Item~(3), note that using Item~(4) in Definition~\ref{def-mathcal-J}, we see that for all $\pi(v_1)=v_1$. In addition, since $[\pi(U_1)] \in \mathcal U$ and $\pi(U_1) \subset J$, we see from Item~(2) that we must have $\pi(U_1)=U_1$ and thus $\pi(v_2)=v_2$. Repeating this argument we see that for all $\pi\in\mathsf{Aut}(J)$ we have $\pi(U_i)=U_i$ for $1 \leq i \leq \ell$ and $\pi(v_j)=v_j$ for all $1 \leq j \leq \ell+1$. Based on this, for each $1 \leq i \leq \ell$ we have $\pi_i \in \mathsf{Aut}_{v_i}(U_i)$ and 
    \begin{align*}
        \pi \longrightarrow (\pi_1,\ldots,\pi_{\ell})
    \end{align*}
    is a bijection between $\mathsf{Aut}(J)$ and $\mathsf{Aut}_{v_1}(U_1) \otimes \ldots \otimes \mathsf{Aut}_{v_\ell}(U_\ell)$. This leads to Item~(4).
\end{proof}

\begin{cor}{\label{cor-efficient-compute-Aut-J}}
    For any $J \in \mathcal J$, there exists an algorithm with running time $O( ((mp+1)!)^{\ell} )$ that computes $\mathsf{Aut}(J)$.
\end{cor}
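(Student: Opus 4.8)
The plan is to exploit the rigidity of elements of $\mathcal J$ already established in Lemma~\ref{lem-prelim-mathcal-J}, mirroring the proof of Corollary~\ref{cor-bound-Aut-H-in-mathcal-H}. Given $[J] \in \mathcal J$ together with its decomposition $J = U_1 \cup \cdots \cup U_\ell$ and marked vertices $v_1,\ldots,v_{\ell+1}$ as in Definition~\ref{def-mathcal-J}, Item~(3) of Lemma~\ref{lem-prelim-mathcal-J} tells us that every $\pi \in \mathsf{Aut}(J)$ satisfies $\pi(U_i)=U_i$ and $\pi(v_j)=v_j$ for all $i,j$, so the restriction $\pi_i := \pi|_{V(U_i)}$ lies in $\mathsf{Aut}_{v_i}(U_i)$; moreover Item~(4) asserts that $\pi \mapsto (\pi_1,\ldots,\pi_\ell)$ is a bijection onto $\mathsf{Aut}_{v_1}(U_1) \times \cdots \times \mathsf{Aut}_{v_\ell}(U_\ell)$. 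This reduces the task to computing each $\mathsf{Aut}_{v_i}(U_i)$ and then assembling tuples. (If the decomposition is not supplied as input, it can be recovered first: by Item~(2) of Lemma~\ref{lem-prelim-mathcal-J} the $U_i$ are precisely the subhypergraphs of $J$ isomorphic to an element of $\mathcal U$, which are found by brute force in time negligible compared with the stated bound.)

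The algorithm then has two phases. First, for each $1 \le i \le \ell$, enumerate all bijections $\pi_i : V(U_i) \to V(U_i)$ fixing $v_i$ — there are $(mp)!$ of them since $|V(U_i)| = mp+1$ — and retain those with $\pi_i(e) \in E(U_i)$ for every $e \in E(U_i)$; since $|E(U_i)| = 2m$, each test costs $\mathrm{poly}(mp)$, so this phase runs in time $O\big(\ell \cdot (mp)! \cdot \mathrm{poly}(mp)\big)$. Second, enumerate all $\ell$-tuples $(\pi_1,\ldots,\pi_\ell)$ with $\pi_i \in \mathsf{Aut}_{v_i}(U_i)$; because the $U_i$ pairwise intersect only in the vertices $v_j$, each of which is fixed by every such $\pi_i$, these restrictions glue to a well-defined bijection of $V(J)$, which by the discussion above ranges exactly over $\mathsf{Aut}(J)$. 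As $|\mathsf{Aut}_{v_i}(U_i)| \le (mp)!$, the number of tuples is at most $((mp)!)^\ell$, and assembling each costs $\mathrm{poly}(mp\ell)$, giving total running time $O\big(((mp)!)^\ell \cdot \mathrm{poly}(mp\ell)\big) = O\big(((mp+1)!)^\ell\big)$ after absorbing the polynomial factors into one extra factor of $(mp+1)$ per component.

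Correctness is immediate from Items~(3) and (4) of Lemma~\ref{lem-prelim-mathcal-J}: phase one produces each factor group and phase two produces all of $\mathsf{Aut}(J)$ through the product decomposition. The only point needing a moment of care is the gluing step — that an arbitrary tuple $(\pi_1,\ldots,\pi_\ell)$ really defines a consistent permutation of $V(J)$ — but this follows from $V(U_i)\cap V(U_{i+1}) = \{v_{i+1}\}$ together with the fact that every $\pi_i \in \mathsf{Aut}_{v_i}(U_i)$ fixes \emph{both} leaves of $U_i$ (since $\mathsf L(U_i) = \{v_i,v_{i+1}\}$ has exactly two elements and is preserved setwise, fixing $v_i$ forces fixing $v_{i+1}$), so adjacent restrictions automatically agree on the shared vertex. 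I do not expect any genuine obstacle here; beyond this bookkeeping the content is entirely supplied by Lemma~\ref{lem-prelim-mathcal-J}.
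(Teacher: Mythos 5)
Your proposal is correct and follows essentially the same route as the paper: compute each factor group $\mathsf{Aut}_{v_i}(U_i)$ by brute force and assemble $\mathsf{Aut}(J)$ as the product, with correctness supplied by Items~(3) and (4) of Lemma~\ref{lem-prelim-mathcal-J}. You are in fact slightly more careful than the paper, which omits the observation that each $\pi_i$ fixes both leaves of $U_i$ so that adjacent restrictions glue consistently.
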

\begin{proof}
    Consider the following algorithm:
    \begin{breakablealgorithm}{\label{alg:cal-Aut-J-in-mathcal-J}}
    \caption{Computation of $\mathsf{Aut}(J)$ for $[J] \in \mathcal J$}
        \begin{algorithmic}[1]
        \STATE {\bf Input:} Unlabeled hypergraph $[J] \in\mathcal J$ with decomposition $J=U_1 \cup \ldots \cup U_{\ell}$ satisfying Definition~\ref{def-mathcal-J}.
        \FOR{each $1 \leq k \leq \ell$}
        \STATE Compute $\mathsf{Aut}_{v_{k}}(U_k)$.
        \ENDFOR
        \STATE Compute $\mathsf{Aut}(J) = \{ \pi: \pi(v)=\pi_k(v) \mbox{ for some } \pi_k \in \mathsf{Aut}_{v_{k}}(U_k), \forall v \in V(U_k), 1 \leq k \leq \ell \}$. 
        \STATE {\bf Output:} $\mathsf{Aut}(J)$.
        \end{algorithmic}
    \end{breakablealgorithm}
    Using Item~(3) in Lemma~\ref{lem-prelim-mathcal-J}, it is clear that Algorithm~\ref{alg:cal-Aut-J-in-mathcal-J} correctly computes $\mathsf{Aut}(J)$ and runs in time $O( ((mp+1)!)^{\ell} )$.
\end{proof}

\section{Postponed proof in Section~\ref{sec:main-results}}{\label{sec:supp-proofs-sec-2}}

In this section we use the results we established in Section~\ref{sec:prelim-graphs} to provide the postponed proofs of Lemmas~\ref{lem-control-Aut-component-hypergraph}, \ref{lem-control-Aut-necklace-hypergraph} and \ref{lem-control-Aut-necklace-chain}.

\subsection{Proof of Lemma~\ref{lem-control-Aut-component-hypergraph}}{\label{subsec:proof-lem-2.2}}

Note that $\#\{ H \subset \mathsf K_{mp+1}: H \cong U \}=\tfrac{(mp+1)!}{|\mathsf{Aut}(U)|}$. Thus, we have
\begin{align*}
    \sum_{[U] \in \mathcal U} \frac{ 1 }{ |\mathsf{Aut}(U)| } &= \frac{1}{(mp+1)!} \sum_{[U] \in \mathcal U} \#\big\{ H \subset \mathsf K_{mp+1}: H \cong U \big\} \\
    &= \frac{1}{(mp+1)!} \#\big\{ H \subset \mathsf K_{mp+1}: H \cong U \mbox{ for some } [U] \in \mathcal U \big\} \,.
\end{align*}
Thus, it suffices to show that 
\begin{equation}{\label{eq-lem-2.2-relax}}
    \#\mathfrak U_{m,p} := \#\big\{ H \subset \mathsf K_{mp+1}: [H] \in \mathcal U \big\} \geq R^{-1} \Big( 1-\frac{10^{10}R^3}{m} \Big) \frac{(mp+1)(2pm)!}{2^{mp}(p!)^{2m}(2m)!} 
\end{equation}
for some absolute constant $R>0$. To this end, for all constant $k \geq 0$ define $\mathfrak A_{(2N-k)/p,N}$ to be the set of $p$-uniform hypergraphs $U$ such that the following conditions hold:
\begin{enumerate}
    \item[(1)] $V(U)=[N]$ and $|E(U)|=\frac{2N-k}{p}$.
    \item[(2)] $\mathsf{deg}_U(v) \in \{ 1,2 \}$ for all $v \in [N]$.
\end{enumerate}
Clearly, for all $U \in \mathfrak A_{(pN-k)/2,N}$ we have $|\mathsf L(U)|=k$. In addition, given $v \in [N]$ define
\begin{align*}
    \mathfrak A_{ (pN-k)/2,N,v } := \Big\{ U \in \mathfrak A_{(pN-k)/2,N}: v \in \mathsf L(U) \Big\} \,.
\end{align*}
\begin{lemma}{\label{lem-enu-desired-hypergraphs}}
    there exists a universal constant $R>0$ such that
    \begin{align}
        & \frac{ R^{-1} \binom{N}{k} \cdot (2N-k)! }{ (\frac{2N-k}{p})! (p!)^{(2N-k)/p} 2^{N-k} } \leq \#\mathfrak{A}_{(2N-k)/p,N} \leq \frac{ R \binom{N}{k} \cdot (2N-k)! }{ (\frac{2N-k}{p})! (p!)^{(2N-k)/p} 2^{N-k} } \,; \label{eq-est-mathfrak-A-general} \\
        & \frac{ R^{-1} \binom{N-1}{k-1} \cdot (2N-k)! }{ (\frac{2N-k}{p})! (p!)^{(2N-k)/p} 2^{N-k} } \leq \#\mathfrak{A}_{(2N-k)/p,N} \leq \frac{ R \binom{N-1}{k-1} \cdot (2N-k)! }{ (\frac{2N-k}{p})! (p!)^{(2N-k)/p} 2^{N-k} } \,.  \label{eq-est-mathfrak-A-particular}
    \end{align}
\end{lemma}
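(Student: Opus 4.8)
The plan is to reduce the enumeration to counting \emph{labeled} $p$-uniform hypergraphs with a fixed degree sequence, and then invoke Lemma~\ref{lem-labeled-hypergraph-given-degree-sequence}. First I would do the degree bookkeeping. For any $U \in \mathfrak A_{(2N-k)/p,N}$ we have $\sum_{v\in[N]}\mathsf{deg}_U(v) = p|E(U)| = 2N-k$, and since every degree lies in $\{1,2\}$, writing $a$ and $b$ for the numbers of vertices of degree $1$ and $2$ respectively gives $a+b = N$ and $a+2b = 2N-k$, hence $a = k$ and $b = N-k$. Consequently $\mathfrak A_{(2N-k)/p,N}$ is exactly the set of $p$-uniform hypergraphs on $[N]$ whose degree sequence is $k$ ones and $N-k$ twos, and this set is partitioned according to the leaf set $L := \mathsf L(U)$, which ranges over the $k$-subsets of $[N]$. (Throughout we implicitly assume $p \mid 2N-k$, since otherwise $\mathfrak A_{(2N-k)/p,N} = \emptyset$ and the stated bounds are vacuous.)

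Next, fix a $k$-subset $L \subset [N]$. The hypergraphs $U$ with $\mathsf L(U) = L$ are precisely those realizing the degree sequence $d$ given by $d_v = 1$ for $v\in L$ and $d_v = 2$ for $v\notin L$; here $M := |E(U)| = (2N-k)/p$, so $\sum_v d_v = pM$, $d_{\max} = 2$, and $\prod_v d_v! = 2^{N-k}$. Since $k$ is a fixed constant and $N\to\infty$, we have $M\to\infty$ and $d_{\max}^3 = o(M)$, so the ``$d_{\max}=2$'' clause of Lemma~\ref{lem-labeled-hypergraph-given-degree-sequence} supplies a universal constant $R>0$, independent of $L$, with
\[
\#\big\{ U : V(U)=[N],\ \mathsf{deg}_U(v)=d_v\ \forall v \big\} \in \Big( \tfrac{1}{R}\cdot \tfrac{(2N-k)!}{M!\,(p!)^{M}\,2^{N-k}},\ R\cdot \tfrac{(2N-k)!}{M!\,(p!)^{M}\,2^{N-k}} \Big).
\]
Summing this over the $\binom{N}{k}$ choices of $L$ and recalling $M = (2N-k)/p$ yields \eqref{eq-est-mathfrak-A-general}. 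For \eqref{eq-est-mathfrak-A-particular} (whose left and right sides should read $\#\mathfrak A_{(2N-k)/p,N,v}$), the identical argument applies except that $L$ now ranges over the $\binom{N-1}{k-1}$ sets of the form $\{v\}\cup L'$ with $L'\subset[N]\setminus\{v\}$, $|L'|=k-1$; multiplying the per-$L$ bound by $\binom{N-1}{k-1}$ gives the claim.

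I do not expect a serious obstacle here: the proof is essentially a packaging of the \cite{BG16} estimate. The only points needing care are (i) verifying the hypotheses of Lemma~\ref{lem-labeled-hypergraph-given-degree-sequence}, which is immediate because $k = O(1)$ forces $M\to\infty$ and $d_{\max} = 2 = O(1)$; and (ii) confirming that partitioning by leaf set is genuine, i.e.\ each $U$ has a well-defined $\mathsf L(U)$ and the constraints ``$\mathsf{deg}_U(v)\in\{1,2\}$ for all $v$'' together with $|E(U)| = (2N-k)/p$ pin the number of leaves to exactly $k$ (this is precisely the degree bookkeeping above). The constant $R$ in the conclusion of Lemma~\ref{lem-enu-desired-hypergraphs} may be taken to be the one furnished by Lemma~\ref{lem-labeled-hypergraph-given-degree-sequence}.
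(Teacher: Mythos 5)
Your proof is correct and follows essentially the same route as the paper: partition $\mathfrak A_{(2N-k)/p,N}$ by the leaf set (which the degree bookkeeping forces to have size exactly $k$), apply the universal-constant clause of Lemma~\ref{lem-labeled-hypergraph-given-degree-sequence} to the resulting fixed degree sequence, and sum over the $\binom{N}{k}$ (resp.\ $\binom{N-1}{k-1}$) admissible leaf sets. You are also right that the two sides of \eqref{eq-est-mathfrak-A-particular} should bound $\#\mathfrak{A}_{(2N-k)/p,N,v}$ rather than $\#\mathfrak{A}_{(2N-k)/p,N}$; that is a typo in the statement.
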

\begin{proof}
    For $U \in \mathfrak{A}_{(pN-k)/2,N}$, note that there are at most $\binom{N}{k}$ ways to choose $\mathsf L(U)$. In addition, once we have fixed $\mathsf L(U)$ the degree sequence of $U$ is fixed from the definition of $\mathfrak{A}_{(pN-k)/2,N}$. Thus, using Lemma~\ref{lem-labeled-hypergraph-given-degree-sequence} with $M=\frac{2N-k}{p}$ we obtain the desired bound on $\#\mathfrak{A}_{(2N-k)/p,N}$. The bound on $\#\mathfrak{A}_{(2N-k)/p,N,v}$ can be derived in the same manner, except the choices of $\mathsf L(U)$ is bounded by $\binom{N-1}{k-1}$. 
\end{proof}

Note that for all $U \in \mathfrak A_{2m,mp+1}$ we have $U$ satisfies Item~(1) in Definition~\ref{def-component-hypergraph}. Thus, it suffices to upper bound the number of $U \in\mathfrak A_{2m,mp+1}$ such that $U$ does not satisfy Item~(2) or Item~(3) in Definition~\ref{def-component-hypergraph}. We first deal with the case that $U$ is not connected. In this case, $U$ can be decomposed into two components $U_1$ and $U_2$ such that $V(U_1) \cap V(U_2)=\emptyset$. Then $\mathsf L(U)=\mathsf L(U_1) \sqcup \mathsf L(U_2)$. Without losing of generality, we may assume that $|\mathsf L(U_1)| \leq 1$. Denote $|E(U_1)|=k$ for some $3 \leq k \leq 2m-3$, we then have $|V(U_1)| = \frac{pk+1}{2}$ if $pk$ is odd and $|V(U_1)|=\tfrac{pk}{2}$ if $pk$ is even. Thus, if $pk$ is odd the possible enumeration of $(U_1,U_2)$ in this case is bounded by
\begin{align*}
    & \binom{mp+1}{\frac{kp+1}{2}} \big| \mathfrak A_{k,(kp+1)/2} \big| \cdot \big| \mathfrak A_{(2m-k),((2m-k)p+1)/2} \big|  \\
    \overset{\eqref{eq-est-mathfrak-A-general}}{\leq}\ & R^2 \binom{mp+1}{ \frac{kp+1}{2} } \cdot \frac{ \frac{kp+1}{2} \cdot (kp)! }{ k! (p!)^{k} 2^{\frac{kp-1}{2}} } \cdot \frac{ \frac{((2m-k)p+1)}{2} \cdot ((2m-k)p)! }{ (2m-k)! (p!)^{2m-k} 2^{ \frac{(2m-k)p-1}{2} } } \\
    =\ & R^2 \binom{mp+1}{ \frac{kp+1}{2} } \cdot \frac{ (kp+1)!((2m-k)p+1)! }{ k!(2m-k)! (p!)^{2m} 2^{mp+1} } \\
    \overset{\eqref{eq-est-mathfrak-A-general}}{\leq}\ & R^3 \binom{mp+1}{ \frac{kp+1}{2} } \cdot \big| \mathfrak A_{2m,mp+1} \big| \cdot \frac{ (kp+1)((2m-k)p+1) }{ mp(mp+1) } \cdot \binom{2m}{k} \cdot \binom{2mp}{kp}^{-1} \\
    \leq\ & \big| \mathfrak A_{2m,mp+1} \big| \cdot \frac{ R^3 (kp+1)((2m-k)p+1) \binom{2m}{k} \binom{mp+1}{\frac{kp+1}{2}} }{ mp(mp+1)\binom{2mp}{kp} }  \,.
\end{align*}
Similarly, when $pk$ is even the possible enumeration of $(U_1,U_2)$ in this case is bounded by
\begin{align*}
    & \binom{mp+1}{ \frac{kp}{2} } \cdot \big| \mathfrak A_{k,kp/2} \big| \cdot \big| \mathfrak A_{2m-k,(2m-k)p/2+1} \big|  \\
    \overset{\eqref{eq-est-mathfrak-A-general}}{\leq}\ & R^2 \binom{mp+1}{ \frac{kp}{2} } \cdot \frac{ (kp)! }{ (k!)(p!)^{k} 2^{\frac{kp}{2}} } \cdot \binom{(2m-k)p/2+1}{2} \cdot \frac{ ((2m-k)p)! }{ (2m-k)! (p!)^{2m-k} 2^{\frac{(2m-k)p}{2}-1} } \\
    =\ & R^2 \binom{mp+1}{ \frac{kp}{2} } \cdot \frac{ ((2m-k)p+2)((2m-k)p) \cdot (kp)!((2m-k)p)! }{ k!(2m-k)! (p!)^{2m-k} 2^{mp} } \\
    \overset{\eqref{eq-est-mathfrak-A-general}}{\leq}\ & R^3 \binom{mp+1}{ \frac{kp}{2} } \cdot \big| \mathfrak A_{2m,mp+1} \big| \cdot \frac{ ((2m-k)p+2)((2m-k)p) }{ mp(mp+1) } \cdot \binom{2m}{k} \cdot \binom{2mp}{kp}^{-1}  \\
    \leq\ & \big| \mathfrak A_{2m,mp+1} \big| \cdot \frac{ R^3 ((2m-k)p+2)((2m-k)p) \binom{2m}{k} \binom{mp+1}{\frac{kp}{2}} }{ mp(mp+1) \binom{2mp}{kp} }  \,.
\end{align*}
Thus, we see that
\begin{align}
    &\#\Big\{ U \in \mathfrak A_{2m,mp+1}: U \mbox{ does not satisfy Item~(2) in Definition~\ref{def-component-hypergraph}} \Big\} \nonumber \\
    \leq\ & R^3 \big| \mathfrak A_{2m,mp+1} \big| \cdot \Bigg( \sum_{ \substack{ 3 \leq k \leq 2m-3 \\ kp \text{ is odd} } } \frac{ (kp+1)((2m-k)p+1) \binom{2m}{k}\binom{mp}{\frac{kp+1}{2}} }{ mp(mp+1) \binom{2mp}{kp} } \nonumber \\
    & \quad\quad\quad\quad\quad\quad\quad\quad +  \sum_{ \substack{ 3 \leq k \leq 2m-3 \\ kp \text{ is even} } } \frac{ ((2m-k)p+2)((2m-k)p) \binom{2m}{k} \binom{mp}{\frac{kp}{2}} }{ mp(mp+1) \binom{2mp}{kp} } \Bigg) \nonumber \\
    \leq\ & \frac{10^{9} R^3}{m} \big| \mathfrak A_{2m,mp+1} \big| \,,  \label{eq-crutial-bound-lem-2.2-first-part}
\end{align}
where the last inequality follows from
\begin{align*}
    & \frac{ (kp+1)((2m-k)p+1) \binom{2m}{k}\binom{mp}{\frac{kp+1}{2}} }{ mp(mp+1) \binom{2mp}{kp} } \leq \frac{ 10^{8} }{ m^2 } \mbox{ for } 3 \leq k\leq 2m-3,p\geq 3, kp \mbox{ is odd} \,; \\
    & \frac{ ((2m-k)p+1)((2m-k)p) \binom{2m}{k} \binom{mp}{\frac{kp}{2}} }{ mp(mp+1) \binom{2mp}{kp} } \leq \frac{ 10^{8} }{ m^2 } \mbox{ for } 3 \leq k \leq 2m-3, p \geq 3, kp \mbox{ is even} \,.
\end{align*}
Now we deal with the case that $U|_{\setminus \{ v \}}$ is disconnected for some $v \in [mp+1]$ in the similar manner. In this case, $U$ can be written as $U=U_1 \cup U_2$, where $V(U_1) \cap V(U_2) = \{ v \}$ with $v \in \mathsf L(U_1) \cap \mathsf L(U_2)$. Without losing of generality, we may assume that $1 \leq |\mathsf L(U_1)| \leq 2$. Denote $|E(U_1)|=k$ for some $3 \leq k \leq 2m-3$, we then have $|V(U_1)|=\tfrac{pk+1}{2}$ if $pk$ is odd and $|V(U_1)|=\tfrac{pk}{2}+1$ if $pk$ is odd. Thus, if $pk$ is odd the possible enumeration of $(U_1,U_2)$ in this case is bounded by
\begin{align*}
    & (mp+1) \binom{mp}{\frac{kp-1}{2}} \cdot \big| \mathfrak A_{k,(kp+1)/2;v} \big| \cdot \big| \mathfrak A_{2m-k,((2m-k)p+3)/2;v} \big|  \\
    \overset{\eqref{eq-est-mathfrak-A-particular}}{\leq}\ & R^2 (mp+1) \binom{mp}{ \frac{kp-1}{2} } \cdot \frac{ (kp)! }{ k! (p!)^{k} 2^{\frac{kp-1}{2}} } \cdot \frac{ \binom{((2m-k)p+1)/2}{2} \cdot ((2m-k)p)! }{ (2m-k)! (p!)^{2m-k} 2^{\frac{(2m-k)p-3}{2}} } \\
    =\ & R^2 (mp+1) \binom{mp}{ \frac{kp-1}{2} } \cdot \frac{ ((2m-k)p+1)((2m-k)p-1) \cdot (kp)!((2m-k)p)! }{ k!(2m-k)! (p!)^{2m} 2^{mp+1} } \\
    \overset{\eqref{eq-est-mathfrak-A-general}}{\leq}\ & R^3 (mp+1) \binom{mp}{ \frac{kp-1}{2} } \cdot \big| \mathfrak A_{2m,mp+1} \big| \cdot \frac{ ((2m-k)p+1)((2m-k)p-1) }{ mp(mp+1) } \cdot \binom{2m}{k} \cdot \binom{2pm}{pk}^{-1} \\
    \leq\ & \big| \mathfrak A_{2m,mp+1} \big| \cdot \frac{ R^3 (2m-k)^2 \binom{2m}{k}\binom{mp}{\frac{kp-1}{2}} }{ m \binom{2mp}{kp} }  \,.
\end{align*}
Similarly, when $pk$ is even the possible enumeration of $(U_1,U_2)$ in this case is bounded by
\begin{align*}
        & (mp+1) \binom{mp}{ \frac{kp}{2} } \cdot \big| \mathfrak A_{k,kp/2+1;v} \big| \cdot \big| \mathfrak A_{(2m-k),(2m-k)p/2+1;v} \big|  \\
        \overset{\eqref{eq-est-mathfrak-A-particular}}{\leq}\ & R^2 (mp+1) \binom{mp}{\frac{kp}{2}} \cdot \frac{ \frac{kp}{2} \cdot (kp)!}{ k! (p!)^{k} 2^{\frac{kp}{2}-1}} \cdot \frac{ \frac{(2m-k)p}{2} \cdot ((2m-k)p)! }{ (2m-k)! (p!)^{2m-k} 2^{\frac{(2m-k)p}{2}-1} } \\
        =\ & R^2 (mp+1) \binom{mp}{ \frac{kp}{2} } \cdot \frac{ kp((2m-k)p) \cdot (kp)!((2m-k)p)! }{ k!(2m-k)! 2^{mp} } \\
        \overset{\eqref{eq-est-mathfrak-A-general}}{\leq}\ & R^3 (mp+1) \binom{mp}{ \frac{kp}{2} } \cdot \Big| \mathfrak A_{2m,mp+1} \Big| \cdot \frac{ (kp)((2m-k)p) }{ mp(mp+1) } \cdot \binom{2m}{k}  \cdot \binom{2pm}{pk}^{-1} \\
        \leq\ & \Big| \mathfrak A_{2m,mp+1} \Big| \cdot \frac{ R^3 k(2m-k) \binom{2m}{k} \binom{mp}{\frac{kp}{2}} }{ m\binom{2mp}{kp} }  \,.
\end{align*}
Thus, we get that
\begin{align}
    & \#\Big\{ U \in \mathfrak A_{2m,mp+1}: U \mbox{ does not satisfy Item (3) in Definition~\ref{def-component-hypergraph}} \Big\} \nonumber \\
    \leq\ & R^3 \big| \mathfrak A_{2m,mp+1} \big| \cdot \Bigg( \sum_{ \substack{ 3 \leq k \leq 2m-3 \\ kp \text{ is odd} } } \frac{ (2m-k)^2 \binom{2m}{k}\binom{mp}{\frac{pk-1}{2}} }{ m \binom{2mp}{kp} } + \sum_{ \substack{ 3 \leq k \leq 2m-3 \\ kp \text{ is even} } } \frac{ k(2m-k) \binom{2m}{k} \binom{mp}{\frac{pk}{2}} }{ m\binom{2mp}{kp} } \Bigg) \nonumber \\
    \leq\ & \frac{10^{9}R^3}{m} \big| \mathfrak A_{2m,mp+1} \big| \,, \label{eq-crutial-bound-lem-2.2-second-part}
\end{align}
where the last inequality follows from when $kp$ is odd
\begin{align*}
    & \frac{ (2m-k)^2 \binom{2m}{k}\binom{mp}{\frac{pk-1}{2}} }{ m \binom{2mp}{kp} } \leq \frac{10^{8}}{m} \mbox{ if } k=3,p=3 \,; \\
    & \frac{ (2m-k)^2 \binom{2m}{k}\binom{mp}{\frac{pk-1}{2}} }{ m \binom{2mp}{kp} } \leq \frac{10^{8}}{m} \mbox{ if } 4 \leq k \leq 2m-3,p\geq 3 \mbox{ or } k=3, p \geq 4 
\end{align*}
and when $kp$ is even
\begin{align*}
    &\frac{ k(2m-k) \binom{2m}{k} \binom{mp}{\frac{pk}{2}} }{ m\binom{2mp}{kp} } \leq \frac{10^{8}}{m^2} \mbox{ if } 3 \leq k \leq 2m-3, p \geq 3 \,.
\end{align*}
Combining \eqref{eq-crutial-bound-lem-2.2-first-part} and \eqref{eq-crutial-bound-lem-2.2-second-part}, we get that
\begin{align*}
    & \#\Big\{ U \in \mathfrak A_{2m,mp+1}: U \mbox{ does not satisfy Item (2) or (3) in Definition~\ref{def-component-hypergraph}} \Big\} \nonumber \\
    \leq\ & \tfrac{\cdot 10^{10}R^3}{m} \#\mathfrak A_{2m,mp+1} \,,
\end{align*}
Combined with \eqref{eq-est-mathfrak-A-general}, we get that 
\begin{align*}
    \#\mathfrak U_{m,p} \geq \Big( 1-\frac{10^{10}R^3}{m} \Big) \#\mathfrak A_{2m,mp+1} \overset{\eqref{eq-est-mathfrak-A-general}}{\geq} R^{-1} \Big( 1-\frac{10^{10}R^3}{m} \Big) \frac{ (mp+1)(2mp)! }{ 2^{mp} (p!)^{2m} (2m)! } \,,
\end{align*}
leading to \eqref{eq-lem-2.2-relax}.

\subsection{Proof of Lemma~\ref{lem-control-Aut-necklace-hypergraph}}{\label{subsec:proof-lem-2.4}}

Consider the following mapping
\begin{align*}
    \phi: \mathcal U^{ \otimes \ell } \to \mathcal H,  
\end{align*}
where for each $[U_i] \in \mathcal U (1 \leq i \leq \ell)$ we choose an arbitrary realization $U_i$ with $U_{i} \cap U_{i+1}=\{ v_{i+1} \} \in \mathsf L(U_i)$ and let $\phi([U_1],\ldots,[U_\ell])=[H]$ where $H=U_1 \cup \ldots \cup U_\ell$. By Item~(2) of Lemma~\ref{lem-prelim-mathcal-H}, if $\phi([U_1],\ldots,[U_\ell]) = \phi( [U_1'], \ldots,[U_{\ell}'] )$, there exists $1 \leq k\leq \ell$ such that
\begin{align*}
    \mbox{either } [U_i]=[U_{i+k}] \mbox{ for all } 1 \leq i \leq \ell \mbox{ or } [U_i]=[U_{\ell+i-k}] \mbox{ for all } 1 \leq i \leq \ell \,.
\end{align*}
Thus, we get that
\begin{align*}
    \sum_{[H] \in \mathcal H} \frac{1}{|\mathsf{Aut}(H)|} \geq \frac{1}{2\ell} \cdot \sum_{ [U_1], \ldots, [U_\ell] \in \mathcal U } \frac{1}{ \mathsf{Aut}( \phi([U_1],\ldots,[U_{\ell}]) ) } \,.
\end{align*}
In addition, again using Item~(2) of Lemma~\ref{lem-prelim-mathcal-H}, we see that for all $\pi\in \mathsf{Aut}( \phi([U_1],\ldots,[U_{\ell}]) )$, there exists $1 \leq k \leq i$ such that either $\pi(U_i)=U_{i+k}$ for all $1 \leq i \leq \ell$ or $\pi(U_i)=U_{i-k+\ell}$ for all $1 \leq i \leq \ell$. Since the number of isomorphisms between $U_i$ and $U_j$ is bounded by $\mathsf{Aut}(U_i)$, we see that
\begin{align*}
    \mathsf{Aut}( \phi([U_1],\ldots,[U_{\ell}]) ) \leq 2\ell \prod_{1 \leq i \leq \ell} |\mathsf{Aut}(U_i)| \,.
\end{align*}
Thus, we see that
\begin{align*}
    \sum_{[H] \in \mathcal H} \frac{1}{|\mathsf{Aut}(H)|} &\geq \frac{1}{(2\ell)^2} \cdot \sum_{ [U_1], \ldots, [U_\ell] \in \mathcal U } \prod_{1 \leq i \leq \ell} \frac{1}{|\mathsf{Aut}(U_i)|} \\
    &\overset{\text{Lemma~\ref{lem-control-Aut-component-hypergraph}}}{\geq} \frac{1}{(2\ell)^2} \cdot \Big( R^{-1} \big( 1-\tfrac{10^{10}R^3}{m} \big) \frac{(2pm)!}{2^{mp}(p!)^{m}(mp+1)!(2m)!} \Big)^{\ell} \,. \qedhere
\end{align*}

\subsection{Proof of Lemma~\ref{lem-control-Aut-necklace-chain}}{\label{subsec:proof-lem-2.9}}

We first show 
\begin{equation}{\label{eq-upper-bound-lem-2.9}}
    \big( \beta^{\diamond}_{\mathcal U} \big)^{\ell} \geq \beta_{\mathcal J} \,.
\end{equation}
Define
\begin{equation}{\label{eq-def-widetilde-mathcal-U}}
    \widetilde{U} := \big\{ ([v],[U]): [U] \in \mathcal U, [v] \in \mathsf L_{\diamond}(U) \big\} \,.
\end{equation}
It is clear that for all $[J] \in \mathcal J$, suppose $J=U_1 \cup U_2 \ldots \cup U_{\ell}$ is the desired decomposition in Definition~\ref{def-mathcal-J}, then the following mapping
\begin{align*}
    J \longrightarrow \Big( ([v_1],[U_1]), \ldots, ([v_{\ell}],[U_{\ell}]) \Big)
\end{align*}
is an injection from $\mathcal J$ to $\widetilde{U}^{ \otimes \ell }$. In addition, Item~(2) in Lemma~\ref{lem-prelim-mathcal-J} yields that
\begin{align*}
    |\mathsf{Aut}(J)|= \prod_{1 \leq i \leq \ell} |\mathsf{Aut}_{v_i}(U_i)| \,.
\end{align*}
Thus, 
\begin{align*}
    \beta_{\mathcal J} &\overset{\eqref{eq-def-beta-mathcal-H}}{=} \sum_{ [J] \in \mathcal J } \frac{1}{|\mathsf{Aut}(J)|} \leq \sum_{ ([v_1],[U_1]), \ldots, ([v_{\ell}],[U_{\ell}]) \in \widetilde{U} } \prod_{1 \leq i \leq \ell} \frac{ 1 }{ |\mathsf{Aut}_{v_i}(U_i)| } \\
    &\leq \Big( \sum_{ ([v],[U]) \in \widetilde{U} } \frac{ 1 }{ |\mathsf{Aut}_{v}(U)| } \Big)^{\ell} \overset{\eqref{eq-def-widetilde-mathcal-U},\eqref{eq-def-beta-diamond-U}}{=} \big( \beta^{\diamond}_{\mathcal U} \big)^{\ell} \,.
\end{align*}
Now we show that 
\begin{equation}{\label{eq-lower-bound-lem-2.9}}
    \beta_{\mathcal J} \geq \big( \beta^{\diamond}_{\mathcal U} \big)^{\ell-1} \big( \beta^{\diamond}_{\mathcal U}-2 \big) \,.
\end{equation}
Consider the mapping 
\begin{align*}
    \Big\{ \big( ([v_1],[U_1]), \ldots, ([v_{\ell}],[U_{\ell}]) \big) \in \widetilde{\mathcal U}^{\otimes \ell}: U_1 \not \cong U_{\ell} \Big\} \longrightarrow \mathcal J 
\end{align*}
such that $\big( ([v_1],[U_1]), \ldots, ([v_{\ell}],[U_{\ell}])$ is mapped to $J=U_1 \cup \ldots \cup U_{\ell}$ according to Definition~\ref{def-mathcal-J}. Note that for all $\pi \in \mathsf{Aut}(J)$, we have $\pi( \{ v_1,v_{\ell+1} \} ) =\{ v_1,v_{\ell+1} \}$, and similarly as in Item~(2) of Lemma~\ref{lem-prelim-mathcal-H} we can show that $\pi(v_i) \in \{ v_2,\ldots,v_\ell \}$ for all $2 \leq i \leq \ell$. Thus, we see that either $\pi(U_i)=U_i$ for all $1 \leq i \leq \ell$ or $\pi(U_i)=U_{\ell+1-i}$ for all $1 \leq i \leq \ell$. Thus, since $U_1 \not\cong U_{\ell}$ we see that $\pi(v_i)=v_i$ and $\pi(U_j)=U_j$ for all $\pi\in\mathsf{Aut}(J)$. Thus, we indeed have $J \in \mathcal J$ and thus from Item~(3) in Lemma~\ref{lem-prelim-mathcal-J} we have $|\mathsf{Aut}(J)|=\prod_{1 \leq i \leq \ell} |\mathsf{Aut}_{v_i}(U_i)|$. In conclusion, define $\{ w_i \}=\mathsf L(U_i) \setminus \{ v_i \}$, we have $\big( ([v_1],[U_1]), \ldots, ([v_{\ell}],[U_{\ell}])$ and $\big( ([v_1'],[U_1']), \ldots, ([v_{\ell}'],[U_{\ell}'])$ are mapped to the same element if and only if 
\begin{align*}
    ([v_i'],[U_i'])=([v_i],[U_i]) \mbox{ or } ([v_i'],[U_i'])=([v_{\ell+1-i}],[U_{\ell+1-i}]) \,.
\end{align*}
Thus, 
\begin{align*}
    \beta_{\mathcal J} &\overset{\eqref{eq-def-beta-mathcal-H}}{=} \sum_{ [J] \in \mathcal J } \frac{1}{|\mathsf{Aut}(J)|} \geq \frac{1}{2} \sum_{ ([v_1],[U_1]), \ldots, ([v_{\ell}],[U_{\ell}]) \in \widetilde{\mathcal U}: [U_1] \neq [U_{\ell}] } \prod_{1 \leq i \leq \ell} \frac{ 1 }{ |\mathsf{Aut}_{v_i}(U_i)| } \\
    &\geq \frac{1}{2} \sum_{ ([v_1],[U_1]), \ldots, ([v_{\ell}],[U_{\ell}]) \in \widetilde{\mathcal U} } \prod_{1 \leq i \leq \ell} \frac{ 1 }{ |\mathsf{Aut}_{v_i}(U_i)| } - \sum_{ ([v_1],[U_1]), \ldots, ([v_{\ell-1}],[U_{\ell-1}]) \in \widetilde{\mathcal U} } \prod_{1 \leq i \leq \ell-1} \frac{ 1 }{ |\mathsf{Aut}_{v_i}(U_i)| } \\
    &\overset{\eqref{eq-def-beta-diamond-U}}{=} \frac{1}{2} \big( \beta^{\diamond}_{\mathcal U} \big)^{\ell-1} \big( \beta^{\diamond}_{\mathcal U}-2 \big) \,.
\end{align*}
Combining \eqref{eq-upper-bound-lem-2.9} and \eqref{eq-lower-bound-lem-2.9} yields the first line of inequality. In addition, from \eqref{eq-def-beta-mathcal-H} and \eqref{eq-def-beta-diamond-U} we have $\beta^{\diamond}_{\mathcal U} \geq \beta_{\mathcal U}$. So the second inequality follows from Lemma~\ref{lem-control-Aut-component-hypergraph}.

\section{Postponed proofs in Section~\ref{sec:stat-analysis}}{\label{sec:supp-proofs-sec-3}}

The goal of this section is to provide the proof of Lemma~\ref{lem-very-technical-sec-3-transfer}. We will prove \eqref{eq-technical-part-I}--\eqref{eq-technical-part-IV} separately. Throughout this section, we will always assume the decomposition \eqref{eq-decomposition-S} and \eqref{eq-decomposition-K} holds, and we will always regard $K=U_1' \cup \ldots \cup U_{\ell}'$ as fixed. In addition, using Lemma~\ref{lem-control-Aut-necklace-chain} and \eqref{eq-condition-weak-recovery}, we see that (note that $\beta^{\diamond}_{\mathcal U} \geq \beta_{\mathcal U}$)
\begin{equation}{\label{eq-esay-bound-beta-mathcal-J}}
    \beta_{\mathcal J} \geq \frac{1}{3} \big( \beta^{\diamond}_{\mathcal U} \big)^{\ell}
\end{equation}

\subsection{Proof of (\ref{eq-technical-part-I})}{\label{subsec:proof-lem-3.8-part-1}}

Note that $(S,K) \in \mathsf{Part}_I$ implies that $\mathsf{IND}(S,K)=\{ 1,\ldots,\ell \}$ and thus $S=K$. Thus,
\begin{align*}
    & \sum_{ S:(S,K) \in \mathsf{Part}_I } \big( \lambda^{-2} n^{\frac{p}{2}} \big)^{ |E(S) \cap E(K)| }  \\
    =\ & \big( \lambda^{-2} n^{\frac{p}{2}} \big)^{ |E(K)| } = \big( \lambda^{-2} n^{\frac{p}{2}} \big)^{ 2m\ell } = \lambda^{-4m\ell} n^{pm\ell}  \,. 
\end{align*}

\subsection{Proof of (\ref{eq-technical-part-II})}{\label{subsec:proof-lem-3.8-part-2}}

Note that $(S,K) \in \mathsf{Part}_{II}$ implies that there exists $0 \leq a,b,a+b\leq \ell-1$ such that $\mathsf{SEQ}(S,K)=\{ 1,\ldots,a,\ell-b+1,\ldots,\ell \}$. Thus, from \eqref{eq-def-overline-SEQ(S,K)} we see that $\overline{\mathsf{SEQ}}(S,K)=\{ 1,\ldots,a,\ell-b+1,\ldots,\ell \}$. So we have
\begin{align*}
    E(S) \cap E(K) &= E(U_1' \cup \ldots \cup U_a' \cup U_{\ell+1-b}' \cup \ldots \cup U_{\ell}') \\
    &= E(U_1 \cup \ldots \cup U_a \cup U_{\ell+1-b} \cup \ldots \cup U_{\ell}) \,.
\end{align*}
Thus, we have $|E(S) \cap E(K)|=2m(a+b)$. In addition, since $E(S) \cap E(K) \neq \emptyset$ we see that $a+b \geq 1$. We now control the enumeration of $S$ for all fixed $K$. Note that $S=U_1 \cup \ldots U_{\ell}$ and $U_{\mathtt k}=U_{\mathtt k}'$ is fixed for all $1 \leq \mathtt k \leq a$ and $\ell+1-b \leq \mathtt k \leq \ell$. Thus, it suffices to bound the possible enumeration of $U_{a+1}, \ldots, U_{\ell-b}$. Note that given $U_{1}, \ldots, U_{\mathtt k}$ where $a \leq \mathtt k \leq \ell-b-1$, we have $U_{\mathtt k+1}$ satisfies $[U_{\mathtt k+1}] \in \mathcal U$ with $v_{\mathtt k+1} \in \mathsf L(U_{\mathtt k+1})$ for a fixed vertex $v_{\mathtt k+1}$ (as $v_{\mathtt k+1}=\mathsf L(U_{\mathtt k}) \setminus \{ v_{\mathtt k}\}$ has been decided); thus, using Item~(3) in Lemma~\ref{lem-prelim-result-mathcal-U} the possible choices of $U_{\mathtt k+1}$ is bounded by $[1+o(1)] n^{mp} \beta_{\mathcal U}^{\diamond}$. In addition, given $U_{1}, \ldots, U_{\ell-b-1}, U_{\ell-b+1}, \ldots, U_{\ell}$, then $U_{\ell-b}$ satisfies $[U_{\ell-b}] \in \mathcal U$ with $v_{\ell-b}, v_{\ell-b+1} \in \mathsf L(U_{\ell-b})$ for fixed vertices $v_{\ell-b}, v_{\ell-b+1}$; thus, using Item~(3) in Lemma~\ref{lem-prelim-result-mathcal-U} the possible choices of $U_{k+1}$ is bounded by $[1+o(1)] n^{mp-1} \beta_{\mathcal U}^{\diamond}$. In conclusion, for all fixed $K$ we have  
\begin{align*}
    &\#\Big\{ S: E(S \cap K) = E(U_1' \cup \ldots \cup U_a' \cup U_{\ell+1-b}' \cup \ldots \cup U_{\ell}') \Big\} \\
    \leq\ & \prod_{ a+1 \leq \mathtt k \leq \ell-b } \Big( \sum_{ \substack{ [U] \in \mathcal U \\ [v] \in \mathsf L_{\diamond}(U) } } \frac{ (1+o(1)) n^{mp-\mathbf 1_{\mathtt k=\ell-b}} }{ |\mathsf{Aut}_v(U)| } \Big) = n^{ (\ell-a-b)mp-1 } \cdot \big( \beta^{\diamond}_{\mathcal U} \big)^{ \ell-a-b } \,,
\end{align*}
where the equality follows from \eqref{eq-def-beta-diamond-U}. Thus, we see that
\begin{align*}
    \sum_{ S:(S,K) \in \mathsf{Part}_{II} } \big( \lambda^{-2} n^{\frac{p}{2}} \big)^{ |E(S) \cap E(K)| } &\leq \sum_{1\leq a+b \leq \ell-1} n^{ (\ell-a-b)mp-1 } \cdot \big( \beta^{\diamond}_{\mathcal U} \big)^{ \ell-a-b } \cdot \big( \lambda^{-2} n^{\frac{p}{2}} \big)^{ 2m(a+b) } \\
    & \overset{\text{Lemma~\ref{lem-control-Aut-necklace-chain}}}{\leq} n^{pm\ell-1} \beta_{\mathcal J} \sum_{1\leq a+b \leq \ell-1} \lambda^{-4m(a+b)} \big( \beta^{\diamond}_{\mathcal U} \big)^{ -(a+b) } \\
    & \leq \frac{ \delta^3 }{ (1-2\delta^3)(1-\delta^3)^2 } n^{pm\ell-1} \beta_{\mathcal J} \leq \delta^2 n^{pm\ell-1} \beta_{\mathcal J} \,,
\end{align*}
where the third inequality follows from $\beta^{\diamond}_{\mathcal U} \geq \delta^{-3}$ from \eqref{eq-condition-weak-recovery} and
\begin{equation*}
    \sum_{1\leq a+b \leq \ell-1} \lambda^{-4m(a+b)} \big( \beta^{\diamond}_{\mathcal U} \big)^{ -(a+b) } \overset{\text{Lemma~\ref{lem-control-Aut-component-hypergraph},}\eqref{eq-condition-weak-recovery}}{\leq} \sum_{1\leq a+b \leq \ell-1} \delta^{3(a+b)} \leq \frac{ \delta^3 }{ (1-\delta^3)^2 } \,. 
\end{equation*}

\subsection{Proof of (\ref{eq-technical-part-III})}{\label{subsec:proof-lem-3.8-part-3}}

Note that $(S,K) \in \mathsf{Part}_{III}$ implies that there exists $0 \leq a,b,a+b\leq \ell-1$ such that $\mathsf{IND}(S,K)=\{ 1,\ldots,a, \ell-b+1,\dots,\ell \}$, so $U_{\mathtt k}=U'_{\mathtt k}$ for $\mathtt k \in [1,a] \cup [\ell-b+1,\ell]$ and
\begin{align*}
    & U_1' \cup \ldots \cup U_a' \cup U_{\ell+1-b}' \cup \ldots \cup U_{\ell}' \\
    =\ & U_1 \cup \ldots \cup U_a \cup U_{\ell+1-b} \cup \ldots \cup U_{\ell} \subset S \cap K \,.
\end{align*}
In addition, recall \eqref{eq-def-gamma-t}. Note that $(S,K) \in \mathsf{Part}_{III}$ implies that we have $\gamma=\gamma_1>0$, which means that
\begin{align}
    \Gamma &= \Big\{ 1 \leq \mathtt k \leq \ell: E(U_{\mathtt k}') \cap E(U_{[a+1,\ell-b]}) \neq \emptyset \Big\} \nonumber \\
    &= \Big\{ a+1 \leq \mathtt k \leq \ell-b: E(U_k') \cap E(U_{ [a+1,\ell-b] }) \neq \emptyset \Big\} \neq \emptyset \,,  \label{eq-def-Gamma}
\end{align}
where the second equality follows from $U_{\mathtt k}=U_{\mathtt k}'$ for $1 \leq \mathtt k \leq a$ and $\ell-b+1 \leq \mathtt k \leq \ell$, and thus $E(U_{\mathtt k}') \cap E(U_{[a+1,\ell-b]}) = E(U_{\mathtt k}) \cap E(U_{[a+1,\ell-b]})=\emptyset$. We first argue the following property on $(S,K) \in \mathsf{Part}_{III}$.

\begin{lemma}{\label{lem-key-property-Part-III}}
    for all $(S,K) \in \mathsf{Part}_{III}$ we have
    \begin{align}
        2|V(S) \cap V(K)| \geq p|E(S) \cap E(K)| + 2\gamma +2 + \mathbf 1_{ \{ \gamma=1,a+b \leq \ell-2 \} } \,. \label{eq-key-property-Part-III}
    \end{align}
\end{lemma}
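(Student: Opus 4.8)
The plan is to work with the intersection hypergraph $W := S \cap K$ (vertex set $V(S)\cap V(K)$, edge set $E(S)\cap E(K)$) and first reduce \eqref{eq-key-property-Part-III} to an estimate on its local defects. Since $W \subseteq S$, every vertex has $\mathsf{deg}_W(v) \le \mathsf{deg}_S(v) \le 2$, so
\[
2|V(S)\cap V(K)| - p|E(S)\cap E(K)| = \sum_{v\in V(W)}\big(2-\mathsf{deg}_W(v)\big) = 2\#\mathsf{I}(W) + \#\mathsf{L}(W)\,.
\]
Thus it suffices to lower bound $2\#\mathsf{I}(W)+\#\mathsf{L}(W)$ by $2\gamma+2+\mathbf 1_{\{\gamma=1,\,a+b\le\ell-2\}}$, where (recalling \eqref{eq-decomposition-S}, \eqref{eq-decomposition-K}) I write $\mathsf{IND}(S,K)=[1,a]\cup[\ell-b+1,\ell]$ and use that $U_{\mathtt k}=U_{\mathtt k}'$ for $\mathtt k$ in this range, while the middle segment $U_{[a+1,\ell-b]}$ is a chain of $\ell-a-b$ blocks.

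Next I would collect two disjoint sources of defect. First, the leaves $i,j$ of $S$ are leaves of $K$ too, hence lie in $V(W)$ with $W$-degree $\le 1$; together they contribute at least $2$, and at least $3$ if one of them is isolated in $W$. Second, for each $\mathtt k\in\Gamma$ (from \eqref{eq-def-gamma-t}) I would show $U_{\mathtt k}'\cap S$ is a proper, edge-nonempty subhypergraph of the block $U_{\mathtt k}'\in\mathcal U$: edge-nonemptiness is the definition of $\Gamma$, and properness follows since $U_{\mathtt k}'\subseteq S$ would force, via Item~(2) in Lemma~\ref{lem-prelim-mathcal-J}, $U_{\mathtt k}'=U_{\mathtt n}$ for some block of $S$, which is impossible as that block would then be a fixed block disjoint from the middle. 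Item~(2) in Lemma~\ref{lem-prelim-result-mathcal-U} then yields at least two leaves of $U_{\mathtt k}'\cap S$ that are interior (degree-$2$) vertices of $U_{\mathtt k}'$; an interior vertex of $U_{\mathtt k}'$ lies in no other block of $K$, so all of its $K$-edges (hence all of its $W$-edges) sit inside $U_{\mathtt k}'$, making it a leaf of $W$. These $2\gamma$ leaves are pairwise distinct and distinct from $i,j$ (glue/leaf vertices of $K$ are never interior to a block), so in total $2\#\mathsf{I}(W)+\#\mathsf{L}(W)\ge 2\gamma+2$, with equality forcing $\mathsf{deg}_W(i)=\mathsf{deg}_W(j)=1$, $\mathsf{I}(W)=\emptyset$, and $\mathsf{L}(W)$ equal to exactly these $2\gamma+2$ vertices.

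The delicate part, and the main obstacle, is producing the extra $+1$ when $\gamma=1$ and $a+b\le\ell-2$. Here I would argue by contradiction: assume the count equals $4$, so $\mathsf{deg}_W(i)=\mathsf{deg}_W(j)=1$ and $\mathsf{L}(W)=\{i,j,w_1,w_2\}$ with $w_1,w_2$ interior to the unique block $U_{\mathtt m}'$, $\mathtt m\in\Gamma$. The junction vertices $v_{a+1},v_{\ell-b+1}$ of the middle segment are glue vertices of both $S$ and $K$, hence never interior to any block of $K$, so they are not among $\{w_1,w_2\}$; if either lay in $V(W)$ with $W$-degree $\le 1$ it would be a fifth leaf. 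Ruling that out (when $a,b\ge1$, $v_{a+1}$ already carries its $U_a$-edge in $W$, forcing $\mathsf{deg}_W(v_{a+1})=2$ and hence its middle-side edge into $E(U_{\mathtt m}')$; when $a=0$, $\mathsf{deg}_W(i)=1$ itself forces the $U_1$-edge at $i$ into $E(U_{\mathtt m}')$), one concludes $v_{a+1}\in V(U_{\mathtt m}')$ and likewise $v_{\ell-b+1}\in V(U_{\mathtt m}')$. But $v_{a+1}$ (being the common glue vertex, $v_{a+1}=v_{a+1}'$) belongs only to $K$-blocks $U_a',U_{a+1}'$, and $v_{\ell-b+1}$ only to $U_{\ell-b}',U_{\ell-b+1}'$, so $\mathtt m=a+1$ and $\mathtt m=\ell-b$ simultaneously, i.e. $\ell-a-b=1$, contradicting $a+b\le\ell-2$. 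Hence the count is $\ge 2\gamma+3$, which is \eqref{eq-key-property-Part-III} in this case. The fiddly bookkeeping I anticipate is verifying that the $S$-glue vertices in the frozen ranges coincide with the corresponding $K$-glue vertices $v_{\mathtt r}'$ (so that the block-membership assertions above are legitimate), which needs the uniqueness of the decompositions in Definition~\ref{def-mathcal-J} together with Item~(3) of Lemma~\ref{lem-prelim-mathcal-J}.
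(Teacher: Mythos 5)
Your proof is correct and follows essentially the same route as the paper's: both reduce \eqref{eq-key-property-Part-III} to a count of degree-deficient vertices of $S\cap K$ (your $2\#\mathsf I(W)+\#\mathsf L(W)$ bookkeeping equals the paper's $2+\sum_v(\mathsf{deg}_K(v)-\mathsf{deg}_{S\cap K}(v))$), extract $2\gamma$ interior leaves via Item~(2) of Lemma~\ref{lem-prelim-result-mathcal-U} applied to each partially-hit block $U'_{\mathtt k}$, $\mathtt k\in\Gamma$, and obtain the extra $+1$ from the junction vertices $v_{a+1},v_{\ell-b+1}$. The only difference is presentational: you argue the $+1$ by contradiction, whereas the paper directly observes that when $\gamma=1$ and $a+1\neq\ell-b$ at least one of these two junctions lies outside $\Gamma$ and is therefore an additional deficient vertex.
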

\begin{proof}
    Since $\{ i,j \} \in V(S) \cap V(K)$ and $\mathsf{deg}_S(v)=2$ for all $v \in V(S) \setminus \{ i,j \}$, we see that
    \begin{align*}
        2 = 2|V(K)|-p|E(K)| = \sum_{ v \in V(S) \cap V(K) } (2-\mathsf{deg}_K(v)) \,.
    \end{align*}
    Also, we have
    \begin{align*}
        2|V(S) \cap V(K)| - p|E(S) \cap E(K)| = \sum_{ v \in V(S) \cap V(K) } (2-\mathsf{deg}_{S \cap K}(v)) \,.
    \end{align*}
    Thus, to show \eqref{eq-key-property-Part-III} it suffices to show that
    \begin{align}
        \sum_{ v \in V(S) \cap V(K) } \Big( \mathsf{deg}_K(v) - \mathsf{deg}_{S \cap K}(v) \Big) \geq 2\gamma + \mathbf 1_{ \{ \gamma=1,a+b \leq \ell-2 \} } \,.  \label{eq-key-property-Part-III-transfer}
    \end{align}
    To this end, note that $\gamma=|\Gamma|$ where $\Gamma$ is defined in \eqref{eq-def-Gamma}. For each $\mathtt k \in \Gamma$, we have $E(U_{\mathtt k}') \cap E(U_{a+1,\ell-b})$ and since $\mathtt k \not\in \mathsf{IND}(S,K)$ we have $E(U_{\mathtt k}') \not \subset E(U_{a+1,\ell-b})$. Thus, using Item~(2) Lemma~\ref{lem-prelim-result-mathcal-U} we see that
    \begin{align*}
        \big| \mathsf L(U_{\mathtt k}' \cap U_{[a+1,\ell-b]}) \setminus \mathsf L(U_{\mathtt k}') \big| \geq 2 \,.
    \end{align*}
    Also, it is easy to check that $\mathsf L(U_{\mathtt k}' \cap U_{[a+1,\ell-b]}) \subset \mathsf L(S \cap K)$, thus 
    \begin{align*}
        \Big\{ \mathsf L( U_{\mathtt k}' \cap U_{[a+1,\ell-b]} ) \setminus \mathsf L(U_{\mathtt k}'): \mathtt k \in \Gamma \Big\}
    \end{align*}
    are disjoint and belongs to $\{ v \in V(S) \cap V(K): \mathsf{deg}_K(v) > \mathsf{deg}_{S \cap K}(v) \}$. Finally, it is easy to check that if $a+1 \not \in \Gamma$, then $\mathsf{deg}_{S \cap K}(v_{a+1})< \mathsf{deg}_{K}(v_{a+1})$ and if $\ell-b \not\in \Gamma$, then $\mathsf{deg}_{S \cap K}(v_{\ell-b+1})< \mathsf{deg}_{K}(v_{\ell-b+1})$, thus (note that $v_{a+1},v_{\ell-b+1} \not\in \mathsf L( U_{\mathtt k}' \cap U_{[a+1,\ell-b]}) \setminus \mathsf L(U_{\mathtt k}')$ for all $\mathtt k \in \Gamma$)
    \begin{align*}
        \#\Big\{ v \in V(S) \cap V(K): \mathsf{deg}_K(v) > \mathsf{deg}_{S \cap K}(v) \Big\} &\geq 2|\Gamma| + \mathbf 1_{ \{ a+1 \not \in \Gamma \} } + \mathbf 1_{ \{ \ell-b \not\in \Gamma \} } \\
        &\geq 2\gamma + \mathbf 1_{ \{ \gamma=1,a+b \leq \ell-2 \} } \,,
    \end{align*}
    leading to \eqref{eq-key-property-Part-III-transfer} and thus yields \eqref{eq-key-property-Part-III}.
\end{proof}

We now return to the proof of \eqref{eq-technical-part-III}. Note that
\begin{align*}
    &|E(S) \cap E(K)|= 2m(a+b) + |E(U_{[a+1,\ell-b]}) \cap E(K)| \,, \\
    &|V(S) \cap V(K)|=mp(a+b)+2 + |V(U_{[a+1,\ell-b]}) \cap V(K) \setminus \mathsf L(U_{[a+1,\ell-b]})| \,.
\end{align*}
In addition, we have
\begin{align*}
    |E(U_{[a+1,\ell-b]}) \cap E(K)| \leq 2m \cdot \#\big\{ \mathtt k: E(U_{\mathtt k}') \cap E(U_{[a+1,\ell-b]} \neq \emptyset \big\} \leq 2m\gamma
\end{align*}
For all 
\begin{equation}{\label{eq-constraint-mathtt-E-V-part-3}}
    2m\gamma \geq \mathtt E \geq 0 \mbox{ and } \mathtt V \geq \frac{p\mathtt E + 2(\gamma-1)+\mathbf 1_{ \{ \gamma=1,a+b \leq \ell-2 \} } }{2} \,,
\end{equation}
define $\mathsf{Part}_{III}(a,b,\gamma;\mathtt V,\mathtt E)$ to be the set of $S$ such that 
\begin{itemize}
    \item $(S,K) \in \mathsf{Part}_{III}$;
    \item $\mathsf{SEQ}(S,K)=(0,a+1,\ell-b,\ell+1)$;
    \item $|E(U_{[a+1,\ell-b]}) \cap E(K)|=\mathtt E$;
    \item $|V(U_{[a+1,\ell-b]}) \cap V(K) \setminus \mathsf L(U_{[a+1,\ell-b]})| = \mathtt V$.
\end{itemize}
For all $S \in \mathsf{Part}_{III}(a,b,\gamma;\mathtt V,\mathtt E)$ we have
\begin{align*}
    \big( \lambda^{-2} n^{\frac{p}{2}} \big)^{ |E(S) \cap E(K)| } = \big( \lambda^{-2} n^{\frac{p}{2}} \big)^{ 2m(a+b)+\mathtt E } \leq \lambda^{-4m(a+b)} \lambda^{-2\mathtt E} n^{ \frac{p}{2}(2m(a+b)+\mathtt E) }
\end{align*}
Now we bound the enumeration of $\mathsf{Part}_{III}(a,b,\gamma;\mathtt V,\mathtt E)$. Recall \eqref{eq-decomposition-S} and the fact that $U_{\mathtt k}=U'_{\mathtt k}$ for all $\mathtt k \in [1,a] \cup [\ell-b+1,\ell]$. Thus, it suffices to bound the possible enumeration of $U_{[a+1,\ell-b]}$ such that $|U_{[a+1,\ell-b]} \cap V(K)|=\mathtt V+2$. We now argue that given $v_{a+1}$ and $v_{\ell-b+1}$
\begin{equation}{\label{eq-enu-U-[a,b]}}
    \#\Big\{ U_{[a+1,\ell-b]}: \big| U_{[a+1,\ell-b]} \cap V(K) \big| =\mathtt V+2 \Big\} \leq \big( m^2 p^2 \ell^2 \big)^{\mathtt V} \big( \beta^{\diamond}_{\mathcal U} \big)^{ \ell-a-b } n^{ mp(\ell-a-b)-1-\mathtt V } \,.
\end{equation}
To this end, define
\begin{align*}
    \mathtt X_{\mathtt k} = \big| (U_{\mathtt k} \cap V(K)) \setminus \{ v_{\mathtt k} \} \big|-\mathbf 1_{\mathtt k=\ell-b} \,.
\end{align*}
We have $\sum_{a+1 \leq \mathtt k \leq \ell-b} \mathtt X_{\mathtt k}=\mathtt V$; given $\mathtt X_{\mathtt k}$, the enumeration of $\mathtt W_{\mathtt k}=(V(U_{\mathtt k}) \cap V(K)) \setminus \{ v_{\mathtt k} \}$ is bounded by $\binom{|V(K)|}{\mathtt X_{\mathtt k}}$; finally, using Item~(4) in Lemma~\ref{lem-prelim-result-mathcal-U} given $\mathtt W_{\mathtt k}$, the possible enumeration of $[U_{\mathtt k}] \in \mathcal U$ is bounded by $\beta^{\diamond}_{\mathcal U} n^{ mp-\mathtt X_{\mathtt k}-\mathbf 1_{\mathtt k=\ell-b} } (mp)^{\mathtt X_{\mathtt k}}$. Thus, the left hand side of \eqref{eq-enu-U-[a,b]} is bounded by
\begin{align*}
    & \sum_{ \mathtt X_{a+1}+\ldots+\mathtt X_{\ell-b}=\mathtt V } \prod_{ a+1 \leq \mathtt k \leq \ell-b } \Bigg( \binom{|V(K)|}{\mathtt X_{\mathtt k}} \beta^{\diamond}_{\mathcal U} n^{ mp-\mathtt X_{\mathtt k}-\mathbf 1_{\mathtt k=\ell-b} } (mp)^{\mathtt X_{\mathtt k}} \Bigg) \\
    \leq\ & \sum_{ \mathtt X_{a+1}+\ldots+\mathtt X_{\ell-b}=\mathtt V } \prod_{ a+1 \leq \mathtt k \leq \ell-b } \big( m^2 p^2 \ell \big)^{\mathtt V} \big( \beta^{\diamond}_{\mathcal U} \big)^{ \ell-a-b } n^{ mp(\ell-a-b)-1-\mathtt V } \\
    \leq\ & \big( m^2 p^2 \ell^2 \big)^{\mathtt V} \big( \beta^{\diamond}_{\mathcal U} \big)^{ \ell-a-b } n^{ mp(\ell-a-b)-1-\mathtt V } \,,
\end{align*}
where the last inequality follows from $\#\{ \mathtt X_{a+1}+\ldots+\mathtt X_{\ell-b}=\mathtt V \} \leq \ell^{\mathtt V}$. Thus, we have
\begin{align*}
    & \sum_{ S:(S,K) \in \mathsf{Part}_{III} } \big( \lambda^{-2} n^{\frac{p}{2}} \big)^{ |E(S) \cap E(K)| } \\
    =\ & \sum_{ \substack{ a+b \leq \ell-1 \\ \gamma \geq 1 } } \sum_{ (\mathtt V,\mathtt E) \text{ with } \eqref{eq-constraint-mathtt-E-V-part-3} } \lambda^{-4m(a+b)} \lambda^{-2\mathtt E} n^{ \frac{p}{2}(2m(a+b)+\mathtt E) } \cdot \big( m^2 p^2 \ell^2 \big)^{\mathtt V} \big( \beta^{\diamond}_{\mathcal U} \big)^{ \ell-a-b } n^{ mp(\ell-a-b)-1-\mathtt V } \\
    \overset{\eqref{eq-esay-bound-beta-mathcal-J}}{\leq}\ & 3\beta_{\mathcal J} \sum_{\substack{ a+b \leq \ell-1 \\ \gamma \geq 1 }} \sum_{ (\mathtt V,\mathtt E) \text{ with } \eqref{eq-constraint-mathtt-E-V-part-3} } \big( \lambda^{4m} \beta^{\diamond}_{\mathcal U} \big)^{-(a+b)} n^{ mp\ell+ \frac{p\mathtt E}{2}-1-\mathtt V } \big(m^2 p^2 \ell^2 \big)^{\mathtt V}  \lambda^{-2\mathtt E}  \\
    \leq\ & [3+o(1)] \beta_{\mathcal J} \sum_{\substack{ a+b \leq \ell-1 \\ \gamma \geq 1 }} \sum_{ 0 \leq \mathtt E \leq 2m\gamma } \lambda^{-2\mathtt E} \cdot \big( \lambda^{4m} \beta^{\diamond}_{\mathcal U} \big)^{-(a+b)} \big( m^2 p^2 \ell^2 \big)^{p\mathtt E+1} n^{ mp\ell-1-(\gamma-1)-\frac{1}{2}\cdot\mathbf 1_{\gamma=1,a+b\leq \ell-2} } \\
    \leq\ & 4 \beta_{\mathcal J} \sum_{\substack{ a+b \leq \ell-1 \\ \gamma \geq 1 }} \lambda^{-4m\gamma} (m^2 p^2 \ell^2)^{2mp\gamma+1} n^{mp\ell-1-(\gamma-1)-\frac{1}{2}\cdot\mathbf 1_{\gamma=1,a+b\leq \ell-2} } \\
    =\ & 4 \beta_{\mathcal J} \sum_{\substack{ a+b \leq \ell-1 }} \big( \lambda^{4m} \beta^{\diamond}_{\mathcal U} \big)^{-(a+b)} \lambda^{-4m} (m^2 p^2 \ell^2)^{2mp+1} n^{mp\ell-1-\frac{1}{2} \cdot \mathbf 1_{a+b\leq \ell-2} } \\
    & + 4 \beta_{\mathcal J} \sum_{\substack{ a+b \leq \ell-1 \\ \gamma \geq 2 }} \big( \lambda^{4m} \beta^{\diamond}_{\mathcal U} \big)^{-(a+b)} \lambda^{-4m\gamma} (m^2 p^2 \ell^2)^{2mp\gamma+1} n^{mp\ell-1-(\gamma-1) } \leq n^{-\frac{1}{2}+o(1)} \beta_{\mathcal J} n^{mp\ell-1} \,. 
\end{align*}

\subsection{Proof of (\ref{eq-technical-part-IV})}{\label{subsec:proof-lem-3.8-part-4}}

Recall \eqref{eq-def-SEQ(S,K)} and \eqref{eq-def-overline-SEQ(S,K)}. Note that $(S,K) \in \mathsf{Part}_{IV}$ implies that there exists $\mathtt T \geq 3$ such that
\begin{align*}
    & \mathsf{IND}(S,K) = \big\{ 1,\ldots,\ell \big\} \setminus \big( [\mathtt q_1,\mathtt s_2] \cup \ldots \cup [\mathtt q_{\mathtt T-1}, \mathtt s_{\mathtt T}] \big) \,, \\
    & \mathsf{SEQ}(S,K) := \big( 0, \mathtt q_1, \mathtt s_2, \mathtt q_2, \ldots, \mathtt s_{\mathtt T}, \ell+1 \big) \,; \\
    & \overline{\mathsf{SEQ}}(S,K) := \big( 0, \mathtt q_1', \mathtt s_2', \mathtt q_2', \ldots, \mathtt s_{\mathtt T}', \ell+1 \big) \,.
\end{align*}
In addition, recall that \eqref{eq-def-gamma-t}, we have
\begin{align*}
    \gamma_{\mathtt t} = \#\Gamma_{\mathtt t} \mbox{ where } \Gamma_{\mathtt t} &= \Big\{ 1 \leq \mathtt k \leq \ell: E(U_{\mathtt k}') \cap E(U_{[\mathtt q_{\mathtt t},\mathtt s_{\mathtt t+1}]}) \neq \emptyset \Big\} \\
    & = \Big\{ \mathtt k \in [\mathtt q_1',\mathtt s_2'] \cup \ldots \cup [\mathtt q_{\mathtt T-1}',\mathtt s_{\mathtt T}']: E(U_{\mathtt k}') \cap E(U_{[\mathtt q_{\mathtt t},\mathtt s_{\mathtt T+1}]}) \neq\emptyset \Big\} \,,
\end{align*}
where the second equality follows from 
\begin{align*}
    E(U'_{\mathtt k}) \cap E(U_{[\mathtt q_{\mathtt t},\mathtt s_{\mathtt t+1}]}) = E( U_{\phi^{-1}(\mathtt k)} ) \cap E(U_{[\mathtt q_{\mathtt t},\mathtt s_{\mathtt t+1}]}) = \emptyset
\end{align*}
for all $\mathtt k \not\in[\mathtt q_1',\mathtt s_2'] \cup \ldots \cup [\mathtt q_{\mathtt T-1}',\mathtt s_{\mathtt T}']$. We first show the following lemma:

\begin{lemma}{\label{lem-key-property-Part-IV}}
    For all $(S,K) \in \mathsf{Part}_{IV}$ we have
    \begin{equation}{\label{eq-key-property-Part-IV}}
        2|V(S) \cap V(K)|-p|E(S) \cap E(K)| \geq 2 + 2\sum_{1 \leq \mathtt t \leq \mathtt T-1} \gamma_{\mathtt t} + 2\Big( \mathtt T-1-\sum_{1 \leq \mathtt t \leq \mathtt T-1} \gamma_{\mathtt t} \Big)_+ \,.
    \end{equation}
\end{lemma}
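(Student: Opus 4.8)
The plan is to mimic the degree-counting argument used in the proof of Lemma~\ref{lem-key-property-Part-III}, but now to track the contribution of each of the $\mathtt T-1$ ``gaps'' $[\mathtt q_{\mathtt t},\mathtt s_{\mathtt t+1}]$ separately. As in that proof, I would start from the identity
\begin{align*}
    2|V(S) \cap V(K)| - p|E(S) \cap E(K)| = \sum_{ v \in V(S) \cap V(K) } \big( 2-\mathsf{deg}_{S \cap K}(v) \big)\,,
\end{align*}
together with $\sum_{v \in V(S)\cap V(K)}(2-\mathsf{deg}_K(v))=2|V(K)|-p|E(K)|=2$ (valid since $\{i,j\}\subset V(S)\cap V(K)$, $\mathsf{deg}_K(v)=2$ off the leaves, and $K\in\mathcal J$). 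Subtracting, it suffices to show that
\begin{align*}
    \sum_{v \in V(S) \cap V(K)} \big( \mathsf{deg}_K(v)-\mathsf{deg}_{S \cap K}(v) \big) \;\geq\; 2\sum_{1 \leq \mathtt t \leq \mathtt T-1}\gamma_{\mathtt t} \;+\; 2\Big( \mathtt T-1-\sum_{1 \leq \mathtt t \leq \mathtt T-1}\gamma_{\mathtt t} \Big)_+\,.
\end{align*}

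The heart of the argument is to exhibit, for each gap $\mathtt t$, a set of vertices that are ``deficient'' in $S\cap K$ relative to $K$, and to show these sets (across all $\mathtt t$) are disjoint. For a fixed $\mathtt t$ and each $\mathtt k\in\Gamma_{\mathtt t}$, I would apply Item~(2) of Lemma~\ref{lem-prelim-result-mathcal-U} to the proper nonempty subgraph $U_{\mathtt k}'\cap U_{[\mathtt q_{\mathtt t},\mathtt s_{\mathtt t+1}]}$ of $U_{\mathtt k}'$ (nonempty since $\mathtt k\in\Gamma_{\mathtt t}$; proper since $\mathtt k\notin\mathsf{IND}(S,K)$ forces $E(U_{\mathtt k}')\not\subset E(S)$, hence not contained in any single gap-union once we note gaps are separated by indices of $\mathsf{IND}(S,K)$) to get $|\mathsf L(U_{\mathtt k}'\cap U_{[\mathtt q_{\mathtt t},\mathtt s_{\mathtt t+1}]})\setminus\mathsf L(U_{\mathtt k}')|\geq 2$; these leaf vertices lie in $\mathsf L(S\cap K)$ but have $\mathsf{deg}_K=2$, so each contributes to the deficiency sum. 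Summing over $\mathtt k\in\Gamma_{\mathtt t}$ and over $\mathtt t$ gives the term $2\sum_{\mathtt t}\gamma_{\mathtt t}$; disjointness follows because $U_{\mathtt k}'$ for distinct $\mathtt k$ share only the vertices $v_{\mathtt k}'$, and those are precisely the ones we must exclude. The extra term $2(\mathtt T-1-\sum_{\mathtt t}\gamma_{\mathtt t})_+$ should come from the ``boundary'' vertices $v_{\mathtt q_{\mathtt t}}$ and $v_{\mathtt s_{\mathtt t+1}+1}$ at the two ends of each gap: for any gap $\mathtt t$ with $\gamma_{\mathtt t}=0$, both endpoints $v_{\mathtt q_{\mathtt t}},v_{\mathtt s_{\mathtt t+1}+1}$ satisfy $\mathsf{deg}_{S\cap K}<\mathsf{deg}_K=2$ (the gap edges attach to them in $K$ but not in $S\cap K$), so each such empty gap contributes $2$; there are at least $\mathtt T-1-\sum_{\mathtt t}\gamma_{\mathtt t}$ empty gaps, hence the $(\cdot)_+$. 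One must also verify these endpoint vertices are distinct from each other and from the leaf-vertices produced above — which holds because the $v$'s are the $\mathsf{IND}$-boundary vertices while the produced leaves lie strictly inside components indexed outside $\mathsf{IND}$. Finally, the leading $+2$ on the right of \eqref{eq-key-property-Part-IV} is exactly the universal $2=2|V(K)|-p|E(K)|$ carried through the subtraction, so it comes for free and need not be re-derived.

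The main obstacle I anticipate is the bookkeeping needed to guarantee disjointness of all these vertex sets simultaneously across gaps and across $\mathtt k\in\Gamma_{\mathtt t}$, and in particular handling edge cases where a gap is adjacent to another gap (possible when some $\mathtt s_{\mathtt t+1}=\mathtt q_{\mathtt t+1}$) or where $\mathtt q_1=0$ / $\mathtt s_{\mathtt T}=\ell+1$ touches the fixed leaves $i,j$. The cleanest way around this is probably to argue on the projected/edge level: partition $E(K)\setminus E(S)$ according to which gap $[\mathtt q_{\mathtt t},\mathtt s_{\mathtt t+1}]$ it ``sees'' via $\Gamma_{\mathtt t}$ (this is well-defined by the second displayed formula defining $\Gamma_{\mathtt t}$ and the fact that distinct gaps' index-images $\phi([\mathtt q_{\mathtt t},\mathtt s_{\mathtt t+1}])$ are separated), treat each block independently exactly as in Lemma~\ref{lem-key-property-Part-III} with $a+1\mapsto\mathtt q_{\mathtt t}$, $\ell-b\mapsto\mathtt s_{\mathtt t+1}$, and then add the per-block bounds. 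Since Lemma~\ref{lem-key-property-Part-III}'s proof already produced, for a single gap, the bound $2\gamma+2\cdot\mathbf 1_{\gamma=0\text{-type boundary}}$ worth of deficiency (there it was phrased as $2\gamma+\mathbf 1_{\gamma=1,\ldots}$ because one boundary was the global leaf, but in the interior-gap setting both boundaries are genuine), summing over the $\mathtt T-1$ blocks yields $\geq 2\sum_{\mathtt t}\gamma_{\mathtt t}+2\#\{\mathtt t:\gamma_{\mathtt t}=0\}\geq 2\sum_{\mathtt t}\gamma_{\mathtt t}+2(\mathtt T-1-\sum_{\mathtt t}\gamma_{\mathtt t})_+$, which is the claim. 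I would present the single-block estimate as a small internal claim, prove it once by the degree count, and then invoke it $\mathtt T-1$ times.
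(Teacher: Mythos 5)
Your proposal is correct and follows essentially the same route as the paper: reduce to bounding $\sum_{v\in V(S)\cap V(K)}(\mathsf{deg}_K(v)-\mathsf{deg}_{S\cap K}(v))$, extract two deficient non-leaf vertices of $U'_{\mathtt k}$ for each $\mathtt k\in\Gamma_{\mathtt t}$ via Item~(2) of Lemma~\ref{lem-prelim-result-mathcal-U} (with disjointness coming from the disjointness of the interiors $V(U'_{\mathtt k})\setminus\mathsf L(U'_{\mathtt k})$ and of the gap blocks), and then harvest the $2(\mathtt T-1-\sum_{\mathtt t}\gamma_{\mathtt t})_+$ term from the gap-boundary vertices. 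The only cosmetic difference is in the second term's bookkeeping — you count two deficient endpoints per gap with $\gamma_{\mathtt t}=0$, while the paper counts boundary indices $\mathtt q'_{\mathtt t},\mathtt s'_{\mathtt t+1}$ lying outside $\Gamma=\cup_{\mathtt t}\Gamma_{\mathtt t}$ — and both give the stated bound (your worry about adjacent gaps is moot since the blocks in $\mathsf{SEQ}(S,K)$ are maximal contiguous runs of $[\ell]\setminus\mathsf{IND}(S,K)$).
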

\begin{proof}
    Similar as in Lemma~\ref{lem-key-property-Part-III}, it suffices to show that
    \begin{align}{\label{eq-key-property-Part-IV-transfer}}
        \sum_{ v \in V(S) \cap V(K) } \Big( \mathsf{deg}_K(v) - \mathsf{deg}_{S \cap K}(v) \Big) \geq 2\sum_{1 \leq \mathtt t \leq \mathtt T-1} \gamma_{\mathtt t} + 2\Big( \mathtt T-1-\sum_{1 \leq \mathtt t \leq \mathtt T-1} \gamma_{\mathtt t} \Big)_+ \,.  
    \end{align}
    For each $\mathtt k \in \Gamma_{\mathtt t}$, we have $E(U_{\mathtt k}') \cap E(U_{[\mathtt q_{\mathtt t},\mathtt s_{\mathtt t+1}]}) \neq \emptyset$ and $E(U_{\mathtt k}') \cap E(U_{[\mathtt q_{\mathtt t},\mathtt s_{\mathtt t+1}]}) \neq E(U_{\mathtt k}')$, thus using Item~(2) in Lemma~\ref{lem-prelim-result-mathcal-U} we get that
    \begin{align*}
        \Big| \mathsf{L}( U_{\mathtt k}' \cap U_{[\mathtt q_{\mathtt t}, \mathtt s_{\mathtt t+1}]} ) \setminus \mathsf{L}(U_{\mathtt k}') \Big| \geq 2 \,.
    \end{align*}
    Thus, using the fact that $\{ V(U_{\mathtt k}') \setminus \mathsf{L}(U_{\mathtt k}'): 1 \leq \mathtt k \leq \ell \}$ are disjoint and $\{ V(U_{[\mathtt q_{\mathtt t},\mathtt s_{\mathtt t+1}]}): 1 \leq \mathtt t \leq \mathtt T-1 \}$ are also disjoint, we have
    \begin{align*}
        & \#\Big\{ v \in \cup_{1 \leq \mathtt k \leq \ell} V(U_{\mathtt t}') \setminus \mathsf{L}(U_{\mathtt k}'): v \in V(S) \cap V(K), \mathsf{deg}_{S \cap K}(v) < \mathsf{deg}_K(v) \Big\} \\
        \geq\ & \sum_{ 1 \leq \mathtt t \leq \mathtt T-1 } \sum_{ \mathtt k \in \Gamma_{\mathtt t} } \Big| \mathsf{L}( U_{\mathtt k}' \cap U_{[\mathtt q_{\mathtt t},\mathtt s_{\mathtt t+1}]} ) \setminus \mathsf{L}(U_{\mathtt k}') \Big| = 2\sum_{1 \leq \mathtt t \leq \mathtt T-1} \gamma_{\mathtt t} \,.
    \end{align*}
    In addition, note that 
    \begin{align*}
        & \mathsf{deg}_{S \cap K}(v_{\mathtt q_{\mathtt t}'}') < \mathsf{deg}_{K}(v_{\mathtt q_{\mathtt t}'}') \mbox{ if } v_{\mathtt q_{\mathtt t}'}' \not\in \cup_{1 \leq \mathtt t \leq \mathtt T-1} \Gamma_{\mathtt t} \,;  \\
        & \mathsf{deg}_{S \cap K}(v_{\mathtt s_{\mathtt t+1}'+1}') < \mathsf{deg}_{K}(v_{\mathtt s_{\mathtt t+1}'+1}') \mbox{ if } \mathtt s_{\mathtt t+1}' \not\in \cup_{1 \leq \mathtt t \leq \mathtt T-1} \Gamma_{\mathtt t} \,.
    \end{align*}
    So we have (below we write $\Gamma=\cup_{1 \leq \mathtt t \leq \mathtt T-1} \Gamma_{\mathtt t}$)
    \begin{align*}
        \#\Big\{ 1 \leq \mathtt k \leq \ell+1: \mathsf{deg}_{S \cap K}(v_{\mathtt k}') \geq \mathsf{deg}_{K}(v_{\mathtt k}') \Big\} &\geq 2(K-1) - \sum_{ 1 \leq \mathtt t \leq \mathtt T-1 } \mathbf 1_{ \{ \mathtt q_{\mathtt t}' \not\in \Gamma \} } - \sum_{ 1 \leq \mathtt t \leq \mathtt T-1 } \mathbf 1_{ \{ \mathtt s_{\mathtt t+1}' \not\in \Gamma \} } \\
        &\geq \Big( 2(\mathtt T-1) - |\Gamma|/2 \Big)_+ = 2\Big( \mathtt T-1-\sum_{1 \leq \mathtt t \leq \mathtt T-1} \gamma_{\mathtt t} \Big)_+ \,.
    \end{align*}
    In conclusion, we have \eqref{eq-key-property-Part-IV-transfer} holds.
\end{proof}

Now we return to the proof of \eqref{eq-technical-part-IV}. For 
\begin{align*}
    &\mathtt T \geq 3, \ \overrightarrow{\gamma}=(\gamma_{\mathtt t}:1 \leq \mathtt t \leq \mathtt T-1) \,, \\
    &\mathtt{SEQ}=( \mathtt s_1=0,\mathtt q_1, \ldots, \mathtt s_{\mathtt T}, \mathtt q_{\mathtt T}=\ell+1 ), \ \overline{\mathtt{SEQ}}=( \mathtt s_1'=0,\mathtt q_1', \ldots, \mathtt s_{\mathtt T}', \mathtt q_{\mathtt T}'=\ell+1 )  \\
    &\overrightarrow{\mathtt E} = (\mathtt E_{\mathtt t}:1\leq \mathtt t \leq \mathtt T-1), \ \overrightarrow{\mathtt V} = (\mathtt V_{\mathtt t}:1\leq \mathtt t \leq \mathtt T-1) \,, 
\end{align*}
define $\mathsf{Part}_{IV}(\mathtt T, \mathtt{SEQ}, \overline{\mathtt{SEQ}}, \overrightarrow{\gamma}; \overrightarrow{E}, \overrightarrow{V})$ to be the set of $S \subset \mathsf K_n$ such that
\begin{itemize}
    \item $(S,K) \in \mathsf{Part}_{IV}$ and $E(S) \cap E(K) \neq \emptyset$;
    \item $\mathsf{SEQ}(S,K)=\mathtt{SEQ}$ and $\overline{\mathsf{SEQ}}(S,K)=\overline{\mathtt{SEQ}}$;
    \item $|E(U_{[\mathtt q_{\mathtt t},\mathtt s_{\mathtt t+1}]}) \cap E(K)|=\mathtt E_{\mathtt t}$ and $ |V(U_{[\mathtt q_{\mathtt t}, \mathtt s_{\mathtt t+1}]}) \cap V(K)|=\mathtt V_{\mathtt t}+2$ for all $1 \leq \mathtt t \leq \mathtt T-1$;
\end{itemize}
Then for all $S \in \mathsf{Part}_{IV}(\mathtt T,\mathtt{SEQ}, \overline{\mathtt{SEQ}}, \overrightarrow{\gamma}; \overrightarrow{E}, \overrightarrow{V})$ we have
\begin{align*}
    &|E(S \cap K)|= \sum_{1 \leq \mathtt t \leq \mathtt T} 2m(\mathtt q_{\mathtt t}-\mathtt s_{\mathtt t}-1) + \sum_{1 \leq \mathtt t \leq \mathtt T-1} \mathtt E_{\mathtt t} \,, \\
    &|V(S \cap K)| = \sum_{1 \leq \mathtt t \leq \mathtt T} \Big( mp(\mathtt q_{\mathtt t}-\mathtt s_{\mathtt t}-1)+1 \Big) + \sum_{1 \leq \mathtt t \leq \mathtt T-1} \mathtt V_{\mathtt t} \,.
\end{align*}
Using Lemma~\ref{lem-key-property-Part-IV}, we then have
\begin{equation}{\label{eq-constraint-Part-IV}}
    \begin{aligned}
        & \sum_{1 \leq \mathtt t \leq \mathtt T-1} \mathtt E_{\mathtt t} \leq mp \sum_{1 \leq \mathtt t \leq \mathtt T-1} \#\big\{ 1 \leq \mathtt k \leq \ell: E(U_{\mathtt k}') \cap E(U_{[\mathtt q_{\mathtt t},\mathtt s_{\mathtt t+1}]}) \neq \emptyset \big\} \leq mp\sum_{1 \leq \mathtt t \leq \mathtt T-1} \gamma_{\mathtt t} \,. \\
        & \sum_{1 \leq \mathtt t \leq \mathtt T-1} \mathtt V_{\mathtt t} - \frac{p}{2} \sum_{1 \leq j \leq \mathtt T-1} \mathtt E_{\mathtt t} \geq -\Big( \mathtt T-1-\sum_{1 \leq \mathtt t \leq \mathtt T-1} \gamma_{\mathtt t} \Big) + \Big( \mathtt T-1-\sum_{1 \leq \mathtt t \leq \mathtt T-1} \gamma_{\mathtt t} \Big)_+ \,.
    \end{aligned}
\end{equation}
For all $S \in \mathsf{Part}_{IV}( K, \overrightarrow{\gamma}; \overrightarrow{E}, \overrightarrow{V})$ we have
\begin{align*}
    \big( \lambda^{-2} n^{\frac{p}{2}} \big)^{ |E(S) \cap E(k)| } = \big( \lambda^{-2} n^{\frac{p}{2}} \big)^{ \sum_{1 \leq \mathtt w \leq \mathtt T} 2m(\mathtt q_{\mathtt w}-\mathtt s_{\mathtt w}-1) + \sum_{1 \leq \mathtt t \leq \mathtt T-1} \mathtt E_{\mathtt t} } \,.
\end{align*}
Now we bound the enumerations of $\mathsf{Part}_{IV}( K,\mathtt{SEQ}, \overline{\mathtt{SEQ}}, \overrightarrow{\gamma}; \overrightarrow{E}, \overrightarrow{V})$. Recall \eqref{eq-decomposition-S}. Recall that there exists a bijection 
\begin{align*}
    \phi:[\ell] \setminus \big( [\mathtt q_1,\mathtt s_2] \cup \ldots \cup [\mathtt q_{\mathtt T-1},\mathtt s_{\mathtt T}] \big) \to [\ell] \setminus \big( [\mathtt q_1',\mathtt s_2'] \cup \ldots \cup [\mathtt q_{\mathtt T-1}',\mathtt s_{\mathtt T}'] \big)
\end{align*}
such that $U_{\mathtt k}=U_{\phi(\mathtt k)}$. Thus, to decide $\{ U_{\mathtt k}: \mathtt k \in [\ell] \setminus ( [\mathtt q_1,\mathtt s_2] \cup \ldots \cup [\mathtt q_{\mathtt T-1},\mathtt s_{\mathtt T}] ) \}$ it suffices to decide $\phi$. Note that by \eqref{eq-property-phi}, we see that $\phi$ is determined by
\begin{align*}
    \phi(\mathtt q_1-1), \phi(\mathtt s_2+1), \ldots , \phi(\mathtt q_{\mathtt T-1}-1), \phi(\mathtt s_{\mathtt T}+1) \,.
\end{align*}
Thus, the possible choices of $\phi$ is bounded by $\ell^{2(\mathtt T-1)}$. Now it suffices to bound the possible realizations of $\{ U_{[\mathtt q_{\mathtt t},\mathtt s_{\mathtt t+1}]} \}$. Using \eqref{eq-enu-U-[a,b]}, we get that the enumeration of each $U_{[\mathtt q_{\mathtt t},\mathtt s_{\mathtt t+1}]}$ is bounded by
\begin{align*}
    \big( \beta^{\diamond}_{\mathcal U} \big)^{\mathtt s_{\mathtt t+1}-\mathtt q_{\mathtt t}+1} (m^2 p^2 \ell^2)^{\mathtt V_{\mathtt t}} n^{mp(\mathtt s_{\mathtt t+1}-\mathtt q_{\mathtt t})-1-\mathtt V_{\mathtt t}} \,.
\end{align*}
Thus, we have (below we write $|\overrightarrow{\gamma}|=\sum_{1 \leq \mathtt t \leq \mathtt T-1} \gamma_{\mathtt t}, |\overrightarrow{\mathtt V}|=\sum_{1 \leq \mathtt t \leq \mathtt T-1} \mathtt V_{\mathtt t}$ and $|\overrightarrow{\mathtt E}|=\sum_{1 \leq \mathtt t \leq \mathtt T-1} \mathtt E_{\mathtt t}$)
\begin{align*}
    & \sum_{ S:(S,K) \in \mathsf{Part}_{IV} } \big( \lambda^{-2} n^{\frac{p}{2}} \big)^{ |E(S) \cap E(K)| } \\
    =\ & \sum_{\mathtt T \geq 3} \sum_{ \mathtt{SEQ},\overline{\mathtt{SEQ}} } \sum_{\overrightarrow{\gamma}} \sum_{ \substack{  \overrightarrow{\mathtt E},\overrightarrow{\mathtt V} \text{ with } \eqref{eq-constraint-Part-IV} } } \big( \lambda^{-2} n^{\frac{p}{2}} \big)^{ \sum_{1 \leq \mathtt t \leq \mathtt T} 2m(\mathtt q_{\mathtt t}-\mathtt s_{\mathtt t}-1) + \sum_{1 \leq \mathtt t \leq \mathtt T-1} \mathtt E_{\mathtt t} }  \\
    & \cdot \ell^{2(\mathtt T-1)} \big( \beta^{\diamond}_{\mathcal U} \big)^{ \sum_{1 \leq \mathtt t \leq \mathtt T-1} (\mathtt s_{\mathtt t+1}-\mathtt q_{\mathtt t}+1)} (m^2 p^2 \ell^2)^{\sum_{1 \leq \mathtt t \leq \mathtt T-1}\mathtt V_{\mathtt t}} n^{ \sum_{1 \leq \mathtt t \leq \mathtt T-1}( mp(\mathtt s_{\mathtt t+1}-\mathtt q_{\mathtt t})-1-\mathtt V_{\mathtt t})} \\
    \overset{\eqref{eq-esay-bound-beta-mathcal-J}}{\leq}\ & 3\beta_{\mathcal J} \sum_{\mathtt T \geq 3} \sum_{ \mathtt{SEQ},\overline{\mathtt{SEQ}} } \sum_{\overrightarrow{\gamma}} \sum_{ \substack{ \overrightarrow{\mathtt E},\overrightarrow{\mathtt V} \text{ with } \eqref{eq-constraint-Part-IV} } } \ell^{2(\mathtt T-1)} (m^2 p^2 \ell^2)^{|\overrightarrow{\mathtt V}|} n^{mp\ell-(\mathtt T-1)-\frac{1}{2}|\overrightarrow{\mathtt V}|+\frac{p}{2}|\overrightarrow{\mathtt E}|} \\
    \leq\ & [3+o(1)] \cdot \beta_{\mathcal J} \sum_{\mathtt T \geq 3} \sum_{\overrightarrow{\gamma}} \sum_{ \mathtt{SEQ}, \overline{\mathtt{SEQ}} } \sum_{ \substack{ \overrightarrow{\mathtt E} \text{ with } \eqref{eq-constraint-Part-IV} } } \ell^{2(\mathtt T-1)} (m^2 p^2 \ell^2)^{p|\overrightarrow{\mathtt E}|} n^{mp\ell-(\mathtt T-1)} n^{ -(|\overrightarrow{\gamma}|-\mathtt T+1)_+ } \\
    \leq\ & [3+o(1)] \cdot \beta_{\mathcal J} \sum_{\mathtt T \geq 3} \sum_{\overrightarrow{\gamma}} \ell^{6(\mathtt T-1)} (m^2 p^2 \ell^2)^{mp^2|\overrightarrow{\gamma}|} n^{mp\ell-(\mathtt T-1)} n^{ -(|\overrightarrow{\gamma}|-\mathtt T+1)_+ } \\
    \leq\ & [3+o(1)] \cdot \beta_{\mathcal J} \sum_{\mathtt T \geq 3} \ell^{6(\mathtt T-1)} (m^2 p^2 \ell^2)^{mp^2\mathtt T} n^{mp\ell-(\mathtt T-1)} = n^{-1+o(1)} n^{mp\ell-1} \beta_{\mathcal J} \,,
\end{align*}
where in the third inequality we use $\#\{ \mathtt{SEQ}, \overline{\mathtt{SEQ}} \} \leq \ell^{4(\mathtt T-1)}$.

\section{Postponed proofs in Section~\ref{sec:hypergraph-color-coding}}{\label{sec:supp-proofs-sec-4}}

\subsection{Proof of Proposition~\ref{prop-approximate-detection-statistics}}{\label{subsec:proof-prop-4.1}}

Recall \eqref{eq-def-f-mathcal-H} and \eqref{eq-def-widetilde-f-H}. We then have
\begin{align*}
    &\Big( \widetilde{f}_{\mathcal H} - f_{\mathcal H} \Big)^2 = \Bigg( \frac{1}{\sqrt{ \beta_{\mathcal H} n^{pm\ell} }} \sum_{ [H] \in \mathcal H } \sum_{ S \cong H } f_{S}(\bm Y) \cdot \Big( \frac{1}{t} \sum_{i=1}^{t} \frac{1}{r} \chi_{\tau_i}(V(S))-1 \Big) \Bigg)^2 \\
    =\ & \frac{ 1 }{ \beta_{\mathcal H} n^{pm\ell} } \sum_{ [H],[I] \in \mathcal H } \sum_{ \substack{ S \cong H \\ K \cong I } } f_{S}(\bm Y) f_{K}(\bm Y) \cdot \Big( \frac{1}{t} \sum_{\mathtt k=1}^{t} \frac{1}{r} \chi_{\tau_{\mathtt k}}(V(S))-1 \Big) \Big( \frac{1}{t} \sum_{\mathtt h=1}^{t} \frac{1}{r} \chi_{\tau_{\mathtt h}}(V(K))-1 \Big) \,.
\end{align*}
Note that by conditioning on $\bm Y$, by the randomness over $\{ \tau_{\mathtt k}:1 \leq \mathtt k \leq t \}$ we have
\begin{align*}
    \mathbb E\Big[ \Big( \frac{1}{t} \sum_{\mathtt k=1}^{t} \frac{1}{r} \chi_{\tau_{\mathtt k}}(V(S))-1 \Big) \Big( \frac{1}{t} \sum_{\mathtt h=1}^{t} \frac{1}{r} \chi_{\tau_{\mathtt h}}(V(K))-1 \Big) \Big]=0 \mbox{ if } V(S) \cap V(K)=\emptyset
\end{align*}
and 
\begin{align*}
    &\mathbb E\Big[ \Big( \frac{1}{t} \sum_{\mathtt k=1}^{t} \frac{1}{r} \chi_{\tau_{\mathtt k}}(V(S))-1 \Big) \Big( \frac{1}{t} \sum_{\mathtt h=1}^{t} \frac{1}{r} \chi_{\tau_{\mathtt h}}(V(K))-1 \Big) \Big] \\
    \leq\ & \mathbb E\Big[  \frac{1}{t^2} \sum_{\mathtt k=1}^{t} \frac{1}{r^2} \chi_{\tau_{\mathtt k}}(V(S)) \chi_{\tau_{\mathtt k}}(V(K)) \Big] \leq \frac{1}{tr} = 1 \,.
\end{align*}
Combining the fact that $\mathbb E_{\Pb}[f_{S}(\bm Y) f_{K}(\bm Y)], \mathbb E_{\Qb}[f_{S}(\bm Y) f_{K}(\bm Y)] \geq 0$, we get that
\begin{align}{\label{eq-approx-error-Qb}}
    \mathbb E_{\Qb}\Big[ ( \widetilde{f}_{\mathcal H} - f_{\mathcal H})^2 \Big] \leq \frac{ 1 }{ \beta_{\mathcal H} n^{pm\ell} } \sum_{ [H],[I] \in \mathcal H } \sum_{ \substack{ S \cong H, K \cong I \\ V(S) \cap V(K) \neq \emptyset } } \mathbb E_{\Qb}[f_{S}(\bm Y) f_{K}(\bm Y)] \,.
\end{align}
and 
\begin{align}{\label{eq-approx-error-Pb}}
    \mathbb E_{\Pb}\Big[ ( \widetilde{f}_{\mathcal H} - f_{\mathcal H})^2 \Big] \leq \frac{ 1 }{ \beta_{\mathcal H} n^{pm\ell} } \sum_{ [H],[I] \in \mathcal H } \sum_{ \substack{ S \cong H, K \cong I \\ V(S) \cap V(K) \neq \emptyset } } \mathbb E_{\Pb}[f_{S}(\bm Y) f_{K}(\bm Y)] \,.
\end{align}
We first bound \eqref{eq-approx-error-Qb}. By \eqref{eq-standard-orthogonal} we have
\begin{align*}
    \eqref{eq-approx-error-Qb} &= \frac{ 1 }{ \beta_{\mathcal H} n^{pm\ell} } \sum_{ [H],[I] \in \mathcal H } \sum_{ \substack{ S \cong H, K \cong I \\ V(S) \cap V(K) \neq \emptyset } } \mathbf 1_{ \{ S=K \} } \\
    &= \frac{ 1 }{ \beta_{\mathcal H} n^{pm\ell} } \sum_{ [H] \in \mathcal H } \#\big\{ S \subset \mathsf K_n: S \cong H \big\} \overset{\text{Lemma~\ref{lem-prelim-graphs},(1)}}{=} \frac{ 1+o(1) }{ \beta_{\mathcal H} n^{pm\ell} } \sum_{ [H] \in \mathcal H } \frac{n^{pm\ell}}{|\mathsf{Aut}(H)|} \\
    & \overset{\eqref{eq-def-beta-mathcal-H}}{=} 1+o(1) \overset{\text{Lemma~\ref{lem-mean-var-f-H-part-1}}}{=} o(1) \cdot \mathbb E_{\Pb}[f]^2 \,.
\end{align*}
Now we bound \eqref{eq-approx-error-Pb}. Using Lemma~\ref{lem-est-cov-f-S-f-K}, we get that
\begin{align*}
    \mathbb E_{\Pb}[f_{S}(\bm Y) f_{K}(\bm Y)] &= \big( \lambda n^{-\frac{p}{4}} \big)^{|E(S) \triangle E(K)|} \big( 1+\lambda^2 n^{-\frac{p}{2}} \big)^{ |E(S) \cap E(K)| } \\
    &= [1+o(1)] \lambda^{4m\ell} n^{-pm\ell} \cdot \big( \lambda^{-1} n^{\frac{p}{4}} \big)^{2|E(S) \cap E(K)|} \,.
\end{align*}
Combined with $\mathbb E_{\Pb}[f_{\mathcal H}]=[1+o(1)] \lambda^{2m\ell} \sqrt{\beta_{\mathcal H}}$ in Lemma~\ref{lem-mean-var-f-H-part-1}, we get that
\begin{align}
    \frac{ \mathbb E_{\Pb}[( \widetilde{f}_{\mathcal H} - f_{\mathcal H})^2] }{ \mathbb E_{\Pb}[f_{\mathcal H}]^2 } &\leq \frac{ 1+o(1) }{ \beta_{\mathcal H}^2 n^{2pm\ell} } \sum_{ [H],[I] \in \mathcal H } \sum_{ \substack{ S \cong H, K \cong I \\ V(S) \cap V(K) \neq \emptyset } } \big( \lambda^{-1} n^{\frac{p}{4}} \big)^{2|E(S) \cap E(K)|} \nonumber \\
    &= \frac{ 1+o(1) }{ \beta_{\mathcal H}^2 n^{2pm\ell} } \sum_{1 \leq h \leq pm\ell} \sum_{ [H],[I] \in \mathcal H } \sum_{ \substack{ S \cong H, K \cong I \\ |V(S) \cap V(K)|=h } } \big( \lambda^{-1} n^{\frac{p}{4}} \big)^{2|E(S) \cap E(K)|} \,. \label{eq-approx-error-Pb-relax-1}
\end{align}
Similar as in \eqref{eq-var-Pb-f-H-relax-3}, we see that
\begin{align}
    \eqref{eq-approx-error-Pb-relax-1} \leq o(1) + \frac{ 1+o(1) }{ n^{2pm\ell} \beta_{\mathcal H}^2 } \sum_{ \substack{ S \subset \mathsf{K}_n \\ [S] \in \mathcal H } } \sum_{ 1 \leq h \leq 2pm\ell } \sum_{ 2h>pk } \big( \lambda^{-1} n^{\frac{p}{4}} \big)^{2k} \mathrm{ENUM}(k,h) \,,  \label{eq-approx-error-Pb-relax-2}
\end{align}
where $\operatorname{ENUM}(k,h)$ was defined in \eqref{eq-def-ENUM}. Again, from Item~(4) in Lemma~\ref{lem-prelim-graphs}, we have
\begin{align*}
    \mathrm{ENUM}(k,h) \leq \binom{pm\ell}{h} \sum_{[I] \in \mathcal H} \frac{ n^{pm\ell-h} (pm\ell)! }{ |\mathsf{Aut}(I)| } \overset{\eqref{eq-def-beta-mathcal-H}}{=} \beta_{\mathcal H} n^{pm\ell-h} (pm\ell)! \binom{pm\ell}{h} \leq \beta_{\mathcal H} n^{pm\ell-h} (pm\ell)^{pm\ell} \,.
\end{align*}
Plugging this estimation into \eqref{eq-approx-error-Pb-relax-2} we get that
\begin{align*}
    \eqref{eq-approx-error-Pb-relax-2} &\leq o(1) + \frac{ 1+o(1) }{ n^{2pm\ell} \beta_{\mathcal H}^2 } \sum_{ \substack{ S \subset \mathsf{K}_n \\ [S] \in \mathcal H } } \sum_{ 1 \leq h \leq 2pm\ell } \sum_{ 2h>pk } \big( \lambda^{-1} n^{\frac{p}{4}} \big)^{2k} \beta_{\mathcal H} n^{pm\ell-h} (pm\ell)^{pm\ell} \\
    &\leq o(1) + \frac{ 1+o(1) }{ n^{pm\ell} \beta_{\mathcal H} } \sum_{ \substack{ S \subset \mathsf{K}_n \\ [S] \in \mathcal H } } \sum_{ 1 \leq h \leq 2pm\ell } \sum_{ 2h>pk } \big( \lambda^{-1} n^{\frac{p}{4}} \big)^{2k} (pm\ell)^{pm\ell}  \\
    &\overset{\eqref{eq-condition-strong-detection}}{\leq} o(1) + \frac{ 1+o(1) }{ n^{pm\ell-o(1)} \beta_{\mathcal H} } \sum_{ \substack{ S \subset \mathsf{K}_n \\ [S] \in \mathcal H } } \sum_{ 1 \leq h \leq 2pm\ell } \sum_{ 2h>pk } n^{-\frac{1}{2}+o(1)} = o(1) \,,
\end{align*}
leading to \eqref{eq-approx-error-Pb}.

\subsection{Proof of Lemma~\ref{lem-color-coding}}{\label{subsec:proof-lem-4.2}}

We first bound the total time complexity of Algorithm~\ref{alg:dynamic-programming}. The total number of subsets $[mp\ell]$ with cardinality $mp(k+1)+1$ is $\binom{mp\ell}{mp(k+1)+1}$. Fixing a color set $C$ with $|C|=mp(k+1)+1$ and $c \in C$, the total number of $C_1,C_2$ with $C_1 \cup C_2=C$ and $C_1 \cap C_2 = \{ c \}$ is bounded by $2^{mp(k+1)+1}$. Thus according to Algorithm~\ref{alg:dynamic-programming}, the total time complexity of computing $Y_{\ell-k}(x,y;c_1,c_2;U_{\ell-k} \cup \ldots \cup U_{\ell})$ is bounded by
\begin{align*}
    \sum_{1 \leq k \leq \ell-1} \binom{mp\ell}{mp(k+1)+1} \cdot 2^{mp(k+1)+1} \cdot n^2 \cdot n^{mp+1} \overset{\eqref{eq-condition-strong-detection}}{=} n^{mp+3+o(1)} \,.
\end{align*}
Finally, computing $g_H(\bm Y,\tau)$ according to $Y_{2}(y,x;c_2,c_1;U_2 \cup \ldots \cup U_{\ell})$ takes time
\begin{align*}
    2^{mp\ell} \cdot 2^{mp\ell} \cdot n^2 \cdot n^{mp+1} \overset{\eqref{eq-condition-strong-detection}}{=} n^{mp+1+o(1)} \,.
\end{align*}
Thus, the total running time of Algorithm~\ref{alg:dynamic-programming} is $O(n^{mp+3+o(1)})$.

Next, we prove the correctness of Algorithm~\ref{alg:dynamic-programming}. For any $1 \leq k \leq \ell-2$, define
\begin{align*}
    & Z_{\ell-k}(x,y;c_1,c_2,C;U_{\ell-k} \cup \ldots \cup U_{\ell}) \\
    =\ & \sum_{ \substack{ \psi: \{ v_{\ell-k}, \ldots v_{\ell}, v_1 \} \to [n] \\ \psi(v_{\ell-k})=x, \psi(v_1)=y }  } \sum_{ \substack{ W_{\ell-k}, \ldots W_{\ell} \subset \mathsf K_n: W_i \cong U_i \\ W_i \cap W_{i+1} = \psi(v_{i+1}) \\ x \in \mathsf L(W_{\ell-k}), y \in \mathsf L(W_{\ell}) } } \mathbf 1_{ \{ \tau(W_{\ell-k} \cup \ldots \cup W_{\ell})=C, \tau(x)=c_1,\tau(y)=c_2 \} } \prod_{\ell-k \leq i \leq \ell} \prod_{e \in E(W_i)} \bm Y_e \,.
\end{align*}
It is clear that
\begin{align*}
    Z_{\ell}(x,y;c_1,c_2,C;U_{\ell}) = \Lambda(x,y;c_1,c_2,C;U_{\ell})
\end{align*}
and 
\begin{align*}
    &Z_{\ell-k}(x,y;c_1,c_2,C;U_{\ell-k} \cup \ldots \cup U_{\ell}) \\
    =\ & \sum_{c \in C} \sum_{ \substack{ (C_1,C_2): C_1 \cup C_2 = C \\ c_1 \in C_1, c_2 \in C_2 \\ C_1 \cap C_2 = \{ c \} } } \sum_{ \substack{ z \in [n] \\ z \neq x,y } } \Lambda( x,z;c_1,c;C_1,[U_{\ell-k}] ) Z_{\ell-k+1}(y,z;c,c_2,C_2;U_{\ell-k+1} \cup \ldots \cup U_{\ell}) \,.
\end{align*}
Thus, we have $Z_{\ell-k+1}(y,z;c,c_2,C_2;U_{\ell-k+1} \cup \ldots \cup U_{\ell})=Y_{\ell-k+1}(y,z;c,c_2,C_2;U_{\ell-k+1} \cup \ldots \cup U_{\ell})$ for all $1 \leq k \leq \ell-2$. This yields that
\begin{align*}
    & \sum_{ \substack{ c_1,c_2 \in [mp\ell] \\ c_1 \neq c_2 } } \sum_{ \substack{ (C_1,C_2): C_1 \cup C_2 = [mp\ell] \\ C_1 \cap C_2 = \{ c_1,c_2 \} } } \sum_{ \substack{ x,y \in [n] \\ x \neq y } } \Lambda( x,y;c_1,c_2;C_1,[U_{1}] ) Y_{2}(y,x;c_2,c_1,C_2;U_{2} \cup \ldots \cup U_{\ell}) \\
    =\ & \sum_{ \substack{ \psi: \{ v_{1}, \ldots v_{\ell} \} \to [n] }  } \sum_{ \substack{ W_{1}, \ldots W_{\ell} \subset \mathsf K_n: W_i \cong U_i \\ W_i \cap W_{i+1} = \psi(v_{i+1}) } } \chi_\tau( V(W_{1} \cup \ldots \cup W_{\ell}) ) \prod_{1 \leq i \leq \ell} \prod_{e \in E(W_i)} \bm Y_e = \frac{1}{\mathfrak a(H)} g_H(\bm Y,\tau) \,.
\end{align*}

\subsection{Proof of Proposition~\ref{prop-approximate-recovery-statistics}}{\label{subsec:proof-prop-4.4}}

Recall \eqref{eq-def-Phi-i,j-mathcal-J} and \eqref{eq-def-widetilde-Phi-H}. We then have
\begin{align*}
    & \Big( \widetilde{\Phi}^{\mathcal J}_{i,j} - \Phi^{\mathcal J}_{i,j} \Big)^2 = \Bigg( \frac{ 1 }{ \lambda^{2m\ell} \beta_{\mathcal J} n^{\frac{pm\ell}{2}-1} } \sum_{ \substack{ S \subset \mathsf K_n: [S] \in \mathcal J \\ \mathsf{L}(S)=\{ i,j \} } } f_{S}(\bm Y) \cdot \Big( \frac{1}{t} \sum_{\mathtt k=1}^{t} \frac{1}{\varkappa} \chi_{\xi_{\mathtt k}}(V(S))-1 \Big)  \Bigg)^2 \\
    =\ & \sum_{ \substack{ S,K \subset \mathsf K_n: [S],[K] \in \mathcal J \\ \mathsf{L}(S)=\mathsf{L}(K)=\{ i,j \} } } \frac{ f_{S}(\bm Y) f_{K}(\bm Y) }{ \lambda^{4m\ell} \beta_{\mathcal J}^2 n^{2pm\ell-2} }  \Big( \frac{1}{t} \sum_{\mathtt k=1}^{t} \frac{1}{\varkappa} \chi_{\xi_{\mathtt k}}(V(S))-1 \Big) \Big( \frac{1}{t} \sum_{\mathtt k=1}^{t} \frac{1}{\varkappa} \chi_{\xi_{\mathtt k}}(V(K))-1 \Big) \,.
\end{align*}
We divide the above summation into three parts, the first part is
\begin{equation}{\label{eq-recovery-approx-error-part-1}}
    \sum_{ \substack{ S,K \subset \mathsf K_n: [S],[K] \in \mathcal J \\ \mathsf{L}(S)=\mathsf{L}(K)=\{ i,j \} \\ E(S) \cap E(K) \neq \emptyset } } \frac{ f_{S}(\bm Y) f_{K}(\bm Y) }{ \lambda^{4m\ell} \beta_{\mathcal J}^2 n^{2pm\ell-2} }  \Big( \frac{1}{t} \sum_{\mathtt k=1}^{t} \frac{1}{\varkappa} \chi_{\xi_{\mathtt k}}(V(S))-1 \Big) \Big( \frac{1}{t} \sum_{\mathtt k=1}^{t} \frac{1}{\varkappa} \chi_{\xi_{\mathtt k}}(V(K))-1 \Big) \,,
\end{equation}
the second part is
\begin{equation}{\label{eq-recovery-approx-error-part-2}}
    \sum_{ \substack{ S,K \subset \mathsf K_n: [S],[K] \in \mathcal J \\ \mathsf{L}(S)=\mathsf{L}(K)=\{ i,j \} \\ E(S) \cap E(K) = \emptyset \\ |V(S) \cap V(K)| \leq (mp\ell)^{0.1} } } \frac{ f_{S}(\bm Y) f_{K}(\bm Y) }{ \lambda^{4m\ell} \beta_{\mathcal J}^2 n^{2pm\ell-2} }  \Big( \frac{1}{t} \sum_{\mathtt k=1}^{t} \frac{1}{\varkappa} \chi_{\xi_{\mathtt k}}(V(S))-1 \Big) \Big( \frac{1}{t} \sum_{\mathtt k=1}^{t} \frac{1}{\varkappa} \chi_{\xi_{\mathtt k}}(V(K))-1 \Big) \,,
\end{equation}
and the third part is
\begin{equation}{\label{eq-recovery-approx-error-part-3}}
    \sum_{ \substack{ S,K \subset \mathsf K_n: [S],[K] \in \mathcal J \\ \mathsf{L}(S)=\mathsf{L}(K)=\{ i,j \} \\ E(S) \cap E(K) = \emptyset \\ |V(S) \cap V(K)| > (mp\ell)^{0.1} } } \frac{ f_{S}(\bm Y) f_{K}(\bm Y) }{ \lambda^{4m\ell} \beta_{\mathcal J}^2 n^{2pm\ell-2} }  \Big( \frac{1}{t} \sum_{\mathtt k=1}^{t} \frac{1}{\varkappa} \chi_{\xi_{\mathtt k}}(V(S))-1 \Big) \Big( \frac{1}{t} \sum_{\mathtt k=1}^{t} \frac{1}{\varkappa} \chi_{\xi_{\mathtt k}}(V(K))-1 \Big) \,.
\end{equation}
Note that by conditioning on $\bm Y$, by the randomness over $\{ \xi_{\mathtt k}:1 \leq \mathtt k \leq t \}$ we have 
\begin{align*}
    &\mathbb E\Big[ \Big( \frac{1}{t} \sum_{\mathtt k=1}^{t} \frac{1}{\varkappa} \chi_{\xi_{\mathtt k}}(V(S))-1 \Big) \Big( \frac{1}{t} \sum_{\mathtt k=1}^{\varkappa} \frac{1}{\varkappa} \chi_{\xi_{\mathtt k}}(V(K))-1 \Big) \Big] \\
    \leq\ & \mathbb E\Big[ \frac{1}{t^2} \sum_{\mathtt k=1}^{t} \frac{1}{\varkappa^2} \chi_{\xi_{\mathtt k}}(V(S)) \chi_{\xi_{\mathtt k}}(V(K)) \Big] = \frac{1}{t\varkappa} \leq 1 \,.
\end{align*}
Thus, we have (using $E_{\Pb}[ f_{S}(\bm Y) f_K(\bm Y) ] \geq 0$)
\begin{align}
    \mathbb E_{\Pb}\big[ \eqref{eq-recovery-approx-error-part-1} \big] \leq \frac{ 1 }{ \lambda^{4m\ell} \beta_{\mathcal J}^2 n^{2pm\ell-2} } \sum_{ \substack{ S,K \subset \mathsf K_n: [S],[K] \in \mathcal J \\ \mathsf{L}(S)=\mathsf{L}(K)=\{ i,j \} \\ E(S) \cap E(K) = \emptyset } } \mathbb E_{\Pb}[ f_{S}(\bm Y) f_K(\bm Y) ] \leq \delta^2 \,, \label{eq-recovery-approx-error-part-1-bound}
\end{align}
where the second inequality follows from Lemma~\ref{lem-very-technical-sec-3}. Now we bound \eqref{eq-recovery-approx-error-part-2}. Note that when $|V(S) \cap V(K)| = A \leq (mp\ell)^{0.1}$ 
\begin{align*}
    &\mathbb E\Big[ \Big( \frac{1}{t} \sum_{\mathtt k=1}^{t} \frac{1}{\varkappa} \chi_{\mathtt k}(V(S))-1 \Big) \Big( \frac{1}{t} \sum_{\mathtt k=1}^{t} \frac{1}{\varkappa} \chi_{\xi_{\mathtt k}}(V(K))-1 \Big) \Big] \\
    =\ & \mathbb E\Big[ \frac{1}{t^2} \sum_{\mathtt k=1}^{t} \frac{1}{\varkappa^2} \chi_{\xi_{\mathtt k}}(V(S)) \chi_{\xi_{\mathtt k}}(V(K)) \Big] -1 = \frac{1}{t^2\varkappa} \sum_{\mathtt k=1}^{t} \mathbb E\Big[ \chi_{\xi_{\mathtt k}}(V(S)) \mid \chi_{\xi_{\mathtt k}}(V(K))=1 \Big] -1  \\
    =\ & \frac{1}{t\varkappa} \cdot \frac{ (mp\ell-A)! }{ (mp\ell)^{mp\ell-A} } -1 \overset{\eqref{eq-condition-weak-recovery}}{=} [1+o(1)] \cdot [1+o(1)]-1 = o(1) \,.
\end{align*}
Thus, we have
\begin{align}
    \mathbb E_{\Pb}\big[ \eqref{eq-recovery-approx-error-part-2} \big] &\leq o(1) \cdot \frac{ 1 }{ \lambda^{4m\ell} \beta_{\mathcal J}^2 n^{2pm\ell-2} } \sum_{ \substack{ S,K \subset \mathsf K_n: [S],[K] \in \mathcal J \\ \mathsf{L}(S)=\mathsf{L}(K)=\{ i,j \} \\ E(S) \cap E(K) = \emptyset \\ |V(S) \cap V(K)| \leq (mp\ell)^{0.1} } } \mathbb E_{\Pb}[ f_{S}(\bm Y) f_{K}(\bm Y) ] \nonumber \\
    &\overset{\text{Lemma~\ref{lem-conditional-exp-given-endpoints}}}{=} o(1) \cdot \frac{ 1 }{ \lambda^{4m\ell} \beta_{\mathcal J}^2 n^{pm\ell-2} } \sum_{ \substack{ S,K \subset \mathsf K_n: [S],[K] \in \mathcal J \\ \mathsf{L}(S)=\mathsf{L}(K)=\{ i,j \} \\ E(S) \cap E(K) = \emptyset \\ |V(S) \cap V(K)| \leq (mp\ell)^{0.1} } } \lambda^{4m\ell} n^{-pm\ell} \nonumber \\
    &\leq o(1) \cdot \frac{1}{ \beta_{\mathcal J}^2 n^{2pm\ell-2} } \cdot \#\big\{ S,K \subset \mathsf K_n: [S],[K] \in \mathcal J ,\mathsf{L}(S)=\mathsf{L}(K)=\{ i,j \} \big\} = o(1) \,. \label{eq-recovery-approx-error-part-2-bound}
\end{align}
Finally we bound \eqref{eq-recovery-approx-error-part-3}. Again we have
\begin{align*}
    \mathbb E_{\Pb}\big[ \eqref{eq-recovery-approx-error-part-3} \big] &\leq \frac{ 1 }{ \lambda^{4m\ell} \beta_{\mathcal J}^2 n^{pm\ell-2} } \sum_{ \substack{ S,K \subset \mathsf K_n: [S],[K] \in \mathcal J \\ \mathsf{L}(S)=\mathsf{L}(K)=\{ i,j \} \\ E(S) \cap E(K) = \emptyset \\ |V(S) \cap V(K)| > (mp\ell)^{0.1} } }  \mathbb E_{\Pb}[f_{S}(\bm Y) f_{K}(\bm Y)] \nonumber \\
    &\overset{\text{Lemma~\ref{lem-conditional-exp-given-endpoints}}}{=} \frac{ 1 }{ \lambda^{4m\ell} \beta_{\mathcal J}^2 n^{pm\ell-2} } \sum_{ \substack{ S,K \subset \mathsf K_n: [S],[K] \in \mathcal J \\ \mathsf{L}(S)=\mathsf{L}(K)=\{ i,j \} \\ E(S) \cap E(K) = \emptyset \\ |V(S) \cap V(K)| > (mp\ell)^{0.1} } } \lambda^{4m\ell} n^{-pm\ell} \nonumber \\
    &\leq \frac{ 1 }{ \beta_{\mathcal J}^2 n^{2pm\ell-2} } \cdot \#\big\{ (S,K): S,K \subset \mathsf K_n: [S],[K] \in \mathcal J, |V(S) \cap V(K)| > (mp\ell)^{0.1} \big\} \,.
\end{align*}
Note that for all fixed $K \subset \mathsf K_n, [K] \in \mathcal J$, we can bound the enumeration of $S$ such that $[S] \in \mathcal J,|S\cap K|>(mp\ell)^{0.1}$ as follows: using Item~(2) of Lemma~\ref{lem-prelim-graphs}, for each $(mp\ell)^{0.1} \leq A \leq mp\ell+1$, the possible choices of $S$ such that $[S] \in \mathcal H$ and $|V(S) \cap V(K)|=A$ is bounded by
\begin{align*}
    & \sum_{ [J] \in \mathcal J } \#\big\{ S \subset \mathsf K_n: S \cong J, |V(S) \cap V(K)|=A \big\} \leq \sum_{ [J] \in \mathcal J } \binom{|V(K)|}{A} \frac{ n^{|V(J)|-A} |V(J)|^{A} }{ |\mathsf{Aut}(J)| } \\
    =\ & \sum_{ [J] \in \mathcal J } \binom{mp+1}{A} \frac{ n^{mp+1-A} (mp+1)^{A} }{ |\mathsf{Aut}(J)| } \overset{\eqref{eq-def-beta-mathcal-H}}{=} \beta_{\mathcal J} \cdot \binom{mp\ell+1}{A} n^{mp\ell+1-A} (mp\ell+1)^{A}  \,. 
\end{align*}
Thus we have
\begin{align*}
    & \#\Big\{ S \subset \mathsf K_n: [S] \in \mathcal J, |V(S) \cap V(K)| \geq (mp\ell)^{0.1} \Big\} \\
    \leq\ & \beta_{\mathcal J} \sum_{ (mp\ell)^{0.1} \leq A \leq mp\ell } \binom{mp\ell+1}{A} n^{mp+1-A} (mp\ell+1)^{A} \leq \beta_{\mathcal J} n^{mp\ell+2-(mp\ell)^{0.1}} \,.
\end{align*}
Thus, we have that
\begin{align}
    \mathbb E_{\Pb}\big[ \eqref{eq-recovery-approx-error-part-3} \big] &\leq \frac{1}{\beta_{\mathcal J} n^{ mp\ell-4+(mp\ell)^{0.1} } } \#\big\{ K \subset \mathsf K_n: [K] \in \mathcal J, \mathsf L(K) \big\} = o(1) \,.  \label{eq-recovery-approx-error-part-3-bound}
\end{align}
Combining \eqref{eq-recovery-approx-error-part-1-bound}, \eqref{eq-recovery-approx-error-part-2-bound} and \eqref{eq-recovery-approx-error-part-3-bound} yields that
\begin{align*}
    \mathbb E_{\Pb}\Big[ \big( \widetilde{\Phi}^{\mathcal J}_{i,j} - \Phi^{\mathcal J}_{i,j} \big)^2 \Big] \leq \delta^2 +o(1) \,.
\end{align*}

\subsection{Proof of Lemma~\ref{lem-color-coding-recovery}}{\label{subsec:proof-lem-4.5}}

We first bound the total time complexity of Algorithm~\ref{alg:dynamic-programming-recovery}. The total number of subsets $[mp\ell]$ with cardinality $mp(k+1)+1$ is $\binom{mp\ell}{mp(k+1)+1}$. Fixing a color set $C$ with $|C|=mp(k+1)+1$ and $c \in C$, the total number of $C_1,C_2$ with $C_1 \cup C_2=C$ and $C_1 \cap C_2 = \{ c \}$ is bounded by $2^{mp(k+1)+1}$. Thus according to Algorithm~\ref{alg:dynamic-programming-recovery}, the total time complexity of computing $Y_{\ell-k}(x,y;c_1,c_2;U_{\ell-k} \cup \ldots \cup U_{\ell})$ is bounded by
\begin{align*}
    \sum_{1 \leq k \leq \ell-1} \binom{mp\ell}{mp(k+1)+1} \cdot 2^{mp(k+1)+1} \cdot n^2 \cdot n^{mp+1} \overset{\eqref{eq-condition-weak-recovery}}{=} n^{2mp+3+o(1)} \,.
\end{align*}
Thus, the total running time of Algorithm~\ref{alg:dynamic-programming} is $O(n^{2mp+3+o(1)})$. The correctness of Algorithm~\ref{alg:dynamic-programming-recovery} is almost identical to the correctness of Algorithm~\ref{alg:dynamic-programming}, so we omit further details here for simplicity.

\bibliographystyle{alpha}
\small

\end{document}